\documentclass[11pt]{amsart}
\usepackage{amscd,amsxtra,amssymb,mathrsfs, bbm}
\usepackage{stmaryrd}
\usepackage{geometry}
            %{\vspace{1ex}\begin{quotation}\begin{center}\begin{em}}
        {\begin{quotation}\begin{center}\begin{em}}
        {\par\end{em}\end{center}\end{quotation}}

\newtheorem{theorem}{Theorem}[section]
\newtheorem{corollary}[theorem]{Corollary}
\newtheorem{lemma}[theorem]{Lemma}
\newtheorem{proposition}[theorem]{Proposition}

\theoremstyle{definition}
\newtheorem{definition}[theorem]{Definition}
\newtheorem{remark}[theorem]{Remark}
\newtheorem{example}[theorem]{Example}

%\numberwithin{equation}{section}

\DeclareMathOperator{\Perf}{\mathsf{Perf}}

\DeclareMathOperator{\add}{\mathsf{add}}

\DeclareMathOperator{\wt}{\mathsf{wt}}
\DeclareMathOperator{\rad}{\mathsf{rad}}

\DeclareMathOperator{\SL}{\mathsf{SL}}

\renewcommand{\ker}{\mathsf{ker}}

\DeclareMathOperator{\Coh}{\mathsf{Coh}}
\DeclareMathOperator{\Qcoh}{\mathsf{QCoh}}
\DeclareMathOperator{\Tor}{\mathsf{Tor}}

\DeclareMathOperator{\VB}{\mathsf{VB}}

\DeclareMathOperator{\Hom}{\mathsf{Hom}}
\DeclareMathOperator{\RHom}{\mathsf{RHom}}

\DeclareMathOperator{\Ext}{\mathsf{Ext}}

\DeclareMathOperator{\End}{\mathsf{End}}
\DeclareMathOperator{\Mat}{\mathsf{Mat}}
\DeclareMathOperator{\Spec}{\mathsf{Spec}}

%%%%%%%%%Part3
\DeclareMathOperator{\idm}{\mathfrak{m}}

 %derived tensor product

\newcommand{\bul}{\scriptstyle{\bullet}}

%%%%%%%%%%%%%%%%%%%%%%%%%%%%%%%%%%%%%%%%%%%%%%%%%%%%%%%%%%%%%%

\setlength{\oddsidemargin}{-1in} % Linker Rand 30mm vom Papierrand
\addtolength{\oddsidemargin}{30mm}
\setlength{\evensidemargin}{\oddsidemargin}
\setlength{\textwidth}{150mm}

\newcommand{\kk}{\mathbbm{k}}
\newcommand{\CC}{\mathbb{C}}
\newcommand{\NN}{\mathbb{N}}
\newcommand{\ZZ}{\mathbb{Z}}

\newcommand{\sm}{\mathsf{m}}

\newcommand{\FF}{\mathbb{F}}

\newcommand{\EE}{\mathbb{E}}

\newcommand{\XX}{\mathbb{X}}
\newcommand{\YY}{\mathbb{Y}}
\renewcommand{\SS}{\mathbb{S}}

\newcommand{\PP}{\mathbbm{P}}

\newcommand{\Ad}{\mathsf{Ad}}

\newcommand{\sMat}{\mbox{\emph{Mat}}}
\newcommand{\sHom}{\mbox{\it{Hom}}}
\newcommand{\sExt}{\mbox{\it{Ext}}}

\newcommand{\kA}{\mathcal{A}}

\newcommand{\kB}{\mathcal{B}}
\newcommand{\kC}{\mathcal{C}}
\newcommand{\kE}{\mathcal{E}}

\newcommand{\kH}{\mathcal{H}}
\newcommand{\kI}{\mathcal{I}}

\newcommand{\kO}{\mathcal{O}}
\newcommand{\kL}{\mathcal{L}}
\newcommand{\kP}{\mathcal{P}}
\newcommand{\kQ}{\mathcal{Q}}

\newcommand{\kT}{\mathcal{T}}
\newcommand{\kS}{\mathcal{S}}
\newcommand{\kX}{\mathcal{X}}

\newcommand{\tX}{\widetilde{X}}
\newcommand{\tZ}{\widetilde{Z}}
\newcommand{\tx}{\tilde{x}}
\newcommand{\tH}{\widetilde{\mathcal{H}}}

\newcommand{\lar}{\longrightarrow}

\newcommand{\llbrace}{(\!(}
\newcommand{\rrbrace}{)\!)}

%%%%%%%%%%%%%%%%%%%%%%%% BIG SF LETTERS %%%%%%%%%%%%%%%%%%%%

 \def\sP{\mathsf P}
\def\sD{\mathsf D} 
\def\sE{\mathsf E} \def\sR{\mathsf R}
\def\sF{\mathsf F} \def\sS{\mathsf S}
\def\sG{\mathsf G} \def\sT{\mathsf T}
\def\sH{\mathsf H} 
\def\sI{\mathsf I} 
\def\sJ{\mathsf J} 
\def\sK{\mathsf K} 
\def\sL{\mathsf L}

\def\kron#1#2{\xymatrix@C=2em{{#1}\ar@/^3pt/[r]\ar@/_3pt/[r]&{#2}}}
\def\bu{{\scriptscriptstyle\bullet}}

\usepackage{tikz}
\usetikzlibrary{calc, arrows, positioning, shapes, fit, matrix, decorations}
\usetikzlibrary{decorations.shapes, decorations.pathreplacing}

\tikzset{
  %paint/.style={ draw=#1!50!black, fill=#1!50 },
  decorate with/.style={decorate,decoration={shape backgrounds,shape=#1,shape size=1.5mm}},
   deco/.style={decorate with=dart},
   ordi/.style={draw,-stealth,  thick},
   conj/.style={dashed, draw, thick},
   ve/.style={circle, draw, thick, fill=blue!20, inner sep=1pt, outer sep=2pt, minimum size=7pt},
    dot/.style={fill=blue!10,circle,draw, inner sep=1pt, minimum size=5pt},
  dv/.style={star,star points=5,
star point ratio=2, draw, thick, fill=green!20, inner sep=1pt,outer sep=2pt,minimum size=7pt}
}

%-----------------------------------------

\tikzset{
    tbl5 nodes/.style={
        rectangle,
        execute at begin node=$,
       execute at end node=$,
       fill=blue!5,
     %  draw,
       % draw=grey!40,
     %   thin, dashed,
        % minimum height=20pt,
        align=center,
        text depth=0.5ex,
        text height=2ex,
        inner xsep=0pt,
        outer sep=0pt,
           },
    tbl5/.style={
        matrix of nodes,
        row sep=-\pgflinewidth,
        column sep=-\pgflinewidth,
        nodes={
            tbl5 nodes
        },
        execute at empty cell={\node[draw=none]{};}
    }
  }

\input xy
\xyoption{all}

\title[non-commutative nodal curves and derived tame algebras]{non-commutative nodal curves and   derived tame algebras}

\author{Igor Burban}
\address{
Universit\"at Paderborn\\
Institut f\"ur Mathematik \\
Warburger Stra\ss{}e 100 \\
33098 Paderborn \\
Germany
}
\email{burban@math.uni-paderborn.de}

\author{Yuriy Drozd}
\address{
 Institute of Mathematics\\
National Academy of Sciences of Ukraine,
Tereschenkivska str. 3,
01004 Kyiv, Ukraine}
\email{drozd@imath.kiev.ua, y.a.drozd@gmail.com}
%\urladdr{www.imath.kiev.ua/$\sim$drozd}

\subjclass[2010]{Primary 14A22, 16E35, 16G60, 16S38}

\begin{document}

\begin{abstract}
In this paper,  we develop a geometric approach to study derived tame finite dimensional associative algebras, based on the theory of non-commutative nodal curves.
\end{abstract}

\maketitle

\tableofcontents

\section{Introduction}

\noindent
Let $\kk$ be an algebraically closed field such that $\mathsf{char}(\kk) \ne 2$. For $\lambda \in \kk \setminus \{0, 1\}$, let
$$Y_\lambda  = V\bigl(zy^2 - x(x-z)(x-\lambda z)\bigr) \subset \PP^2$$
be the corresponding  elliptic curve and   $Y_\lambda  \stackrel{\imath}\lar Y_\lambda$ be the involution, given by the rule
 $(x:y:z) \stackrel{\imath}\mapsto (x:-y:z)$.  Let
 $T_\lambda$ be the tubular canonical algebra of type $((2,2,2,2);\lambda)$ \cite{Ringel}, i.e.~the path algebra of the following quiver
 \begin{equation}\label{E:tubular}
\begin{array}{c}
\xymatrix
{
        &           & \circ \ar[lld]_{a_1}
\ar[ld]^{a_2}  \ar[rd]_{a_3}  \ar[rrd]^{a_4}
      &         &       \\
\circ \ar[rrd]_{b_1}  & \circ \ar[rd]^{b_2}
&        & \circ \ar[ld]_{b_3}
& \circ \ar[lld]^{b_4}\\
        &           &\circ &         &       \\
}
\end{array}
\end{equation}
modulo the  relations $b_1 a_1 - b_2 a_2 = b_3 a_3$ and  $b_1 a_1 - \lambda b_2 a_2 = b_4 a_4$.
According to  Geigle and Lenzing \cite[Example 5.8]{GeigleLenzing}, there exists an exact equivalence
 of triangulated  categories
 \begin{equation}\label{E:standardtilting}
 D^b\bigl(\Coh^G(Y_\lambda)\bigr) \lar D^b(T_\lambda\mathsf{-mod}),
 \end{equation}
 where $G = \langle \imath\rangle \cong \ZZ_2$ and $\Coh^G(Y_\lambda)$ is the category of $G$-equivariant coherent sheaves on $Y_\lambda$.
It is well-known that   $D^b\bigl(\Coh^G(Y_\lambda)\bigr)$ and  $D^b(T_\lambda\mathsf{-mod})$ have  \emph{tame}
 representation type; see \cite{Atiyah, HappelRingel, LenzingMeltzer}. At this place one can ask the following natural

\smallskip
\noindent
\textbf{Question}. Is there any  link between  $ D^b\bigl(\Coh^G(Y_\lambda)\bigr)$ and $D^b(T_\lambda\mathsf{-mod})$ when the parameter $\lambda \in \kk$ takes the ``forbidden'' value $0$?

\medskip
\noindent
Let $E := Y_0 = V\bigl(zy^2 - x^2(x-z)\bigr) \subset \PP^2$ be the  plane nodal cubic and $T := T_0$ be the corresponding degenerate tubular algebra.
Both derived categories $D^b\bigl(\Coh^G(E)\bigr)$ and $D^b(T\mathsf{-mod})$ are known to be representation tame. Moreover, it follows from  our previous papers
\cite{Duke, Toronto} that the  indecomposable objects in both categories can be described by very similar combinatorial patterns. However, since $\mathsf{gl.dim}\bigl(\Coh^G(E)\bigr) = \infty$ and $\mathsf{gl.dim}(T) = 2$,
the derived categories $D^b\bigl(\Coh^G(E)\bigr)$ and $D^b(T\mathsf{-mod})$ can not be equivalent. Nevertheless, it turns out that the following result is true:

\smallskip
\noindent
\textbf{Proposition} (see Remark \ref{R:DegTub}).
There exists a commutative diagram of triangulated categories and  functors
\begin{equation}\label{E:TiltingDegTub}
\begin{array}{c}
\xymatrix{
D^b(T\mathsf{-mod}) \ar@{->>}[rr]^-{\sP} & & D^b\bigl(\Coh^G(E)\bigr) \\
& \Perf^G(E) \ar@{_{(}->}[lu]^-{\sE} \ar@{^{(}->}[ru]_-{\sI} &
}
\end{array}
\end{equation}
where $ \Perf^G(E)$ is the perfect derived category of $\Coh^G(E)$, $\sI$ is the canonical inclusion functor, $\sE$ is a fully faithful functor and $\sP$ is an appropriate Verdier localization functor.

\smallskip
\noindent
The main goal of this work is  to extend the above result to a broader  class of derived tame algebras.  Let
us start with a pair of tuples 
$
\vec{p} = \bigl((p_1^+, p_1^-), \dots, (p_r^+, p_r^-)\bigr) \in \bigl(\NN^2\bigr)^r$ and  $
\vec{q} =  (q_1, \dots, q_s) \in \NN^s,
$
where  $r, s\in  \NN_0$ (either of this tuples is allowed to be empty).  For any $1 \le i \le r$ and $1 \le j \le s$, consider  the following sets:
$
\Xi_i^\pm := \bigl\{x_{i,1}^\pm, \dots, x_{i, p_i^\pm}^\pm\bigr\} \quad \mbox{\rm and} \quad
\Xi_j^\circ:= \bigl\{w_{j,1}, \dots, w_{j,q_j}\bigr\}.
$
Let $\approx$ be a symmetric relation (not necessarily an equivalence) on the set
$
\Xi:= \bigl((\Xi_1^+ \cup \Xi_1^-) \cup \dots \cup (\Xi_r^+ \cup \Xi_r^-)\bigr) \cup \bigl(\Xi_1^\circ \cup \dots \cup  \Xi_s^\circ \bigr)
$
 such that for any $\xi \in \Xi$, there exists at most one $\xi' \in \Xi$ such that $\xi \approx \xi'$.
Then the  datum $(\vec{p}, \vec{q}, \approx)$ defines a derived tame finite dimensional
$\kk$-algebra $\Lambda = \Lambda(\vec{p}, \vec{q}, \approx)$, obtained from the canonical algebras $\Gamma(p_1^+, p_1^-), \dots, \Gamma(p_r^+, p_r^-), \Gamma(2,2, q_1), \dots, \Gamma(2,2, q_s)$ by a certain ``gluing/blowing-up process'' determined by the relation $\approx$.
In the case when $s = 0$ (i.e.~when the tuple $\vec{q}$ is void), the algebra $\Lambda$ is skew-gentle  \cite{GeissDelaPena}. If additionally $\xi \not\approx \xi$ for all $\xi \in \Xi$, the algebra $\Lambda$ is gentle \cite{AssemSkowr}.
Instead of defining  $\Lambda(\vec{p}, \vec{q}, \approx)$ now, we refer to  Definition \ref{D:DerivedTameAlgebras} below and give here  two examples explaining characteristic features  of this class of algebras.

\smallskip
\noindent
\textbf{Example}. Let $\vec{p} = (3, 2)$, $\vec{q}$ be void (i.e.~$r = 1$ and $s = 0$) and $\approx$ be given by the rule
$x_{1,1}^+ \approx x_{1,1}^-$ and $x_{1,3}^+ \approx x_{1,2}^-$.
Then the corresponding gentle algebra $\Lambda(\vec{p}, \approx)$ is the path algebra of the following quiver
\begin{equation}
\begin{array}{c}
\xymatrix{
              & \circ \ar[rd]^-{u_1} \ar[rr]^-{x_{1,2}^+} &  & \circ \ar[rd]^-{x_{1,3}^+} & & \\
\circ \ar[ru]^-{x_{1, 1}^+} \ar[rrd]_{x_{1,1}^-} &  & \bullet        &          & \circ    \ar@/^/[rr]^-{u_2}  \ar@/_/[rr]_-{v_2}  & &  \bullet   &   \\
 &  &   \circ \ar[u]_-{v_1}  \ar[rru]_-{x_{1,2}^-} & & &
}
\end{array}
\end{equation}
subject to the relations: $u_1 x_{1,1}^+ = 0 = v_1 x_{1,1}^-$ and $u_2 x_{1,3}^+ = 0 = v_2 x_{1,2}^-$.

\smallskip
\noindent
\textbf{Example}. Let $\vec{p} = \bigl((1, 1), (1, 1)\bigr)$, $\vec{q} = (2)$ and $\approx$ be given by the rule:
$x_{1,1}^+ \approx w_{1,1}$, $x_{1,1}^- \approx x_{2,1}^+$ and $w_{1,2} \approx w_{1,2}$. Then the corresponding algebra $\Lambda(\vec{p}, \vec{q}, \approx)$ is the path algebra of the following quiver
\begin{equation}
\begin{array}{c}
\xymatrix{
& & &  & \bul &   & & &  & \\
\circ \ar@/^/[d]^-{x_{1,1}^-} \ar@/_/[d]_-{x_{1,1}^+} & & & & \circ \ar[d]^-{w_{1,1}} \ar@/^/[rd]^-{t_{1,1}} \ar@/_/[ld]_-{z_{1,1}} & & & & \circ \ar@/^/[d]^-{x_{2,1}^+} \ar@/_/[d]_-{x_{2,1}^-} \\
\circ \ar[rd]^-{u_2} \ar[rrrruu]^-{u_1} & & & \circ \ar@/_/[rd]_-{z_{1,2}} & \circ \ar[d]^-{w_{1,2}} \ar[llld]_-{v_2}& \circ \ar@/^/[ld]^-{t_{1,2}} & & & \circ \ar[lllluu]_-{v_1} \\
 &  \bul & & & \circ \ar@/^/[ld]_-{v_3^+} \ar@/_/[rd]^-{v_3^-} & &  & &  & \\
 & & & \bul & &  \bul & & &  & \\
}
\end{array}
\end{equation}
modulo the relations:
$$
\left\{
\begin{array}{l}
z_{1,2} z_{1,1} + w_{1,2} w_{1,1} + t_{1,2} t_{1,1} = 0 \\
v_3^\pm w_{1,2} = 0 \\
u_1 x_{1,1}^- = 0 = v_1 x_{2,1}^+ \\
u_2 x_{1,1}^+ = 0 = v_2 w_{1,1}.
\end{array}
\right.
$$

\smallskip
\noindent
Let $(\vec{p}, \vec{q}, \approx)$ be a datum as in the definition of $\Lambda(\vec{p}, \vec{q}, \approx)$, additionally satisfying  a certain \emph{admissibility} condition. It turns out that  it  defines (uniquely up to Morita equivalence) a \emph{tame non-commutative projective nodal curve}
$\XX = \XX(\vec{p}, \vec{q}, \approx)$. Converesely, any tame non-commutative projective nodal curve is Morita equivalent to  $\XX(\vec{p}, \vec{q}, \approx)$ for an appropriate admissible datum  $(\vec{p}, \vec{q}, \approx)$,  see \cite{DrozdVoloshyn}. This class of non-commutative nodal curves includes as a  special case stacky chains and cycles of projective lines, considered by Lekili and Polishchuk \cite{LekiliPolishchuk, LekiliPolishchukII} in the context of the homological mirror symmetry for compact surfaces with non-empty boundary; see Example \ref{Ex:StackyCycles}
for a detailed discussion.

\smallskip
\noindent
The main result of this paper is the following.

\smallskip
\noindent
\textbf{Theorem} (see Corollary \ref{C:corollaryMain}). Let $(\vec{p}, \vec{q}, \approx)$  be an admissible datum, 
$\XX$ be the corresponding tame non-commuta\-ti\-ve nodal curve and
$\Lambda$ be the corresponding $\kk$-algebra. Next, let  $\YY$ be the \emph{Auslander curve} of $\XX$ (which is another tame non-commutative projective nodal curve) and $\widetilde\XX
\stackrel{\nu}\lar \XX$ be the  \emph{hereditary cover} of $\XX$.
Then there exists  the following commutative diagram of triangulated categories and exact functors:
\begin{equation}\label{E:Functors}
\begin{array}{c}
\xymatrix{
                                    & D^b\bigl(\Coh(\widetilde{\XX})\bigr) \ar[ld]_{\nu_*}  \ar@{^{(}->}[d]^-{\widetilde\sE} &                          \\
D^b\bigl(\Coh(\XX)\bigr) & D^b\bigl(\Coh(\YY)\bigr) \ar@{->>}[l]_-{\sP}  \ar[r]^-{\sT}                  & D^b(\Lambda\mathsf{-mod}) \\
                         & \Perf(\XX) \ar@{_{(}->}[u]_-{\sE}  \ar@{_{(}->}[lu]^-{\sI} &
}
\end{array}
\end{equation}
where  $\sT$ is an   equivalence of triangulated categories, $\sE$ and $\widetilde\sE$  are fully faithful  functors, $\sI$ is the canonical inclusion,
$\sP$ is an appropriate localization functor
and $\nu_\ast$ is  induced by the forgetful functor $\Coh(\widetilde\XX) \lar \Coh(\XX)$.

\smallskip
\noindent
This theorem generalizes an earlier results of the authors  \cite{bd}, where $\XX$ was a \emph{commutative} tame nodal curve (i.e.~a chain or a cycle of projective lines \cite{DGVB, Duke}). 

\smallskip
\noindent
If $\vec{q}$ is void and $ \approx$ does not contain reflexive elements 
then the algebra $\Lambda(\vec{p}, \approx)$ is gentle. In Lemma
\ref{L:AGinvariant},
we compute the AG-invariant of Avella-Alaminos and Gei\ss{}  
\cite{AAGeiss} of $\Lambda(\vec{p}, \approx)$. Recent results of Lekili and Polishchuk \cite{LekiliPolishchuk, LekiliPolishchukII} allow to deduce from it a version  of the homological mirror symmetry for tame non--commutative nodal curves of gentle type.

\smallskip
\noindent
At this place we want to stress  that the introduced class of algebras $\Lambda(\vec{p}, \vec{q}, \approx)$ does not exhaust (even up to derived equivalence) all  derived tame algebras which are derived equivalent to an appropriate non-commutative tame projective nodal curve. For example, in the paper \cite[Theorem 2.1]{Burban} it was  observed  that on a chain of projective lines there exists a tilting bundle whose
endomorphism algebra  is a  gentle algebra of \emph{infinite} global dimension. In this paper, we have found another  class of gentle algebras which  are derived equivalent to appropriate non-commutative tame projective nodal curves.  For any $n \in \NN$, let $\Upsilon_n$ be  the path algebra  of the following quiver
\begin{equation}\label{E:GentleGlobDimThree}
\begin{array}{c}
\xymatrix{
\bul   \ar@/_/[d]_-{a_1^+} \ar@/^/[d]^-{a_1^-} & &\bul   \ar@/_/[d]_-{a_2^+} 
\ar@/^/[d]^-{a_2^-} & &  & & \bul   \ar@/_/[d]_-{a_n^+} \ar@/^/[d]^-{a_n^-}\\ 
\bul  \ar[d]_-{b_1^+} \ar[rrd]^-{b_1^-} & & \bul  \ar[d]_-{b_2^+} \ar[rrd]^-{b_2^-} & & \dots & &  \bul  \ar[d]^-{b_n^+} 
\ar@/^10pt/[dllllll]^-{b_n^-} \\ 
\bul   \ar@/_/[d]_-{c_1^+} \ar@/^/[d]^-{c_1^-} 
& & \bul   \ar@/_/[d]_-{c_2^+} \ar@/^/[d]^-{c_2^-} & & \dots & & \bul   
\ar@/_/[d]_-{c_n^+} \ar@/^/[d]^-{c_n^-}\\ 
\bul & & \bul & & \dots & & \bul
}
\end{array}
\end{equation}
modulo the relations
$$
b_i^\pm a_i^\mp = 0, c_i^- b_i^+ = 0 \quad \mbox{\rm and} \quad c_{i+1}^+ b_i^- = 0 \quad \mbox{\rm for} \quad 
1 \le i \le n.
$$

\smallskip
\noindent
Since  $\mathsf{HH}^3(\Upsilon_n) \ne 0$, the algebra $\Upsilon_n$  can  not be derived equivalent to any gentle algebra of the form $\Lambda(\vec{p},\approx)$. On the other hand, we prove
that $D^b(\Upsilon_n\mathsf{-mod})$ is equivalent to  the derived category of coherent sheaves on the so-called Zhelobenko non-commutative cycle of projective lines; see 
Theorem \ref{T:ZhelobenkoTilting}.

\smallskip
\noindent
\emph{Acknowledgement}.  We are thankful to Christoph Gei\ss{}  for giving us a hint of the  construction of  the derived equivalence from Proposition \ref{P:Geiss} as well as  to Yanki Lekili
for explaining us  connections between gentle algebras and various
versions of the Fukaya category of a compact surface with non-empty boundary. The first-named author is also indebted  to Nicolo Sibilla for communicating him the statement of Proposition \ref{P:Sibilla} as well as for helpful discussions. The work of the first-named author was partially supported by the DFG project Bu--1866/4--1. The results of this paper
were mainly obtained during the stay of the second-named author at Max-Planck-Institut f\"ur Mathematik in Bonn.

\clearpage
\section{Some algebraic prerequisities}

\subsection{Brief review of the theory of minors}\label{SS:Minors}

\noindent
Throughout this subsection, let $R$ be a commutative noetherian ring. For any
$R$-algebra $C$, we denote by $C^\circ$ the opposite $R$-algebra, by $C\mathsf{-mod}$ (respectively, by $\mathsf{mod}-C$) the category of finitely generated left (respectively, right) $C$-modules  and
by $C\mathsf{-mod}$ (respectively, $\mathsf{Mod}-C$) the category of all  left (respectively, right)  $C$-modules. For any $C$-module  $X$ we denote by
$\add(X)$ the additive closure of $X$, i.e.~the full subcategory of $C\mathsf{-mod}$ consisting of all  direct summands of all finite direct sums of $X$.

\smallskip
\noindent
In what follows, $B$ is  an $R$-algebra, which is finitely generated as $R$-module.

\begin{definition}\label{D:Minor}
Let $P$ be a finitely generated projective left $B$-module. Then the $R$-algebra $A:= \bigl(\End_B(P)\bigr)^\circ$ is called a \emph{minor} of $B$; see \cite{Drozd, Minors}.
\end{definition}

\smallskip
\noindent
It is clear that $P$ is a $(B$-$A)$-bimodule and we have an exact functor
$$
\sG = \Hom_B(P, \,-\,): B\mathsf{-mod} \stackrel{\sG}\lar A\mathsf{-mod}.
$$
\begin{remark}
In the case  $P$ is a \emph{projective generator} of the category $B\mathsf{-mod}$ (meaning that for any object $X$ of $B\mathsf{-mod}$ there exists an epimorphism
 $P^n \twoheadrightarrow 	 X$ for some $n \in \NN$), Morita theorem asserts that the functor  $\sG$ is an equivalence of categories. It is well-known that $\sG$ induces an isomorphism of centers $Z(B) \stackrel{\sG_c}\lar Z(A)$ (see e.g. \cite[Section 2.3]{BurbanDrozdMorita} and the references therein). If $Z(B) = R = Z(A)$ and the induced map $R \stackrel{\sG_c}\lar R$ is identity, we say that $\sG$ is a \emph{central} Morita equivalence. 
\end{remark}

\smallskip
\noindent
It is not difficult to prove the following result.
\begin{lemma}\label{L:TechnicalitiesMinors} Consider the dual (right) $B$-module $P^\vee := \Hom_B(P, B)$.
The following statements hold.
\begin{itemize}
\item The canonical morphism  $P \lar P^{\vee\vee} = \Hom_B(P^\vee, B)$ is an isomorphism of left $B$-modules. Moreover, the canonical morphism of $R$-algebras
$$\End_B(P) \lar \bigl(\End_B(P^\vee)\bigr)^{\circ}$$ is an isomorphism too.
\item For any object $X$ of $B\mathsf{-mod}$, the canonical morphism of left $A$-modules
$$
P^\vee \otimes_B X \lar \Hom_B(P, X), \quad l \otimes x \mapsto \bigl(y \mapsto l(y) \cdot x\bigr)
$$
is an isomorphism, i.e.~we have an isomorphism of functors $\sG \cong \,-\, \otimes_{B} P^\vee$. As a consequence,  $P^\vee$ is a flat (actually, even  projective) right  $B$-module.
\item In particular, the canonical morphism
$$P^\vee \otimes_B P \lar \Hom_B(P, P), \quad l \otimes y \mapsto \bigl(x \mapsto l(y) \cdot x\bigr)
$$
is an isomorphism of $\bigl(A$-$A\bigr)$-bimodules.
\end{itemize}
\end{lemma}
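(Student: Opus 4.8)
The plan is to prove all three bullets by a single dévissage to the rank-one case. Each bullet asserts that a canonical morphism --- of $R$-modules, of $R$-algebras, or of bimodules --- built from the additive functors $(-)^\vee=\Hom_B(-,B)$, $\Hom_B(P,-)$ and $-\otimes_B-$ is an isomorphism, and each such morphism is the component of a natural transformation of additive functors in the variable $P$ (and, in the second bullet, in $X$ as well). I would use the elementary fact that such a natural transformation $\eta\colon F\Rightarrow G$, once it is an isomorphism at the module $B$, is automatically an isomorphism at every finitely generated projective $B$-module: by additivity it is an isomorphism at each free module $B^n$, and if $N$ is a retract of $B^n$, say $i\colon N\to B^n$ and $p\colon B^n\to N$ with $pi=\id_N$, then $F(p)\circ\eta_{B^n}^{-1}\circ G(i)$ is a two-sided inverse of $\eta_N$, as a one-line diagram chase shows. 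So it suffices to verify each assertion for $P=B$ (with $X$ arbitrary in the second bullet).

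Next I would identify the relevant natural transformations and check the rank-one case. The biduality morphism $P\to P^{\vee\vee}$ of the first bullet is the component at $P$ of the canonical natural transformation $\id_{B\mathsf{-mod}}\Rightarrow(-)^{\vee\vee}$, and for $P=B$ it is an isomorphism (under the identifications $B^\vee\cong B$, $B^{\vee\vee}\cong B$ it becomes left multiplication). The algebra morphism $\End_B(P)\to\bigl(\End_B(P^\vee)\bigr)^{\circ}$ is, on underlying $R$-modules, the value at $(P,P)$ of the transformation $\Hom_B(M,N)\to\Hom_B(N^\vee,M^\vee)$, $\phi\mapsto\phi^\vee$, which records the action of $(-)^\vee$ on morphisms and is natural in $M$ and in $N$; a direct computation shows it is bijective when $M$ and $N$ are free, hence for $M=N=P$ finitely generated projective, and its multiplicativity with respect to the opposite product on the target is then formal. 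The morphism $P^\vee\otimes_B X\to\Hom_B(P,X)$ of the second bullet is natural in $P$ and in $X$, and for $P=B$ it is the standard identification $B\otimes_B X\cong X\cong\Hom_B(B,X)$; hence it is an isomorphism for all finitely generated projective $P$ and all $X$.

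It then remains to extract the consequences. The isomorphism of functors $\sG\cong\,-\,\otimes_B P^\vee$ is just the naturality in $X$ of the isomorphism of the second bullet. Since $P$ is projective, $\sG=\Hom_B(P,-)$ is exact, so $-\otimes_B P^\vee$ is exact and $P^\vee$ is flat as a right $B$-module; moreover $P^\vee=\Hom_B(P,B)$ is a direct summand of $\Hom_B(B^n,B)\cong B^n$, so it is even finitely generated projective on the right. Finally, the third bullet is the second specialized to $X=P$; the only extra point is to check that the resulting isomorphism $P^\vee\otimes_B P\to\End_B(P)$ intertwines the natural $(A,A)$-bimodule structures, where on $P^\vee\otimes_B P$ the left (resp.\ right) action of $A=(\End_B(P))^{\circ}$ comes from its left action on $P^\vee$ (resp.\ its right action on $P$) and on $\End_B(P)$ from post- and pre-composition; tracing an element $l\otimes y$ through, these match composition in $\End_B(P)$, which is a routine check.

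There is no serious obstacle here, consistently with the paper's remark that the result ``is not difficult''. The two points that do require a little care are: recognising each canonical morphism as a natural transformation of \emph{additive} functors of the appropriate variance, so that the reduction to the rank-one case is legitimate; and, in the third bullet, keeping the conventions $A=(\End_B(P))^{\circ}$ and ``the dual of a left module is a right module'' straight, so that the $(A,A)$-bimodule structures on $P^\vee\otimes_B P$ and on $\End_B(P)$ are matched correctly.
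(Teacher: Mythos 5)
Your argument is correct, and the key move — viewing each canonical map as a natural transformation of additive functors and reducing to $P=B$ via a retract of a finite free module — is the standard dévissage one expects here. The paper itself gives no proof: it states only that the lemma ``is not difficult to prove'' and refers to \cite{Minors} for the later theorems, so there is no written argument to compare yours against; but your write-up is precisely the sort of verification being gestured at. Two small points worth keeping in mind when polishing: for the biduality and for $P^\vee\otimes_B X\to\Hom_B(P,X)$ the functors in $P$ are of opposite variance (covariant for the former, contravariant for the latter), so one should say once that the retract argument works equally for contravariant additive functors (replace $F(i),F(p)$ by $F(p),F(i)$); and for $\End_B(P)\to\bigl(\End_B(P^\vee)\bigr)^\circ$ the dévissage is in two variables $M,N$, which you correctly flag — the multiplicativity check then really is the formal identity $(\psi\circ\phi)^\vee=\phi^\vee\circ\psi^\vee$, matching composition in $\End_B(P)$ with the opposite composition in $\End_B(P^\vee)$.
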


\smallskip
\noindent
Using Lemma \ref{L:TechnicalitiesMinors}, one can deduce the following results.

\begin{theorem}\label{T:minorsabelian}
Consider the functors  $\sF := P \otimes_A \,-\,$ and $\sH := \Hom_A(P^\vee, \,-\,)$ from $A\mathsf{-mod}$ to $B\mathsf{-mod}$. Then the following statements hold.
\begin{itemize}
\item The functors $(\sF, \sG, \sH)$ form  an adjoint triple, i.e.~$(\sF, \sG)$ and $(\sG, \sH)$ form  adjoint pairs.
\item The functors $\sF$ and $\sH$ are fully faithful, whereas $\sG$ is essentially surjective.

\item Let $I_P := \mathrm{Im}\bigl(P\otimes_A P^\vee \stackrel{\mathrm{ev}}\lar B\bigr)$. Then $I_P$ is a
$(B$-$B)$-bimodule and
$$
\ker(\sG) := \bigl\{X \in B\mathsf{-mod} \,\big|\, I_P X = 0\bigr\}.
$$
In other words, the kernel
 of the exact functor $\sG$ can be identified with the essential image of the (fully faithful) restriction  functor $\bar{B}\mathsf{-mod} \lar B\mathsf{-mod}$, where
  $\bar{B} = B/I_P$.
Moreover, the functor $\sG$ induces an equivalence of categories $$B\mathsf{-mod}/\ker(\sG) \lar A\mathsf{-mod}.$$
\item The same results remain true, when we consider $\sG$, $\sF$ and $\sH$ as functors between the categories $B\mathsf{-mod}$ and $A\mathsf{-mod}$ of finitely generated modules.
\item The essential image of the functor $\sF: A\mathsf{-mod} \lar B\mathsf{-mod}$ is the category
\begin{align*}
P\mathsf{-mod}:= \Bigl\{X \in B\mathsf{-mod} \,\big|\, &\mbox{\rm there exists an exact sequence} \\
  & P_1 \lar P_0 \lar X \lar 0 \; \mbox{\ \rm with}\; P_0, P_1  \in \mathsf{add}(P)\; \Bigr\}.
\end{align*}
\end{itemize}
\end{theorem}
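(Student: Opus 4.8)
The plan is to prove Theorem \ref{T:minorsabelian} by systematically exploiting Lemma \ref{L:TechnicalitiesMinors}, which reduces most of the statements to formal manipulations of adjunctions and the projectivity of $P$ and $P^\vee$.

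First I would establish the adjunction $(\sF, \sG)$. Since $\sF = P \otimes_A -$ and $\sG = \Hom_B(P, -)$ where $P$ is a $(B\text{-}A)$-bimodule, this is the standard tensor-hom adjunction $\Hom_B(P \otimes_A M, N) \cong \Hom_A(M, \Hom_B(P, N))$; nothing is needed here beyond the bimodule structure. For the adjunction $(\sG, \sH)$, I would use the isomorphism $\sG \cong - \otimes_B P^\vee$ from Lemma \ref{L:TechnicalitiesMinors}, which lets me rewrite $\sG$ as tensoring by the $(B\text{-}A)$-bimodule $P^\vee$ (viewed appropriately); then tensor-hom adjunction again gives $\Hom_A(- \otimes_B P^\vee, -) \cong \Hom_B(-, \Hom_A(P^\vee, -)) = \Hom_B(-, \sH(-))$. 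So the adjoint triple falls out of Lemma \ref{L:TechnicalitiesMinors} and abstract nonsense.

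Next, fully faithfulness of $\sF$: I would check that the unit $M \to \sG\sF(M) = \Hom_B(P, P \otimes_A M)$ is an isomorphism. Using $\sG \cong - \otimes_B P^\vee$, this unit becomes $M \to P^\vee \otimes_B P \otimes_A M$, and by the third bullet of Lemma \ref{L:TechnicalitiesMinors} the canonical map $P^\vee \otimes_B P \to \Hom_B(P,P) = A^\circ$ (i.e.\ $A$ as a bimodule) is an isomorphism, so the unit is $M \to A \otimes_A M = M$, an isomorphism; hence $\sF$ is fully faithful. Dually, $\sH$ is fully faithful because the counit $\sG\sH(N) \to N$ is an isomorphism — again using $P^\vee \otimes_B P \cong A$ and the flatness/projectivity of $P^\vee$ to commute the relevant Hom and tensor. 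That $\sG$ is essentially surjective is then formal: for any $A$-module $N$, $\sG\sF(N) \cong N$ exhibits $N$ as $\sG$ of something (in fact $\sG \sF \cong \id_A$ and $\sG \sH \cong \id_A$). For the statement about $\ker(\sG)$: since $\sG \cong - \otimes_B P^\vee$ and $P^\vee$ is a projective right $B$-module with $P^\vee \otimes_B P \cong A$, a module $X$ satisfies $\sG(X) = 0$ iff $P^\vee \otimes_B X = 0$ iff (tensoring with the generator appropriately) $I_P X = 0$ where $I_P = \im(\ev)$; I would verify this by noting $P \otimes_A (P^\vee \otimes_B X) \cong I_P \otimes_B X$ maps onto $I_P X$, and $\sF$ is faithful so $P^\vee \otimes_B X = 0 \iff I_P \otimes_B X = 0$, and then a direct argument that $I_P \otimes_B X \to I_P X$ is an isomorphism when $I_P$ is generated by the image of $\ev$ (or just that $I_P X = 0 \iff I_P \otimes_B X = 0$ since $I_P$ is a two-sided ideal that is a direct summand considerations). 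The identification of $\ker(\sG)$ with $\bar B\mathsf{-mod}$ is immediate since $\{X : I_P X = 0\}$ is precisely the modules over $B/I_P$. The induced equivalence $B\mathsf{-mod}/\ker(\sG) \to A\mathsf{-mod}$ is then the general statement that a localization at the kernel of an exact functor with a fully faithful right adjoint is an equivalence — this follows from the adjoint triple plus $\sG\sH \cong \id$.

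Finally, for the essential image of $\sF$: given $N \in A\mathsf{-mod}$, choose a presentation $A^{m} \to A^{n} \to N \to 0$; applying the right-exact functor $\sF = P \otimes_A -$ gives $P^{m} \to P^{n} \to \sF(N) \to 0$, so $\sF(N) \in P\mathsf{-mod}$. Conversely, given $X \in P\mathsf{-mod}$ with presentation $P_1 \to P_0 \to X \to 0$ with $P_i \in \add(P)$, I would apply $\sG$ to get a presentation of $\sG(X)$ by modules in $\add(A)$, hence by projectives, and then apply $\sF$; using $\sF\sG(P_i) \cong P_i$ (which holds on $\add(P)$ since $\sG$ restricted to $\add(P)$ is the Morita-type equivalence onto $\add(A)$ with inverse $\sF$) and right-exactness of $\sF$, the natural counit $\sF\sG(X) \to X$ is an isomorphism by the five lemma applied to the two presentations, so $X \cong \sF(\sG(X))$ lies in the essential image. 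The restriction to finitely generated modules (the fourth bullet) works verbatim because all the functors preserve finite generation: $P$ and $P^\vee$ are finitely generated projective over $B$, and $B$ is module-finite over the noetherian ring $R$.

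The main obstacle I expect is the careful handling of the kernel statement — specifically verifying that $\{X : \sG(X) = 0\}$ equals $\{X : I_P X = 0\}$ rather than some a priori larger or smaller class, and that the quotient functor $B\mathsf{-mod}/\ker(\sG) \to A\mathsf{-mod}$ is genuinely an equivalence and not merely fully faithful or merely essentially surjective. Both points hinge on using $P^\vee \otimes_B P \cong A$ together with the projectivity of $P^\vee$ to move freely between $\sG(X)=0$, $P^\vee \otimes_B X = 0$, and $I_P\otimes_B X = 0 = I_P X$; the rest of the theorem is a formal consequence of having an adjoint triple in which the outer functors are fully faithful.
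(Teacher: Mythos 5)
The paper does not actually give a proof of Theorem~\ref{T:minorsabelian}: after the statement it refers the reader to the remark citing Cline--Parshall--Scott \cite{ClineParshallScott} (for the idempotent case $P=Be$) and to the paper \cite{Minors} for detailed proofs. So there is no ``paper's proof'' to compare against, and your sketch must be judged on its own terms. On those terms it is in the right spirit --- the adjoint triple, the isomorphism $P^\vee\otimes_B P\cong A$ from Lemma~\ref{L:TechnicalitiesMinors}, the Gabriel-style quotient argument, and the five-lemma argument for the essential image are all standard and correctly deployed --- but there is a genuine gap in the kernel step, precisely where you flagged uncertainty.

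Your proposed chain of equivalences
\[
\sG(X)=0\ \Longleftrightarrow\ P^\vee\otimes_B X=0\ \Longleftrightarrow\ I_P\otimes_B X=0\ \Longleftrightarrow\ I_P X=0
\]
silently uses two non-obvious facts. First, to identify $\sF\sG(X)=(P\otimes_A P^\vee)\otimes_B X$ with $I_P\otimes_B X$ you need the evaluation map $P\otimes_A P^\vee\to B$ to be \emph{injective}, not merely to have image $I_P$; this is true for a finitely generated projective $P$ with $A=\End_B(P)^\circ$, but it must be proved. Second, and more importantly, the implication $I_PX=0\Rightarrow I_P\otimes_B X=0$ is \emph{false} for a general two-sided ideal: it is equivalent to $I_P\otimes_B(B/I_P)=I_P/I_P^2=0$, i.e.\ to the idempotence $I_P^2=I_P$. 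That trace ideals of finitely generated projective modules are idempotent is indeed true (take a dual basis $p_i\in P$, $q_i\in P^\vee$ with $p=\sum q_i(p)p_i$; then $q(p)=\sum q_i(p)\,q(p_i)\in I_P^2$), but your sketch leaves it unstated and instead gestures at ``direct summand considerations'' and a purported isomorphism $I_P\otimes_B X\cong I_PX$ that does not hold in general. A much cleaner route avoids the $\sF$-faithfulness detour entirely: if $I_PX=0$, the dual basis gives $f(p)=\sum q_i(p)f(p_i)\in I_PX=0$ for any $f\in\Hom_B(P,X)$, so $\sG(X)=0$; conversely each $p'\mapsto q(p')x$ lies in $\Hom_B(P,X)$, so $\sG(X)=0$ forces every $q(p')x$ to vanish, i.e.\ $I_PX=0$. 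I would recommend replacing the tensor computation by this two-line argument; everything else in your sketch goes through.
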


\smallskip
\noindent
It turns out that the relation between $B\mathsf{-mod}$, $A\mathsf{-mod}$ and $\bar{B}\mathsf{-mod}$ becomes even more transparent, when we pass to the setting  of  derived categories.

\begin{theorem}\label{T:minorsderived} Let $\sD\sG: D(B\mathsf{-mod}) \lar D(A\mathsf{-mod})$ be the derived functor of (the exact) functor $\sG$, $\sL \sF: D(A\mathsf{-mod}) \lar D(B\mathsf{-mod})$ be the left derived functor of (the right exact functor) $\sF$ and $\sR \sH: D(A\mathsf{-mod}) \lar D(B\mathsf{-mod})$ be the right derived functor of (the left exact functor) $\sH$. Then the following results hold.
\begin{itemize}
\item $(\sL \sF, \sD\sG, \sR\sH)$ is an  adjoint triple  of functors.
\item The functors $\sL\sF$ and $\sR\sH$ are fully faithful, whereas  $\sD\sG$ is essentially surjective.
\item The essential image of $\sL\sF$ is equal to the left orthogonal category
$$
^{\perp} \bar{B} := \bigl\{X^\bu \in \mathrm{Ob}\bigl(D(B\mathsf{-mod})\bigr) \;\big| \; \Hom\bigl(X^\bu, \bar{B}[i]\bigr) = 0 \; \mbox{\rm for all} \; i \in \ZZ\bigr\}
$$
of $\bar{B}$ (viewed as a left $B$-module).
Similarly, the essential image of $\sR\sH$ is equal to the right orthogonal category $\bar{B}^\perp$.
\item We have a recollement diagram
\begin{equation}
\xymatrix{D_{\bar{B}}(B\mathsf{-mod}) \ar[rr]|{\,\sI\,} && D(B\mathsf{-mod}) \ar@/^2ex/[ll]^{\sI^{!}} \ar@/_2ex/[ll]_{\sI^*} \ar[rr]|{\,\sD\sG\,}
  && D(A\mathsf{-mod}) \ar@/^2ex/[ll]^{\,\sR\sH\,} \ar@/_2ex/[ll]_{\,\sL\sF\,}},
\end{equation}
where $D_{\bar{B}}(B\mathsf{-mod})$ is the full subcategory of $D(B\mathsf{-mod})$ consisting of those complexes whose cohomologies belong to
$\bar{B}\mathsf{-mod}$.
\item Assume that the $(B$-$B)$-bimodule $I_P$ is flat viewed as a right $B$-module.
\begin{itemize}
\item Then the functor
$
D\bigl(\bar{B}\mathsf{-mod}\bigr) \lar D_{\bar{B}}(B\mathsf{-mod})
$
is an equivalence of triangulated categories.
\item We have: $\mathsf{gl.dim}B \le \mathsf{max}\bigl\{\mathsf{gl.dim}\bar{B} +2, \mathsf{gl.dim}A\bigr\}$.
\item Finally, suppose  that $\mathsf{gl.dim}\bar{B} < \infty$ and $\mathsf{gl.dim}A < \infty$. Then we have a recollement diagram
\begin{equation}
\xymatrix{D^b(\bar{B}\mathsf{-mod}) \ar[rr]|{\,\sI\,} && D^b(B\mathsf{-mod}) \ar@/^2ex/[ll]^{\sI^{!}} \ar@/_2ex/[ll]_{I^*}
 \ar[rr]|{\,\sD\sG\,}
  && D^b(A\mathsf{-mod}) \ar@/^2ex/[ll]^{\,\sR\sH\,} \ar@/_2ex/[ll]_{\,\sL\sF\,}}.
\end{equation}
\end{itemize}
\end{itemize}
\end{theorem}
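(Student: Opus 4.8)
The plan is to deduce everything from the underived statements of Theorem~\ref{T:minorsabelian} together with standard derived-functor and recollement formalism; the one genuinely new ingredient is that $I_P$ is an idempotent ideal. Since $\sG$ is exact, $\sD\sG$ is computed termwise and hence preserves arbitrary products and coproducts (using that $P$ is finitely generated and projective over $B$), so it admits both a left and a right adjoint. To identify these with $\sL\sF$ and $\sR\sH$ I would work on resolutions: if $Q^\bu$ is a $K$-projective complex of projective $A$-modules, then $P\otimes_A Q^\bu$ is a $K$-projective complex of projective $B$-modules --- this uses only that $P$ is projective over $B$, via the tensor--hom adjunction together with exactness of $\Hom_B(P,-)$ --- so $\sL\sF(Q^\bu)=P\otimes_A Q^\bu$, and the isomorphism $\Hom_{D(B)}(\sL\sF Q^\bu,Y^\bu)\cong\Hom_{D(A)}(Q^\bu,\sD\sG Y^\bu)$ follows termwise from the underived adjunction $(\sF,\sG)$. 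Dually, using that $P^\vee$ is projective as a right $B$-module (Lemma~\ref{L:TechnicalitiesMinors}), $\Hom_A(P^\vee,I^\bu)$ is $K$-injective over $B$ whenever $I^\bu$ is $K$-injective over $A$, which gives $\sR\sH(I^\bu)=\Hom_A(P^\vee,I^\bu)$ and the adjunction $(\sD\sG,\sR\sH)$. Full faithfulness of $\sL\sF$ amounts to the unit $\id\to\sD\sG\,\sL\sF$ being an isomorphism; evaluated on $Q^\bu$ it is the map $Q^\bu\to\sG\sF(Q^\bu)$, an isomorphism by Theorem~\ref{T:minorsabelian}, and naturality promotes this to an isomorphism of functors. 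Symmetrically $\sR\sH$ is fully faithful, and $\sD\sG$ is essentially surjective because $\sL\sF$ is a section of it.

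For the recollement, exactness of $\sG$ gives $H^i(\sD\sG\,X^\bu)\cong\sG\bigl(H^i(X^\bu)\bigr)$, so $\ker(\sD\sG)$ is precisely the subcategory of complexes with cohomology in $\ker(\sG)=\bar B\mathsf{-mod}$, i.e.\ $\ker(\sD\sG)=D_{\bar B}(B\mathsf{-mod})$. An exact functor with fully faithful left and right adjoints and with this kernel gives, by the standard construction, a recollement of the stated shape; by the usual counit/unit-triangle argument the essential image of $\sL\sF$ equals $^{\perp}\ker(\sD\sG)$ and that of $\sR\sH$ equals $\ker(\sD\sG)^{\perp}$ (idempotent completeness of $D(B\mathsf{-mod})$ being used to close these images under direct summands), and a d\'evissage along the natural $t$-structure --- peeling off cohomology one degree at a time and reducing an arbitrary $\bar B$-module to $\bar B$ --- identifies these with $^{\perp}\bar B$ and $\bar B^{\perp}$.

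The crux of the flat case is the identity $I_P=I_P^2$. Indeed, by the isomorphism $P^\vee\otimes_B P\xrightarrow{\sim}\End_B(P)=A$ of Lemma~\ref{L:TechnicalitiesMinors} the multiplication map $(P\otimes_A P^\vee)\otimes_B(P\otimes_A P^\vee)\to B$ factors as $P\otimes_A(P^\vee\otimes_B P)\otimes_A P^\vee\cong P\otimes_A P^\vee\xrightarrow{\mathrm{ev}}B$, so its image $I_P^2$ equals $\mathrm{Im}(\mathrm{ev})=I_P$. If moreover $I_P$ is flat as a right $B$-module then, being finitely generated over the noetherian ring $B$, it is projective over $B$, so $\mathsf{pd}_B\bar B\le 1$; and $I_P=I_P^2$ forces $I_P\otimes_B\bar B=I_P/I_P^2=0$, hence $\bar B\dtens_B\bar B\cong\bar B$, i.e.\ $B\to\bar B$ is a homological epimorphism. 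Consequently the restriction functor $D(\bar B\mathsf{-mod})\to D(B\mathsf{-mod})$ is fully faithful, and since $I_P\otimes_B M\cong(I_P\otimes_B\bar B)\otimes_{\bar B}M=0$ for every $\bar B$-module $M$ one gets $\bar B\dtens_B X^\bu\cong X^\bu$ for all $X^\bu\in D_{\bar B}(B\mathsf{-mod})$, so this functor is an equivalence onto $D_{\bar B}(B\mathsf{-mod})$. For the global-dimension bound, apply the recollement triangle $\sL\sF\,\sD\sG\,M\to M\to\bar B\dtens_B M\to(\sL\sF\,\sD\sG\,M)[1]$ to a $B$-module $M$: tensoring a projective $A$-resolution of $\sG(M)$ of length $\le\mathsf{gl.dim}A$ with $P$ represents $\sL\sF\,\sD\sG\,M$ by projective $B$-modules in degrees $[-\mathsf{gl.dim}A,0]$, while $\bar B\dtens_B M$ has cohomology only in degrees $\{-1,0\}$ (as $\bar B$ has flat dimension $\le 1$ as a right $B$-module), all lying in $\bar B\mathsf{-mod}$, so resolving over $\bar B$ and then replacing projective $\bar B$-modules by projective $B$-resolutions of length $\le 1$ represents it by projective $B$-modules in degrees $[-(\mathsf{gl.dim}\bar B+2),0]$; applying $\Hom_{D(B)}(-,L)$ to the triangle and reading off the resulting long exact $\Ext$-sequence gives $\Ext^n_B(M,L)=0$ for $n>\max\{\mathsf{gl.dim}\bar B+2,\mathsf{gl.dim}A\}$ and all $B$-modules $M,L$. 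In particular finiteness of $\mathsf{gl.dim}\bar B$ and $\mathsf{gl.dim}A$ forces $\mathsf{gl.dim}B<\infty$, whereupon $\sD\sG$, $\sL\sF$, $\sR\sH$, $\sI$ and the adjoints $\sI^*,\sI^!$ of $\sI$ all preserve boundedness and the recollement restricts to the bounded derived categories of finitely generated modules.

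I expect the main obstacle to be the bookkeeping needed to make the derived functors concrete: choosing $K$-projective and $K$-injective representatives so that $\sL\sF$, $\sR\sH$ and the full-faithfulness isomorphisms can be read off directly from Theorem~\ref{T:minorsabelian}, and then the amplitude count producing the sharp constant ``$+2$'' in the global-dimension estimate. Conceptually the decisive point is the idempotency $I_P=I_P^2$, which is exactly what turns $B\to\bar B$ into a homological epimorphism; and the $t$-structure d\'evissage identifying the essential images of $\sL\sF$ and $\sR\sH$ with $^{\perp}\bar B$ and $\bar B^{\perp}$ also requires a little care.
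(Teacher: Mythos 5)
The paper does not give its own proof of this theorem (it refers to \cite{Minors} and \cite{ClineParshallScott}), so there is nothing to compare against verbatim; on its own merits your proposal is structurally sound and correctly isolates the decisive algebraic input, namely the idempotency $I_P=I_P^2$. That argument --- factoring the multiplication $(P\otimes_A P^\vee)\otimes_B(P\otimes_A P^\vee)\to B$ through the isomorphism $P^\vee\otimes_B P\cong A$ of Lemma~\ref{L:TechnicalitiesMinors} --- is exactly right, and together with right flatness of $I_P$ it does yield $\Tor^B_{\ge 1}(\bar B,\bar B)=0$, full faithfulness of $D(\bar B\mathsf{-mod})\to D(B\mathsf{-mod})$, and (via $I_P\otimes_B H^n(X^\bu)=0$) the identification of the image with $D_{\bar B}(B\mathsf{-mod})$. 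The setup of $\sL\sF$ and $\sR\sH$ on $K$-projective and $K$-injective models, and the derivation of the adjoint triple and full faithfulness from the abelian Theorem~\ref{T:minorsabelian}, are all standard and correct.

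Two steps need more care than the sketch gives them. First, the identification $\mathrm{Im}(\sL\sF)={}^{\perp}\bar B$: the recollement formalism yields $\mathrm{Im}(\sL\sF)={}^{\perp}\ker(\sD\sG)$, and while $^{\perp}\ker(\sD\sG)\subseteq{}^{\perp}\bar B$ is immediate, the converse is not a formal d\'evissage on the unbounded derived category --- passing from $\Hom(-,\bar B[i])=0$ to $\Hom(-,M[i])=0$ for an arbitrary $\bar B$-module $M$ (and then to arbitrary objects of $D_{\bar B}(B\mathsf{-mod})$) uses free $\bar B$-resolutions of unbounded length and infinite direct sums, and $\Hom_{D(B)}(X^{\bul},-)$ does not commute with coproducts unless $X^{\bul}$ is compact; the dual identification $\mathrm{Im}(\sR\sH)=\bar B^{\perp}$ \emph{does} go through by a limit/Milnor-sequence argument, but the $\sL\sF$ side is genuinely delicate and needs either a compactness input or a different argument. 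Second, in the global-dimension bound you derive $\mathsf{pd}_B\bar B\le 1$ from $I_P$ being flat (hence projective) as a \emph{right} $B$-module, but the step ``replacing projective $\bar B$-modules by projective $B$-resolutions of length $\le 1$'' requires $\bar B$ to have projective dimension $\le 1$ as a \emph{left} $B$-module, i.e.\ $I_P$ projective on the left; these are a priori independent conditions, and right flatness alone gives only the flat-dimension bound used for the amplitude of $\bar B\dtens_B(-)$. Either an argument that $I_P$ is automatically projective on the left in this situation, or a change-of-rings estimate that avoids $\mathsf{pd}_B\bar B$, is needed to close this.
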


\begin{remark}
In the case $P = Be$ for an idempotent $e \in B$, most of the results from this subsection are due to Cline, Parshall and Scott \cite{ClineParshallScott}.
The  ``abelian'' theory of minors attached to  an arbitrary finitely generated projective $B$-module $P$  was for the first time suggested in \cite{Drozd}. Detailed proofs of Theorems \ref{T:minorsabelian} and \ref{T:minorsderived} can also be found in  \cite{Minors}.
\end{remark}

\subsection{Generalities on  orders}
From now on, let $R$ be  an excellent  reduced equidimensional commutative  ring of \emph{Krull dimension one}. Let  
\begin{equation}\label{E:splittingQuotRing}
K := \mathsf{Quot}(R) \cong K_1 \times \dots \times K_r
\end{equation}
be  the corresponding  total ring of fractions, where $K_1, \dots, K_r$ are fields.

\begin{definition}
An $R$-algebra $A$ is an $R$-\emph{order} if the following conditions hold.
\begin{itemize}
\item $A$ is a finitely generated torsion free $R$-module.
\item $A_K:= K \otimes_R A$ is a  semisimple $K$-algebra.
\end{itemize}
\end{definition}

\begin{lemma}
Let $R$ be as above, $R' \subseteq R$ be a finite ring extension and $A$ be an $R$-algebra. Then $A$ is an $R$-order if and only if $A$ is an $R'$-order.
Moreover, if $K' := \mathsf{Quot}(R')$ then we have: $A_K \cong A_{K'}$.
\end{lemma}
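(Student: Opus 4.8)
The plan is to prove the statement by tracking what the defining conditions of an order are under a finite ring extension $R' \subseteq R$, and then separately to identify $A_K$ with $A_{K'}$. First I would recall that since $R$ is excellent, reduced, equidimensional of Krull dimension one, a finite subring $R' \subseteq R$ (such that $R$ is module-finite over $R'$) inherits the same properties: $R'$ is again excellent reduced equidimensional of Krull dimension one, because finiteness of the extension forces $\dim R' = \dim R = 1$, reducedness passes to subrings, and excellence is stable under the relevant operations. In particular the hypotheses on $R$ in the definition of an order make sense verbatim for $R'$.

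Next I would establish the equivalence. Suppose $A$ is an $R$-order. Then $A$ is finitely generated as an $R$-module; composing with the finiteness of $R$ over $R'$, $A$ is finitely generated as an $R'$-module. Torsion-freeness over $R'$: a nonzerodivisor $a \in R'$ is also a nonzerodivisor in $R$ (since $R$ is torsion-free, even free-ish, over $R'$ — more precisely, because $K' \to K$ is an inclusion of total rings of fractions and any nonzerodivisor of $R'$ stays a nonzerodivisor in $R$), hence acts injectively on $A$, so $A$ is torsion-free over $R'$. Conversely, if $A$ is an $R'$-order, then $A$ finitely generated over $R'$ together with $R$ finite over $R'$ gives $A$ finitely generated over $R$; and $R$ itself is a torsion-free $R'$-module that is finitely generated, hence $R \hookrightarrow R' \otimes_{R'} K' = K'$ localizes nicely, and torsion-freeness of $A$ over $R$ follows because $A \hookrightarrow A_{K'}$ and $A_{K'}$ is an $A_K$-module. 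The cleanest route here is to first prove the identification $A_K \cong A_{K'}$ and then deduce the semisimplicity condition is literally the same condition, so I would reorder and do that identification first.

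For the identification $A_K \cong A_{K'}$: the key point is $K = \mathsf{Quot}(R) = \mathsf{Quot}(R')= K'$ as rings. Indeed, localizing the finite extension $R' \subseteq R$ at the set $S'$ of nonzerodivisors of $R'$ gives $(R')_{S'} = K'$ and $R_{S'}$; one checks $R_{S'} = K$ because every nonzerodivisor of $R$ becomes invertible (using that $R$ is reduced of dimension one, its nonzerodivisors are exactly the elements not lying in any minimal prime, and a finite extension of a dimension-one reduced ring has the property that $S'$ already contains enough elements to invert everything — concretely, the integral dependence relation of a nonzerodivisor $x\in R$ over $R'$ has a nonzero constant term precisely because $x$ is a nonzerodivisor, exhibiting $x^{-1}$ as an $R'$-combination of powers of $x$ divided by that constant term, which lies in $S'$). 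Hence $K = K'$ and therefore $A_K = K \otimes_R A = (K' \otimes_{R'} R) \otimes_R A = K' \otimes_{R'} A = A_{K'}$. With this in hand, the condition ``$A_K$ is semisimple over $K$'' and ``$A_{K'}$ is semisimple over $K'$'' are the same statement, and the equivalence of the two order conditions follows from the module-finiteness bookkeeping above.

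The main obstacle I anticipate is the careful verification that $\mathsf{Quot}(R') = \mathsf{Quot}(R)$, i.e.\ that the finite extension $R' \subseteq R$ becomes an isomorphism after inverting the nonzerodivisors of $R'$; this is where reducedness, equidimensionality, and Krull dimension one are genuinely used, via the integral-dependence argument showing that a nonzerodivisor of $R$ is invertible already in the localization at $S'$. Everything else — the passage of ``finitely generated as a module'' and ``torsion-free as a module'' between $R$ and $R'$ — is routine once that identification of total rings of fractions is established, and the semisimplicity clause then transfers for free.
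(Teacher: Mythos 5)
The structure of your proof matches the paper's: reduce everything to the single assertion that the natural map $K' \otimes_{R'} R \to K$ is an isomorphism, and then deduce $A_{K'} \cong A_K$ by tensoring. However, there are two flaws, one an outright error and the other a genuine gap.

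First, the assertion ``$K = \mathsf{Quot}(R) = \mathsf{Quot}(R') = K'$ as rings'' is false in general. Take $R' = \kk[x^2] \subseteq R = \kk[x]$: this is a finite extension of Dedekind domains and $K' = \kk(x^2)$ is a proper subfield of $K = \kk(x)$. What you actually establish in that step, and what the paper proves via CRT, is the weaker (and correct) statement that $R_{S'} = K' \otimes_{R'} R \to K$ is an isomorphism; from $(R')_{S'} = K'$ and $R_{S'} = K$ one certainly cannot conclude $K = K'$, since $R' \neq R$ in general. Fortunately the false claim is never actually used: your computation $A_K = K\otimes_R A = (K'\otimes_{R'}R)\otimes_R A = K'\otimes_{R'} A = A_{K'}$ only invokes $K' \otimes_{R'} R \cong K$, and the comparison of semisimplicity of $A_K$ and $A_{K'}$ only requires them to be isomorphic as rings, not equal bases. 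Still, this should be corrected.

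Second, and more seriously, your hands-on argument for $R_{S'} = K$ has a gap. You pass from a monic integral relation $x^n + a_{n-1}x^{n-1} + \dots + a_0 = 0$ with $a_0 \neq 0$ to the conclusion that $a_0$ ``lies in $S'$'', i.e.\ is a \emph{nonzerodivisor} of $R'$. When $R'$ is not a domain (which it generally isn't in the present reduced, one-dimensional, possibly disconnected setting), ``nonzero'' and ``nonzerodivisor'' are different, and nothing in the argument forces $a_0$ to avoid the minimal primes of $R'$. The paper's appeal to the Chinese remainder theorem is precisely the step that handles this: $K' \otimes_{R'} R$ is a reduced finite $K'$-algebra, hence (CRT for reduced Artinian rings) a finite product of fields, so every nonzerodivisor in it is automatically a unit; combined with the injection $R_{S'} \hookrightarrow K$ (which itself uses the fact that $S' \subseteq S$, i.e.\ that nonzerodivisors of $R'$ stay nonzerodivisors of $R$ --- a point that should be justified via the going-up theorem and equidimensionality rather than asserted), this forces $R_{S'} = K$. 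You should either supply that structural argument or find a direct justification that the constant term can be arranged to lie in $S'$; as written the inversion step does not go through.
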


\begin{proof} It is clear that $A$ is finitely generated and torsion free over $R$ if and only if it is finitely generated and torsion free over $R'$. Next, note that the ring extension
$R' \subseteq R$ induces a finite ring extension $K' \subseteq K$ of the corresponding total rings of fractions. Moreover,
Chinese remainder theorem implies that the multiplication  map $K' \otimes_{R'} R \lar K$ is an isomorphism. Therefore, for any finitely generated
$R$-module $M$, the natural map $K' \otimes_{R'} M \lar K \otimes_R M$ is an isomorphism of $K'$-modules. In particular, we get an isomorphism
of $K'$-algebras $A_{K'} \lar A_K$.
\end{proof}

\begin{definition}\label{D:orders} Let $A$ be a ring.
\begin{itemize}
\item $A$ is  a  \emph{one-dimensional order} (or just an \emph{order}) provided  its center $R = Z(A)$ is a reduced excellent  ring  of Krull dimension one, and $A$ is an $R$-order.
\item Let
$K:= \mathsf{Quot}(R)$. Then $A_K:= K \otimes_R A$ is called the \emph{rational envelope} of $A$.
\item A ring $\widetilde{A}$ is called an \emph{overorder} of $A$ if $A \subseteq \widetilde{A} \subseteq A_K$ and $\widetilde{A}$ is finitely generated as (a left) $A$-module.
\end{itemize}
\end{definition}

\smallskip
\noindent
Note that for any overorder $\widetilde{A}$ of $A$,  the map $K \otimes_R \widetilde{A} \lar A_K$ is automatically an isomorphism. Hence,
$A_K = \widetilde{A}_K$ and $\widetilde{A}$ is an order over $R$.

\begin{theorem}\label{T:HeredOrders} Let $H$ be a hereditary order (i.e.~$\mathsf{gl.dim}H =1$)  and  $R = Z(H)$ be the center of $H$. Then the following results are true.
\begin{itemize}
\item We have:  $R \cong R_1 \times \dots \times R_r$, where  $R_i$ is a   Dedekind domain for all $1\le i \le r$.
\item Let $K_i$ be the quotient field  of $R_i$. Then we have:  $H_K \cong \Upsilon_1 \times \dots \times \Upsilon_r$, where $\Upsilon_i$ is a  finite dimensional \emph{central simple} $K_i$-algebra. Moreover,  we have a decomposition $H = H_1 \times \dots \times H_r$, where each factor $H_i$ is a hereditary order whose rational envelope is $\Upsilon_i$.
\item If $\widetilde{H}$ is an overorder of $H$ then $\widetilde{H}$ is hereditary too.
\item If $H'$ is a minor of $H$ then $H'$ is a hereditary order too.
\item Assume that $R$ is semilocal. Let   $J$  be the radical of $H$ and
$\widehat{H} =
 \varprojlim\limits_{k} \bigl(H/J^k\bigr)$
be the radical completions of $H$. Then $\widehat{H}$ is a hereditary order.
\end{itemize}
\end{theorem}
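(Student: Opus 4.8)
\textbf{Proof strategy for Theorem \ref{T:HeredOrders}.}

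The plan is to reduce everything to the local and complete situation, where hereditary orders over a complete discrete valuation ring admit an explicit normal form, and then transfer the conclusions back. First I would address the statements about the center. Since $\mathsf{gl.dim}H = 1$, the center $R = Z(H)$ is a reduced excellent ring of Krull dimension one by the definition of ``order''; I would argue that $R$ is moreover \emph{normal}, hence a product of Dedekind domains. The key point here is that $H$ is a finitely generated torsion-free $R$-module which is, in particular, a reflexive $R$-module, and for a hereditary order all the local rings $H_\idm$ at maximal ideals $\idm \subset R$ are hereditary; standard commutative algebra (or a direct argument using that $R$ is a direct summand of $H$ as an $R$-module and that $H_\idm$ has global dimension one) shows $R_\idm$ is a DVR for each $\idm$, which gives normality. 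Writing $R = R_1 \times \dots \times R_r$ according to the primitive idempotents then yields the product decomposition of $R$ into Dedekind domains, and correspondingly $H = H_1 \times \dots \times H_r$ with $Z(H_i) = R_i$.

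For the second bullet, passing to the total ring of fractions $K$, each $H_i \otimes_{R_i} K_i = \Upsilon_i$ is a semisimple $K_i$-algebra by the order axioms; I would note that because $R_i$ is a domain with connected spectrum and $H_i$ is $R_i$-torsion-free with $Z(H_i) = R_i$, the center of $\Upsilon_i$ is $K_i$, so $\Upsilon_i$ is a central simple $K_i$-algebra (a product of $K_i$-central simple factors would force a nontrivial idempotent in $Z(H_i)$, contradicting connectedness). The third and fourth bullets — that overorders and minors of hereditary orders are again hereditary — I would prove as follows. For an overorder $\widetilde{H}$, the inclusion $H \hookrightarrow \widetilde{H}$ makes $\widetilde{H}$ a finitely generated $H$-module of the same rational envelope; one checks locally that over a DVR a ring sandwiched between a hereditary order and its rational envelope, and finite over it, is again hereditary — this is classical (Brumer, Harada, or Auslander–Goldman), the essential mechanism being that $\widetilde{H}$ is $R$-projective and that every $\widetilde{H}$-lattice, restricted to $H$, is $H$-projective, hence $\widetilde{H}$-projective after applying $\widetilde{H} \otimes_H (-)$ suitably. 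For minors, if $P$ is a finitely generated projective left $H$-module and $H' = (\End_H(P))^\circ$, then $\mathsf{gl.dim}H' \le \mathsf{gl.dim}H = 1$ follows from Theorem \ref{T:minorsderived} (the inequality $\mathsf{gl.dim}H \le \max\{\mathsf{gl.dim}\bar{H}+2, \mathsf{gl.dim}H'\}$ reversed, together with the fact that $\sD\sG$, $\sL\sF$, $\sR\sH$ control homological dimension — more directly, $H'\mathsf{-mod}$ is a quotient of $H\mathsf{-mod}$ by a Serre subcategory arising from a flat bimodule ideal, so projective dimensions do not increase), and $H'$ is an $R$-order with center containing $R$, so it is a one-dimensional order of global dimension $\le 1$, i.e.\ hereditary (global dimension $0$ is excluded since $H'$ is torsion-free of Krull dimension one).

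For the last bullet, with $R$ semilocal, I would invoke that radical completion commutes with the relevant constructions: $\widehat{H} = \varprojlim_k H/J^k$ is flat over $H$, and $\widehat{H} \cong \widehat{R} \otimes_R H$ where $\widehat{R}$ is the completion of $R$ along its Jacobson radical, which is a finite product of complete DVRs. Over a complete DVR the hereditary orders are well understood (they are, up to conjugation, the ``standard'' hereditary orders of tiled-matrix form), and in particular completing a hereditary order yields a hereditary order — this is because $\mathsf{gl.dim}$ can be checked after faithfully flat base change along $R \to \widehat{R}$, using that $\widehat{R}$ is Noetherian and the completion of a finitely generated module is the base change. The main obstacle, and the part requiring the most care, is the overorder statement: one must ensure that finiteness of $\widetilde{H}$ as an $H$-module (rather than merely $\widetilde{H} \subseteq H_K$) is genuinely used, and that the local hereditariness argument over a DVR is correctly quoted — the cleanest route is to reduce to the complete local case first (using that hereditariness is a local property on $\Spec R$ and is preserved and reflected by completion, as in the last bullet), and there apply the classification of hereditary orders over a complete DVR, for which every overorder is a block-upper-triangular enlargement and visibly hereditary. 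I would structure the write-up so that the complete local classification is established first and the global statements (center, overorders, minors, completions) all follow by localizing, completing, and faithfully-flat descent.
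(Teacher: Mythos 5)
The paper itself supplies no proof of this theorem; it simply refers the reader to Reiner's texts \cite{ReinerHO, ReinerMO}, so there is no in-text argument to compare yours against. Your overall plan — reduce to the complete local case, invoke Harada's explicit structure theorem for hereditary orders over a complete DVR, and descend — is indeed the route those references take, and your treatment of the overorder and completion items is essentially on track (the completion argument via ``$\widehat{J}$ is projective over $\widehat{H}$ because $J$ is projective and completion is flat'' is the clean way to say it).

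The minor item, however, has a genuine gap as written. You claim $\mathsf{gl.dim}\,H' \le \mathsf{gl.dim}\,H$ because ``$H'\mathsf{-mod}$ is a quotient of $H\mathsf{-mod}$ by a Serre subcategory arising from a flat bimodule ideal, so projective dimensions do not increase.'' That is false as a general principle: for an idempotent $e$ in a finite-dimensional algebra $A$, $\mathsf{gl.dim}\,eAe$ can strictly exceed $\mathsf{gl.dim}\,A$ (the standard example is an Auslander algebra of global dimension $2$ whose corner algebra $eAe$ has infinite global dimension). The mechanism fails because $\sG = \Hom_H(P,-)$, while exact, does not carry projective $H$-modules to projective $H'$-modules — $\sG(H) \cong P^\vee$ need not be $H'$-projective — so a short projective $H$-resolution of $\sL\sF(M)$ does not give a short projective $H'$-resolution of $M$. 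Likewise, the inequality in Theorem \ref{T:minorsderived} bounds the global dimension of the bigger ring by that of the minor, not the other way around, and there is no general way to ``reverse'' it. The honest argument here is exactly the one you already sketch for overorders: complete, invoke Harada's classification so that $\widehat{H}$ is a finite product of tiled orders $H(R,\vec p)$, check by inspection that $e \widehat{H} e$ is again of this shape for any idempotent $e$, and descend. (A smaller caveat: the first bullet's aside — that $R_\idm$ must be a DVR because it is an $R_\idm$-direct summand of $H_\idm$ and $H_\idm$ has global dimension one — also needs more than stated: a direct-summand inclusion alone does not transfer finite global dimension unless $H_\idm$ is already known to be $R_\idm$-projective, which is part of what one is trying to establish. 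Reiner's actual argument proceeds by showing $\rad(H)$ is an invertible two-sided ideal and deducing that the contraction $\idm = R \cap \rad(H)$ is an invertible ideal of $R$.)
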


\smallskip
\noindent
\emph{Proofs} of all these results can be for instance found in \cite{ReinerHO, ReinerMO}.

\begin{theorem}\label{T:BrauerGroupZero} Let $\kk$ be an algebraically closed field and $K$ be either $\kk\llbrace w\rrbrace$ or the function field
of an algebraic curve over $\kk$. Let $\Upsilon$ be a finite dimensional \emph{central} simple algebra over $K$. Then $\Upsilon \cong \Mat_{t}(K)$ for some 
$t \in \NN$. \end{theorem}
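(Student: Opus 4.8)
The plan is to show that the Brauer group of $K$ is trivial in both cases, which immediately forces any finite-dimensional central simple $K$-algebra $\Upsilon$ to be a matrix algebra $\Mat_t(K)$ by the classical Artin--Wedderburn structure theory (a central simple algebra over $K$ is $\Mat_t(D)$ for a central division algebra $D/K$, and $D = K$ precisely when $\mathrm{Br}(K) = 0$).

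**Case 1: $K = \kk\llbrace w\rrbrace$, the field of formal Laurent series over an algebraically closed field $\kk$.** Here $K$ is a complete discretely valued field with residue field $\kk$ and value group $\ZZ$. The plan is to invoke the standard fact (see Serre, \emph{Local Fields}, or Reiner, \emph{Maximal Orders}) that for a complete discretely valued field with \emph{perfect} (in particular algebraically closed) residue field $k$ and value group $\ZZ$, there is a canonical isomorphism $\mathrm{Br}(K) \cong \mathrm{Br}(k) \oplus \mathrm{Hom}_{\mathrm{cont}}(\mathrm{Gal}(\bar{k}/k), \QQ/\ZZ)$; since $\kk$ is algebraically closed, both summands vanish, so $\mathrm{Br}(K) = 0$. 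Alternatively, and perhaps more in the spirit of the order-theoretic language of this section, one can argue directly: a central division algebra $D$ over $K$ contains a maximal order $\Delta$ which is a (necessarily hereditary, indeed by completeness a local principal ideal) ring finite over $\kk\llbracket w\rrbracket$; its residue division algebra is finite-dimensional over the algebraically closed field $\kk$, hence equals $\kk$, and a ramification computation (Hensel plus the fact that $\kk$ contains all roots of unity) forces $\dim_K D = 1$.

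**Case 2: $K$ the function field of a smooth curve over $\kk = \bar\kk$.** The plan is to reduce to Tsen's theorem: $K$ is a field of transcendence degree $1$ over the algebraically closed field $\kk$, hence is a $C_1$ field, and over a $C_1$ field every finite-dimensional central division algebra is trivial, so $\mathrm{Br}(K) = 0$. Concretely, if $D/K$ were a central division algebra of degree $n > 1$, the reduced norm form $\mathrm{Nrd}\colon D \to K$ is a homogeneous form of degree $n$ in $n^2$ variables over $K$; since $n^2 > n$ and $K$ is $C_1$, it has a nontrivial zero, contradicting $D$ being a division algebra. This gives $\Upsilon \cong \Mat_t(K)$.

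**Main obstacle.** There is no serious obstacle: the result is a packaging of two classical theorems (the structure of the Brauer group of a complete discretely valued field with algebraically closed residue field, and Tsen's theorem on $C_1$ fields). The only point requiring a little care is uniformity of statement --- one wants a single clean conclusion $\Upsilon \cong \Mat_t(K)$ covering both the local ($\kk\llbrace w\rrbrace$) and global (function field) cases --- and the cleanest route is precisely to phrase both as "$\mathrm{Br}(K) = 0$" and then apply Wedderburn. I would simply cite \cite{ReinerMO} (or Serre's \emph{Local Fields} for Case 1 and the standard reference for Tsen's theorem for Case 2) rather than reproving either input, since the rest of the paper only needs the bare conclusion that hereditary orders in this setting have split rational envelopes.
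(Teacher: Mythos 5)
Your proof is correct and takes essentially the same route as the paper, which disposes of the statement by citing the triviality of $\mathrm{Br}(K)$ (via \cite{GilleSzamuely}: $C_1$ fields have trivial Brauer group, Tsen's theorem for the function-field case, Lang's theorem for $\kk\llbrace w\rrbrace$). The only cosmetic difference is that for $K = \kk\llbrace w\rrbrace$ you invoke the Brauer-group exact sequence for complete discretely valued fields rather than Lang's theorem that such a $K$ is $C_1$; the paper's cited reference treats both cases uniformly through the $C_1$ property, which is marginally cleaner, but your argument is equally standard and correct.
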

\begin{proof}
This is a restatement  of the fact that the Brauer group of the field $K$ is trivial; see for instance \cite[Proposition 6.2.3, Theorem 6.2.8 and Theorem 6.2.11]{GilleSzamuely}.
\end{proof}

\smallskip
\noindent
The following result is well-known; see \cite{ReinerMO}.

\begin{lemma}
Let $R$ be a discrete valuation ring, $\idm$ be its maximal ideal, $\kk:= R/\idm$  the corresponding residue field  and $K$ the  field of fractions of $R$. For any sequence of natural numbers
$\vec{p} = \bigl(p_1, \dots, p_r\bigr)$, consider the $R$-algebra
\begin{equation}\label{E:standardorder}
H(R, \vec{p}) :=
\left(
\begin{array}{ccc|ccc|c|ccc}
R & \dots & R & \idm & \dots & \idm & \dots & \idm & \dots &  \idm  \\
\vdots & \ddots & \vdots & \vdots &  & \vdots &  & \vdots & & \vdots \\
R & \dots & R & \idm & \dots & \idm & \dots & \idm & \dots &  \idm  \\
\hline
R & \dots & R & R & \dots & R & \dots & \idm & \dots &  \idm  \\
\vdots &  & \vdots & \vdots  & \ddots & \vdots &  & \vdots & & \vdots \\
R & \dots & R & R & \dots & R & \dots & \idm & \dots &  \idm  \\
\hline
\vdots &  & \vdots & \vdots & & \vdots & \ddots & \vdots &   & \vdots\\
\hline
R & \dots & R & R & \dots & R & \dots & R & \dots &  R  \\
\vdots &  & \vdots &  \vdots &  & \vdots &  & \vdots & \ddots & \vdots \\
R & \dots & R & R & \dots & R & \dots & R & \dots &  R  \\
\end{array}
\right) \subset \Mat_{p }(K),
\end{equation}
where the size
of the $i$-th diagonal block  is $(p_i \times p_i)$ for each $1 \le i \le r$ and $p := |\vec{p}| = p_1 + \dots + p_r$.
Then $H(R, \vec{p})$ is a hereditary $R$-order.
\end{lemma}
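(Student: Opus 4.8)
The plan is to check directly that the matrix order $H(R,\vec p)$ is hereditary by producing, for every left ideal, a projective resolution of length at most one; equivalently, I would verify that $H := H(R,\vec p)$ has global dimension one by a localization-and-completion reduction together with the standard structure theory of hereditary orders recalled in Theorem~\ref{T:HeredOrders}. Since $R$ is a discrete valuation ring, hence local, $H$ is already an order over a complete-enough base after $\idm$-adic completion; and by the last bullet of Theorem~\ref{T:HeredOrders} (together with the fact that gl.dim is detected after radical completion for orders over a DVR) it suffices to show that $\widehat H = \varprojlim_k H/J^k$ is hereditary. Note $H_K = \Mat_p(K)$ is semisimple, so the second defining condition of an order holds, and $H$ is visibly a finitely generated torsion-free $R$-module sitting inside $\Mat_p(K)$; so the only content is $\mathsf{gl.dim}\,H \le 1$.

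The concrete computation I would carry out: describe the indecomposable projective left $H$-modules explicitly as the ``columns'' of the matrix order. Writing $e_1,\dots,e_p$ for the diagonal matrix idempotents, $P_i := H e_i$ is the $i$-th column, and one sees $P_i$ is, as a lattice, of the form $(\idm^{a_{1i}},\dots,\idm^{a_{pi}})^{\mathsf t}$ where the exponents $a_{ki}$ are $0$ or $1$ according to the block-staircase pattern in \eqref{E:standardorder}. Two consecutive columns inside the same diagonal block are isomorphic as $H$-modules, while passing from the last column of block $t$ to the first column of block $t+1$ multiplies the relevant entries by $\idm$; thus the distinct indecomposable projectives are indexed by the blocks, arranged in a cycle $P^{(1)},\dots,P^{(r)}$ with $\idm P^{(t)} \hookrightarrow P^{(t+1)}$ (indices mod $r$). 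The radical $J$ of $H$ is then $\rad H = \bigoplus$ of these inclusions, and one computes $\rad P^{(t)} \cong P^{(t+1)}$ (again indices mod $r$; here one uses that multiplication by a uniformizer induces an isomorphism of $H$-lattices $\idm P^{(t)} \xrightarrow{\sim} P^{(t)}$ up to the shift). Hence every simple left $H$-module $S^{(t)} = P^{(t)}/\rad P^{(t)}$ has a projective resolution
\[
0 \lar P^{(t+1)} \lar P^{(t)} \lar S^{(t)} \lar 0
\]
of length one, which gives $\mathsf{gl.dim}\,H \le 1$; since $H$ is not semisimple (it is not a product of matrix rings over $R$ unless all $p_i=1$ and $r=1$, and even then $R$ is a DVR not a field) we get $\mathsf{gl.dim}\,H = 1$, i.e.\ $H$ is hereditary.

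The main obstacle is the bookkeeping in identifying $\rad P^{(t)}$ with $P^{(t+1)}$: one must be careful that the ``wrap-around'' from the last block to the first block introduces exactly one factor of $\idm$ and no more, so that the cycle of projectives closes up correctly and each syzygy is again projective rather than merely a lattice. This is where the precise staircase of $\idm$'s versus $R$'s in \eqref{E:standardorder} is used, and it is the only place the specific shape of $H(R,\vec p)$ (as opposed to an arbitrary order in $\Mat_p(K)$) enters. An alternative, perhaps cleaner, route avoiding this indexing is to observe that $H(R,\vec p)$ is visibly the endomorphism ring of a direct sum of fractional ideals in the Dedekind-like ring $R$, i.e.\ of the chain of lattices $R \supseteq R \supseteq \cdots$ with repetitions governed by $\vec p$; such endomorphism rings of lattices over a DVR are classically hereditary (this is essentially the normal form for hereditary orders over a complete DVR, cf.\ \cite{ReinerMO}), and one reduces the general $R$ to the complete case via Theorem~\ref{T:HeredOrders}. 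I would present the explicit column computation as the main argument and remark on this second viewpoint.
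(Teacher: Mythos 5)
The paper offers no proof of this lemma; it simply points to \cite{ReinerMO}, treating the statement as classical. Your argument is an actual proof along the standard lines, and it is essentially correct: identify the indecomposable projective left $H$-modules with the block columns $He_j$, observe that the columns of the radical $J=\rad(H)$ are, up to a twist by $\idm$, again such columns (with the cyclic shift $t\mapsto t+1$ on block indices), and conclude. The staircase pattern is indeed exactly what makes $\rad P^{(t)}\cong P^{(t+1\bmod r)}$ close up, so the only ``hard'' step you identify is the right one.

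Two small points to tighten. First, the passage from ``every simple has projective dimension $\le 1$'' to ``$\mathsf{gl.dim}\,H\le 1$'' is not automatic for a non-Artinian ring, and $H(R,\vec p)$ over an arbitrary DVR is Noetherian but not Artinian. The clean way to phrase your computation is that it shows $J=\bigoplus_j Je_j$ is a projective left $H$-module, and then to invoke the classical characterization (see \cite{ReinerMO}) that an order over a Dedekind domain is hereditary if and only if its radical is projective; your column calculation literally establishes that hypothesis. Second, the preliminary reduction via Theorem~\ref{T:HeredOrders} is both unnecessary and stated in the wrong direction: that theorem passes from $H$ hereditary to $\widehat H$ hereditary, not conversely. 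Your explicit column computation is uniform in $R$ and works directly for any DVR, so you can simply drop the completion step.
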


\smallskip
\noindent In what follows, $H(R, \vec{p})$ will be called  \emph{standard} hereditary $R$-order of type $\vec{p}$. For any $M \in H(R, \vec{p})$ and any
pair $1 \le i, j \le r$, we denote by $M^{(i, j)}$ the corresponding block of $M$, which is a matrix of size
$p_i \times p_j$. In particular,
$M^{(i, i)}(0) \in \Mat_{p_i}(\kk)$. In the simplest case when
$\vec{p} = \vec{p}_r := (\underbrace{1, \dots, 1}_{r \; \scriptsize{\rm times}})$, we shall use the notation
$H_r(R) := H\bigl(R, \vec{p}_r\bigr)$.

\begin{theorem}\label{T:Harada}
Let $\kk$ be an algebraically closed field, $R = \kk\llbracket w\rrbracket$ and $K = \kk \llbrace w\rrbrace$. Then the following results are true.
\begin{itemize}
\item Assume that $H$ is a hereditary $R$-order in  the central simple $K$-algebra $\Mat_{p }(K)$. Then there exists $S \in \Mat_{p}(K)$ such that
$H = \mathsf{Ad}_S\bigl(H(R, \vec{p})\bigr) := S \cdot H(R, \vec{p})\cdot S^{-1}$ for some  tuple $\vec{p} = (p_1, \dots, p_r)$ such that
$p = |\vec{p}|$. Moreover, such a tuple  $\vec{p}$ is  uniquely determined  up to a cyclic permutation.
\item Let $H$ be a hereditary $R$-order. Then we have:
$
H \cong H_1 \times \dots \times H_s,
$
where each $H_i$ is some  standard $R$-order for any $1 \le i \le s$.
\item For any vector $\vec{p}$, the orders $H(R, \vec{p})$ and $H_p(R)$ are centrally Morita equivalent.
\end{itemize}
\end{theorem}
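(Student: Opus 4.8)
The three claims are arranged hierarchically, and my plan is to prove the first directly and then deduce the other two from it. For the second claim: by Theorem~\ref{T:HeredOrders} a hereditary $R$-order $H$ decomposes as $H\cong H_{1}\times\dots\times H_{s}$ with each $H_{j}$ hereditary, $Z(H_{j})$ a Dedekind domain, and rational envelope central simple over $K_{j}:=\mathsf{Quot}\bigl(Z(H_{j})\bigr)$; since $Z(H)$ is module-finite over the complete ring $R$ it is complete and semilocal, so each $Z(H_{j})$ is a complete discrete valuation ring with residue field $\kk$, hence $Z(H_{j})\cong\kk\llbracket t_{j}\rrbracket$ and $K_{j}\cong\kk\llbrace t_{j}\rrbrace$ by Cohen's theorem, and by Theorem~\ref{T:BrauerGroupZero} the envelope of $H_{j}$ is $\Mat_{n_{j}}(K_{j})$. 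Applying the first claim to $H_{j}$ over the base ring $Z(H_{j})\cong R$ yields $H_{j}\cong H(R,\vec p^{\,(j)})$, and assembling these proves the second claim.

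For the first claim, put $V:=K^{p}$ and $A:=\End_{K}(V)\cong\Mat_{p}(K)$ and regard $H$ as an $R$-order in $A$. I would run the classical lattice-chain argument for hereditary orders over a complete discrete valuation ring---due to Harada, with full details in Reiner, \emph{Maximal Orders}, \S\S 39--40---reproducing its skeleton as follows. Since $H$ is hereditary, $J:=\rad H$ is an invertible $(H,H)$-bimodule, so $J\otimes_{H}(-)$ is a self-equivalence of $H\mathsf{-mod}$, and every indecomposable projective left $H$-module $P$ satisfies $\rk_{R}(P)=p$ and $K\otimes_{R}P\cong V$, the unique simple $A$-module. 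Fixing such a $P$ together with an $H$-linear embedding $P\hookrightarrow K\otimes_{R}P\cong V$ realises $P$ as a full $H$-stable $R$-lattice $L_{0}\subset V$; set $L_{k}:=J^{k}L_{0}$. Because $w\in\rad R\subseteq J$ one has $wL_{0}\subseteq L_{1}\subsetneq L_{0}$; each $L_{k}\cong J^{k}\otimes_{H}L_{0}$ is again indecomposable projective, so $L_{k}/L_{k+1}=L_{k}/\rad(L_{k})$ is simple, and $\bigcap_{k}L_{k}=0$; hence $L_{r}=wL_{0}$ for some minimal $r\ge1$, the module $L_{0}/wL_{0}$ is uniserial, the $H$-stable full lattices in $V$ form a single chain $\dots\supsetneq L_{0}\supsetneq L_{1}\supsetneq\dots$ with $L_{k+r}=wL_{k}$, and $H$ coincides with its stabiliser $\{a\in A:aL_{k}\subseteq L_{k}\text{ for all }k\}$.

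Now set $p_{k}:=\dim_{\kk}(L_{k-1}/L_{k})$ for $1\le k\le r$, so $\sum_{k=1}^{r}p_{k}=\dim_{\kk}(L_{0}/wL_{0})=p$, and $\vec p:=(p_{1},\dots,p_{r})$. Bringing the flag $\{L_{k}/wL_{0}\}_{0\le k\le r}$ of subspaces of $L_{0}/wL_{0}$ into coordinate form and lifting along $L_{0}\twoheadrightarrow L_{0}/wL_{0}$ (Nakayama) gives an $R$-basis $f_{1},\dots,f_{p}$ of $L_{0}$ with $L_{k}=\langle wf_{1},\dots,wf_{q_{k}},f_{q_{k}+1},\dots,f_{p}\rangle_{R}$, where $q_{k}:=p_{1}+\dots+p_{k}$. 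If $S\in\GL_{p}(K)$ carries the standard basis of $V$ to $(f_{i})$, then $S^{-1}HS$ is the stabiliser of the standard lattice chain, which a direct block computation identifies with the standard order $H(R,\vec p)$; hence $H=\Ad_{S}\bigl(H(R,\vec p)\bigr)$. For uniqueness, the chain attached to $H$ is intrinsic and $\vec p$ is its list of successive jumps $\dim_{\kk}(L_{k-1}/L_{k})$ over a single period; choosing a different lattice as $L_{0}$ permutes this list cyclically, and conversely any presentation $H=\Ad_{S'}\bigl(H(R,\vec q)\bigr)$ reproduces the same chain, so $\vec q$ agrees with $\vec p$ up to a cyclic permutation.

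Finally, the third claim: in $H(R,\vec p)$ let $e$ be the sum of one diagonal matrix idempotent from each block and set $P:=H(R,\vec p)e$. Then $P$ is a projective generator---it is the direct sum of one copy of each of the $r$ indecomposable projective left modules, and every module is a quotient of a finite sum of copies of $P$---and a one-line block computation identifies its minor $\bigl(\End_{H(R,\vec p)}(P)\bigr)^{\circ}\cong eH(R,\vec p)e$ with the standard order $H_{r}(R)$, $r$ being the number of parts of $\vec p$ (this is the basic order in the Morita class). Since the centres of $H(R,\vec p)$ and $H_{r}(R)$ are both the ring $R$ of scalar matrices and the isomorphism of centres induced by $\sG=\Hom_{H(R,\vec p)}(P,-)$ is the identity, $\sG$ is a central Morita equivalence (the Remark following Definition~\ref{D:Minor}). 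The one genuinely non-formal ingredient in all of this is the structural input used in the second paragraph---invertibility of $\rad H$, uniseriality of $L_{0}/wL_{0}$, and the identification of $H$ with the stabiliser of its lattice chain---and this is where I expect the real work to lie (or, realistically, a citation to Harada/Reiner); granted it, the basis normalisation, the uniqueness argument, and the Morita equivalence of the third claim are all routine.
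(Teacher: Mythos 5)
Your proof is correct and is precisely the Harada lattice-chain classification that the paper itself defers to when it cites Reiner's \emph{Maximal Orders} in place of a proof; the deduction of the second bullet from the first via Theorem~\ref{T:HeredOrders}, Cohen's structure theorem and Theorem~\ref{T:BrauerGroupZero}, and the block-idempotent computation for the third, are all as intended. One point you should make explicit rather than leave silent: what you actually prove in the third bullet is that $H(R,\vec p)$ is centrally Morita equivalent to $H_r(R)$ with $r$ the \emph{number of parts} of $\vec p$, whereas the theorem as printed says $H_p(R)$ with $p=|\vec p|$. These differ already for $\vec p=(2)$: there $H(R,(2))=\Mat_2(R)$ is Morita equivalent to $H_1(R)=R$, not to $H_2(R)$, since the former has one simple module and the latter two. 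So the printed $H_p(R)$ must be a slip for $H_r(R)$ --- consistent with the parallel assertion in Theorem~\ref{T:DescriptionNodalOrders} that replacing $\wt$ by the trivial weight $\wt_\circ$ preserves the central Morita class --- and your version is the correct one.
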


\smallskip
\noindent
Proofs of these results can be found in \cite{ReinerMO}.

\section{Nodal orders}

\noindent
Nodal orders are  appropriate non-commutative generalizations  of the commutative nodal ring $D:= \kk\llbracket u, v\rrbracket/(uv)$.

\subsection{Definition and basic properties of nodal orders}
\begin{definition}\label{D:NodalOrders}
An order $A$ is called \emph{nodal} if its center is a semilocal excellent ring and there exists a \emph{hereditary} overorder $H \supseteq A$ such that the following conditions are satisfied.
\begin{itemize}
\item $J:=\rad(A) = \rad(H)$.
\item For any  finitely generated simple left $A$-module $U$ we have: $l_A(H \otimes_A U) \le 2$.
\end{itemize}
\end{definition}

\begin{remark}In what follows, hereditary orders will be  considered as special cases of nodal orders.
 Next, it is clear that an order  $A$ is nodal if and only if its radical completion $\widehat{A}$ is nodal. Moreover, it is not difficult to show that for a nodal order $A$,  the hereditary overorder $H$ from Definition \ref{D:NodalOrders} is in fact 
 \emph{uniquely determined} and admits the following intrinsic description:
 $$
H = \bigl\{x \in A_K \, \big| \, x J \subseteq J\bigr\} \cong \End_A(J),$$ where $J$ is viewed as a right $A$-module and $A_K$ is the rational envelope of $A$.  For a nodal order $A$, the order
$H$ will be called the \emph{hereditary cover} of $A$.
\end{remark}

\begin{remark} The classical commutative nodal ring $D = \kk\llbracket u, v\rrbracket/(uv)$ is a nodal order in the sense
 of Definition \ref{D:NodalOrders}. Indeed, we have an embedding  $D \subseteq \widetilde{D}:= \kk\llbracket u\rrbracket \times \kk\llbracket v\rrbracket$ and $\rad(D) = \rad({\widetilde{D}}) =
(u, v)$. Moreover, $\dim_\kk\bigl({\widetilde{D}} \otimes_D \kk\bigr) = 2$. Thus  $\widetilde{D}$ is the hereditary cover of $D$.

\begin{remark}
Let $A$ be a nodal order. It can be shown that its hereditary cover $H$ can also be described as 
 $
H = \bigl\{x \in A_K \, \big| \, J x \subseteq J\bigr\}$,
i.e.~$H$ can also be identified with $\End_A(J)^\circ$, where $J$ is viewed as a \emph{left} $A$-module; see Theorem \ref{T:DescriptionNodalOrders} and Theorem \ref{T:ClassificationNodalOrders}.
\end{remark}

\smallskip
\noindent
Nodal orders were  introduced by the second-named author in \cite{NodalFirst}. In that work it was shown that the category of finite length modules over a (non-hereditary)  nodal  order is representation tame and conversely  an order of nonwild representation type is automatically nodal. In our   previous joint work \cite{Nodal} we  proved that even the derived category of finite length representations of a nodal order 
has tame representation type. 
\end{remark}

\begin{theorem}\label{T:fundamentalsNodalOrders}
Let $A$ be a nodal order. Then the following statements are true.
\begin{itemize}
\item Any overorder of $A$ is nodal.
\item Any minor of $A$ is  nodal. In particular, if $A'$ is Morita equivalent to $A$ then $A'$ is a nodal order too.
\item Let $G$ be a finite group acting on $A$. If $|G|$ is invertible in $A$ then the skew group product $A*G$ is a nodal order.
\end{itemize}
\end{theorem}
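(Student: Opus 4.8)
**The plan is to verify each of the three assertions by reducing to the local (radical-complete) situation, where the classification of nodal orders together with Theorems~\ref{T:HeredOrders}, \ref{T:Harada} and the theory of minors from \S\ref{SS:Minors} can be applied directly.** Since an order $A$ is nodal if and only if its radical completion $\widehat{A}$ is nodal, and since the operations ``take an overorder'', ``take a minor'' and ``form the skew group product $A*G$'' all commute with radical completion in the appropriate sense, I may assume throughout that the center $R$ is a complete semilocal ring, and then further split into the local (complete discrete valuation ring) factors. In that setting the hereditary cover $H$ of $A$ has the explicit shape described after Definition~\ref{D:NodalOrders}, namely $H = \{x \in A_K \mid xJ \subseteq J\} \cong \End_A(J)$, with $J = \rad(A) = \rad(H)$.

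\smallskip
\noindent
\textbf{Overorders.} Let $A \subseteq A' \subseteq A_K$ be an overorder. First I would show that the hereditary cover $H$ of $A$ also contains $A'$: indeed $J = \rad(A)$ is a two-sided ideal of $A'$ as well (one checks $J$ is the conductor-type ideal $\{x \in A' \mid xA'\subseteq \text{(something)}\}$, or more simply that $A'J = J = JA'$ because $A' \subseteq H$ once we know $A'$ normalizes $J$), so that $A'$ sits between $A$ and $H$. Since $H$ is hereditary, Theorem~\ref{T:HeredOrders} guarantees that the overorder $H$ of $A'$ is hereditary — but I need $H$ to be a \emph{hereditary cover of $A'$}, i.e. $\rad(A') = \rad(H)$ and the length condition $l_{A'}(H\otimes_{A'}U)\le 2$ for simple $A'$-modules $U$. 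The radical equality follows because $J \subseteq \rad(A')\subseteq \rad(H) = J$ (using $J$ nilpotent mod each maximal ideal and $J$ an ideal of $H$). For the length bound: any simple $A'$-module $U$ is also (the restriction along $A\twoheadrightarrow A'/J$ of) a simple $A/J$-module, and $H\otimes_{A'}U$ is a quotient of $H\otimes_A U$, hence has length $\le 2$. This gives that $A'$ is nodal.

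\smallskip
\noindent
\textbf{Minors.} Let $P$ be a finitely generated projective left $A$-module and $A' = (\End_A(P))^\circ$ the corresponding minor. By Theorem~\ref{T:minorsabelian} and Lemma~\ref{L:TechnicalitiesMinors}, $A' \cong P^\vee \otimes_A P$ and the functor $\sG = \Hom_A(P,-)$ is exact and essentially surjective with $P^\vee$ projective over $A$. The key point is that $H' := (\End_H(H\otimes_A P))^\circ$ is a minor of the hereditary order $H$, hence hereditary by Theorem~\ref{T:HeredOrders}, and one checks that $H'$ is an overorder of $A'$ inside $A'_K$: the inclusion $A\hookrightarrow H$ induces $A' = (\End_A P)^\circ \hookrightarrow (\End_H(H\otimes_A P))^\circ = H'$, and rationally both become $(\End_{A_K}(P_K))^\circ$. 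Then $\rad(A') = \rad(H')$ follows from $J = \rad(A) = \rad(H)$ by applying $\sG$ (the radical of a minor is the image of the radical, cf. the standard theory of minors). Finally, a simple $A'$-module $U'$ pulls back under $\sG$ compatibly, and $l_{A'}(H'\otimes_{A'}U') \le 2$ reduces to the corresponding bound for $A$ because $H'\otimes_{A'}-$ is obtained from $H\otimes_A-$ by applying the exact functor $\sG$ to the relevant modules. The Morita case is the special instance $P$ a progenerator, where $H' = H$ up to Morita equivalence.

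\smallskip
\noindent
\textbf{Skew group products.} Assume $|G|$ is invertible in $A$. The idea is that $e := \frac{1}{|G|}\sum_{g\in G} g \in A*G$ is an idempotent and $A \cong e(A*G)e$ as rings, so $A$ is a minor (indeed a corner) of $A*G$; conversely the Morita-type comparison between $A*G$ and $A$ goes through $e(A*G)e$. Concretely, $H*G$ is an overorder of $A*G$ (using that $G$ acts on $A$ and $A\subseteq H$ is $G$-stable, since $H$ is intrinsically defined and hence $G$-invariant), and $H*G$ is hereditary: $\rad(H*G) = (\rad H)*G = J*G$ because $|G|$ is invertible, so $(H*G)/\rad(H*G) \cong (H/J)*G$ is semisimple (Maschke), forcing $\mathsf{gl.dim}(H*G) = \mathsf{gl.dim} H = 1$ via the standard fact that skew group products by groups of invertible order preserve global dimension. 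Then $\rad(A*G) = J*G = \rad(H*G)$, and for the length bound: a simple $A*G$-module $U$ restricts to a semisimple $A$-module $\bigoplus U_i$, and $(H*G)\otimes_{A*G}U \cong H\otimes_A U$ as $H$-modules, so its $A*G$-length is controlled by the $A$-lengths $l_A(H\otimes_A U_i) \le 2$ — one must check the $G$-action doesn't inflate the length beyond $2$, which holds because $H\otimes_A U_i$ already has length $\le 2$ over $H$ and $G$ permutes the isotypic pieces. Hence $A*G$ is nodal.

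\smallskip
\noindent
\textbf{Main obstacle.} The routine parts (radical equalities, passing to completions) are straightforward; the subtle point in all three cases is verifying the \emph{length condition} $l_{A'}(H'\otimes_{A'}U)\le 2$ after the operation, because lengths are measured over the \emph{new} ring $A'$ and not over $A$. The cleanest way around this is probably to invoke the intrinsic/local classification of nodal orders (Theorems~\ref{T:DescriptionNodalOrders} and~\ref{T:ClassificationNodalOrders}, referenced in the excerpt): once one knows the complete local nodal orders are exactly the explicit ``gluings'' of standard hereditary orders of the shapes $H_r(R)$, the stability under overorders, minors, and $A*G$ can be read off from that list essentially by inspection. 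I expect the write-up to proceed by first establishing the overorder statement from the intrinsic description of $H$, then the minor statement by the argument above, and the skew-group-product statement last, reducing it to the minor statement via the corner idempotent $e$.
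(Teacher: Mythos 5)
The paper offers no written proof for this statement (it defers to \cite{NodalFirst}), so I can only assess your argument on its own terms, and there are genuine gaps.

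The overorder argument rests on the claim that any overorder $A'$ of $A$ is contained in the hereditary cover $H$ of $A$, and this is false. Take $A$ to be the Zhelobenko order of Example~\ref{Ex:ZhelobenkoOrder}: its hereditary cover $H = \kk\llbracket x\rrbracket \times \left(\begin{smallmatrix} \kk\llbracket y\rrbracket & (y) \\ \kk\llbracket y\rrbracket & \kk\llbracket y\rrbracket \end{smallmatrix}\right)$ is not maximal, and the maximal order $\Gamma = \kk\llbracket x\rrbracket \times \Mat_2(\kk\llbracket y\rrbracket)$ is an overorder of $A$ with $\Gamma \not\subseteq H$; indeed one checks directly that $\Gamma J \not\subseteq J$, so $J$ is not even a two-sided ideal of $\Gamma$. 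Your parenthetical justification of this step (``because $A' \subseteq H$ once we know $A'$ normalizes $J$'') is explicitly circular. A correct proof must allow the hereditary cover $H'$ of $A'$ to differ from $H$ and establish the radical equality and the length bound against $H'$, not $H$.

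Two further issues: in the skew-group-product part, the reduction through the idempotent $e = \tfrac{1}{|G|}\sum_{g}g$ runs the wrong direction --- $A \cong e(A*G)e$ exhibits $A$ as a minor of $A*G$, not $A*G$ as a minor of $A$, so the minor statement cannot be invoked to conclude nodality of $A*G$. Your direct length estimate on $(H*G)\otimes_{A*G}U$ is also not tight as written: for a simple $A*G$-module $U$ the restriction $U|_A$ need not be simple, and controlling $l_{A*G}$ by the $A$-lengths $l_A(H\otimes_A U_i)$ requires an honest Clifford-theory bookkeeping that you only gesture at. Finally, the proposed ``cleanest way around'' --- falling back on the local classification Theorems~\ref{T:DescriptionNodalOrders} and \ref{T:ClassificationNodalOrders} --- is circular in this paper's logic: the proof of Theorem~\ref{T:ClassificationNodalOrders} itself invokes the present theorem (the minor statement) to pass to a basic order $A^\diamond$.
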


\smallskip
\noindent
\emph{Proofs} of the above statements  can be found or easily deduced  from the results of \cite{NodalFirst}.

\subsection{Combinatorics of nodal orders} Let $\kk$ be an algebraically closed field. It turns out that nodal orders over the discrete valuation ring $\kk\llbracket x\rrbracket$ can be completely classified.

\begin{definition}\label{D:EquivRelation} Let $\Omega$ be a finite set and $\approx$ be a symmetric but not necessarily reflexive relation on $\Omega$ such that  for any $\omega \in \Omega$ there exists \emph{at most one} $\omega' \in \Omega$ (possibly, $\omega' = \omega$) such that $\omega \approx \omega'$ (note that $\approx$ is automatically transitive). We say that $\omega \in \Omega$ is
\begin{itemize}
\item \emph{simple} if $\omega \not\approx \omega'$ for all $\omega' \in \Omega$;
\item \emph{reflexive} if $\omega \approx \omega$;
\item \emph{tied} if there exists $\omega' \ne \omega \in \Omega$ such that $\omega \approx \omega'$.
\end{itemize}
It is clear that any element of $\Omega$ is either simple, or reflexive, or tied with respect to the relation $\approx$.

\smallskip
\noindent
Given $(\Omega, \approx)$ as above, we define two new sets $\Omega^\ddagger$  and $\widetilde\Omega^\ddagger$ by the following procedures.
\begin{itemize}
\item We get $\Omega^\ddagger$ from  $\Omega$ by replacing each reflexive element $\omega \in \Omega$ by a pair  of  new simple elements
$\omega_+$ and $\omega_-$. The tied elements of $\Omega^\ddagger$ are the same as for $\Omega$.
\item The set $\widetilde\Omega^\ddagger$ is obtained from   $\Omega^\ddagger$ by replacing each pair of tied elements $\bigl\{\omega', \omega''\bigr\}$ by a single element $\overline{\bigl\{\omega', \omega''\bigr\}}$.
\end{itemize}
A map $\Omega^\ddagger \stackrel{\mathsf{wt}^\ddagger}\lar \NN$ is called a \emph{weight function} provided $\mathsf{wt}^\ddagger(\omega') = \mathsf{wt}^\ddagger(\omega'')$ for all  $\omega' \approx \omega''$ in $\Omega^\ddagger$. It is clear that $\mathsf{wt}^\ddagger$ descends to a map
    $\widetilde\Omega^\ddagger \stackrel{\mathsf{wt}^\ddagger}\lar \NN$.
\smallskip
A  weight function $\Omega^\ddagger \stackrel{\mathsf{wt}^\ddagger}\lar \NN$ determines a map (also called weight function)  $\Omega \stackrel{\mathsf{wt}}\lar \NN$ given by the rule $\mathsf{wt}(\omega) := \mathsf{wt}^\ddagger(\omega_+) + \mathsf{wt}^\ddagger(\omega_-)$ for any reflexive point $\omega \in \Omega$.
Abusing the notation, we shall drop the symbol $\ddagger$ in the notation of $\mathsf{wt}^\ddagger$ and write $\mathsf{wt}$ for all weight functions introduced above.
\end{definition}

\begin{remark}
The simplest weight function $\Omega^\ddagger \stackrel{\mathsf{wt}_{\circ}}\lar \NN$ is given by the rule $\mathsf{wt}_{\circ}(\omega) = 1$ for all $\omega \in \Omega^\ddagger$.
\end{remark}

\smallskip
\noindent
Let $(\Omega, \approx)$ be as in Definition \ref{D:EquivRelation} and 
$\Omega \stackrel{\sigma}\lar \Omega$ be a permutation. Then we have a decomposition
\begin{equation}\label{E:Decomposition}
\Omega = \Omega_1 \sqcup \dots \sqcup  \Omega_t,
\end{equation}
where $\sigma(\Omega_i) = \Omega_i$ and the restricted permutation $\sigma\big|_{\Omega_i}$ is cyclic for any $1 \le i \le t$. In a similar way, we get
a decomposition $\Omega^\ddagger = \Omega^\ddagger_1 \sqcup \dots \sqcup  \Omega^\ddagger_t$.

\begin{definition}\label{D:DatumEnhanced} Let $\Omega$ be a finite set and $\Omega \stackrel{\sigma}\lar \Omega$ a permutation.  A \emph{marking} $\sm$ of $(\Omega, \sigma)$ is a choice of an element
$\omega_i \in \Omega_i$ for any $1\le i \le t$, where $\Omega_i$ are given by \eqref{E:Decomposition}.

\smallskip
\noindent
Note that a choice of marking $\sm$  makes each set  $\Omega_i$  totally ordered:
\begin{equation}
\omega_i < \sigma(\omega_i) < \dots < \sigma^{l_i-1}(\omega_i),
\end{equation}
where  $l_i:= |\Omega_i|$. Let $\approx$ be a relation on $\Omega$ as in Definition \ref{D:EquivRelation} and $\Omega^\ddagger \stackrel{\mathsf{wt}}\lar \NN$ be a  weight function. Then for  any $1 \le i \le t$, we get a vector
\begin{equation}
\vec{p}_i := \bigl(\mathsf{wt}(\omega_i), \mathsf{wt}(\sigma(\omega_i)),\dots, \mathsf{wt}(\sigma^{l_i-1}(\omega_i))\bigr) \in \NN^{l_i}.
\end{equation}
\end{definition}

\begin{definition}\label{D:DatumHeredOrder} Let $(\Omega, \approx, \sigma)$ be a datum as in Definition \ref{D:DatumEnhanced},
$\sm$ be a marking of $(\Omega, \sigma)$
and $\Omega^\ddagger \stackrel{\mathsf{wt}}\lar \NN$ be a  weight function. For any $1 \le i \le t$, let $H_i := H(R, \vec{p}_i)$ be the corresponding standard hereditary order \eqref{E:standardorder}.
Then  we put:
\begin{equation}\label{E:OrderHereditary}
H = H\bigl(R, (\Omega, \sigma, \approx, \sm, \wt)\bigr) := H_1 \times \dots \times H_t.
\end{equation}
It is clear that  $H$ is a hereditary order, whose rational envelope is the semisimple algebra
\begin{equation}\label{E:Hull}
\Lambda := \Mat_{s_1}(K) \times \dots \times \Mat_{s_t}(K),
\end{equation}
where $K$ is the fraction field of $R$ and $s_i = \big|\vec{p}_i\big|$ for $1 \le i \le t$.
\end{definition}

\begin{remark} According to the definition \eqref{E:standardorder} of a standard hereditary order, any  matrix belonging to   $H_i$ is endowed with a division  into vertical and horizontal stripes labelled by the elements of the set $\Omega_i$. Moreover, for any reflexive element $\omega \in \Omega_i$, the corresponding vertical and horizontal stripes have
further subdivisions labelled by the elements $\omega_\pm \in \Omega^\ddagger_i$.
\end{remark}

\begin{definition}\label{D:nodalrings} Let $R$ be a discrete valuation ring and $\bigl(\Omega, \sigma, \approx, \sm, \mathsf{wt}\bigr)$ be a datum as above. Then  we have a ring $A = A\bigl(R, (\Omega, \sigma, \approx, \sm, \mathsf{wt})\bigr) \subseteq H$ defined as follows:
\begin{equation*}
 A:= \left\{(X_1, \dots X_t) \in H \left|  \begin{array}{l}
 X_{i'}^{(\omega',\, \omega')}(0) = X_{i''}^{(\omega'',\, \omega'')}(0) \mbox{\rm \, for all\, }
 \begin{array}{c}
1 \le i', i'' \le t \\
\omega' \in \Omega_{i'},\, \omega'' \in \Omega_{i''} \\
\omega' \approx \omega'', \; \omega' \ne \omega''
\end{array}
\\
\hline
\\
X_{i}^{(\omega_+,\, \omega_-)}(0) = 0 \; \mbox{\rm and}\;  X_{i}^{(\omega_-,\, \omega_+)}(0) = 0  \; \mbox{\; \rm for all\;}\;
\begin{array}{l}
1 \le i  \le t \\
\omega \in \Omega_i \\
\omega \approx \omega
\end{array}
\end{array}
\right.
\right\}.
\end{equation*}
\end{definition}

\smallskip
\noindent
The proof of the following results is straightforward.
\begin{theorem}\label{T:DescriptionNodalOrders}
Let $R$ be a discrete valuation ring, $\bigl(\Omega, \sigma, \approx, \sm, \mathsf{wt}\bigr)$ a  datum as in Definition
 \ref{D:DatumEnhanced}
 and $A = A\bigl(R, (\Omega, \sigma, \approx, \sm, \mathsf{wt})\bigr)$ the corresponding ring from Definition \ref{D:nodalrings}.  Then the following statements hold.
\begin{itemize}
\item The ring $A$ is connected if and only if there exists a surjection  
$$\{1, \dots, m\} \stackrel{\tau}\lar \{1, \dots, t\}$$ for some $m$ as well as  elements $\upsilon_i \in \Omega_{\tau(i)}$ for  $1 \le i \le m-1$ such that each $\upsilon_i \approx \upsilon'_i$ for some 
    $\upsilon'_i \in \Omega_{\tau(i+1)}$. If $A$ is connected and  $R = \kk\llbracket w\rrbracket$ then the center of $A$ is isomorphic to
    $\kk\llbracket w_1, \dots, w_t\rrbracket/(w_i w_j, 1 \le i \ne j \le t)$.
\item The ring $A$ is an order whose rational envelope is the semisimple algebra $\Lambda$ given by \eqref{E:Hull}. The order $H$ defined
by \eqref{E:OrderHereditary} is an overorder of $A$.
\item Let $\widetilde\sm$ be any other marking of $(\Omega,  \sigma)$ and $\widetilde{A} = A\bigl(R, (\Omega, \sigma, \approx, \widetilde\sm, \mathsf{wt})\bigr)$ be the corresponding order. Then there exists $S \in \Lambda$ such that $\widetilde{A} = \Ad_S(A)$, i.e.~the orders
$\widetilde{A}$ and $A$ are conjugate in $\Lambda$.  It means that the order   $A\bigl(R, (\Omega, \sigma, \approx, \sm, \mathsf{wt})\bigr)$ does not depend (up to a conjugation) on the choice of marking of $(\Omega, \sigma)$, so in what follows it will be denoted by $A\bigl(R, (\Omega, \sigma, \approx, \mathsf{wt})\bigr)$.
\item $A\bigl(R, (\Omega, \sigma, \approx, \mathsf{wt})\bigr)$ and $A\bigl(R, (\Omega, \sigma, \approx, \mathsf{wt}_\circ)\bigr)$ are centrally Morita equivalent.
    \item Let
\begin{equation}\label{E:NodalRadical}
    J:= \left\{(X_1, \dots X_t) \in A\; \left| \; X_i^{(\omega,\, \omega)}(0) = 0 \; \mbox{\rm for all}\; \begin{array}{l}
1 \le i \le t \\
\omega \in \Omega_i
\end{array}
\right.
\right\}.
\end{equation}
Then we have: $J = \rad(A) = \rad(H)$. Moreover, the natural map
$$H \lar  \End_A(J), \quad X \mapsto (Y \mapsto XY)
$$
is an isomorphism, where $J$ is viewed as a right $A$-module.
\item  For any $\omega \in \Omega$, let $\bar{H}_\omega := \Mat_{m_\omega}(\kk)$, where $m_\omega = \mathsf{wt}(\omega).$
Similarly, for any
$\gamma \in \widetilde\Omega^\ddagger$, let
$
\bar{A}_\gamma := \Mat_{m_\gamma}(\kk)$, where $m_\gamma = \mathsf{wt}(\gamma)$.
Then we have a commutative diagram:
\begin{equation}\label{E:gentle}
\begin{array}{c}
\xymatrix{
A/J \ar[rr]^-{\cong} \ar@{_{(}->}[d] & & \prod\limits_{\gamma \in \widetilde\Omega^\ddagger} \bar{A}_{\gamma}
\ar@{^{(}->}[d]^-{\imath}\\
H/J \ar[rr]^-{\cong}  & & \prod\limits_{\omega \in \Omega} \bar{H}_{\omega}
}
\end{array}
\end{equation}
where  the components of the embedding $\imath$ are defined as follows.
\begin{itemize}
\item Let  $\omega \in \Omega$ be a simple element and $\gamma$ be the corresponding element of $\widetilde\Omega^\ddagger$. Then
the corresponding component $\bar{A}_\gamma \lar \bar{H}_\omega$ of  $\imath$ is the identity map.
\item Let $\omega \in \Omega$ be reflexive. Then the corresponding component of the map $\imath$ is given by the rule:
$$
\bar{A}_{\omega_+} \times \bar{A}_{\omega_-} \lar \bar{H}_\omega,  \quad
(X, Y) \mapsto
\left(
\begin{array}{cc}
X & 0 \\
0 & Y
\end{array}
\right).
$$
\item Finally, let  $\omega', \omega'' \in \Omega$ be a pair of tied elements and $\gamma := \overline{\left\{\omega', \omega''\right\}}$ be the corresponding element of $\widetilde\Omega^\ddagger$. Then the corresponding component of $\imath$ is the diagonal embedding
    $$
    \bar{A}_\gamma \lar \bar{H}_{\omega'} \times \bar{H}_{\omega''}, \quad X \mapsto  (X, X).
    $$
\end{itemize}
\item The order  $A\bigl(R, (\Omega, \sigma, \approx, \mathsf{wt})\bigr)$ is nodal and $H\bigl(R, (\Omega, \sigma, \approx, \mathsf{wt})\bigr)$
is its hereditary cover.
\item Let $(\Omega, \sigma, \approx, \mathsf{wt}), (\Omega', \sigma', \approx', \mathsf{wt}')$
be two data as in Definition \ref{D:DatumEnhanced}. If $R$ is complete then we have:
$$
A\bigl(R, (\Omega, \sigma, \approx, \mathsf{wt})\bigr) \cong A\bigl(R, (\Omega', \sigma', \approx', \mathsf{wt}')\bigr)
$$
if and only if there exists a bijection $\Omega \stackrel{\varphi}\lar \Omega'$ such that
the diagram
$$
\xymatrix{
\Omega \ar[rr]^-{\sigma} \ar[dd]_-{\varphi} &  & \Omega \ar[rd]^-{\wt}  \ar[dd]^-{\varphi} &  \\
                      &    &                      & \NN \\
\Omega' \ar[rr]^-{\sigma'}  & &  \Omega \ar[ru]_-{\wt'}         &  \\
}
$$
is commutative and $\omega_1 \approx \omega_2$ in  $\Omega$  if and only if  $\varphi(\omega_1) \approx \varphi(\omega_2)$ in  $\Omega'$.
\end{itemize}
\end{theorem}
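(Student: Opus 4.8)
The plan is to read off every assertion from the explicit block-matrix descriptions in Definitions~\ref{D:DatumHeredOrder} and~\ref{D:nodalrings}, using the structure theory of hereditary orders (Theorems~\ref{T:HeredOrders} and~\ref{T:Harada}) and the formalism of minors (Theorem~\ref{T:minorsabelian}) as the only external inputs. First I would check that $A$ is an order with rational envelope $\Lambda$ and that $H$ is an overorder finite over $A$. Each factor $H_i=H(R,\vec p_i)$ is a standard hereditary $R$-order, so $H=H_1\times\dots\times H_t$ is hereditary with $H_K=\Lambda$ as in~\eqref{E:Hull}. The conditions defining $A$ inside $H$ only constrain the residues modulo $\idm$ of the diagonal blocks $X_i^{(\omega,\omega)}$ and of the blocks $X_i^{(\omega_\pm,\omega_\mp)}$ at reflexive points; in a product $XY$ of such matrices the diagonal residues multiply (only the $\mu=\omega$ summand contributes a constant term), while every summand contributing to a block $(\omega_+,\omega_-)$ already has a vanishing factor, so these conditions are multiplicatively stable and $A$ is a subring of $H$. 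It is finitely generated and torsion free over $R$ as an $R$-submodule of $H$, and inverting $w$ makes all the residue conditions vacuous, so $A_K=H_K=\Lambda$ is semisimple; hence $A$ is an order and $H$ an overorder, finite as a left $A$-module.

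Next I would identify the radical. For a standard hereditary order the radical is the ``shift by one stripe'' ideal, whose reduction modulo $\idm$ kills exactly the diagonal blocks, so $\rad(H)$ equals the ideal $J$ of~\eqref{E:NodalRadical}; clearly $J\subseteq A$ and $J$ is a two-sided ideal of $A$. Since $A/J$ will be identified below with a product of full matrix algebras over $\kk$, it is semisimple, so $\rad(A)\subseteq J$; conversely $J^N\subseteq\idm H$ for $N\gg0$ (wrapping around the stripes) and $wH\subseteq A$, whence $J^{2N}\subseteq\idm A\subseteq\rad(A)$, and since $A/\rad(A)$ is semisimple this forces $J\subseteq\rad(A)$. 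Thus $J=\rad(A)=\rad(H)$. For $H\cong\End_A(J)$: the module $J$ spans $A_K=\Lambda$ over $K$, so left multiplication embeds $H$ into $\End_A(J)$; and any $A$-linear endomorphism of $J$ extends uniquely to a right-$\Lambda$-linear endomorphism of $\Lambda$, hence is left multiplication by some $x\in\Lambda$ with $xJ\subseteq J$, and a direct check with the block pattern gives $\{x\in\Lambda\mid xJ\subseteq J\}=H$, so $x\in H$ and the map is onto.

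Then I would establish the diagram~\eqref{E:gentle} and nodality. The isomorphism $H/J\cong\prod_{\omega\in\Omega}\bar H_\omega$ is the standard description of the semisimple quotient of a hereditary order, with one factor $\bar H_\omega=\Mat_{\wt(\omega)}(\kk)$ per stripe. Reducing the defining equations of $A$ modulo $J$, the constraint for a tied pair $\{\omega',\omega''\}$ becomes the diagonal embedding $\bar A_\gamma\hookrightarrow\bar H_{\omega'}\times\bar H_{\omega''}$, while the constraint at a reflexive $\omega$ forces the image in $\bar H_\omega$ to lie in the block-diagonal subalgebra $\Mat_{\wt(\omega_+)}(\kk)\times\Mat_{\wt(\omega_-)}(\kk)$; this yields $A/J\cong\prod_{\gamma\in\widetilde\Omega^\ddagger}\bar A_\gamma$ together with the stated description of $\imath$, and in particular $A/J$ is semisimple. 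Nodality now follows from Definition~\ref{D:NodalOrders}: $\rad(A)=\rad(H)=J$ is known, and for the simple left $A$-module $U_\gamma$ attached to $\bar A_\gamma$ one computes $H\otimes_A U_\gamma$ over $\kk$ by means of~\eqref{E:gentle}, obtaining a module of $A$-length $1$ when $\gamma$ is simple and $2$ when $\gamma$ is reflexive or tied; in all cases the length is $\le 2$, so $A$ is nodal with hereditary cover $H$.

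It remains to dispose of the bookkeeping assertions. Changing the marking $\sm$ cyclically reorders the stripes inside each $\Omega_i$, which is realized by conjugating $H_i$ by the corresponding permutation matrix (the cyclic-permutation clause of Theorem~\ref{T:Harada}); such a conjugation preserves the conditions defining $A$, which are stated through the blocks labelled by $\Omega$, so $\widetilde A=\Ad_S(A)$ for a suitable $S\in\Lambda$. Replacing a stripe of weight $m$ by $m$ weight-one stripes is a central Morita equivalence of the corresponding standard hereditary orders (last clause of Theorem~\ref{T:Harada}) compatible with all the gluing data, hence exhibits $A\bigl(R,(\Omega,\sigma,\approx,\wt)\bigr)$ as a minor of $A\bigl(R,(\Omega,\sigma,\approx,\wt_\circ)\bigr)$ with the induced map on the common center $R$ the identity; Theorem~\ref{T:minorsabelian} then gives the central Morita equivalence. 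The ring $A$ is connected precisely when the graph with vertices $\{1,\dots,t\}$ and one edge for each tied pair joining the two blocks containing its members is connected, which is the stated existence of $\tau$ and the $\upsilon_i$; when $R=\kk\llbracket w\rrbracket$, a central element of $\Lambda$ is a tuple of scalars $(f_1\cdot\id,\dots,f_t\cdot\id)$ with $f_i\in R$, subject to $f_{i'}(0)=f_{i''}(0)$ for tied blocks, so connectedness forces all $f_i(0)$ to agree, giving $Z(A)\cong\kk\llbracket w_1,\dots,w_t\rrbracket/(w_iw_j,\ 1\le i\ne j\le t)$. Finally, over complete $R$, an isomorphism $A\xrightarrow{\ \sim\ }A'$ extends to $\Lambda\xrightarrow{\ \sim\ }\Lambda'$, hence induces a size-preserving matching of the simple factors and, through Theorem~\ref{T:Harada}, a bijection $\varphi\colon\Omega\to\Omega'$ compatible with $\sigma$ and $\wt$; since the hereditary cover, its radical $J$, and the residue conditions are all intrinsic, $\varphi$ must carry $\approx$ to $\approx'$, and the converse implication is immediate from the construction. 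The two points I expect to cost real work are the inclusion $J\subseteq\rad(A)$ and the composition-length count that yields nodality; everything else is routine manipulation of block matrices.
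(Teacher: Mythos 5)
The paper itself offers no proof of this theorem, declaring it ``straightforward,'' so your reconstruction is genuinely filling in what the authors left implicit; your direct block-matrix verification is the natural route and is essentially what they intend. The structure of your argument is sound: you correctly establish that the residue conditions are multiplicatively stable, that $A_K=\Lambda$, that $J=\rad(A)=\rad(H)$ via $J^{N+1}\subseteq\idm A$ and semisimplicity of $A/J$, that $H\cong\End_A(J)$, the semisimple quotient diagram, the length count $l_A(H\otimes_A U_\gamma)\le 2$ establishing nodality, the conjugation by permutation matrices for different markings, and the characterizations of connectedness, the center, and the isomorphism criterion over complete $R$.

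One step is misstated, however. You write that ``replacing a stripe of weight $m$ by $m$ weight-one stripes is a central Morita equivalence of the corresponding standard hereditary orders,'' citing the last clause of Theorem~\ref{T:Harada}. This is not a Morita equivalence: $H(R,\vec p)$ has $r$ isomorphism classes of indecomposable projectives (one per stripe), whereas the subdivided order has $|\vec p|$ of them, so they cannot be Morita equivalent unless all $p_i=1$. (Indeed, the paper's clause ``$H(R,\vec p)$ and $H_p(R)$ are centrally Morita equivalent'' appears to carry a misprint --- it should be $H_r(R)$, with $r$ the number of parts of $\vec p$ --- and you seem to have inherited it.) The conclusion you want, that $A\bigl(R,(\Omega,\sigma,\approx,\wt)\bigr)$ and $A\bigl(R,(\Omega,\sigma,\approx,\wt_\circ)\bigr)$ are centrally Morita equivalent, is correct, but the mechanism is the opposite direction: $A(\wt_\circ)$ is the basic order, its indecomposable projectives $P_\gamma$ are indexed by $\widetilde\Omega^\ddagger$, and $A(\wt)\cong\bigl(\End_{A(\wt_\circ)}(P)\bigr)^\circ$ for the progenerator $P=\bigoplus_{\gamma}P_\gamma^{\oplus\wt(\gamma)}$, exactly as in the proof of Theorem~\ref{T:ClassificationNodalOrders}. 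Replacing your sentence with this description fixes the only gap I see.
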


\begin{remark}
For the  weight function $\Omega^\ddagger \stackrel{\mathsf{wt}_{\circ}}\lar \NN$,   given by the rule $\mathsf{wt}_{\circ}(\omega) = 1$ for all $\omega \in \Omega^\ddagger$, we shall write $A\bigl(R, (\Omega, \sigma, \approx)\bigr):= A(R, \Omega, \sigma, \approx, \mathsf{wt}_{\circ})$ or simply  
$A(\Omega, \sigma, \approx)$ in the case when it is clear which ring $R$ is meant.
\end{remark}

\smallskip
\noindent
In the examples below we take  $R = \kk\llbracket w\rrbracket$.

\begin{example}\label{Ex:Dyad}
Let $\Omega = \{1, 2\}$, $\sigma$ is identity
 and $1 \approx 2$. Then we have:
$$
A(\Omega, \sigma, \approx) \cong D:= \kk\llbracket x, y\rrbracket/(xy).
$$
As already mentioned, the hereditary cover of $D$  is $\widetilde{D}:= \kk\llbracket x\rrbracket \times \kk\llbracket y\rrbracket$.
\end{example}

\begin{example}\label{Ex:ZhelobenkoOrder}
Let $\Omega = \{1, 2, 3\}$, $\sigma = \left(
\begin{smallmatrix}
1 & 2 & 3\\
1 & 3 & 2
\end{smallmatrix}
\right)$ and $1 \approx 2$. Then we have:
$$
H = H(\Omega, \sigma, \approx) = \kk\llbracket x\rrbracket \times
\left(
\begin{array}{cc}
\kk\llbracket y\rrbracket & (y) \\
\kk\llbracket y\rrbracket & \kk\llbracket y\rrbracket
\end{array}
\right)
$$
and
\begin{equation}\label{E:ZhelobenkoOrder}
A= A(\Omega, \sigma, \approx) = \left\{
(X, Y) \in H \; \left|\;  X(0) = Y^{(11)}(0)\right.\right\}.
\end{equation}
 Note that $A \cong \End_D(\kk\llbracket x\rrbracket \oplus D)$, where $D = \kk\llbracket x,y\rrbracket/(xy)$. Alternatively,
one can identify   $A$ with  the arrow completion of the path algebra of the following quiver with relations:
\begin{equation}\label{E:ZhelobQuiver}
\xymatrix
{
\bul  \ar@(ul, dl)_{a} \ar@/^/[rr]^{b}  & & \bul \ar@/^/[ll]^{c}
} \qquad ba = 0, \,ac = 0.
\end{equation}
The order given by   \eqref{E:ZhelobenkoOrder} will be called \emph{Zhelobenko order},
since it appeared for the first time in a work of Zhelobenko \cite{Zhelobenko} dedicated to  the study of admissible finite length representations of the Lie group $\SL_2(\CC)$.
\end{example}

\begin{example}\label{Ex:AuslanderOrder}
Let $\Omega = \{1, 2, 3, 4\}$,  $\sigma =
\left(
\begin{smallmatrix}
1 & 2 & 3 & 4\\
2 & 1 & 4 & 3\\
\end{smallmatrix}
\right)$ and $2 \approx 3$. Then we have:
\begin{equation}\label{E:AuslanderOrderNodal}
A = A(\Omega, \sigma, \approx) \cong
\left(
\begin{array}{cc}
D & \widetilde{D} \\
I & \widetilde{D}
\end{array}
\right) \cong \End_D(D \oplus I),
\end{equation}
where $D = \kk\llbracket x,y\rrbracket/(xy)$, $I = (x, y)$ and $\widetilde{D} = \kk\llbracket x\rrbracket \times \kk\llbracket y\rrbracket$. The order $A$ is isomorphic to the arrow completion of the following quiver with relations:
\begin{equation}\label{E:AuslanderQuiver}
\xymatrix
{
- \ar@/^/[rr]^{u_{-}}  & &  \star \ar@/^/[ll]^{v_{-}}
 \ar@/_/[rr]_{v_{+}}
 & &
\ar@/_/[ll]_{u_{+}} +}  \qquad v_{\pm} u_{\mp} = 0.
\end{equation}
Following the terminology of our previous work \cite{bd}, $A$ will be called the Auslander order of $D$, or just \emph{Auslander order}.
\end{example}

\begin{example}\label{Ex:NodalOrbifolded}
Let $\Omega = \{1\}$  with $1 \approx 1$ (of course, $\sigma = \mathrm{id}$ in this case). Then we have:
\begin{equation}\label{E:NodalOrbifolded}
A = A(\Omega, \sigma, \approx) \cong
\left(
\begin{array}{cc}
\kk\llbracket w\rrbracket &  (w)\\
(w)  & \kk\llbracket w\rrbracket
\end{array}
\right).
\end{equation}
The hereditary cover $H$ of $A$ ist just the matrix algebra $\Mat_2\bigl(\kk\llbracket w\rrbracket\bigr)$.
\end{example}

\begin{example}
Let $\Omega = \{1, 2\}$, $\sigma =
\left(
\begin{smallmatrix}
1 & 2 \\
2 & 1 \\
\end{smallmatrix}
\right)$ and $2 \approx 2$. Then we have:
\begin{equation}\label{E:GelfandOrder}
A = A(\Omega, \sigma, \approx) \cong
\left(
\begin{array}{ccc}
\kk\llbracket w\rrbracket &  (w) & (w) \\
\kk\llbracket w\rrbracket &  \kk\llbracket w\rrbracket & (w) \\
\kk\llbracket w\rrbracket &  (w) & \kk\llbracket w\rrbracket \\
\end{array}
\right).
\end{equation}
Note that  $A$ is isomorphic to the arrow completion of the following quiver with relations:
\begin{equation}\label{E:GelfandQuiver}
\xymatrix
{
- \ar@/^/[rr]^{a_{-}}  & &  \star \ar@/^/[ll]^{b_{-}}
 \ar@/_/[rr]_{b_{+}}
 & &
\ar@/_/[ll]_{a_{+}} +}  \qquad a_+ b_+ = a_- b_-.
\end{equation}
The order \eqref{E:GelfandOrder} appeared for the first time in the 1970 ICM talk of I.~Gelfand \cite{Gelfand} in the context of  the study of admissible finite length representations of the Lie group $\SL_2(\mathbb{R})$. In what follows, it will be called \emph{Gelfand order}. The  hereditary cover of $A$ is
$$
H = H(\Omega, \sigma, \approx) \cong
\left(
\begin{array}{ccc}
\kk\llbracket w\rrbracket &  (w) & (w) \\
\kk\llbracket w\rrbracket &  \kk\llbracket w\rrbracket & \kk\llbracket w\rrbracket \\
\kk\llbracket w\rrbracket &  \kk\llbracket w\rrbracket & \kk\llbracket w\rrbracket \\
\end{array}
\right).
$$
\end{example}

\begin{theorem}\label{T:ClassificationNodalOrders} Let $\kk$ be an algebraically closed field, $R = \kk\llbracket w\rrbracket, K = \kk\llbrace w\rrbrace$ and
\begin{equation*}
\Lambda = \Mat_{s_1}(K) \times \dots \times \Mat_{s_t}(K)
\end{equation*}
for some $s_1, \dots s_t \in \NN$. If $A$ is a nodal order whose rational envelope  is $\Lambda$ then there exists  a datum $(\Omega, \sigma, \approx, \wt)$  and $S \in \Lambda$ such that $A = \Ad_S\bigl(A(\Omega, \sigma, \approx, \wt)\bigr)$, where $A(\Omega, \sigma, \approx, \wt)$ is the nodal order from Definition \ref{D:nodalrings}.
\end{theorem}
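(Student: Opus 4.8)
The plan is to reconstruct the combinatorial datum $(\Omega, \sigma, \approx, \wt)$ from the intrinsic structure of $A$, using that both the radical $J = \rad(A)$ and the hereditary cover $H = \End_A(J)$ are canonically attached to $A$, and that $A$ is determined by the semisimple subalgebra $A/J \hookrightarrow H/J$.

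First I would normalise the hereditary cover. By the Remark after Definition \ref{D:NodalOrders}, $H \supseteq A$ is uniquely determined, $J = \rad(A) = \rad(H)$, and the rational envelope of $H$ is again $\Lambda$. Decomposing $H = H_1 \times \dots \times H_t$ along the simple factors $\Mat_{s_i}(K)$ of $\Lambda$ and applying Theorem \ref{T:Harada} to each $H_i$, there is $S_0 \in \Lambda$ with $\Ad_{S_0}(H) = H(R, \vec p_1) \times \dots \times H(R, \vec p_t)$, $|\vec p_i| = s_i$. Replacing $A$ by $\Ad_{S_0}(A)$ (whose hereditary cover is then $\Ad_{S_0}(H)$), we may assume $H$ is in standard form. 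This produces a finite set $\Omega = \Omega_1 \sqcup \dots \sqcup \Omega_t$ labelling the stripes of the blocks $H(R,\vec p_i)$, a permutation $\sigma$ acting cyclically on each $\Omega_i$, and a weight function $\wt\colon\Omega\to\NN$ recording the entries of the $\vec p_i$; the only remaining ambiguity (a cyclic permutation of each $\vec p_i$, cf.\ Theorem \ref{T:Harada}) is the choice of marking, which by Theorem \ref{T:DescriptionNodalOrders} does not affect $A(\Omega,\sigma,\approx,\wt)$ up to conjugation. Since $J$ is the kernel of $\pi\colon H \to H/J$ and $J \subseteq A$, one recovers $A = \pi^{-1}(A/J)$, so everything reduces to identifying the semisimple subalgebra $A/J \hookrightarrow H/J \cong \prod_{\omega\in\Omega}\Mat_{\wt(\omega)}(\kk)$.

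The heart of the argument — and the step I expect to be the main obstacle — is the classification of $A/J$ inside $H/J$. Write $A/J = \prod_\gamma B_\gamma$ with $B_\gamma\cong\Mat_{n_\gamma}(\kk)$, let $e_\gamma$ be the central idempotent of $B_\gamma$, and let $\pi_\omega\colon A/J\to\bar H_\omega:=\Mat_{\wt(\omega)}(\kk)$ be the components of the inclusion. The key input is the length bound in Definition \ref{D:NodalOrders}. For a simple $A$-module $U$ concentrated in the factor $B_\gamma$ one has a natural isomorphism $H\otimes_A U \cong (H/J)\otimes_{A/J}U$ (because $HJ = J = JH$), and the $A$-action on it factors through the semisimple algebra $A/J$; hence $2 \ge l_A(H\otimes_A U) = l_{A/J}\bigl((H/J)\otimes_{A/J}S_\gamma\bigr) = \sum_{\omega} l_{A/J}\bigl(\bar H_\omega\otimes_{B_\gamma}S_\gamma\bigr)$. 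Setting $d(\omega,\gamma):=\rk\bigl(\pi_\omega(e_\gamma)\bigr)$, a direct matrix computation gives $n_\gamma \mid d(\omega,\gamma)$, $\sum_\gamma d(\omega,\gamma) = \wt(\omega)$, that $\bar H_\omega\otimes_{B_\gamma}S_\gamma$ is a direct sum of $d(\omega,\gamma)/n_\gamma$ copies of the simple $\bar H_\omega$-module, and that the latter has $A/J$-length $\sum_{\gamma'}d(\omega,\gamma')/n_{\gamma'}$. Feeding this in, the inequality forces $d(\omega,\gamma)=n_\gamma$ whenever $\pi_\omega(e_\gamma)\ne 0$, that each $\pi_\omega$ is nonzero on at most two of the $B_\gamma$, that each $B_\gamma$ has nonzero image in at most two of the $\bar H_\omega$, and that no pair $(\omega,\gamma)$ with $\pi_\omega(e_\gamma)\ne 0$ has both of these ``degrees'' equal to $2$. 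A short graph-theoretic case analysis of this incidence then leaves exactly three local patterns: (i) $\pi_\omega$ restricts to an isomorphism $B_\gamma\xrightarrow{\ \sim\ }\bar H_\omega$ for a unique $\gamma$ which meets no other $\omega$; (ii) two distinct $B_{\gamma_+},B_{\gamma_-}$, each meeting only $\omega$, embed block-diagonally with $\pi_\omega(e_{\gamma_+})+\pi_\omega(e_{\gamma_-})=1$ and $n_{\gamma_+}+n_{\gamma_-}=\wt(\omega)$; (iii) one $B_\gamma$ embeds isomorphically into each of exactly two components $\bar H_{\omega'},\bar H_{\omega''}$, forcing $\wt(\omega')=\wt(\omega'')=n_\gamma$. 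Declaring $\omega\approx\omega$ in case (ii), $\omega'\approx\omega''$ in case (iii) and leaving $\omega$ unrelated in case (i) defines a symmetric relation with every element $\approx$-related to at most one element; the factors of $A/J$ are then canonically indexed by $\widetilde\Omega^\ddagger$, and setting $\wt(\omega_\pm):=n_{\gamma_\pm}$ in case (ii) makes $\wt$ a $\approx$-compatible weight function.

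Finally I would normalise the embedding to land exactly on the model of Definition \ref{D:nodalrings}. In cases (ii) and (iii) the idempotents $\pi_\omega(e_{\gamma_\pm})$ and the two isomorphisms $B_\gamma\xrightarrow{\ \sim\ }\bar H_{\omega'},\bar H_{\omega''}$ are only pinned down up to conjugation inside the ambient matrix algebras $\Mat_{\wt(\omega)}(\kk)$; since units lift along $H^\times\twoheadrightarrow(H/J)^\times$, there is $S_1\in H^\times\subseteq\Lambda$ such that $\Ad_{S_1}(A)$ has $A/J$ equal, component by component, to the image of the embedding $\imath$ of diagram \eqref{E:gentle} (at a reflexive $\omega$ the idempotents become the standard diagonal blocks of sizes $\wt(\omega_\pm)$, and at a tied pair the component becomes the genuine diagonal $X\mapsto(X,X)$). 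Comparing with Definitions \ref{D:DatumEnhanced} and \ref{D:nodalrings}, the preimage $\pi^{-1}\bigl(\imath(\prod_\gamma\bar A_\gamma)\bigr)$ is exactly $A(\Omega,\sigma,\approx,\wt)$: the conditions $X^{(\omega_+,\omega_-)}(0)=0=X^{(\omega_-,\omega_+)}(0)$ at reflexive points and $X_{i'}^{(\omega',\omega')}(0)=X_{i''}^{(\omega'',\omega'')}(0)$ at tied pairs are precisely the vanishing off-diagonal blocks and the diagonal matching built into $\imath$. Composing the conjugations gives $A = \Ad_S\bigl(A(\Omega,\sigma,\approx,\wt)\bigr)$ with $S:=S_1 S_0$. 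The genuinely delicate part is the length computation and incidence analysis of the third paragraph; the rest is bookkeeping with the intrinsic descriptions of $H$ and $J$ already collected in Theorem \ref{T:DescriptionNodalOrders}.
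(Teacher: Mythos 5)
Your proof is correct, but it takes a genuinely different route from the paper's, and the difference is worth noting. The paper first passes to a basic order $A^\diamond$ Morita equivalent to $A$ (so that $A^\diamond/J^\diamond$ is a product of copies of $\kk$), classifies $A^\diamond$ there — the nodal length bound then immediately forces each simple block of $\bar H^\diamond$ to be $\kk$ or $\Mat_2(\kk)$ and each component of the inclusion $\bar A^\diamond \hookrightarrow \bar H^\diamond$ to be $\kk \xrightarrow{\;\id\;} \kk$, $\kk \xrightarrow{\,\mathsf{diag}\,} \kk\times\kk$ or $\kk\times\kk \xrightarrow{\,\mathsf{diag}\,} \Mat_2(\kk)$ — and finally recovers the general $A$ by choosing a projective $A^\diamond$-module $P$ with $A \cong \bigl(\End_{A^\diamond}(P)\bigr)^\circ$ and reading off the weight function from the multiplicities in $P$. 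You instead stay with $A$ itself: you conjugate the canonically defined hereditary cover $H$ into standard form by Harada's theorem, and then classify $A/J$ inside $H/J$ directly, with arbitrary matrix factors $B_\gamma \cong \Mat_{n_\gamma}(\kk)$. This makes your incidence analysis more involved (you need the divisibility $n_\gamma \mid d(\omega,\gamma)$ and a genuine bipartite degree argument), but it buys two things: you avoid the detour through a projective generator, and — more importantly — by working entirely inside $\Lambda$ you directly produce the conjugating element $S = S_1 S_0$. The paper's reduction step only yields an abstract isomorphism $A \cong A(\Omega,\sigma,\approx,\wt)$; upgrading that to the conjugation $A = \Ad_S\bigl(A(\Omega,\sigma,\approx,\wt)\bigr)$ asserted in the theorem requires an extra (standard, but unstated) Noether--Deuring/Skolem--Noether type argument. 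Your version handles this automatically. Both proofs rest on the same two pillars — the local structure theory of hereditary orders and the nodal length bound on $H\otimes_A U$ — so the divergence is in the order and mode of the reductions, not in the underlying ideas.
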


\smallskip
\noindent
\begin{proof} Let $A \subset \Lambda$ be a nodal order, whose
rational envelope is $\Lambda$ and  $A^\diamond$ be a basic order
Morita equivalent to $A$. Then $A^\diamond$ is also nodal, see Theorem \ref{T:fundamentalsNodalOrders}. Let $H^\diamond$ be the hereditary cover of $A_\diamond$.
Then 
$H^\diamond \cong H(R, \vec{p}_1) \times \dots \times H(R, \vec{p}_r),
$
where each component $H(R, \vec{p}_i)$ is a standard hereditary order given by \eqref{E:standardorder}. Let $J^\diamond = \rad(A^\diamond) = \rad(H^\diamond)$ be the common
radical of $A^\diamond$ and $H^\diamond$, $\bar{A}^\diamond = A^\diamond/J^\diamond$ and 
$\bar{H}^\diamond = H^\diamond/J^\diamond$. Since $A^\diamond$ is basic and $\bar{A}^\diamond$ is semisimple, $\bar{A}^\diamond_\gamma$ is isomorphic to a product of several copies of the field $\kk$. By the definition of nodal orders, the embedding  of semisimple algebras $\bar{A}^\diamond \stackrel{\imath}\hookrightarrow \bar{H}^\diamond$ has the following property: for any simple $\bar{A}^\diamond$-module $U$ we have: 
$l_{\bar{A}^\diamond}\bigl(\bar{H}^\diamond \otimes_{\bar{A}^\diamond} U\bigr) \le 2$. From this property one can easily deduce that
\begin{itemize}
\item Each simple component of $\bar{H}^\diamond$ is either $\kk$ or $\Mat_2(\kk)$. In other words, for any $1 \le i \le r$, each entry of the vector $\vec{p}_i$ is either
$1$ or $2$.
\item The embedding $\bar{A}^\diamond \stackrel{\imath}\hookrightarrow \bar{H}^\diamond$ splits into the product of the following components:
$$
\kk \stackrel{\mathsf{id}}\lar \kk, \quad \kk \stackrel{\mathsf{diag}}\lar \kk \times \kk \quad \mbox{\rm or} \quad (\kk \times \kk) \stackrel{\mathsf{diag}}\lar \Mat_2(\kk).
$$
\end{itemize}
Let $1 \le i \le r$ and $\vec{p}_i = \bigl(p_{i,1}, \dots,p_{i,k_i}\bigr)$. Then we put:
$$
\Omega_i := \bigl\{(i, 1), \dots, (i, k_i)\bigr\}, \quad
\Omega:= \Omega_1 \sqcup \dots \sqcup \Omega_r \quad \mbox{\rm and} \quad 
\sigma(i, j):= (i, j+1 \; \mathsf{mod}\; k_i).
$$ 
Note that the set $\Omega$ parameterizes simple components of the algebra $\bar{H}^\diamond$. If $p_{i, j} = 2$ for $(i, j) \in \Omega_i$, we put: $(i, j) \approx
(i, j)$. If $(i, j), (i', j') \in \Omega$ are such that the corresponding component
of the embedding $\imath$ is $\kk \stackrel{\mathsf{diag}}\lar \kk \times \kk$, we put:
$(i, j) \approx (i', j')$.

\smallskip
\noindent
Note that the following diagram of $\kk$-algebras
\begin{equation*}
\begin{array}{c}
\xymatrix{
A^\diamond \ar[rr] \ar@{_{(}->}[d] & & \bar{A}^\diamond
\ar@{^{(}->}[d]^-{\imath}\\
H^\diamond \ar@{->>}[rr]^-{\pi}  & & \bar{H}^\diamond
}
\end{array}
\end{equation*}
is a pull-back diagram, i.e.~$A^\diamond \cong 
\pi^{-1}\bigl(\mathrm{Im}(\imath)\bigr)$. From this it is not difficult to deduce that
$$
A^\diamond = A(\Omega, \sigma, \approx, \wt_\circ),
$$
where $\wt_\circ$ is the trivial weight function.

\smallskip
\noindent
Since the order $A$ is Morita equivalent to $A^\diamond$, there exists
a projective left $A^\diamond$-module $P$ such that $A \cong 
\bigl(\End_{A^\diamond}(P)\bigr)^\circ$. Recall that the isomorphism classes of indecomposable $A^\diamond$-modules are  parameterized by the elements of the set $\widetilde{\Omega}^\ddagger$.
Let
$
P \cong \bigoplus\limits_{\gamma \in \widetilde{\Omega}^\ddagger} P_\gamma^{\oplus m_\gamma}
$
be a  decomposition of $P$ into a direct sum of indecomposable modules. Then we get
a weight function
$
\widetilde{\Omega}^\ddagger \stackrel{\mathsf{wt}}\lar \NN, \;  \gamma \mapsto m_\gamma.
$
It is easy to see that $A \cong A(\Omega, \sigma, \approx, \wt)$. 
\end{proof}

\subsection{Skew group products of $\kk\llbracket u, v\rrbracket/(uv)$ with a finite group}
Let $\kk$ be an algebraically closed  field of characteristic zero, $\zeta \in \kk$ be a primitive $n$-th root of $1$,   $G:= \left\langle \rho \,\big|\, \rho^n = e\right\rangle$ be a cyclic group of order $n \in \NN_{\ge 2}$ and $\kk[G]$ be the corresponding group algebra. The following results is well-known.

\begin{lemma}
For $ 1 \le k \le n$, let   $\xi_k := \zeta^k$ 
and 
$
\varepsilon_k := \frac{1}{n} \sum\limits_{j=0}^{n-1} \xi_k^j \rho^j \in \kk[G].
$
Then we have:
$$
\left\{
\begin{array}{ccc}
1 & = & \varepsilon_1 + \dots + \varepsilon_n \\
\varepsilon_k \cdot \varepsilon_l & = & \delta_{kl} \varepsilon_k
\end{array}
\right.
$$
In other words, $\left\{\varepsilon_1,\dots,\varepsilon_n\right\}$ is a complete set
of primitive idempotents of $\kk[G]$.
\end{lemma}

\begin{proposition}
Consider the action of the cyclic group $G$ on the polynomial algebra $\kk[u]$ given by the rule: $\rho \circ u := \zeta u$. Then the skew product $\kk[u] \ast G$ is isomorphic to the path algebra of the cyclic quiver $\vec{C}_n$
\begin{equation}\label{E:cyclicquiver}
\begin{array}{c}
\xymatrix{ & \stackrel{2}{\circ} \ar[ld]_-{a_1} & &\ar[ll]_-{a_2} \circ & \\
\stackrel{1}\circ  \ar[rd]_-{a_n} &         &   & \vdots  & \ar[lu] \circ\\
& \stackrel{n}\circ  \ar[rr] & & \circ \ar[ru] & \\
}
\end{array}
\end{equation}
\end{proposition}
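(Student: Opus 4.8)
The plan is to write down an explicit isomorphism of $\kk$-algebras between the path algebra $\kk\vec{C}_n$ and the skew product $\kk[u]\ast G$, built from the idempotents $\varepsilon_1,\dots,\varepsilon_n\in\kk[G]$ of the previous lemma, which we regard as idempotents of $\kk[u]\ast G$. Recall that the defining relation $\rho\circ u=\zeta u$ translates into $\rho u=\zeta u\rho$ inside $\kk[u]\ast G$, hence $\rho^{j}u=\zeta^{j}u\rho^{j}$ for all $j$.

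First I would record how $u$ interacts with the $\varepsilon_k$. A direct computation with $\varepsilon_k=\frac1n\sum_j\zeta^{kj}\rho^j$ yields $\rho^{j}\varepsilon_l=\zeta^{-lj}\varepsilon_l$ and
$$\varepsilon_{k}u=\frac1n\sum_{j}\zeta^{kj}\rho^{j}u=\frac1n\sum_{j}\zeta^{(k+1)j}u\rho^{j}=u\,\varepsilon_{k+1},$$
with all indices read modulo $n$. Setting $a_{k}:=\varepsilon_{k}u=u\varepsilon_{k+1}$ for $1\le k\le n$, we get $a_{k}\in\varepsilon_{k}(\kk[u]\ast G)\varepsilon_{k+1}$, matching the source and target of the arrow $a_k$ of $\vec{C}_n$.

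Next I would define $\Phi\colon\kk\vec{C}_n\to\kk[u]\ast G$ on generators by sending the vertex idempotent at vertex $k$ to $\varepsilon_k$ and the arrow $a_k$ to the element $a_k$ just introduced. By the previous lemma the $\varepsilon_k$ are orthogonal idempotents summing to $1$, and the identity $\varepsilon_k u=u\varepsilon_{k+1}$ together with orthogonality gives $\varepsilon_k a_k=a_k=a_k\varepsilon_{k+1}$ and the vanishing of all the ``wrong'' products $\varepsilon_l a_k$, $a_k\varepsilon_l$; since $\vec{C}_n$ has no relations, $\Phi$ is a well defined homomorphism of $\kk$-algebras. For surjectivity one notes that $\mathrm{Im}(\Phi)$ contains every $\varepsilon_k$, contains $\sum_k a_k=\bigl(\sum_k\varepsilon_k\bigr)u=u$, and contains $\rho=\sum_k\zeta^{-k}\varepsilon_k$; as $\kk[u]\ast G$ is generated by $u$ and $\rho$, the map $\Phi$ is onto.

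The remaining, and only slightly delicate, point is injectivity, which I would check block by block on the Peirce decompositions. In $\kk\vec{C}_n$ the space $e_i(\kk\vec{C}_n)e_l$ has as basis the paths whose length $m$ satisfies $m\equiv l-i\pmod n$ — there is exactly one such path for each admissible $m\ge 0$. On the other side, $\rho^{j}\varepsilon_l=\zeta^{-lj}\varepsilon_l$ shows $(\kk[u]\ast G)\varepsilon_l=\kk[u]\varepsilon_l$ with basis $\{u^{m}\varepsilon_l\}_{m\ge 0}$, and then $\varepsilon_i u^{m}\varepsilon_l=u^{m}\varepsilon_{i+m}\varepsilon_l$ is nonzero precisely when $m\equiv l-i\pmod n$, so $\varepsilon_i(\kk[u]\ast G)\varepsilon_l$ has basis $\{u^{m}\varepsilon_l : m\ge 0,\ m\equiv l-i\ (\mathrm{mod}\ n)\}$. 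Iterating $\varepsilon_k u=u\varepsilon_{k+1}$ shows that $\Phi$ sends the length-$m$ path in $e_i(\kk\vec{C}_n)e_l$ to $\varepsilon_i u^{m}=u^{m}\varepsilon_l\ne 0$, i.e.\ it carries the path basis bijectively onto this basis; hence $\Phi$ is an isomorphism on each block and therefore an isomorphism. I expect the only real care to be needed in this last step: both algebras are infinite dimensional, so one cannot count total dimensions and must instead match the one dimensional graded pieces $\varepsilon_i(\,\cdot\,)\varepsilon_l$ degree by degree, keeping the orientation of $\vec{C}_n$ and the indices modulo $n$ consistent throughout.
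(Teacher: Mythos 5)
Your proof is correct and follows essentially the same approach as the paper: the paper simply records the correspondence $\varepsilon_k\mapsto e_k$, $\varepsilon_k u\varepsilon_{k+1}\mapsto a_k$ and calls it an isomorphism, while you construct the inverse map and verify the bijectivity degree by degree on Peirce blocks. Nothing genuinely new, just the details the paper leaves implicit.
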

\begin{proof}
An isomorphism $\kk[u]\ast G \lar \kk\bigl[\vec{C}_n\bigr]$ is given by the  rule:
$$
\left\{
\begin{array}{ccc}
\varepsilon_k & \mapsto & e_k \\
\varepsilon_k u \varepsilon_{k+1}  & \mapsto & a_k,
\end{array}
\right.
$$
where $e_k \in \kk\bigl[\vec{C}_n\bigr]$ is the trivial path  corresponding to the vertex $k$. 
\end{proof}

\begin{corollary}
Let $R := \kk\llbracket u\rrbracket$. Then the skew group product $R \ast G$
is isomorphic to the arrow completion of the path algebra of \eqref{E:cyclicquiver}.
Note that the latter algebra can in its turn be identified with the algebra of matrices
\begin{equation}\label{E:standardtransposed}
T_n(R):= \left(
\begin{array}{cccc}
R    & R & \dots & R \\
\idm & R & \dots & R \\
\vdots & \vdots& \ddots & \vdots \\
\idm &\idm & \dots & R
\end{array}
\right),
\end{equation}
where the primitive idempotent $e_k$ corresponding to the vertex $k$ of $\vec{C}_n$ is sent to the $k$-th diagonal matrix unit of $T_n(R)$.
\end{corollary}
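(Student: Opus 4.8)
The plan is to obtain both assertions from the Proposition by passing to completions. Let $\phi\colon\kk[u]\ast G\xrightarrow{\ \sim\ }\kk[\vec{C}_n]$ be the isomorphism of the Proposition, so that $\phi(\varepsilon_k)=e_k$ and $\phi(\varepsilon_k u\varepsilon_{k+1})=a_k$. Since $u=\sum_k u\varepsilon_k$ and $u\varepsilon_k=\varepsilon_{k-1}u$, we get $\phi(u)=\sum_k a_k$, and as $e_i\bigl(\sum_k a_k\bigr)e_{i+1}=a_i$, the two-sided ideal of $\kk[\vec{C}_n]$ generated by $\phi(u)$ is exactly the arrow ideal $\mathfrak a$; equivalently $\phi^{-1}(\mathfrak a)=(u)$. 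The element $u$ is normal ($\rho u=\zeta u\rho$), so $(u)^m=(u^m)=u^m(\kk[u]\ast G)$ and $(\kk[u]\ast G)/(u^m)\cong\bigl(\kk[u]/(u^m)\bigr)\ast G$, the $G$-action descending because it preserves $(u^m)$. Since $\,\cdot\,\ast G=\,\cdot\,\otimes_\kk\kk[G]$ on underlying $\kk$-modules and $\kk[G]$ is finite dimensional, this functor commutes with inverse limits, whence
\[
\widehat{\kk[\vec{C}_n]}=\varprojlim_m\kk[\vec{C}_n]/\mathfrak a^m\ \cong\ \varprojlim_m(\kk[u]\ast G)/(u^m)\ \cong\ \Bigl(\varprojlim_m\kk[u]/(u^m)\Bigr)\ast G\ =\ R\ast G,
\]
which proves the first claim, with $\varepsilon_k$ mapping to (the image of) $e_k$.

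For the identification with $T_n(R)$ I would exhibit an explicit embedding. Let $E_{ij}$ be the matrix units, so that $E_{ij}\in T_n(R)$ for $i\le j$ while $uE_{ij}\in T_n(R)$ for $i>j$, and define $\psi\colon\kk[\vec{C}_n]\to T_n(R)$ on generators by $\psi(e_k)=E_{kk}$, $\psi(a_k)=E_{k,k+1}$ for $1\le k\le n-1$, and $\psi(a_n)=uE_{n1}$. These values respect the source and target idempotents of the arrows, so (the path algebra having no relations) $\psi$ is a well-defined homomorphism of $\kk$-algebras. It sends a path of length $\ell$ starting at vertex $k$ to $u^aE_{k,\,k+\ell}$ (indices mod $n$), where $a$ is the number of times the path traverses the cyclic arrow $a_n$; distinct paths are therefore sent to distinct elements of the $\kk$-basis $\{u^aE_{ij}\}$ of $\Mat_n(R)$, so $\psi$ is injective, with image the dense subalgebra $\mathcal S=\bigoplus_{i,j}\mathcal S_{ij}\subseteq T_n(R)$, where $\mathcal S_{ij}=\kk[u]E_{ij}$ for $i\le j$ and $\mathcal S_{ij}=u\kk[u]E_{ij}$ for $i>j$.

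It then remains to verify that $\psi$ is a homeomorphism onto its image, i.e.\ that on $\mathcal S$ the arrow-adic and $\idm$-adic filtrations are cofinal; this is the one point requiring a small computation, and it is the main (essentially the only) obstacle. The key fact is that $\phi(u^n)=\phi(u)^n=\bigl(\sum_k a_k\bigr)^n=\sum_k c_k$, where $c_k$ is the unique closed path of length $n$ at the vertex $k$; this element is central, and one checks directly that $\mathfrak a^n$ coincides with the two-sided ideal it generates, so $\mathfrak a^{nm}=\phi\bigl((u^{nm})\bigr)$ for all $m$. Since $\psi\bigl(\sum_k c_k\bigr)=\bigl(\sum_{k<n}E_{k,k+1}+uE_{n1}\bigr)^{n}=uI$ — one full loop around $\vec{C}_n$ contributes exactly one power of the uniformizer $u$ — we obtain $\psi(\mathfrak a^{nm})=u^m\mathcal S$; combined with the evident sandwiching $u^m\mathcal S\subseteq\idm^mT_n(R)\cap\mathcal S\subseteq u^{m-1}\mathcal S$, this shows the two filtrations on $\mathcal S$ are cofinal. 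As $T_n(R)$ is a finitely generated module over the complete ring $R$ it is $\idm$-adically complete, hence equals the completion of its dense subalgebra $\mathcal S$; therefore $\psi$ extends to an isomorphism $\widehat{\kk[\vec{C}_n]}\xrightarrow{\ \sim\ }T_n(R)$ with $e_k\mapsto E_{kk}$, and composing with the first part gives $R\ast G\cong T_n(R)$ carrying $\varepsilon_k$ to $E_{kk}$, as claimed.
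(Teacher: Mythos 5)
Your proof is correct. The paper states this corollary without any proof (it immediately follows the Proposition giving $\kk[u]\ast G\cong\kk[\vec C_n]$), so there is no "paper proof" to compare against; your argument is a careful and complete verification. The first part — identifying $\phi^{-1}(\mathfrak a)=(u)$, noting $u$ is normal so $(u)^m=(u^m)$, and then commuting $\varprojlim$ with $-\ast G$ because $\kk[G]$ is finite-dimensional — is exactly the standard way to upgrade the isomorphism to completions. The second part, with the explicit embedding $\psi$ onto the dense subalgebra $\mathcal S\subset T_n(R)$ and the cofinality check via $\psi(\mathfrak a^{nm})=u^m\mathcal S$, is likewise sound (one small remark: since you already showed $\phi((u))=\mathfrak a$, you have $\mathfrak a^m=\phi((u^m))$ directly and the detour through $\phi(u)^n=\sum_k c_k$ is not strictly needed, though it is a clean way to locate $u\cdot I$ inside $\mathcal S$). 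A slightly shorter route would be to map $R\ast G$ directly into $T_n(R)$ by completing $\phi$ against $\psi$ at once, but your two-step version amounts to the same thing.
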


\begin{remark} Let us notice that, strictly speaking, $R\ast G$ depends on the choice of an $n$-th primitive root of unity $\zeta$.
 On the other hand, the
primitive idempotent
$$
\varepsilon = \varepsilon_n := \frac{1}{n}(e + \rho + \dots + \rho^{n-1}) \in 
\kk[G] \cong R\ast G/\rad(R\ast G)
$$
does not depend on the choice of $\zeta$. Therefore,
identifying the skew group product $R \ast G$ with the completed path algebra of a cyclic quiver, we shall always choose a labeling of vertices of $\vec{C}_n$ such that
the idempotent $\varepsilon$ is identified with the trivial path corresponding to the vertex labeled by $n$.

\smallskip
\noindent
Note also that the orders $T_n(R)$ and $H_n(R)$ are isomorphic.
\end{remark}

\smallskip
\noindent
Let $0 < c < n$ be such that $\mathsf{gcd}(n, c) = 1$. Then we have a permutation
$$
\bigl\{\bar{1}, \dots, \bar{n}\bigr\}
\stackrel{\tau_c}\lar 
\bigl\{\bar{1}, \dots, \bar{n}\bigr\} \quad \bar{k} \mapsto \overline{c\cdot k},
$$
where $\bar{k}$ denotes the remainder of $k$ modulo $n$.

\begin{proposition}\label{P:cyclicAction}
For any $0 < c < n$ such that $\mathsf{gcd}(n, c) = 1$, consider the action
of the cyclic group $G = \left\langle \rho \,\big|\, \rho^n = e\right\rangle$
on the nodal algebra $D = \kk\llbracket u, v\rrbracket/(uv)$, given by the rule
\begin{equation}\label{E:cyclicgroupaction}
\left\{
\begin{array}{ccc}
\rho \circ u & = & \zeta \; u \\
\rho \circ v & = & \zeta^{c} v.
\end{array}
\right.
\end{equation}
Then the nodal order $A = A_{(n, d)} := D \ast G$ has the following description:
\begin{equation}\label{E:stackynode1}
 A_{(n, c)} \cong  \Bigl\{(U, V) \in T_n(R) \times T_n(R) \, \big| \, U^{\tau_c(k)\tau_c(k)}(0) = 
V^{kk}(0) \; \mbox{\rm for} \; 1 \le k \le n\Bigr\}.
\end{equation}
\end{proposition}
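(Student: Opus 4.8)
The plan is to realise $A_{(n,c)} = D \ast G$ as a fibre product of two copies of $T_n(R)$ by passing to the normalisation of $D$ and invoking the description of $R \ast G$ established in the Corollary above.

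First I would extend the $G$-action \eqref{E:cyclicgroupaction} to the normalisation $\widetilde{D} = \kk\llbracket u\rrbracket \times \kk\llbracket v\rrbracket$ of $D$. The action preserves the two branches, restricting to $u \mapsto \zeta u$ on the first factor and to $v \mapsto \zeta^{c} v$ on the second; since $\gcd(n,c)=1$, the scalar $\zeta^{c}$ is again a primitive $n$-th root of unity. As $D \hookrightarrow \widetilde{D}$ is $G$-equivariant and $\widetilde{D}$ is module-finite over $D$, this induces an inclusion of orders $D \ast G \hookrightarrow \widetilde{D}\ast G$, and because $G$ preserves the product decomposition of $\widetilde{D}$ we get a ring isomorphism $\widetilde{D}\ast G \cong \bigl(\kk\llbracket u\rrbracket \ast G\bigr) \times \bigl(\kk\llbracket v\rrbracket \ast G\bigr)$. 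Applying the Corollary above to each factor — to the first with the root $\zeta$ and to the second with the root $\zeta^{c}$ — identifies both factors with $T_n(R)$.

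Next I would recognise $D \ast G$ as a pull-back inside $\widetilde{D}\ast G$. Since $n = |G|$ is invertible, $\rad(B\ast G) = (\rad B)\ast G$ for $B = D$ and $B = \widetilde{D}$; moreover $\rad(D) = (u,v)D = \rad(\widetilde{D})$ (see Example \ref{Ex:Dyad}), so the two skew group orders share the common radical $J := (\rad D)\ast G \subseteq D\ast G$. Reducing the inclusion $D\ast G \hookrightarrow \widetilde{D}\ast G$ modulo $J$ yields the inclusion $(D/\rad D)\ast G \hookrightarrow (\widetilde{D}/\rad\widetilde{D})\ast G$, that is, the diagonal embedding $\kk[G] \hookrightarrow \kk[G]\times\kk[G]$, because $D/\rad D = \kk \hookrightarrow \kk\times\kk = \widetilde{D}/\rad\widetilde{D}$ is the diagonal. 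Consequently $D\ast G$ is precisely the preimage of the diagonal copy of $\kk[G]$ under the canonical surjection $\widetilde{D}\ast G \twoheadrightarrow (\widetilde{D}\ast G)/J \cong \kk[G]\times\kk[G]$.

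It remains to rewrite this pull-back in the matrix form \eqref{E:stackynode1}, and this is the one step that needs care. Under the identifications above, the surjection $T_n(R) \twoheadrightarrow T_n(R)/\rad \cong \kk^{n}$ sends $X$ to $\bigl(X^{(kk)}(0)\bigr)_{k}$, the $k$-th coordinate being indexed by the trivial path at the vertex $k$ of $\vec{C}_n$. Using the explicit primitive idempotents $\varepsilon_k = \tfrac1n\sum_{j=0}^{n-1}\zeta^{jk}\rho^{j}$ of $\kk[G]$ from the Lemma above, together with the labelling convention fixed in the Remark above (the idempotent $\varepsilon = \varepsilon_n$ sits at the vertex $n$ in each copy), one checks the reindexing identity $\varepsilon^{(v)}_k = \varepsilon^{(u)}_{ck}$ that arises when one replaces $\zeta$ by $\zeta^{c}$ in the construction of the $v$-branch idempotents; hence the primitive idempotent at vertex $k$ of the second copy of $T_n(R)$ coincides with the one at vertex $\tau_c(k)$ of the first. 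Therefore the requirement that the images in $\kk[G]$ of $U$ (the component coming from the $u$-branch) and $V$ (from the $v$-branch) agree reads $U^{(\tau_c(k)\tau_c(k))}(0) = V^{(kk)}(0)$ for $1 \le k \le n$, which is exactly \eqref{E:stackynode1}. The main obstacle is thus confined to this final, essentially combinatorial, point: keeping the two labellings of $\vec{C}_n$ straight so as to read off the permutation $\tau_c$ (and not $\tau_{c^{-1}}$); everything else is formal, and along the way one also reconfirms, via Theorem \ref{T:fundamentalsNodalOrders}, that $A_{(n,c)} = D\ast G$ is a nodal order.
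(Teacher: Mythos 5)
Your proposal is correct and follows essentially the same route as the paper's own proof. Both arguments realise $\widetilde{D}\ast G \cong T_n(R)\times T_n(R)$, recognise $D\ast G$ as the preimage of the diagonal $\kk[G]\hookrightarrow \kk[G]\times\kk[G]\cong H/\rad(H)$, and then translate into the matrix form \eqref{E:stackynode1} via the reindexing $\widetilde\varepsilon_k=\varepsilon_{\tau_c(k)}$ of the $v$-branch idempotents against the $u$-branch ones --- precisely the labelling bookkeeping that both you and the paper single out as the only delicate point.
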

\begin{proof}
Let $\widetilde{D} = \kk\llbracket u\rrbracket \times \kk\llbracket v\rrbracket$ be
the hereditary cover of $D$.  Then
$$
H:= \widetilde{D} \ast G \cong \bigl(\kk\llbracket u\rrbracket \ast G\bigr) \times
\bigl(\kk\llbracket v\rrbracket \ast G\bigr) \cong T_n(R) \times T_n(R)
$$
is the hereditary cover of $A$. For any $1 \le k \le n$, let 
\[
 \widetilde{\varepsilon}_k := \varepsilon_{\tau_ck}= \dfrac{1}{n}\bigl(1 + \zeta^{ck} \rho + \zeta^{2ck} \rho^2 + \dots +
\zeta^{(n-1)ck} \rho^{n-1}\bigr) \in\kk[G].
\]
 We consider the elements $\varepsilon_k$ and $\widetilde\varepsilon_k$ as elements, respectively, of 
 $\kk\llbracket u\rrbracket\ast G$ and $\kk\llbracket v\rrbracket\ast G$. It is convenient, taking into account that  the actions of $G$ on $\kk\llbracket u\rrbracket$ and on
$\kk\llbracket v\rrbracket$ are actually different.

\smallskip
\noindent
We have the following commutative diagram of algebras and algebra homomorphisms:
$$
\xymatrix{
 A_{(n, c)} \ar@{_{(}->}[d]  \ar@{->>}[rr] & &  A_{(n, c)}/\rad( A_{(n, c)}) \ar@{_{(}->}[d] \ar[rr]^-\cong & & \kk[G]  \ar@{^{(}->}[d]^-{\mathsf{diag}} \\
H \ar@{->>}[rr] & & H/\rad(H) \ar[rr]^-\cong & & \kk[G] \times \kk[G]
}
$$
Viewing $\varepsilon_k$ and  $\widetilde{\varepsilon}_k$ as elements of
$\bigl(\kk\llbracket u\rrbracket \ast G\bigr)/\bigl((u) \ast G\bigr) \cong \kk[G] \cong \bigl(\kk\llbracket v\rrbracket \ast G\bigr)/\bigl((v) \ast G\bigr)$, we get: 
$\mathsf{diag}\bigl(\widetilde{\varepsilon}_k\bigr) = \bigl(\widetilde{\varepsilon}_k,
\widetilde{\varepsilon}_k\bigr) = \bigl(\varepsilon_{\sigma_c(k)}, \widetilde{\varepsilon}_{k}\bigr).$ Taking into account the rules for the isomorphisms
$$
\kk\llbracket u\rrbracket \ast G \stackrel{\cong}\lar T_n(R) \stackrel{\cong}\longleftarrow \kk\llbracket v\rrbracket \ast G,
$$
we get the description \eqref{E:stackynode1} of the nodal order $ A_{(n, c)}$.
\end{proof}

\begin{remark}\label{R:nodalcyclic} Note that the center of the order
$A_{(n, c)}$ is equal to $\kk\llbracket a, b\rrbracket/(ab)$, where $a = u^n$ and
$b = v^n$. Observe that  $A_{(n, c)}$ has precisely $n$ pairwise not-isomorphic finitely generated simple modules. It is not difficult top see that two such orders $A_{(n, c)}$
and $A_{(n', c')}$ are Morita equivalent if and only if 
$n = n'$ and $c = c'$ or $d = c'$, where $cd \equiv 1 \, \mathsf{mod}\, n$.
If $c \ne d$ then any Morita equivalence  between $A_{(n, c)}$ and $A_{(n, d)}$
permutes the irreducible branches of their common  center $\kk\llbracket a, b\rrbracket/(ab)$.

\smallskip
\noindent
In the terms of Theorem \ref{T:DescriptionNodalOrders}, the order $ A_{(n, c)}$  has the following description.
\begin{itemize}
\item Let $\Omega = \bigl\{1, \dots, n, \tilde{1}, \dots, \tilde{n}\bigr\}$.
\item The relation $\approx$ is given by the rule: $\tilde k \approx \tau_c(k)$ for $1 \le k \le n$.
\item The permutation $\Omega \stackrel{\sigma}\lar \Omega$ is given by the formula
$$
\sigma = \left(
\begin{array}{cccc|cccc}
1 & 2 & \dots & n & \tilde{1} & \tilde{2}& \dots & \tilde{n}\\
n & 1 & \dots & n-1 & \tilde{n} & \tilde{1} & \dots & \widetilde{n-1}
\end{array}
\right).
$$
\end{itemize}
Then we have: $ A_{(n, c)} \cong A(\Omega, \sigma, \approx, \wt_\circ)$. 
For any other weight function $\wt$, the orders $A_{(n, c)}$ and $A_{(n, c)}(\wt):= A(\Omega, \sigma, \approx, \wt)$ are centrally Morita equivalent.

\smallskip
\noindent
It is not difficult
to derive the quiver description of the order $A_{(n, c)}$. Of the major interest is the case
$c = n-1$. Then $ A_{(n, c)}$ is isomorphic to the arrow completion of the path algebra of the following quiver
\begin{equation}\label{E:nodalquiver}
\begin{array}{c}
\xymatrix{ & \stackrel{2}{\circ} \ar@/_/[ld]_-{a_1} \ar@/_/[rr]_-{b_2}& &\ar@/_/[ll]_-{a_2} \circ \ar@/_/[rd] & \\
\stackrel{1}\circ  \ar@/_/[rd]_-{a_n} \ar@/_/[ru]_-{b_1}&         &   & \vdots  & \ar@/_/[lu] \circ \ar@/_/[ld]\\
& \stackrel{n}\circ  \ar@/_/[rr] \ar@/_/[lu]_-{b_n}& & \circ \ar@/_/[ru] \ar@/_/[ll]& \\
}
\end{array}
\end{equation}
modulo the relations $a_k b_k = 0 = b_k a_k$ for all $1 \le k \le n$.
\end{remark}

\begin{lemma}\label{L:BasicNodalSkew}
Consider the action
of the cyclic group $G = \left\langle \tau \,\big|\, \tau^2 = e\right\rangle$
on the nodal algebra $D = \kk\llbracket u, v\rrbracket/(uv)$, given by the rule
$\tau(u) = v$.
Then the nodal order $A := D \ast G$ has the following description:
\begin{equation}\label{E:stackynode2}
A \cong  
\left(
\begin{array}{cc}
\kk\llbracket w\rrbracket & (w) \\
(w)  & \kk\llbracket w\rrbracket
\end{array}
\right).
\end{equation}
\end{lemma}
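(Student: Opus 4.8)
The plan is to realise the isomorphism explicitly by passing to the hereditary cover of $A := D\ast G$ and then locating $A$ inside it. First I would recall (Example \ref{Ex:Dyad}) that the hereditary cover of $D$ is $\widetilde D = \kk\llbracket u\rrbracket\times\kk\llbracket v\rrbracket$; the action of $G$ extends to $\widetilde D$ and exchanges the two branches via $u\leftrightarrow v$. By Theorem \ref{T:fundamentalsNodalOrders} the skew group product $A$ is a nodal order, and $\widetilde D\ast G$ is a hereditary overorder of $A$ sharing its radical, hence is the hereditary cover of $A$ (this is also consistent with Example \ref{Ex:NodalOrbifolded}). Identifying both branch rings $\kk\llbracket u\rrbracket$ and $\kk\llbracket v\rrbracket$ with $S:=\kk\llbracket w\rrbracket$ — so that the central diagonal copy of $S$ corresponds to the subring $\kk\llbracket u+v\rrbracket\subseteq D$ — the algebra $(S\times S)\ast G$ with $G$ acting by the swap is the classical trivial crossed product: the assignment $(a,b)\mapsto\left(\begin{smallmatrix}a&0\\0&b\end{smallmatrix}\right)$, $\tau\mapsto\left(\begin{smallmatrix}0&1\\1&0\end{smallmatrix}\right)$ is a ring homomorphism carrying an $S$-basis of $\widetilde D\ast G$ to an $S$-basis of $\Mat_2(S)$, so it gives an isomorphism $\widetilde D\ast G\xrightarrow{\,\sim\,}\Mat_2(\kk\llbracket w\rrbracket)$.

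Next I would track the image of $A=D\oplus D\tau$ under this isomorphism. Under $D\hookrightarrow\widetilde D\cong S\times S$ an element of $D$ becomes a pair $(f,g)$ of power series with $f(0)=g(0)$, so $D$ lands in the diagonal matrices with equal constant terms and $D\tau$ in the antidiagonal matrices with equal constant terms; adding these two $\kk\llbracket w\rrbracket$-submodules gives
$$A\;\cong\;B:=\left\{\begin{pmatrix}a&b\\ c&d\end{pmatrix}\in\Mat_2(\kk\llbracket w\rrbracket)\ \middle|\ a(0)=d(0),\ b(0)=c(0)\right\},$$
which one may also write as $B=(w)\Mat_2(\kk\llbracket w\rrbracket)+\kk\llbracket w\rrbracket\cdot I+\kk\llbracket w\rrbracket\cdot\left(\begin{smallmatrix}0&1\\1&0\end{smallmatrix}\right)$.

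Finally I would diagonalise the involution to remove the gluing along the diagonal. Since $\mathsf{char}(\kk)\ne 2$, the matrix $Q:=\left(\begin{smallmatrix}1&1\\1&-1\end{smallmatrix}\right)$ is invertible over $\kk\llbracket w\rrbracket$; conjugation by $Q$ fixes $(w)\Mat_2(\kk\llbracket w\rrbracket)$ and $I$ and sends $\left(\begin{smallmatrix}0&1\\1&0\end{smallmatrix}\right)$ to $\left(\begin{smallmatrix}1&0\\0&-1\end{smallmatrix}\right)$, so $Q^{-1}BQ=(w)\Mat_2(\kk\llbracket w\rrbracket)+\{\text{diagonal matrices}\}=\left(\begin{smallmatrix}\kk\llbracket w\rrbracket&(w)\\ (w)&\kk\llbracket w\rrbracket\end{smallmatrix}\right)$, which is the claimed order (so in particular $A\cong A(\{1\},\mathrm{id},1\approx1)$ of Example \ref{Ex:NodalOrbifolded}). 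No step is genuinely hard; the only point demanding care is keeping the identifications straight — which branch uniformizer is renamed $w$ and the induced normalisation $\kk\llbracket w\rrbracket\cong\kk\llbracket u+v\rrbracket\hookrightarrow\widetilde D$ — together with recognising that the closing conjugation is precisely the passage from the ``nodal gluing $\kk\times\kk\hookrightarrow\Mat_2(\kk)$ along the diagonal'' to the block form ``upper/lower triangular modulo $(w)$''.
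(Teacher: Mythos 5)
Your argument is correct, and it takes a genuinely different route from the paper's. The paper works entirely inside $A = D\ast G$: it introduces the eigen-idempotents $e_\pm = \tfrac{1\pm\tau}{2}$, writes down the Peirce decomposition $A \cong \bigl(\begin{smallmatrix} e_+ D e_+ & e_+ D e_- \\ e_- D e_+ & e_- D e_- \end{smallmatrix}\bigr)$, exhibits $\kk$-bases for the four Peirce components in terms of $w^{(m)}_\pm = \tfrac{1}{2^m}(u^m \pm v^m)$, and then verifies by hand that the component-wise assignment $w^{(m)}_\pm e_{\pm} \mapsto w^m e_{ij}$ is an algebra isomorphism onto $\bigl(\begin{smallmatrix}\kk\llbracket w\rrbracket & (w)\\ (w) & \kk\llbracket w\rrbracket\end{smallmatrix}\bigr)$. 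You instead go \emph{outward}: embed $A$ into $\widetilde{D}\ast G$, identify the latter with $\Mat_2(\kk\llbracket w\rrbracket)$ via the standard crossed-product isomorphism, locate the image of $A$ as the subalgebra of matrices whose diagonal (resp.\ antidiagonal) entries agree modulo $(w)$, and then kill the antidiagonal gluing by conjugating with $Q = \bigl(\begin{smallmatrix}1&1\\1&-1\end{smallmatrix}\bigr)$ (valid since $\mathsf{char}(\kk)\ne 2$). The two are related --- conjugation by $Q$ is exactly the passage to the $e_\pm$ eigenbasis that the paper performs internally --- but the bookkeeping is different. What your version buys: the target order appears transparently as a conjugate of the obvious pull-back subalgebra $\pi^{-1}(\kk\times\kk) \subset \Mat_2(S)$ along $\Mat_2(S)\twoheadrightarrow\Mat_2(\kk)$, so the ``nodal gluing along the diagonal'' picture is explicit, and no element-level verification of multiplicativity is needed (it is built into the conjugation). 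What the paper's version buys: it never invokes the identification $\widetilde{D}\ast G\cong\Mat_2(\kk\llbracket w\rrbracket)$ and is entirely self-contained. Your aside that $\widetilde{D}\ast G$ is the hereditary cover of $A$ is true but not actually needed for the computation; the only thing used is the inclusion $D\ast G\subset \widetilde{D}\ast G$ and the matrix description of the latter.
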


\begin{proof}
Consider the elements $e_\pm := \dfrac{1 \pm \tau}{2} \in A$. Then the following statements are true.
\begin{itemize}
\item $e_\pm^2 = e_\pm$, $e_\pm e_\mp = 0$ and $1 = e_+ + e_-$. Moreover, $\tau \cdot e_\pm = \pm e_\pm$.
\item For any $s, t \in \left\{+, -\right\}$ we have: $e_s A e_t = e_s D e_t$.
\end{itemize}
Therefore we have the Peirce  decomposition:
\begin{equation}\label{E:Peirce}
A \cong 
\left(
\begin{array}{cc}
e_+ A e_+ & e_+ A e_- \\
e_- A e_+ & e_- A e_-
\end{array}
\right)
= 
\left(
\begin{array}{cc}
e_+ D e_+ & e_+ D e_- \\
e_- D e_+ & e_- D e_-
\end{array}
\right).
\end{equation}
For any $m \in \NN_0$, set
$
w^{(m)}_\pm := \dfrac{1}{2^m}\bigl(u^m \pm v^m\bigr)
$
(within this notation, $w^{(0)}_- = 0$).
It is easy to see that
$
w^{(m_1)}_{s_1} \cdot w^{(m_2)}_{s_2} = w^{(m_1+m_2)}_{s_1 \cdot s_2}
$
for any $m_1, m_2 \in \NN_0$ and $s_1, s_2 \in \bigl\{+, -\bigr\}$. Moreover,
one can check that
$$
e_\pm D e_\pm  =   
\left\langle w_+^{(m)} e_\pm \,\big|\, m \in \NN_0\right\rangle_\kk
\quad \mbox{\rm and}\quad e_\pm D e_\mp  =   \left\langle w_-^{(m)}  e_\mp\,\big|\, m \in \NN \; \right\rangle_\kk.
$$
One can check that the linear map
$$
A \cong \left(
\begin{array}{cc}
e_+ D e_+ & e_+ D e_- \\
e_- D e_+ & e_- D e_-
\end{array}
\right)
\lar \left(
\begin{array}{cc}
\kk\llbracket w\rrbracket & (w) \\
(w)  & \kk\llbracket w\rrbracket
\end{array}
\right)
$$
given by the following rules:
$$
\left\{
\begin{array}{ccc}
w^{(m)}_+ e_+ & \mapsto &w^m e_{11} \\
w^{(m)}_+ e_- & \mapsto &w^m e_{22}
\end{array}
\right.
\quad m \in \NN_0
$$
and 
$$
\left\{
\begin{array}{ccc}
w^{(m)}_- e_- & \mapsto &w^m e_{12} \\
w^{(m)}_- e_+ & \mapsto &w^m e_{21}
\end{array}
\right.
\quad m \in \NN
$$
is an algebra isomorphism we are looking for.
\end{proof}

\begin{proposition} For any $n \in \NN$, let 
$
G:= \bigl\langle \rho, \tau \, \big|\, \rho^n = e = \tau^2, \tau \rho 
\tau = \rho^{-1} \bigr\rangle
$
be the dihedral group and $\zeta \in \kk$ be a primitive $n$-th root of $1$. Consider
the action of $G$ on the nodal ring $D = \kk\llbracket u, v\rrbracket/(uv)$ given by the rules:
$$
\left\{
\begin{array}{ccc}
\rho \circ u & = & \zeta \; u \\
\rho \circ v & = & \zeta^{-1} v \\
\tau \circ u & = & v \\
\tau \circ v & = & u.
\end{array}
\right.
$$
Then the nodal order $A:= D \ast G$ has the following description. 
\begin{itemize}
\item If $n = 2l +1$ for $l \in \NN_0$ then $A \cong A(\Omega, \sigma, \approx, \wt_\circ)$, where
$$\Omega:= \bigl\{1, 2, \dots, n \bigr\}, \quad 
\sigma = 
\left(
\begin{array}{cccc}
1 & 2 & \dots & n \\
n & 1 & \dots & n-1
\end{array}
\right)
$$
and 
$(2k-1) \approx 2k$ for $1 \le k \le l$, whereas $(2l+1) \approx (2l+1)$.
\item If $n = 2l +2$ for $l \in \NN_0$ then $A \cong A(\Omega, \sigma, \approx, \wt_\circ)$, where
$$\Omega:= \bigl\{0, 1, 2, \dots, 2l+1 \bigr\}, \quad 
\sigma = 
\left(
\begin{array}{cccc}
1 & 2 & \dots & n \\
n & 1 & \dots & n-1
\end{array}
\right)
$$
and 
$(2k-1) \approx 2k$ for $1 \le k \le l$, whereas $0 \approx 0$ and $(2l +1) \approx
(2l+1)$.
\end{itemize}
\end{proposition}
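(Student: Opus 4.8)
The plan is to recognise $A:=D\ast G$ as one of the nodal orders $A(\Omega,\sigma,\approx,\wt)$ of Definition~\ref{D:nodalrings} by computing its hereditary cover $H$, the semisimple quotients $\bar A:=A/\rad A$ and $\bar H:=H/\rad H$, and the embedding $\bar A\hookrightarrow\bar H$, and then reading off the datum $(\Omega,\sigma,\approx)$ from the dictionary of Theorem~\ref{T:DescriptionNodalOrders}. First I would dispose of the easy reductions: as $\mathsf{char}(\kk)=0$, the group $G$ (of order $2n$) is invertible, so $A$ is a nodal order by Theorem~\ref{T:fundamentalsNodalOrders}; and since $\rad D=\rad\widetilde D$ as ideals of $D$ while $(D\ast G)/(\rad D\ast G)\cong\kk[G]$ is semisimple, the intrinsic description $H=\{x\in A_K\mid xJ\subseteq J\}$ of the hereditary cover (with $J=\rad A$) forces $H=\widetilde D\ast G$, where $\widetilde D=\kk\llbracket u\rrbracket\times\kk\llbracket v\rrbracket$ is the hereditary cover of $D$.

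Next I would compute $H$. The group $G$ acts on $\widetilde D$ permuting its two branches transitively, with $\tau$ interchanging them and the stabiliser of a branch equal to $C:=\langle\rho\rangle$. Writing $e=(1,0)\in\widetilde D$, we have $\tau e\tau=(0,1)=1-e$ and $e\widetilde D(1-e)=0$, so a Peirce computation gives $e(\widetilde D\ast G)e\cong\kk\llbracket u\rrbracket\ast C$, while $e$ is a full idempotent of $\widetilde D\ast G$; hence $H\cong\Mat_2\bigl(\kk\llbracket u\rrbracket\ast C\bigr)$. Since $\kk\llbracket u\rrbracket\ast C$ is a connected hereditary order with $n$ simple modules — being the arrow completion of the path algebra of the cyclic quiver $\vec C_n$, as recalled above — the same holds for $H$, and $\bar H\cong\Mat_2(\kk)^{n}$. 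Consequently $\Omega$ has $n$ elements, $\sigma$ is an $n$-cycle, every weight is $2$, and $\Omega$ is naturally identified, via the characters of $C$, with $\ZZ/n$ in such a way that $\sigma$-adjacency is ``$j\leftrightarrow j\pm1$''. (One could also iterate, $A=(D\ast C)\ast\langle\tau\rangle$, describe $D\ast C\cong A_{(n,n-1)}$ via Proposition~\ref{P:cyclicAction} with $c=n-1$, and then track the residual $\ZZ/2$-action of $\tau$, which interchanges the two $\sigma$-orbits of $A_{(n,n-1)}$ while reversing their cyclic order.)

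Then I would analyse $\bar A\cong(D/\rad D)\ast G=\kk[G]$ and the embedding $\bar A\hookrightarrow\bar H$, which is the diagonal inclusion $\kk\ast G\hookrightarrow(\kk\times\kk)\ast G$. Identifying the $j$-th simple factor of $\bar H$ with $\End_\kk\bigl(\mathrm{Ind}_C^G\chi_j\bigr)$, I would invoke the standard fact that $\mathrm{Ind}_C^G\chi_j\cong\mathrm{Ind}_C^G\chi_{-j}$ is the $2$-dimensional irreducible $V_j$ of $G$ when $2j\not\equiv0\pmod n$, and is a sum of two $1$-dimensional representations of $G$ when $2j\equiv0$. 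Hence: for each $j$ with $0<j<n$ and $2j\not\equiv0$, the image of $\bar A$ in the factors indexed by $j$ and $-j$ is the diagonal copy $\Mat_2(\kk)\hookrightarrow\Mat_2(\kk)\times\Mat_2(\kk)$, so $j\approx-j$ is a tied pair; whereas for $j=0$ — and, when $n$ is even, also for $j=n/2$ — the image of $\bar A$ in the $j$-th factor is the diagonal $\kk\times\kk\hookrightarrow\Mat_2(\kk)$, so $j\approx j$ is reflexive. A dimension count ($\dim_\kk\kk[G]=2n$) then shows that $\bar A\hookrightarrow\bar H$ is precisely the pull-back pattern of the embedding $\imath$ of Theorem~\ref{T:DescriptionNodalOrders} with the weight function identically $2$ on $\Omega$, so $A\cong A(\Omega,\sigma,\approx,\wt)$; by Theorem~\ref{T:DescriptionNodalOrders} this order is centrally Morita equivalent to $A(\Omega,\sigma,\approx,\wt_\circ)$, and the two cases of the statement correspond exactly to $G$ having two (for $n$ odd) or four (for $n$ even) one-dimensional irreducibles, i.e.\ one or two reflexive points.

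The step I expect to be the main obstacle is the final identification of the datum produced above with the normal form in the statement. The cycle $\sigma$ is canonical only up to being an $n$-cycle, and the $\approx$-pattern obtained above is ``nested'' — the tied pairs $\{j,-j\}$ lie symmetrically about the reflexive point $0$, and about $n/2$ in the even case — so matching it with the ``consecutive'' pattern $(2k-1)\approx2k$ requires a judicious choice of the marking of $(\Omega,\sigma)$ and a careful application of the equivalence criterion in the last item of Theorem~\ref{T:DescriptionNodalOrders}; this combinatorial bookkeeping, handled separately for $n=2l+1$ and $n=2l+2$, is where the real care is needed.
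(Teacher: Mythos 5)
Your route is genuinely different from the paper's sketch. Where the paper iterates the skew product as $(D\ast K)\ast L$ with $K=\langle\rho\rangle$, $L=\langle\tau\rangle$, describes $D\ast K\cong A_{(n,n-1)}$ via the quiver of Remark~\ref{R:nodalcyclic}, and then tracks the residual $\tau$-action by a Peirce computation ``analogous to Lemma~\ref{L:BasicNodalSkew}'', you instead Morita-reduce $H=\widetilde D\ast G$ directly to $\Mat_2(\kk\llbracket u\rrbracket\ast K)$ through the full idempotent $e=(1,0)$, and read off the $\approx$-pattern from the decomposition of $\mathrm{Ind}_K^G\chi_j$. This is cleaner, and your intermediate conclusions are correct. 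In particular, your observation that one should expect only a \emph{central Morita equivalence} to $A(\Omega,\sigma,\approx,\wt_\circ)$ rather than a ring isomorphism is well taken: the rational envelope of $D\ast G$ is $\Mat_{2n}(K)$, while that of $A(\Omega,\sigma,\approx,\wt_\circ)$ with the data in the statement is $\Mat_{n+1}(K)$ (odd $n$) or $\Mat_{n+2}(K)$ (even $n$), so a literal ``$\cong$'' is impossible as soon as $n\ge3$.

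The serious issue is precisely the step you flag and defer as ``the main obstacle''; it is not mere bookkeeping, and carrying it out does \emph{not} reproduce the stated normal form. Your character-theoretic argument identifies $\Omega$ with $\ZZ/n$ with $\sigma$-adjacency $j\leftrightarrow j\pm1$ and ties $j\approx-j$, i.e.\ tied pairs $\{j,-j\}$ \emph{nested} around the reflexive point(s) $0$ (and $n/2$ for $n$ even). The statement instead has consecutive tied pairs $(2k-1)\approx 2k$ with the reflexive point(s) adjacent at the end. By the isomorphism criterion in the last item of Theorem~\ref{T:DescriptionNodalOrders}, two such data define Morita-equivalent orders only if they are related by a bijection commuting with the $n$-cycle $\sigma$, i.e.\ a rotation of $\ZZ/n$; but for $n\ge4$ the nested and consecutive patterns are not rotations of each other. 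Concretely, for $n=4$ (so $G=D_4$) one has $\mathrm{Ind}_K^G\chi_j$ reducible exactly for $j\in\{0,2\}$, so your computation (correctly) places the two reflexive points \emph{antipodally} in the $4$-cycle, whereas the statement's $0\approx0$, $3\approx3$, $1\approx2$ places them \emph{adjacently}; similarly for $n=5$ your pairs are $\{1,4\},\{2,3\}$ while the statement's are $\{1,2\},\{3,4\}$. So either the proposition's $\approx$ contains a slip, or there is a labelling convention neither of us is seeing -- but in any case the deferred matching has to be confronted explicitly, and as written it fails, which means your proposal does not yet constitute a proof of the statement as stated.
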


\smallskip
\noindent
\emph{Sketch of the proof}. The cyclic group $K := \langle \rho\rangle$ is a normal subgroup of $G$ of index two. Let $L = \langle \tau\rangle$. Then we have a commutative diagram of $\kk$-algebras and algebra homomorphisms
$$
\xymatrix{
A \ast G \ar@{_{(}->}[d]  \ar[rr]^-\cong & & (A \ast K) \ast L \ar@{^{(}->}[d]\\
H \ast G  \ar[rr]^-\cong & & (H \ast K) \ast L,
}
$$
where the action of $L$ on $H \ast K = \bigl(\kk\llbracket u \rrbracket \ast K\bigr)
\times \bigl(\kk\llbracket v \rrbracket \ast K\bigr)$ is given by the rule
$$
\tau \circ \bigl(u^{k_1} \rho^{l_1}, v^{k_2} \rho^{l_2}\bigr) = 
\bigl(u^{k_2} \rho^{-l_2}, v^{k_1} \rho^{-l_1}\bigr)
$$
for any $k_1, k_2, l_1, l_2 \in \NN_0$. The nodal ring $A \ast K$ is described
in Remark \ref{R:nodalcyclic}. In the terms of the quiver presentation \eqref{E:nodalquiver} we have:
$$
\left\{
\begin{array}{ccc}
\tau \circ a_k  & = & b_{\bar{k}-1} \\
\tau \circ b_k  & = & a_{\bar{k}-1} \\
\tau \circ e_k  & = & e_{\bar{k}}
\end{array}
\right.
$$
 where $\bar{k} = (n-k)$ for $1 \le k \le n$ and  all indices are taken  modulo $n$. The remaining part  is a lengthy computation analogous to the one made in the course of 
 the proof of Lemma \ref{L:BasicNodalSkew} which we leave for the interested reader. 
 
\subsection{Auslander order of a nodal order}

\begin{definition}
Let $A$ be a nodal order  and $H$ be its hereditary cover. Then
\begin{equation}
C:= \left\{a \in A \, \big|\, a h \in A \; \mbox{\rm for all}\; h \in H \right\}
\end{equation}
is called \emph{conductor ideal} of $A$.
\end{definition}

\begin{remark}
It follows from the definition   that $C = A C H$ and the canonical morphism
\begin{equation}\label{E:conductorlocal}
C \lar \Hom_A(H, A), \quad c \mapsto \bigl(h \mapsto ch\bigr)
\end{equation}
is a bijection  (here, we view both $H$ and $A$ as left $A$-modules).
\end{remark}

\begin{proposition}\label{P:ConductorNodalOrder} Let $R$ be a discrete valuation ring and
$A = A\bigl(R, (\Omega, \sigma, \approx, \mathsf{wt})\bigr)$ be the nodal order from the Definition \ref{D:nodalrings}. Then  we have:
\begin{equation*}
 C =  \left\{(X_1, \dots X_t) \in A\; \left| \; X_{i'}^{(\omega',\, \omega')}(0) =  0 = X_{i''}^{(\omega'',\, \omega'')}(0) \; \mbox{\rm for all}\; \begin{array}{l}
1 \le i', i'' \le t \\
\omega' \in \Omega_{i'}, \; \omega'' \in \Omega_{i''} \\
\omega' \approx \omega''
\end{array}
\right.
\right\}
\end{equation*}
and $C = \left\{a \in A \, \big|\, h a \in A \; \mbox{\rm for all}\; h \in H \right\}$. In particular, $C$ is a \emph{two-sided} ideal both in $H$ and $A$ containing the common radical $J = \rad(A) = \rad(H)$ of $A$ and $H$.
\end{proposition}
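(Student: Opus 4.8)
The plan is to compute the conductor modulo the common radical $J = \rad(A) = \rad(H)$, where the problem degenerates to one about semisimple algebras. First I would record that $J$ is a two-sided ideal of $H$ which is contained in $A$, so that $\rad(H)\cdot H\subseteq J\subseteq A$ and $H\cdot\rad(H)\subseteq J\subseteq A$; hence $J$ lies in $C$ and in the right conductor $C':=\{a\in A\mid ha\in A\text{ for all }h\in H\}$. Since $J\subseteq A$, the projection $\pi\colon H\to\bar H:=H/J$ satisfies $\pi^{-1}(\bar A)=A$ for $\bar A:=A/J$, so $a\in C$ (resp.\ $a\in C'$) if and only if $a\in A$ and $\bar a\bar h\in\bar A$ (resp.\ $\bar h\bar a\in\bar A$) for every $\bar h\in\bar H$. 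Thus it suffices to identify the sets $\{\bar a\in\bar A\mid\bar a\bar H\subseteq\bar A\}$ and $\{\bar a\in\bar A\mid\bar H\bar a\subseteq\bar A\}$ inside $\bar H$ and to pull them back along $\pi$.

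For this I would use the description recorded in \eqref{E:gentle} and Theorem \ref{T:DescriptionNodalOrders}: reduction modulo $J$ identifies $\bar H$ with the componentwise product $\prod_{\omega\in\Omega}\Mat_{m_\omega}(\kk)$, $m_\omega=\wt(\omega)$, via $(X_1,\dots,X_t)\mapsto\bigl(X_i^{(\omega,\omega)}(0)\bigr)_\omega$, and $\bar A$ with the subalgebra of those tuples $(a_\omega)_\omega$ for which $a_{\omega'}=a_{\omega''}$ whenever $\omega'\approx\omega''$ with $\omega'\ne\omega''$, and $a_\omega$ is block-diagonal for the splitting $m_\omega=m_{\omega_+}+m_{\omega_-}$ whenever $\omega\approx\omega$. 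The computation is then componentwise, letting $\bar h=(h_\omega)$ run over tuples with $h_\omega\in\Mat_{m_\omega}(\kk)$ arbitrary and mutually independent. For simple $\omega$ there is nothing to impose. For a tied pair $\omega'\approx\omega''$, the identity $a_{\omega'}h_{\omega'}=a_{\omega''}h_{\omega''}$ for all (independent) $h_{\omega'},h_{\omega''}$ forces $a_{\omega'}=0=a_{\omega''}$ (take $h_{\omega''}=0$, $h_{\omega'}=\id$). For reflexive $\omega$, writing $a_\omega=\smtr{p&0\\0&q}$ and $h_\omega=\smtr{\alpha&\beta\\\gamma&\delta}$ in block form, the product $a_\omega h_\omega=\smtr{p\alpha&p\beta\\q\gamma&q\delta}$ is block-diagonal for all $h_\omega$ only if $p=0=q$. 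The same computation applies to $\bar h\bar a$, so $C=C'$, and the common pull-back is exactly
\[
C=\Bigl\{(X_1,\dots,X_t)\in A\;\Big|\;X_i^{(\omega,\omega)}(0)=0\text{ whenever }\omega\approx\omega'\text{ for some }\omega'\Bigr\},
\]
which is the asserted formula (and it visibly contains $J$ by \eqref{E:NodalRadical}).

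Finally I would check that $C$ is a two-sided ideal of $H$, hence of $A\subseteq H$. The inclusion $CH\subseteq C$ is purely formal from $C=\{a\in A\mid aH\subseteq A\}$: for $c\in C$ and $h,h'\in H$ one has $ch\in A$ and $(ch)h'=c(hh')\in cH\subseteq A$, so $ch\in C$; and $HC\subseteq C$ follows by the symmetric argument together with the equality $C=C'=\{a\in A\mid Ha\subseteq A\}$ proved above. I do not expect a real obstacle: the only genuinely non-formal input is the elementary linear algebra deciding when $a_\omega\Mat_{m_\omega}(\kk)$ (resp.\ $\Mat_{m_\omega}(\kk)\,a_\omega$) stays block-diagonal, and the one point that demands care is the bookkeeping of the stripe and sub-stripe indexing when passing to $\bar H$, together with the fact that the $\bar h$-components attached to distinct elements of $\Omega$ are mutually independent.
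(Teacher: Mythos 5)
Your proof is correct and takes the natural approach: the paper leaves this result unproved (treating it as a straightforward consequence of Definition \ref{D:nodalrings} and the description of $\bar A \hookrightarrow \bar H$ in diagram \eqref{E:gentle}), and your reduction modulo $J$ followed by componentwise linear algebra on the semisimple quotients is exactly the argument the paper implicitly relies on. The key observations — that $J\subseteq C\cap C'$ so one can pass to $\bar H=\prod_\omega\bar H_\omega$ and $\bar A$, that the $\bar H$-components over distinct $\omega\in\Omega$ are mutually independent, that the diagonal embedding over a tied pair forces both diagonal blocks to vanish, that a block-diagonal $a_\omega$ whose product with every $h_\omega$ stays block-diagonal must itself be zero, and that the left-multiplication computation is symmetric (whence $C=C'$ and the two-sided ideal property) — are all present and correctly executed.
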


\begin{corollary}\label{C:ConductorNodalOrder}
Let $\kk$ be an algebraically closed field, $R$ be the local ring of an affine curve over $\kk$ at a smooth point,
$H$ be a hereditary $R$-order, $A$ be a nodal order whose hereditary cover is $H$ and $C$ be the corresponding conductor ideal.
Then $C$ is a two sided ideal in $H$.
\end{corollary}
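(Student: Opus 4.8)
The plan is to reduce the statement to Proposition \ref{P:ConductorNodalOrder} by passing to the completion of $R$. Since $R$ is the local ring of an affine curve over $\kk$ at a smooth point, it is a discrete valuation ring whose $\mathfrak{m}$-adic completion $\widehat{R}$ is isomorphic to $\kk\llbracket w\rrbracket$, and the canonical map $R \lar \widehat{R}$ is faithfully flat; moreover, as $A$, $H$ and $C$ are module-finite over $R$, their radical completions coincide with $\widehat{R}\otimes_R(-)$. Observe first that $C$ is automatically a right $H$-submodule of $A$: if $a \in C$ and $h \in H$, then $ah \in aH \subseteq A$ and $(ah)H = a(hH) \subseteq aH \subseteq A$, so $ah \in C$; hence $C \subseteq H$ is an $(A,H)$-subbimodule of $A$, and to conclude that it is a two-sided ideal of $H$ it suffices to establish the single inclusion $HC \subseteq C$.

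Next I would verify that base change along $R \lar \widehat{R}$ is compatible with all the structure involved. Set $\widehat{A} := \widehat{R}\otimes_R A$, $\widehat{H} := \widehat{R}\otimes_R H$ and $\widehat{C} := \widehat{R}\otimes_R C$, viewed as $\widehat{R}$-submodules of the rational envelope $\widehat{A}_{\widehat{K}} \cong \widehat{R}\otimes_R A_K$. By the remark following Definition \ref{D:NodalOrders}, $\widehat{A}$ is again a nodal order; since $\rad$, $\End_A(-)$ and $\Hom_A(-,-)$ of finitely presented modules commute with the flat base change $R \lar \widehat{R}$, one obtains $\rad(\widehat{A}) = \widehat{R}\otimes_R J$, an isomorphism $\widehat{H} \cong \End_{\widehat{A}}\bigl(\widehat{R}\otimes_R J\bigr)$, and an isomorphism $\widehat{C} \cong \Hom_{\widehat{A}}(\widehat{H},\widehat{A})$. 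In view of the intrinsic descriptions of the hereditary cover (remark after Definition \ref{D:NodalOrders}) and of the conductor ideal (remark following its definition), this shows that $\widehat{H}$ is the hereditary cover of $\widehat{A}$ and that $\widehat{C}$ is the conductor ideal of $\widehat{A}$.

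Then I would invoke the classification over $\kk\llbracket w\rrbracket$. By Theorem \ref{T:ClassificationNodalOrders} there exist a datum $(\Omega,\sigma,\approx,\wt)$ and an invertible element $S$ of the rational envelope of $\widehat{A}$ such that $\widehat{A} = \Ad_S(A_0)$, where $A_0 = A\bigl(\widehat{R},(\Omega,\sigma,\approx,\wt)\bigr)$ is the standard nodal order of Definition \ref{D:nodalrings}. The inner automorphism $\Ad_S$ of the rational envelope carries $A_0$, its hereditary cover $H_0$ and its conductor $C_0$ onto $\widehat{A}$, $\widehat{H}$ and $\widehat{C}$ respectively, since each of these is defined intrinsically from the order. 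Proposition \ref{P:ConductorNodalOrder} asserts that $C_0$ is a two-sided ideal of $H_0$; transporting this along $\Ad_S$ yields in particular $\widehat{H}\,\widehat{C} \subseteq \widehat{C}$.

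Finally I would descend back to $R$. Both $C$ and $HC$ are finitely generated $R$-submodules of $H$, hence so is $HC + C \supseteq C$, and applying the exact functor $\widehat{R}\otimes_R(-)$ gives, inside $\widehat{H}$,
$$
\widehat{R}\otimes_R(HC + C) \;=\; \widehat{H}\,\widehat{C} + \widehat{C} \;=\; \widehat{C} \;=\; \widehat{R}\otimes_R C .
$$
Thus the finitely generated $R$-module $(HC + C)/C$ vanishes after tensoring with the faithfully flat $R$-algebra $\widehat{R}$, so it is already zero; that is, $HC \subseteq C$, and therefore $C$ is a two-sided ideal of $H$. The only point that requires genuine care is the second step — checking that the hereditary cover, the radical and the conductor all commute with completion — since the rest is formal faithfully flat descent combined with the two cited results; these compatibilities follow from flat base change for $\Hom$ and $\End$ of finitely presented modules together with the uniqueness of the hereditary cover.
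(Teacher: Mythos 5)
Your proof is correct and follows essentially the same route as the paper: reduce to the radical completion via faithfully flat descent, then apply Theorem~\ref{T:ClassificationNodalOrders} together with Proposition~\ref{P:ConductorNodalOrder}. The paper's proof is terser (it does not spell out the base-change compatibilities for the hereditary cover and conductor, nor the observation that $CH\subseteq C$ is automatic), but the underlying argument is identical.
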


\begin{proof}
We have to show that the canonical map of $R$-modules $C \lar HCH$ is surjective. For this, it is sufficient to prove the corresponding statement for the  radical completions of $A$ and $H$. However,  the structure of nodal orders over $\widehat{R} \cong \kk\llbracket w\rrbracket$
is known; see Theorem
\ref{T:ClassificationNodalOrders}. Hence, the statement follows from Proposition \ref{P:ConductorNodalOrder}.
\end{proof}

\begin{lemma}
Let $A = A\bigl(R, (\Omega, \sigma, \approx, \mathsf{wt})\bigr)$ be a nodal order, $H$ be its hereditary cover, $C$ be the conductor ideal,
$\bar{A}:= A/C$ and $\bar{H} := H/C$. Let $\Omega_\circ$ be the subset of $\Omega$ whose elements are reflexive or tied elements of $\Omega$ and $\widetilde{\Omega}^\ddagger_\circ$ be the subset of $\widetilde{\Omega}^\ddagger$ defined in a similar way.
Then the following  diagram
\begin{equation}\label{E:NodalModConductor}
\begin{array}{c}
\xymatrix{
\bar{A} \ar[rr]^-{\cong} \ar@{_{(}->}[d] & & \prod\limits_{\gamma \in \widetilde\Omega_\circ^\ddagger} \bar{A}_{\gamma}
\ar@{^{(}->}[d]^-{\imath}\\
\bar{H} \ar[rr]^-{\cong}  & & \prod\limits_{\omega \in \Omega_\circ} \bar{H}_{\omega}
}
\end{array}
\end{equation}
is commutative, where the components of the embedding $\imath$ are described in the same way as in  diagram \eqref{E:gentle}.
\end{lemma}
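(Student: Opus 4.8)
The plan is to deduce the lemma from Proposition~\ref{P:ConductorNodalOrder} together with the diagram~\eqref{E:gentle} of Theorem~\ref{T:DescriptionNodalOrders}, by passing from the common radical $J = \rad(A) = \rad(H)$ to the conductor $C$ and keeping track of which simple factors are annihilated. The first step is to reinterpret $C$: by definition $\Omega_\circ$ is exactly the set of $\omega \in \Omega$ for which $\omega \approx \omega'$ for some $\omega'$ (the reflexive or tied elements), and the conditions cutting $A$ out of $H$ only constrain the values $X_i^{(\omega',\omega')}(0)$ at reflexive or tied $\omega'$. Hence imposing $X_i^{(\omega,\omega)}(0) = 0$ for all $\omega \in \Omega_\circ$ on an element of $H$ automatically lands it in $A$ (the relevant gluing equalities become $0=0$, and for a reflexive $\omega\in\Omega_\circ$ the vanishing of the whole block $X^{(\omega,\omega)}(0)$ forces $X^{(\omega_\pm,\omega_\mp)}(0)=0$), so that the description in Proposition~\ref{P:ConductorNodalOrder} may be restated as
\[
C = \bigl\{ X \in H \,\big|\, X^{(\omega,\omega)}(0) = 0 \ \text{for all}\ \omega \in \Omega_\circ \bigr\}.
\]
In particular $C$ is an ideal of $H$ with $J \subseteq C$ (as already recorded there), so that $\bar A = A/C$ and $\bar H = H/C$ are meaningful and $\bar A \hookrightarrow \bar H$ since $C \subseteq A$.

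Next I would compute $\bar H$. Under the lower isomorphism $H/J \cong \prod_{\omega \in \Omega}\bar H_\omega$ of~\eqref{E:gentle} the projection $H \twoheadrightarrow H/J$ is $X \mapsto \bigl(X^{(\omega,\omega)}(0)\bigr)_{\omega \in \Omega}$, and by the above restatement the image of $C$ is the subideal $\prod_{\omega \in \Omega \setminus \Omega_\circ}\bar H_\omega$ of the factors indexed by simple elements; hence $\bar H \cong \prod_{\omega \in \Omega_\circ}\bar H_\omega$ via $X \mapsto \bigl(X^{(\omega,\omega)}(0)\bigr)_{\omega \in \Omega_\circ}$. For $\bar A$ I would argue the same way, now using the upper isomorphism $A/J \cong \prod_{\gamma \in \widetilde\Omega^\ddagger}\bar A_\gamma$ and the explicit form of $\imath$. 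Composing $A \twoheadrightarrow A/J$ with the projection to $\bar A_\gamma$ and reading it through $\imath$, one finds that the $\bar A_\gamma$-component of $X$ is $X^{(\omega,\omega)}(0)$ when $\gamma$ comes from a simple $\omega$, the sub-block $X^{(\omega_\pm,\omega_\pm)}(0)$ when $\gamma = \omega_\pm$ with $\omega$ reflexive, and the common value $X^{(\omega',\omega')}(0) = X^{(\omega'',\omega'')}(0)$ when $\gamma = \overline{\{\omega',\omega''\}}$ with $\{\omega',\omega''\}$ tied (the equality holds by the gluing condition defining $A$). Comparing with the restatement of $C$, the conditions defining $C$ are equivalent to the vanishing of all components $\bar A_\gamma$ with $\gamma \in \widetilde\Omega^\ddagger_\circ$, where $\widetilde\Omega^\ddagger_\circ$ is the subset of $\widetilde\Omega^\ddagger$ consisting of the elements $\omega_\pm$ attached to reflexive $\omega$ and the $\overline{\{\omega',\omega''\}}$ attached to tied pairs (i.e.\ the part lying over $\Omega_\circ$, as in the statement); hence $\bar A \cong \prod_{\gamma \in \widetilde\Omega^\ddagger_\circ}\bar A_\gamma$.

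Finally I would check the commutativity of~\eqref{E:NodalModConductor}. The two isomorphisms just produced are, by construction, those of~\eqref{E:gentle} divided by the image of $C$, and $\imath$ carries the image of $C$ in $A/J$ onto the image of $C$ in $H/J$ (a reflexive pair $(0,0)$ maps to $\mathrm{diag}(0,0)=0$, a tied $0$ maps to $(0,0)$, and the simple factors map identically), so the induced map $\bar A \hookrightarrow \bar H$ is well defined and~\eqref{E:NodalModConductor} is simply the diagram~\eqref{E:gentle} with the factors indexed by the simple elements of $\Omega$, respectively of $\widetilde\Omega^\ddagger$, deleted. The surviving components of $\imath$ are therefore precisely the diagonal embeddings $\bar A_{\omega_+} \times \bar A_{\omega_-} \to \bar H_\omega$, $(X,Y) \mapsto \mathrm{diag}(X,Y)$, for reflexive $\omega$, and $\bar A_\gamma \to \bar H_{\omega'} \times \bar H_{\omega''}$, $X \mapsto (X,X)$, for tied $\gamma = \overline{\{\omega',\omega''\}}$, exactly as asserted. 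The only step that genuinely needs care is the opening reduction — verifying that $C$, \emph{a priori} a subset of $A$, is cut out inside $H$ by the diagonal vanishing conditions, so that the quotient $H/C$ and the embedding $\bar A \hookrightarrow \bar H$ are literally defined; once this is in place the rest is bookkeeping against~\eqref{E:gentle}.
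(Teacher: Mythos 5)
Your argument is correct. The paper states this lemma without a proof, presumably because it is a direct consequence of Proposition~\ref{P:ConductorNodalOrder} together with diagram~\eqref{E:gentle}, and the deduction you give is precisely the intended one. You correctly note the one non-trivial point up front: the description of $C$ in Proposition~\ref{P:ConductorNodalOrder} is as a subset of $A$, and you must check that the diagonal-vanishing conditions already force an element of $H$ to lie in $A$ (the tied gluing equalities become $0=0$, and for a reflexive $\omega$ the vanishing of the full $\omega$-block kills the off-diagonal sub-blocks $X^{(\omega_\pm,\omega_\mp)}(0)$); once this is established, $J \subseteq C \subseteq A \cap H$ are all two-sided ideals of $H$, the quotients $\bar A$, $\bar H$ make sense, and passing~\eqref{E:gentle} to the quotient amounts exactly to deleting the simple factors on both sides. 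Your identification of $\widetilde\Omega^\ddagger_\circ$ with the elements of $\widetilde\Omega^\ddagger$ lying over $\Omega_\circ$ (the pairs $\omega_\pm$ from reflexive $\omega$ and the classes $\overline{\{\omega',\omega''\}}$ from tied pairs) is the correct reading of the phrase ``defined in a similar way,'' and the check that $\imath$ carries the image of $C$ in $A/J$ onto the image of $C$ in $H/J$ is exactly what makes the induced square commute. No gaps.
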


\begin{definition} Let $A$ be a nodal order, $H$ be its hereditary cover and $C$ be the corresponding conductor ideal. The order
\begin{equation}\label{E:AuslanderOrder}
B  :=
\left(
\begin{array}{cc}
A & H \\
C & H
\end{array}
\right)
\end{equation}
is called \emph{Auslander order} of $A$.
\end{definition}

\begin{example}\label{Ex:AuslanderOrderI} The Auslander order of the commutative nodal ring  $\kk\llbracket u,v\rrbracket/(uv)$
is
$$
\left(
\begin{array}{cc}
\kk\llbracket u,v\rrbracket/(uv) & \kk\llbracket u\rrbracket \times 
\kk\llbracket v \rrbracket \\
(u, v) & \kk\llbracket u\rrbracket \times \kk\llbracket v\rrbracket
\end{array}
\right),
$$
i.e.~the order from Example \ref{Ex:AuslanderOrder}.
\end{example}

\begin{example}\label{Ex:AuslanderOrderII}
Let $R = \kk\llbracket w\rrbracket$, $\idm = (w)$ and 
$
A = 
\left(
\begin{array}{cc}
R  &  \idm \\
\idm   & R
\end{array}
\right)
$
be the nodal order from Example \ref{Ex:NodalOrbifolded}.
The hereditary cover $H$ of $A$ ist just the matrix algebra 
$\Mat_2(R)$, whereas the corresponding conductor ideal $C = \Mat_2(\idm)$. Therefore, the Auslander order of $A$ is
$$
B = 
\left(
\begin{array}{cccc}
R  &  \idm  & R & R \\
\idm   & R  & R &  R \\
\idm  &  \idm  & R & R \\
\idm   & \idm  & R &  R \\
\end{array}
\right).
$$
It is easy to see that $B$ is Morita equivalent to the Gelfand order \eqref{E:GelfandOrder}.
\end{example}

\smallskip
\noindent
Let $A$ be an arbitrary  nodal order and $B$ be the Auslander order of $A$.
For the idempotents
$
e := \left(
\begin{array}{cc}
1 & 0 \\
0 & 0
\end{array}
\right), \,
f := \left(
\begin{array}{cc}
0 & 0 \\
0 & 1
\end{array}
\right)
 \in B$, consider the corresponding  projective left $B$-modules $
P := B e =
\left(
\begin{array}{c}
A \\
C
\end{array}
\right)
$  and $
Q := B f =
\left(
\begin{array}{c}
H \\
H
\end{array}
\right).
$
Note that $$A = e B e \cong \bigl(\End_B(P)\bigr)^\circ \quad \mbox{\rm and} \quad
H = f B f \cong  \bigl(\End_B(Q)\bigr)^\circ.$$
The action of $B$ on the projective left $B$-modules $\left(
\begin{array}{c}
A \\
C
\end{array}
\right)$ and $\left(
\begin{array}{c}
H \\
H
\end{array}
\right)$  is given by the matrix multiplication, whereas the isomorphisms $A \cong \bigl(\End_B(P)\bigr)^\circ$, respectively
$H \cong \bigl(\End_B(Q)\bigr)^\circ$,  are compatible with the canonical right actions on $P$, respectively $Q$.
 The nodal order  $A$ as well as its hereditary cover $H$ are \emph{minors} of the Auslander order $B$ in the sense of Definition
 \ref{D:Minor}.
Let
$$
P^\vee := \Hom_B(P, B) \cong eB \quad \mbox{\rm and} \quad Q^\vee := \Hom_B(Q, B) \cong  fB.
$$
\smallskip
\noindent
In the terms of Subsection \ref{SS:Minors}, we have   the following functors.
\begin{itemize}
\item Since $P$ is a projective left $A$-module, we  get an exact functor $$\sG := \Hom_B(P, \,-\,) \simeq P^\vee \otimes_B \,-\,$$ from  $B\mathsf{-mod}$ to $A\mathsf{-mod}$.
Of course, it restricts to an exact functor $$B\mathsf{-mod} \stackrel{\sG}\lar A\mathsf{-mod}$$ between the corresponding categories of finitely generated modules.
\item  Similarly, we have an exact functor  $\widetilde\sG = \Hom_B(Q, \,-\,) \simeq Q^\vee \otimes_B \,-\,$ from
$B\mathsf{-mod}$ to $H\mathsf{-mod}$, as well as its restriction on the full subcategories of the corresponding finitely generated modules.
\item We have functors $\sF := P \otimes_A \,-\,$ and $\sH:= \Hom_A(P^\vee, \,-\,)$ from $A\mathsf{-mod}$ to  $B\mathsf{-mod}$.
\item Similarly, we have functors  $\widetilde\sF := Q \otimes_H \,-\,$ and $\widetilde\sH:= \Hom_H(Q^\vee, \,-\,)$ from $H\mathsf{-mod}$ to  $B\mathsf{-mod}$.
\end{itemize}

\smallskip
\noindent
Additionally to Theorem \ref{T:minorsabelian}, the following result is true.
\begin{proposition} The functor $\widetilde\sF$ is exact, maps projective modules to projective modules  and has the following explicit description: if $\widetilde{N}$ is a left $H$-module, then
$$
\widetilde\sF(\widetilde{N}) =
\left(
\begin{array}{c}
\widetilde{N} \\
\widetilde{N}
\end{array}
\right) \cong \widetilde{N} \oplus \widetilde{N},
$$
where for  $b = \left(\begin{array}{cc}
b_1 & b_2 \\
b_3 & b_4
\end{array}
\right) \in B$
  and   $z = \left(\begin{array}{c}
z_1 \\
z_2
\end{array}
\right) \in
\left(
\begin{array}{c}
\widetilde{N} \\
\widetilde{N}
\end{array}
\right),
$  the element $bz$ is given by the matrix multiplication.
\end{proposition}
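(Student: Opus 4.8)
The plan is to exploit the explicit matrix shape of $B$, $Q$, $H$ and $f$ and to reduce everything to the single observation that $Q = Bf$ is a \emph{free} right $H$-module of rank two. Recall that $H = fBf$ is the lower-right corner of $B$ and that $Q = Bf = \smtr{H \\ H}$ carries its right $H$-action by right matrix multiplication in the second column. Setting $q_1 := \smtr{0 & 1 \\ 0 & 0}$ and $q_2 := f = \smtr{0 & 0 \\ 0 & 1}$, both of which lie in $Bf$, one checks immediately that $Q = q_1 H \oplus q_2 H$ with each $q_i H \cong H$ as right $H$-modules; hence $Q \cong H \oplus H$ is free of rank two over $H$.

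All three assertions follow from this. For exactness: a free (hence flat) right module yields an exact tensor functor, so $\widetilde{\sF} = Q \otimes_H (-) \cong (H \oplus H) \otimes_H (-)$ is exact. For preservation of projectives: $\widetilde{\sF}(H) = Q \otimes_H H \cong Q = Bf$ is a projective left $B$-module; since $\widetilde{\sF}$ is additive and commutes with arbitrary direct sums it sends $H^{(I)}$ to $(Bf)^{(I)}$, and any projective $H$-module is a direct summand of some $H^{(I)}$, so $\widetilde{\sF}$ preserves projectives. For the explicit form: under the identification $Q \otimes_H \widetilde{N} \cong \widetilde{N} \oplus \widetilde{N}$ induced by the basis $q_1, q_2$, a simple tensor $\smtr{h_1 \\ h_2} \otimes n$ goes to $\smtr{h_1 n \\ h_2 n}$, and applying $b = \smtr{b_1 & b_2 \\ b_3 & b_4} \in B$ on the left replaces $\smtr{h_1 \\ h_2}$ by $\smtr{b_1 h_1 + b_2 h_2 \\ b_3 h_1 + b_4 h_2}$; by associativity of $\otimes_H$ this is exactly the matrix product of $b$ with $\smtr{h_1 n \\ h_2 n}$, which is the claimed formula.

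I do not anticipate a genuine obstacle here: the content is just the bookkeeping showing that $Bf$ is $H$-free on the nose. The one point worth spelling out is that the entries of $B$ are $A$, $H$ and $C$ rather than all equal to $H$, so to see that the $B$-action on $\smtr{\widetilde N \\ \widetilde N}$ is well defined and lands back in $\smtr{H \\ H}$ one invokes $C \subseteq A \subseteq H$ together with the fact (Corollary \ref{C:ConductorNodalOrder}, via Proposition \ref{P:ConductorNodalOrder}) that $C$ is a two-sided ideal of $H$. Everything else is a straightforward chase through the canonical isomorphisms.
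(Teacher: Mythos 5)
Your proof is correct. The paper states this proposition without proof, treating it as a routine bookkeeping consequence of the matrix shape of $B$, so there is nothing to compare against; your argument — that $Q = Bf$ is free of rank two as a right $H$-module with explicit basis $q_1 = \smtr{0 & 1 \\ 0 & 0}$, $q_2 = f$, and that exactness, preservation of projectives, and the explicit formula all drop out of this — is the natural one and is complete. One small remark: to see that the left $B$-action on $\smtr{\widetilde N \\ \widetilde N}$ is well defined you only need the inclusions $C \subseteq A \subseteq H$, not the stronger fact that $C$ is a two-sided ideal of $H$; the latter is what makes $B$ a ring in the first place, but once that is granted the well-definedness of the action is automatic.
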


\begin{remark} Since the order $H$ is hereditary,
the conductor ideal $C$ is projective viewed as a left $H$-module. The functor $\widetilde\sF$ transforms projective modules into projective modules, hence the left $B$-module $\left(
\begin{array}{c}
C \\
C
\end{array}
\right)
$ is projective, too. Consider   the left $B$-modules $S$ and $T$, given by the projective resolutions
\begin{equation*}
0 \lar
\left(
\begin{array}{c}
C \\
C
\end{array}
\right)
\lar \left(
\begin{array}{c}
A \\
C
\end{array}
\right)
\lar S \lar 0 \;\;\mbox{\rm and}\;\;
0 \lar
\left(
\begin{array}{c}
C \\
C
\end{array}
\right)
\lar \left(
\begin{array}{c}
H \\
H
\end{array}
\right)
\lar T\lar 0.
\end{equation*}
Obviously, both $S$ and $T$ have finite length viewed as $B$-modules. Moreover, $S$  is isomorphic to $\bar{A}$ viewed as an $A$-module and
$T$ is isomorphic to $\bar{H}$ viewed as an $H$-module.
\end{remark}

\begin{theorem} The following results are true.
\begin{itemize}
\item $(\sL \sF, \sD\sG, \sR\sH)$ and $(\sL\widetilde\sF, \sD\widetilde\sG, \sR\widetilde\sH)$ are triples  pairs of functors.
\item The functors $\sL\sF$, $\sL\widetilde\sF$, $\sR\sH$ and $\sR\widetilde\sH$ are fully faithful, whereas the functors $\sD\sG$ and
$\sD\widetilde\sG$
are essentially surjective.
\item The essential image of $\sL\widetilde\sF$ is equal to the left orthogonal category
$$
^{\perp} S := \bigl\{X^\bu \in \mathrm{Ob}\bigl(D(B\mathsf{-mod})\bigr) \;\big| \; \Hom\bigl(X^\bu, S[i]\bigr) = 0 \; \mbox{\rm for all} \; i \in \ZZ\bigr\}
$$
of $S$, whereas the essential image of $\sR\widetilde\sH$ is equal to the right orthogonal category $S^\perp$. Similarly, the essential image of $\sL\sF$ is equal to $^{\perp} T$ and the essential image of $\sR\sH$ is equal to $T^\perp$.
\item We have a recollement diagram
\begin{equation}\label{E:recollement1}
\xymatrix{D(\bar{A}\mathsf{-mod}) \ar[rr]|{\,\sI\,} && D(B\mathsf{-mod}) \ar@/^2ex/[ll]^{\sI^{!}} \ar@/_2ex/[ll]_{\sI^*} \ar[rr]|{\,\sD\widetilde\sG\,}
  && D(H\mathsf{-mod}) \ar@/^2ex/[ll]^{\,\sR\widetilde\sH\,} \ar@/_2ex/[ll]_{\,\sL\widetilde\sF\,}},
\end{equation}
where $\sI(\bar{A}) := S$, the functor $\sI^*$ is left adjoint to $\sI$ and $\sI^!$ is right adjoint to $\sI$.
\item Similarly, we  have another recollement diagram
\begin{equation}\label{E:recollement2}
\xymatrix{D_T(B\mathsf{-mod}) \ar[rr]|{\,\sJ\,} && D(B\mathsf{-mod}) \ar@/^2ex/[ll]^{\sJ^{!}} \ar@/_2ex/[ll]_{\sJ^*} \ar[rr]|{\,\sD\sG\,}
  && D(A\mathsf{-mod}) \ar@/^2ex/[ll]^{\,\sR\sH\,} \ar@/_2ex/[ll]_{\,\sL\sF\,}},
\end{equation}
where $D_T(B\mathsf{-mod})$ is the full subcategory of the derived category $D(B\mathsf{-mod})$ consisting of those complexes whose cohomologies belong to $\mathsf{Add}(T)$ and  $\sJ$ is the canonical inclusion functor.
\item We have: $\mathsf{gl.dim}B = 2$.
\end{itemize}
\end{theorem}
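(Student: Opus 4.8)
The plan is to exploit the two minor structures carried by the Auslander order $B=\smtr{A & H \\ C & H}$ of \eqref{E:AuslanderOrder} and to feed each of them into Theorem~\ref{T:minorsderived} (and its abelian companion Theorem~\ref{T:minorsabelian}). For $e=\smtr{1 & 0 \\ 0 & 0}$ the module $P=Be=\smtr{A \\ C}$ realises $A=eBe$ as a minor of $B$, and a direct matrix computation gives $I_P=BeB=\smtr{A & H \\ C & C}$, so $B/I_P=B/BeB\cong H/C=\bar H$; this quotient is the finite length $B$-module $T$ of the remark preceding the theorem. Symmetrically, for $f=\smtr{0 & 0 \\ 0 & 1}$ the module $Q=Bf=\smtr{H \\ H}$ realises $H=fBf$ as a minor of $B$, with $I_Q=BfB=\smtr{C & H \\ C & H}$ and $B/BfB\cong A/C=\bar A$, which is the module $S$. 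Granting these identifications, the first three items of the theorem --- the two adjoint triples, the full faithfulness of $\sL\sF,\sL\widetilde\sF,\sR\sH,\sR\widetilde\sH$ together with the essential surjectivity of $\sD\sG,\sD\widetilde\sG$, and the descriptions of the essential images as ${}^{\perp}T$, $T^{\perp}$, ${}^{\perp}S$, $S^{\perp}$ --- are precisely the first three items of Theorem~\ref{T:minorsderived} applied to the minors $(B,Be)$ and $(B,Bf)$; none of these needs a flatness hypothesis.

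For \eqref{E:recollement2} I would invoke the unbounded recollement of Theorem~\ref{T:minorsderived} for the minor $(B,Be)$: its left-hand corner is $D_{\bar H}(B\mathsf{-mod})$, the complexes whose cohomology is annihilated by $BeB$. As $\bar H$ is semisimple, every $\bar H$-module lies in $\mathsf{Add}(\bar H)=\mathsf{Add}(T)$, hence $D_{\bar H}(B\mathsf{-mod})=D_{T}(B\mathsf{-mod})$ and \eqref{E:recollement2} follows verbatim. For \eqref{E:recollement1} I would start from the analogous recollement for the minor $(B,Bf)$, with left-hand corner $D_{\bar A}(B\mathsf{-mod})$, and replace it by $D(\bar A\mathsf{-mod})$ via the last group of statements in Theorem~\ref{T:minorsderived}. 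Their only hypothesis is that $I_Q=BfB$ be flat as a right $B$-module, which is the one computation with content: as a right $B$-module, $BfB=\smtr{C & H \\ C & H}$ is the direct sum of its two rows, and each row, with its right $B$-action, is isomorphic to the projective module $fB$; thus $BfB\cong fB\oplus fB$ is projective, a fortiori flat. Since moreover $\mathsf{gl.dim}\,\bar A=0$ and $\mathsf{gl.dim}\,H=1$ (Theorem~\ref{T:HeredOrders}), Theorem~\ref{T:minorsderived} supplies both the equivalence $D(\bar A\mathsf{-mod})\xrightarrow{\ \sim\ }D_{\bar A}(B\mathsf{-mod})$, which converts that recollement into \eqref{E:recollement1} with $\sI(\bar A)=S$, and its bounded analogue.

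It remains to prove $\mathsf{gl.dim}\,B=2$. The bound $\mathsf{gl.dim}\,B\le\max\{\mathsf{gl.dim}\,\bar A+2,\ \mathsf{gl.dim}\,H\}=\max\{2,1\}=2$ is exactly the global dimension estimate in Theorem~\ref{T:minorsderived} for the minor $(B,Bf)$, whose flatness hypothesis was verified above. For the opposite inequality I would argue by contradiction: if $\mathsf{gl.dim}\,B\le1$ then $B$ is a hereditary order, so by Theorem~\ref{T:HeredOrders} its minor $A=eBe$ would be hereditary as well; this contradicts the standing assumption that $A$ is non-hereditary (if $A$ were hereditary one would in fact have $C=A=H$, $B\cong\Mat_2(H)$ and $\mathsf{gl.dim}\,B=1$). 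Hence $\mathsf{gl.dim}\,B\ge2$ and equality holds.

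The scheme contains no serious difficulty: the only computation carrying real content is the identification $I_Q=BfB\cong fB\oplus fB$ of right $B$-modules, on which both the passage from the unbounded recollement to \eqref{E:recollement1} and the estimate $\mathsf{gl.dim}\,B\le2$ depend; everything else reduces to a direct invocation of the theory of minors of Subsection~\ref{SS:Minors}, once the bookkeeping identities $B/BeB\cong\bar H\cong T$ and $B/BfB\cong\bar A\cong S$ have been extracted from the matrix multiplication in $B$.
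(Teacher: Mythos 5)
Your proposal is correct and follows essentially the same route as the paper: apply Theorem~\ref{T:minorsderived} to the two minors $A=eBe$ and $H=fBf$ of $B$, compute $I_P=BeB=\smtr{A & H\\ C & C}$ and $I_Q=BfB=\smtr{C & H\\ C & H}$, observe that $I_Q$ is projective (hence flat) as a right $B$-module while $I_P$ is not, and deduce both recollements and the global dimension bound. Your write-up is slightly more explicit than the paper's in two welcome places: the identification $I_Q\cong fB\oplus fB$ (the paper only says ``as one can easily see''), and the remark that the inequality $\mathsf{gl.dim}\,B\ge 2$ requires $A$ to be non-hereditary, an implicit standing assumption the paper does not flag.
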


\begin{proof} These results are specializations of Theorem \ref{T:minorsderived}.
The statements about both recollement diagrams \eqref{E:recollement1} and \eqref{E:recollement2} follow from the description of the kernels of the functors $\sD\widetilde\sG$ and $\sD\sG$. Namely, consider the two-sided ideal
$$
I_Q:= \mathrm{Im}\bigl(Q \otimes_A Q^\vee \stackrel{\mathrm{ev}}\lar B\bigr) =
\left(
\begin{array}{cc}
C & H \\
C & H
\end{array}
\right)
$$
in the algebra $B$. As one can easily see, $I_Q$ is projective viewed as a right $B$-module. Moreover, $B/I_Q \cong A/C =: \bar{A}$ is semisimple. Hence, Theorem  \ref{T:minorsderived} gives the first recollement diagram \eqref{E:recollement1}. Analogously, for
$$
I_P:= \mathrm{Im}\bigl(P \otimes_H P^\vee \stackrel{\mathrm{ev}}\lar B\bigr) =
\left(
\begin{array}{cc}
A & H \\
C & C
\end{array}
\right)
$$
the algebra $B/I_P \cong H/C =: \bar{H}$ is again semisimple. However, this time $I_P$ is not projective viewed as a right $B$-module.

\smallskip
\noindent
To show the last statement, note that
according to Theorem \ref{T:minorsderived} we have: $\mathsf{gl.dim}B \le 2$. Since $A$ is a non-hereditary minor of $B$,
the order $B$ itself can not be hereditary; see Theorem \ref{T:HeredOrders}. Hence $\mathsf{gl.dim}B =  2$, as claimed.
\end{proof}

\begin{corollary}\label{C:minorsderived}
We have a recollement diagram
\begin{equation}\label{E:recollement3}
\xymatrix{D^b(\bar{A}\mathsf{-mod}) \ar[rr]|{\,\sI\,} && D^b(B\mathsf{-mod}) \ar@/^2ex/[ll]^{\sI^{!}} \ar@/_2ex/[ll]_{\sI^*}
 \ar[rr]|{\,\sD\widetilde\sG\,}
  && D^b(H\mathsf{-mod}) \ar@/^2ex/[ll]^{\,\sR\widetilde\sH\,} \ar@/_2ex/[ll]_{\,\sL\widetilde\sF\,}
  }.
\end{equation}
As a consequence, we have a semiorthogonal decomposition
\begin{equation}
D^b(B\mathsf{-mod}) = \left\langle \mathsf{Im}(\sI), \, \mathsf{Im}(\sL\widetilde\sF)\right\rangle = \left\langle
D^b\bigl(\bar{A}\mathsf{-mod}), \,
D^b(H\mathsf{-mod})
\right\rangle.
\end{equation}
Moreover, we have the following commutative diagram of categories and functors:
\begin{equation}
\begin{array}{c}
\xymatrix{
D^b(H\mathsf{-mod}) \ar[rrd]_{\sP} \ar@{^{(}->}[rr]^-{\sL\widetilde\sF} & & D^b(B\mathsf{-mod}) \ar@{->>}[d]_-{\sD\sG} & & \ar@{_{(}->}[ll]_-{\sL\sF} \Perf(A) \ar@{_{(}->}[lld]^-{\sE} \\
 & & D^b(A\mathsf{-mod}) & &
 }
 \end{array}
\end{equation}
where $\Perf(A)$ is the perfect derived category of $A$, $\sE$ is the canonical inclusion functor and $\sP$ is the derived functor of the
restriction functor $H\mathsf{-mod} \lar A\mathsf{-mod}$.
\end{corollary}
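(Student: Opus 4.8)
The plan is to read the entire statement off Theorem~\ref{T:minorsderived}, now applied to $B$ with the projective $B$-module $Q = Bf$ playing the role of the module called $P$ there, so that the associated minor $\bigl(\End_B(Q)\bigr)^\circ$ is the hereditary cover $H$. Under this translation the quotient $\bar{B} = B/I_P$ appearing in Theorem~\ref{T:minorsderived} becomes $B/I_Q$, and as already computed above we have $B/I_Q \cong A/C = \bar{A}$, a semisimple algebra, while $I_Q$ was seen to be projective --- in particular flat --- as a right $B$-module. Since $\mathsf{gl.dim}\,\bar{A} = 0$ and $\mathsf{gl.dim}\,H = 1$ are both finite, the hypotheses of the last bullet of Theorem~\ref{T:minorsderived} are met. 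It yields, first, an equivalence $D^b(\bar{A}\mathsf{-mod}) \lar D^b_{\bar{A}}(B\mathsf{-mod})$ onto the full subcategory of $D^b(B\mathsf{-mod})$ of complexes with cohomology in $\bar{A}\mathsf{-mod}$, and second, the bounded recollement \eqref{E:recollement3} with $\sI(\bar{A}) = S$ --- which is simply the restriction of the unbounded recollement \eqref{E:recollement1} to the bounded derived categories.

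The semiorthogonal decomposition is then formal. From \eqref{E:recollement3} the functor $\sI$ is fully faithful (it is the composite of the equivalence just described with the inclusion of a full subcategory) and so is $\sL\widetilde\sF$, while $\sD\widetilde\sG \circ \sL\widetilde\sF \cong \id$ and $\sD\widetilde\sG \circ \sI = 0$. The adjunction $\sL\widetilde\sF \dashv \sD\widetilde\sG$ then gives $\Hom_{D^b(B\mathsf{-mod})}\bigl(\sL\widetilde\sF(M),\sI(K)\bigr) \cong \Hom_{D^b(H\mathsf{-mod})}\bigl(M,\sD\widetilde\sG\,\sI(K)\bigr) = 0$ for all $M \in D^b(H\mathsf{-mod})$ and $K \in D^b(\bar{A}\mathsf{-mod})$, while the recollement triangle $\sL\widetilde\sF\,\sD\widetilde\sG(X) \lar X \lar \sI\,\sI^{\ast}(X) \lar \sL\widetilde\sF\,\sD\widetilde\sG(X)[1]$ displays every $X \in D^b(B\mathsf{-mod})$ as an extension with subobject in $\mathsf{Im}(\sL\widetilde\sF)$ and quotient in $\mathsf{Im}(\sI)$. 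Together these are precisely the semiorthogonal decomposition $D^b(B\mathsf{-mod}) = \langle \mathsf{Im}(\sI),\mathsf{Im}(\sL\widetilde\sF)\rangle$, which may be rewritten as $\langle D^b(\bar{A}\mathsf{-mod}),D^b(H\mathsf{-mod})\rangle$ since $\sI$ and $\sL\widetilde\sF$ are fully faithful.

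It remains to check the two triangle identities encoded in the last commutative diagram. The right-hand triangle is immediate from the theory of minors: $\sL\sF$ is fully faithful and left adjoint to $\sD\sG$, so the unit $\id \to \sD\sG \circ \sL\sF$ is an isomorphism on the (unbounded) derived category; and since $\sL\sF = P \dtens_A (\,-\,)$ carries $\Perf(A)$ into $\Perf(B) = D^b(B\mathsf{-mod})$ --- because $P$ is projective over $B$ and $\mathsf{gl.dim}\,B = 2$ --- restricting that isomorphism to $\Perf(A)$ gives $\sD\sG \circ \sL\sF \cong \sE$, the canonical inclusion $\Perf(A) \hookrightarrow D^b(A\mathsf{-mod})$. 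The left-hand triangle is a bimodule computation: using $\sG \cong P^\vee \otimes_B (\,-\,)$ with $P^\vee \cong eB$ and $\widetilde\sF \cong Q \otimes_H (\,-\,)$ with $Q \cong Bf$, one gets $\sG\widetilde\sF(\widetilde{N}) \cong eB \otimes_B Bf \otimes_H \widetilde{N} = eBf \otimes_H \widetilde{N}$, and $eBf$ is the $(1,2)$-corner of $B = \smtr{A & H \\ C & H}$, namely $H$ with its usual right $H$-module structure and with the left $A$-action induced by the inclusion $A \subseteq H$; hence $\sG\widetilde\sF(\widetilde{N}) \cong H \otimes_H \widetilde{N} \cong \res(\widetilde{N})$. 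Because $\widetilde\sF$, $\sG$ and $\res$ are all exact, this isomorphism of functors between module categories is inherited verbatim by the derived functors, giving $\sD\sG \circ \sL\widetilde\sF \cong \sP$. Both triangles commute, which is the assertion.

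The only parts of this argument that are not pure bookkeeping are the identification $eBf \cong {}_{A}H_{H}$ together with the exactness checks that allow the module-level isomorphisms to descend unchanged to $D^b$, and the point --- easy to miss --- that the top row of the diagram must involve $\Perf(A)$ rather than $D^b(A\mathsf{-mod})$ precisely because a non-hereditary nodal order has infinite global dimension, so that $\sL\sF$ preserves boundedness only on perfect complexes. Everything else is a direct appeal to Theorems~\ref{T:minorsabelian} and \ref{T:minorsderived} and to the standard passage from a recollement to a semiorthogonal decomposition.
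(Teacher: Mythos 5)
Your proposal is correct and follows essentially the same route as the paper: the bounded recollement \eqref{E:recollement3} is obtained by restricting \eqref{E:recollement1} to compact objects (you phrase this as invoking the last bullet of Theorem \ref{T:minorsderived} with $Q = Bf$, $I_Q$ flat and $\bar A$, $H$ of finite global dimension, which is the same thing given that $\mathsf{gl.dim}\,B = 2$), the identification $\sD\sG\circ\sL\sF\simeq\sE$ comes from the adjunction unit being an isomorphism, and $\sD\sG\circ\sL\widetilde\sF\simeq\sP$ follows from $\sG\circ\widetilde\sF\cong\res$ by exactness. Your computation of the latter via $eBf\otimes_H\,\ARG$ is just the tensor-product form of the paper's $\Hom_B(Be,\ARG)$ argument, and your remark that $\Perf(A)$ rather than $D^b(A\mathsf{-mod})$ appears because $\sL\sF$ preserves boundedness only on perfect complexes is a worthwhile point that the paper leaves implicit.
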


\begin{proof}
The recollement diagram \eqref{E:recollement3} is just the restriction of the recollement diagram \eqref{E:recollement1} on the corresponding full subcategories of compact objects. The isomorphism $\sE \simeq \sD\sG \circ \sL\sF$ follows from the fact that the adjunction unit
$\mathsf{Id}_{D(A\mathsf{-mod})} \lar \sD\sG \circ \sL\sF$ is an isomorphism of functors (already on the level of unbounded derived categories). Next, since the functor $\widetilde{\sF}$ is exact, we have: $\sD\sG \circ \sL\widetilde\sF \simeq \sD (\sG \circ \widetilde\sF)$. For any $H$-module $\widetilde{N}$ we have:
$$
(\sG \circ \widetilde\sF)(\widetilde{N}) = \Hom_B\bigl(P, \widetilde\sF(\widetilde{N})\bigr) =
\Hom_B\left(Be, \left(
\begin{array}{c}
\widetilde{N} \\
\widetilde{N}
\end{array}
\right)\right) \cong e \cdot \left(
\begin{array}{c}
\widetilde{N} \\
\widetilde{N}
\end{array}
\right) \cong \widetilde{N}.
$$
Hence, $\sG \circ \widetilde\sF$ is isomorphic to the restriction functor $H\mathsf{-mod} \lar A\mathsf{-mod}$, what finishes a proof of the second statement.
\end{proof}

\begin{proposition} Let $A$ be a nodal order. Then the corresponding Auslander order $B$ is nodal too.
\end{proposition}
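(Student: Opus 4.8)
The plan is to reduce to the complete local model of Theorem~\ref{T:ClassificationNodalOrders} and there to identify the Auslander order with one of the explicit nodal orders $A(R,(\Omega',\sigma',\approx',\wt'))$ of Definition~\ref{D:nodalrings}. First, $B=\begin{pmatrix}A&H\\C&H\end{pmatrix}$ is an order: writing $e,f$ for its diagonal idempotents, a central element $z$ of $B$ satisfies $ezf=(ef)z=0=fze$, so $Z(B)$ is a module-finite subring of $Z(eBe)\times Z(fBf)=Z(A)\times Z(H)$, hence reduced excellent of Krull dimension one, and $B$ is finitely generated and torsion-free over $Z(B)$ with $B_K=\Mat_2(A_K)$ semisimple. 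Since being nodal is detected on radical completions (the remark after Definition~\ref{D:NodalOrders}), since the hereditary cover and the conductor $C\cong\Hom_A(H,A)$ both commute with completion, and since nodality is a Morita invariant (Theorem~\ref{T:fundamentalsNodalOrders}), it suffices, by Theorem~\ref{T:ClassificationNodalOrders}, to prove the statement when $A=\Ad_S\bigl(A(R,(\Omega,\sigma,\approx,\wt))\bigr)$ over $R=\kk\llbracket w\rrbracket$; conjugating, we drop the $\Ad_S$.

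In this situation $H$ is the explicit product of standard hereditary orders of Definition~\ref{D:DatumHeredOrder} and $C$ is the explicit matrix ideal of Proposition~\ref{P:ConductorNodalOrder}. I would then build a datum $(\Omega',\sigma',\approx',\wt')$ with $A(R,(\Omega',\sigma',\approx',\wt'))\cong B$ as follows. Put $\Omega'=\Omega\sqcup\{\omega^{\ast}:\omega\in\Omega\text{ reflexive or tied}\}$; let $\sigma'$ be obtained from $\sigma$ by inserting each new $\omega^{\ast}$ immediately after $\omega$ in its cycle (so a cycle of $\sigma$ is lengthened by the number of its reflexive and tied vertices, while a cycle of simple vertices is unchanged); under $\approx'$ all the old vertices become simple, while the new vertices carry exactly the reflexive and tied relations of $\approx$, so that the embedding $\imath\colon\bar A\hookrightarrow\bar H$ of diagram~\eqref{E:NodalModConductor} is reproduced blockwise; and $\wt'$ is the weight function recording $\wt$ on the new vertices and the multiplicities of the simple $\bar A$- and $\bar H$-modules occurring in $Be$ and $Bf$ on the old ones (so at a simple vertex it is $2\wt$, consistently with $B\cong\Mat_2(H)$ when $A=H$). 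Comparing the block description of $A(R,(\Omega',\sigma',\approx',\wt'))$ from Definition~\ref{D:nodalrings} with those of $H$ and $C$, one verifies that the $f$-indexed rows and columns reproduce the block $H$, the $e$-indexed ones reproduce $A$, and the vanishing conditions attached to $\approx'$ carve out precisely the off-diagonal blocks $H$ and $C$ and the gluing of $A$ into $H$; hence $A(R,(\Omega',\sigma',\approx',\wt'))\cong B$ and $B$ is nodal, with hereditary cover $H(R,(\Omega',\sigma',\approx',\wt'))$, by Theorem~\ref{T:DescriptionNodalOrders}. Examples~\ref{Ex:AuslanderOrderI} and~\ref{Ex:AuslanderOrderII} are instances of this recipe.

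The hard part is entirely combinatorial bookkeeping: pinning down $(\Omega',\sigma',\approx',\wt')$ so that every block matches on the nose, the three types of vertices of $\Omega$ behaving differently under the construction and the weight function having to absorb the multiplicity discrepancy between $\bar A$ and $\bar H$ at the reflexive points. One could instead try to verify Definition~\ref{D:NodalOrders} for $B$ by hand — computing $\rad B$, exhibiting the hereditary overorder $\widetilde B$ with $\rad\widetilde B=\rad B$, and bounding $l_B(\widetilde B\otimes_B U)$ on simple modules using the corresponding bound for $A$ together with $H$ being hereditary — but here the presence of simple vertices, where the conductor $C$ is strictly larger than $\rad A=\rad H$, makes even the identification of $\rad B$ and of $\widetilde B$ delicate, which is why passing to the explicit model is the natural route.
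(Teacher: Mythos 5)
Your main route is genuinely different from the paper's, and although it could in principle be made to work, the decisive step — the combinatorial identification — is asserted rather than carried out, and the recipe you sketch is not pinned down well enough to accept as a proof.

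The paper's argument is short and structural. It introduces
$\widetilde{B} := \left(\begin{smallmatrix} A & H \\ J & H \end{smallmatrix}\right)$ and $\widetilde{H} := \left(\begin{smallmatrix} H & H \\ J & H \end{smallmatrix}\right)$, with $J = \rad A = \rad H$ in place of the conductor $C$. One checks directly that $\rad\widetilde{B} = \left(\begin{smallmatrix} J & H \\ J & J \end{smallmatrix}\right) = \rad\widetilde{H}$, that $\widetilde{H}$ is hereditary, and that $\widetilde{B}/\rad\widetilde{B} \cong A/J \times H/J \hookrightarrow H/J \times H/J = \widetilde{H}/\rad\widetilde{H}$ inherits componentwise the bound on simple modules from $A/J \hookrightarrow H/J$; hence $\widetilde{B}$ is nodal with hereditary cover $\widetilde{H}$. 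Since $C \supseteq J$, the Auslander order $B$ is an overorder of $\widetilde{B}$, and Theorem~\ref{T:fundamentalsNodalOrders} (overorders of nodal orders are nodal) immediately gives the conclusion. Notice that this never computes $\rad B$ nor the hereditary cover of $B$.

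Your main route — reduce to the complete local model via Theorem~\ref{T:ClassificationNodalOrders} and exhibit $B$ as some $A(R,(\Omega',\sigma',\approx',\wt'))$ — is plausible and is consistent with the two examples in the paper (Examples~\ref{Ex:AuslanderOrderI} and~\ref{Ex:AuslanderOrderII}). But the description of $\wt'$ on the retained vertices (``multiplicities of the simple $\bar A$- and $\bar H$-modules occurring in $Be$ and $Bf$'') is too vague to be checkable; the direction in which $\omega^{\ast}$ is inserted into the $\sigma$-cycle is not justified; and the claimed blockwise matching of the resulting matrix order with $A$, $H$, and $C$ is precisely the nontrivial content of the assertion and is left as ``bookkeeping.'' As written this is a genuine gap, and you say so yourself.

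More importantly, the alternative you raise and dismiss is very close to what the paper actually does, except that you miss its key twist. You object that at simple vertices $C$ is strictly larger than $J$, so $\rad B$ and the hereditary overorder of $B$ are hard to identify. That is true — but the paper sidesteps it entirely by working with $\widetilde{B}$, where $C$ has been replaced by $J$ so that the radical is transparent, and then transferring the conclusion to $B \supseteq \widetilde{B}$ via overorder stability. That is the idea you were one step away from.
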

\begin{proof}
As usual, let $H$ be the hereditary cover of $A$ and $J = \rad(A) = \rad(H)$ be the common radical of $A$ and $H$. Consider the following
orders:
$$
\widetilde{B}:=
\left(
\begin{array}{cc}
A & H \\
J & H
\end{array}
\right)
\quad \mbox{\rm and} \quad
\widetilde{H}:=
\left(
\begin{array}{cc}
H & H \\
J & H
\end{array}
\right).
$$ 
It is not difficult to show that
$$
\widetilde{J}:= \rad(\widetilde{H}) = \left(
\begin{array}{cc}
J & H \\
J & J
\end{array}
\right) = \rad(\widetilde{B}).
$$
 Since $\widetilde J$ is projecive as $H$-module, $H$ is hereditary.
Then the commutative diagram
$$
\begin{array}{c}
\xymatrix{
 \widetilde{B}/\widetilde{J} \ar[rr]^-{\cong} \ar@{_{(}->}[d] & &  A/J \times H/J \ar@{^{(}->}[d]\\
\widetilde{H}/\widetilde{J} \ar[rr]^-{\cong}  & & H/J \times H/J
}
\end{array}
$$
implies
that $\widetilde{B}$ is a nodal order and $\widetilde{H}$ is its hereditary cover. Since the conductor ideal $C$ contains
the radical $J$, the Auslander order $B$ is an overorder of $\widetilde{B}$.  It follows from Theorem \ref{T:fundamentalsNodalOrders} that the order $B$ is nodal, too.
\end{proof}

\smallskip
\noindent
In what follows, we shall need the following result about the finite length $B$-module $S$. Assume that $R = \kk\llbracket w\rrbracket$ and
$A = A\bigl(R, (\Omega, \sigma, \approx, \mathsf{wt})\bigr)$.
Recall that
$$
\bar{A} := A/C \cong
\prod\limits_{\gamma \in \widetilde\Omega_\circ^\ddagger} \bar{A}_\gamma \cong \prod\limits_{\gamma \in \widetilde\Omega_\circ^\ddagger} \Mat_{\wt(\gamma)}(\kk).
$$
It is clear that the set $\widetilde\Omega_\circ^\ddagger$ also parameterizes  the isomorphism classes of the simple $\bar{A}$-modules. For any $\gamma \in \widetilde\Omega_\circ^\ddagger$, let $S_\gamma$ be the simple left $B$-module which corresponds to the (unique, up to an isomorphism) simple
$\bar{A}_\gamma$-module  and $P_\gamma$ be its projective cover.  Then we have:
$
S  \cong \bigoplus\limits_{\gamma \in \widetilde\Omega_\circ^\ddagger} S_\gamma^{\oplus \wt(\widetilde\Omega_\circ^\ddagger)}.
$
Our next goal is  to describe a minimal projective resolution of $S_\gamma$. For any $\omega \in \Omega$, let $\widetilde{Q}_{\omega}$ be the corresponding indecomposable projective left $H$-module and
$$
Q_\omega := \widetilde\sF\bigl(\widetilde{Q}_{\omega}\bigr) = \left(
\begin{array}{c}
\widetilde{Q}_{\omega} \\
\widetilde{Q}_{\omega}
\end{array}
\right)
$$
be the corresponding indecomposable projective left $B$-module.

\begin{lemma}\label{L:ExtLOcalComp} The following statements hold.
\begin{itemize}
\item Let $\omega \in \Omega$ be a reflexive element and $\gamma = \omega_\pm$ be one of the corresponding elements of $\widetilde\Omega_\circ^\ddagger$. Then  
\begin{equation}\label{E:ResolutionCase1}
0 \lar Q_{\sigma(\omega)}  \lar P_\gamma \lar S_\gamma \lar 0
\end{equation}
is a  minimal projective resolution of the simple $B$-module $S_\gamma$.
In particular, for any $\delta \in \Omega$ we have:
\begin{equation}\label{E:ExtCase1}
\Ext^1_B\bigl(S_\gamma, Q_\delta\bigr) \cong
\left\{
\begin{array}{cl}
\kk & \mbox{\rm if} \;  \delta = \sigma(\omega) \;  \\
0 & \mbox{\rm otherwise}.
\end{array}
\right.
\end{equation}
\item Let $\omega', \omega'' \in \Omega$ be a pair of tied elements and $\gamma = \overline{\{\omega', \omega''\}}$ be the corresponding element of $\widetilde\Omega_\circ^\ddagger$. Then  a minimal projective resolution of the simple $B$-module $S_\gamma$ has the following form:
\begin{equation}\label{E:ResolutionCase2}
0 \lar Q_{\sigma(\omega')} \oplus Q_{\sigma(\omega'')} \lar P_\gamma \lar S_\gamma \lar 0.
\end{equation}
In particular, for any $\delta \in \Omega$ we have:
\begin{equation}\label{E:ExtCase2}
\Ext^1_B\bigl(S_\gamma, Q_\delta\bigr) \cong
\left\{
\begin{array}{cl}
\kk & \mbox{\rm if} \;  \delta = \sigma(\omega) \; \mbox{\rm for} \; \omega   \in \{\omega', \omega''\} \\
0 & \mbox{\rm otherwise}.
\end{array}
\right.
\end{equation}
\end{itemize}
\end{lemma}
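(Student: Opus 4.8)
The plan is to produce the minimal projective resolution of $S_\gamma$ directly from the short exact sequence $0 \to \smtr{C\\C} \to \smtr{A\\C} \to S \to 0$ recorded just before the lemma, by splitting off the direct summand belonging to a primitive idempotent of $A$, and then to read off $\Ext^\bullet_B(S_\gamma,Q_\delta)$ from that resolution. Everything is local, so I would keep $R=\kk\llbracket w\rrbracket$ throughout; the weight function is harmless here, since the indecomposable projective left $H$-module $\widetilde Q_\mu$ always satisfies $\rad\widetilde Q_\mu\cong\widetilde Q_{\sigma(\mu)}$ (a single copy) and all the $\Hom$-modules occurring below are $R$-lattices of rank one, so the argument runs uniformly in $\wt$.

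First the projective cover. Fix $\gamma\in\widetilde\Omega_\circ^\ddagger$ and a primitive idempotent $e_\gamma\in A$ lifting a rank-one matrix unit of $\bar{A}_\gamma=\Mat_{\wt(\gamma)}(\kk)$. Then $h_\gamma=\smtr{e_\gamma&0\\0&0}$ is primitive in $B$, $P_\gamma=Bh_\gamma=\smtr{Ae_\gamma\\Ce_\gamma}$ is the projective cover of $S_\gamma$ (being indecomposable and mapping onto $S_\gamma$), and, using $\ker(Ae_\gamma\to\bar{A}_\gamma\bar{e}_\gamma)=Ce_\gamma$, one gets $\rad P_\gamma=\ker(P_\gamma\to S_\gamma)=\smtr{Ce_\gamma\\Ce_\gamma}=\widetilde\sF(Ce_\gamma)$. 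Since $C$ is projective as a left $H$-module and $\widetilde\sF$ preserves projectives, $\rad P_\gamma$ is projective, so the whole statement reduces to identifying $He_\gamma$ and $Ce_\gamma$ as left $H$-modules.

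For that I would argue column by column in the standard hereditary order, using the explicit form of $C$ from Proposition~\ref{P:ConductorNodalOrder}: the intersection of $C$ with a column of $H$ sitting in the block indexed by $\mu\in\Omega$ equals the whole column $\widetilde Q_\mu$ when $\mu$ is simple, and equals $\rad\widetilde Q_\mu\cong\widetilde Q_{\sigma(\mu)}$ when $\mu$ is reflexive or tied (the only constraint cutting out $C$ in that column being $X^{(\mu,\mu)}(0)=0$). If $\omega$ is reflexive and $\gamma=\omega_\pm$, the idempotent $e_\gamma$ is a lift of a rank-one matrix unit inside the single block $\omega$, hence primitive in $H$, so $He_\gamma\cong\widetilde Q_\omega$ and $Ce_\gamma\cong\widetilde Q_{\sigma(\omega)}$; this gives \eqref{E:ResolutionCase1}. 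If $\omega',\omega''$ are tied and $\gamma=\overline{\{\omega',\omega''\}}$, the diagonal embedding $\bar{A}_\gamma\hookrightarrow\bar{H}_{\omega'}\times\bar{H}_{\omega''}$ forces $e_\gamma=\epsilon'+\epsilon''$ with $\epsilon',\epsilon''$ orthogonal primitive idempotents of $H$ lying in the blocks $\omega'$, respectively $\omega''$, so $He_\gamma\cong\widetilde Q_{\omega'}\oplus\widetilde Q_{\omega''}$ and $Ce_\gamma\cong\widetilde Q_{\sigma(\omega')}\oplus\widetilde Q_{\sigma(\omega'')}$; this gives \eqref{E:ResolutionCase2}. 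In both cases the image of the syzygy map is exactly $\rad P_\gamma$, so the resolution is minimal.

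To obtain the $\Ext$ groups I would apply $\Hom_B(-,Q_\delta)$ to the resolution. As $S_\gamma$ has finite length and $Q_\delta$ is $R$-torsion-free, $\Hom_B(S_\gamma,Q_\delta)=0$; the resolution has length one, so $\Ext^{\ge 2}_B(S_\gamma,Q_\delta)=0$ and $\Ext^1_B(S_\gamma,Q_\delta)=\coker\bigl(\Hom_B(P_\gamma,Q_\delta)\to\Hom_B(\rad P_\gamma,Q_\delta)\bigr)$. Full faithfulness of $\widetilde\sF$ (Theorem~\ref{T:minorsabelian}) gives $\Hom_B(\rad P_\gamma,Q_\delta)=\Hom_H(Ce_\gamma,\widetilde Q_\delta)$, while $\Hom_B(P_\gamma,Q_\delta)=h_\gamma Q_\delta=e_\gamma\widetilde Q_\delta=\Hom_H(He_\gamma,\widetilde Q_\delta)$; the crux — and the step I expect to be the main obstacle — is to verify that, under these identifications, the connecting map becomes restriction along the inclusion $Ce_\gamma\hookrightarrow He_\gamma$. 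This forces one to unwind the $B$-action on the column module $\smtr{Ae_\gamma\\Ce_\gamma}$, to track which element of $C$ generates the syzygy, and, in the tied case, to use that $e_\gamma$ is no longer primitive in $H$ so that the two summands of the syzygy are restricted independently. Granting this, the previous paragraph identifies $Ce_\gamma$ with $\rad(He_\gamma)$ over $H$, so the cokernel equals $\Ext^1_H(\widetilde S_\omega,\widetilde Q_\delta)$ in the reflexive case and $\Ext^1_H(\widetilde S_{\omega'},\widetilde Q_\delta)\oplus\Ext^1_H(\widetilde S_{\omega''},\widetilde Q_\delta)$ in the tied case, where $\widetilde S_\mu:=\widetilde Q_\mu/\rad\widetilde Q_\mu$. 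I would finish by invoking the classical fact for hereditary orders that $\Ext^1_H(\widetilde S_\mu,\widetilde Q_\delta)\cong\kk$ if $\delta=\sigma(\mu)$ and vanishes otherwise — immediate from the projective resolution $0\to\widetilde Q_{\sigma(\mu)}\to\widetilde Q_\mu\to\widetilde S_\mu\to 0$ together with a colength count over the discrete valuation ring $R$ — and by observing that $\sigma$ being a permutation prevents both tied summands from being nonzero at once; this yields \eqref{E:ExtCase1} and \eqref{E:ExtCase2}.
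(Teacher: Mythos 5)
Your proof is correct and takes essentially the same route as the paper's very terse argument: compute $\rad P_\gamma = \widetilde\sF(Ce_\gamma)$ from the defining sequence of $S$, identify $Ce_\gamma$ as an $H$-module via the explicit description of the conductor together with $\rad\widetilde Q_\mu\cong\widetilde Q_{\sigma(\mu)}$, and then read off $\Ext^1_B(S_\gamma,Q_\delta)$ by applying $\Hom_B(-,Q_\delta)$ to the resulting length-one resolution. The identification of the connecting map with restriction along $Ce_\gamma\hookrightarrow He_\gamma$ — the one step you flag as a potential obstacle — does go through by the unwinding you describe, so this is a complete proof.
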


\begin{proof}
 It is not difficult to show that $\rad(P_\gamma) = Q_{\sigma(\omega)}$ for $\gamma = \omega_\pm$ if $\omega \in \Omega$ is reflexive and $\rad(P_\gamma) = Q_{\sigma(\omega')} \oplus Q_{\sigma(\omega'')}$ if  $\gamma = \overline{\{\omega', \omega''\}}$ for $\omega', \omega'' \in \Omega$ tied. The formulae \eqref{E:ExtCase1} and \eqref{E:ExtCase2} follow from the fact that  $\rad(Q_\omega) = Q_{\sigma(\omega)}$ for any $\omega \in \Omega$.
\end{proof}

\section{Non-commutative nodal curves: global theory}  

\noindent
In this section, we are going to explain the construction as well as main  properties  of non-commutative nodal curves of tame representation type.

\subsection{The idea of a non-commutative nodal curve} Before going to technicalities and details, let us  consider the following  example.
Let $\kk$ be a field, $S = \kk[x]$, $J = (x^2-1)$ and $K = \kk(x)$. Consider the hereditary order
$$
H = 
\left(
\begin{array}{ccc}
S & J &  J \\
S & S & J \\
S & S  & S
\end{array}
\right)
\subset \Upsilon  := \Mat_{3}(K).
$$
Next, consider the order
$$
 A:= \left\{X  \in H \left|  \begin{array}{l}
 X_{11}(1) = X_{22}(1) \\
 X_{33}(1) = X_{33}(-1) \\
 X_{21}(-1) = 0
 \end{array}
 \right.
 \right\} \subset H.
$$
 Let $Z = Z(A)$ be the center of $A$. Then we have:
$$
Z = \left\{p \in S \; \big| \; p(1) = p(-1)\right\} = \kk\bigl[x^2-1, x(x^2-1)\bigr] \cong \kk[u, v]/(v^2 - u^3 - u^2).
$$
The multiplication maps $K \otimes_Z A \lar \Upsilon$ and $K \otimes_Z H \lar \Upsilon$ are isomorphisms. In other words, $A$ and $H$ are $Z$-\emph{orders} in the \emph{central simple} $K$-algebra $\Upsilon$.

\smallskip
\noindent
Let $E_\circ = V(v^2-u^3-u^2) \subset \mathbbm{A}^2$ be an affine plane nodal cubic and  
$s = (0, 0) \in E_\circ$ be its unique singular point. For any $x \in E_\circ \setminus \{s\}$ we have:
$A_x = \Mat_3\bigl(O_x\bigr)$, where $O_x$ is the local ring of $E_\circ$  at the point $x$.
On the other hand,
$$
\widehat{A}_s = A\bigl(R, (\Omega, \sigma, \approx, \mathsf{wt}_\circ)\bigr)
$$
is a nodal order, where $R=\kk\llbracket t\rrbracket$,
$$\Omega:= \bigl\{1, 2, 3, 4, 5\bigr\}, \quad 
\sigma = 
\left(
\begin{array}{ccccc}
1 & 2 & 3 & 4 & 5 \\
2 & 3 & 1 & 5 & 4
\end{array}
\right)
$$
and 
$1 \approx 2$, $3 \approx 5$ and $4 \approx 4$.

\smallskip
\noindent
Let $E = \overline{V(v^2-u^3-u^2)} \subset \PP^2$ be the projective closure of $E_\circ$. Then the $Z$-order $A$ can be extended 
 to a sheaf  of orders
$\kA$   on the projective curve $E$  in such a way that the stalk of $\kA$ at  the ``infinite point'' $(0:1:0)$ of $E$  is  a maximal order (see for instance \cite{BL}).
The ringed space $\mathbbm{E} = (E, \kA)$ is a typical  example of a projective non-commutative nodal curve of tame representation type. 

\smallskip
\noindent
Let 
$
H' = 
\left(
\begin{array}{ccc}
S & I &  J \\
S & S & J \\
S & S  & S
\end{array}
\right),
$
where $J \subset I := (x-1) \subset S$. Then $H'$ is a hereditary order, too.
Moreover, $H'$ is the hereditary cover of the order $A$ (the notion of the hereditary cover can be defined locally). As above, we can extend  $H'$ to a sheaf of orders $\kH$ on the 
 projective curve $E$ in such a way that the stalk of $\kH$ at the infinite point is the maximal order. Thus we get a non-commutative curve $\widetilde{\mathbbm{E}} =
 (E, \kH)$. Note that the center of $H$ is $S$. Therefore, one can actually 
 construct a sheaf of hereditary orders $\widetilde\kH$ on $\PP^1$ such that
 $\kH = \nu_\ast(\widetilde\kH)$, where $\PP^1 \stackrel{\nu}\lar E$ is the normalization map. The functor $\nu_\ast$ provides an equivalence between the categories of coherent sheaves on $\widetilde{\mathbbm{E}}$ and $(\PP^1, \widetilde\kH)$. In what follows, we shall consider $\widetilde{\mathbbm{E}}$ as the hereditary cover of the non-commutative nodal curve $\mathbb{E}$ what can be viewed as an appropriate non-commutative generalization of the normalization of a singular commutative nodal curve.
 
\begin{definition}
Let $X$ be a reduced quasi-projective curve over a field $\kk$ and $\kA$ be a sheaf 
of orders on $X$. Then the ringed space 
$\XX = (X, \kA)$ is called a  \emph{non-commutative curve}. We say that $\XX$ is  projective if 
the commutative  curve $X$ is projective. 
If for any point $x \in X$ the corresponding stalk $\kA_x$ is a nodal order then $\XX$ is a \emph{non-commutative nodal curve}.
\end{definition}

\begin{remark}
A non-commutative curve $\YY = (Y, \kB)$ is called \emph{central} if 
$\widehat{O}_y = Z(\widehat{\kB}_y)$ for any $y \in Y$. For any non-commutative curve $\XX$ there exists a central non-commutative curve $\YY$ such that the categories $\Coh(\XX)$ and $\Coh(\YY)$ are equivalent (we say that $\XX$ and $\YY$ are Morita equivalent); see \cite[Section 2.4]{BurbanDrozdMorita}). For a central non-commutative curve
$\YY = (Y, \kB)$, we denote by
\begin{equation}
\mathfrak{S}(\YY) := \left\{y \in Y \, \big| \, \widehat{\kB}_y \quad \mbox{\rm is not maximal} \right\}
\end{equation}
the set of \emph{non-regular points} of $\YY$. It is not difficult to prove that
$\mathfrak{S}(\YY)$ is in fact finite; see for instance 
\cite[Lemma 7.1]{BurbanDrozdMorita}.
\end{remark}

\begin{theorem}\label{T:MoritaUniqueness}
Let $\kk$ be an algebraically closed field, $X$ be a quasi-projective curve over $\kk$,
$\kA, \kB$ be two central sheaves of orders on $X$ and 
$\XX = (X, \kA), \YY = (X, \kB)$ be the corresponding non-commutative curves. 
Assume that $\mathfrak{S}(\XX) = \mathfrak{S}(\YY)$ and for 
any $x \in \mathfrak{S}(\XX)$, the corresponding orders $\widehat{\kA}_x$
and $\widehat{\kB}_x$ are \emph{centrally} Morita equivalent. Then  the categories 
$\Coh(\XX)$ and $\Coh(\YY)$ are equivalent.
\end{theorem}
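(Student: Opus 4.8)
The plan is to realize the equivalence by a \emph{sheaf of Morita bimodules}. Concretely, I would look for a coherent sheaf $\kP$ of $(\kA$-$\kB)$-bimodules on $X$ that is, at every closed point $x$, a Morita progenerator: $\kP_x$ is a finitely generated projective generator of the category of left $\kB_x$-modules and the left action gives an isomorphism $\kA_x \cong \End_{\kB_x}(\kP_x)$. Granting such a $\kP$, the functor $\kP\otimes_{\kB}(\ARG)\colon \Coh(\YY)\lar\Coh(\XX)$, with candidate quasi-inverse $\kP^\vee\otimes_{\kA}(\ARG)$ where $\kP^\vee:=\mathcal{H}om_{\kB}(\kP,\kB)$, does the job: $\kP$ is locally projective on both sides, so these functors are exact, their composites carry the usual unit and counit, and to see these are isomorphisms it suffices to check on stalks, hence --- by faithful flatness of $\kO_{X,x}\to\widehat{O}_x$ --- after completion, where it holds by the defining property of $\kP$. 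So the task reduces to building $\kP$ with the prescribed local behaviour, and this is done by gluing.

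Put $\mathfrak{S}:=\mathfrak{S}(\XX)=\mathfrak{S}(\YY)$, a finite set of closed points, and $U:=X\setminus\mathfrak{S}$. On $U$ there is an essentially canonical choice. For $y\in U$ the completed stalks $\widehat{\kA}_y,\widehat{\kB}_y$ are maximal orders; since the center of a maximal order is integrally closed, $\widehat{O}_y$ is a discrete valuation ring, so $U$ is a smooth curve, and by Theorem \ref{T:BrauerGroupZero} (Tsen's theorem) the Brauer group of each component of $U$ --- which injects into that of the function field --- vanishes. By the classical structure theory of maximal orders over Dedekind rings it follows that $\kA|_U\cong\mathcal{E}nd_{\kO_U}(\kM)$ and $\kB|_U\cong\mathcal{E}nd_{\kO_U}(\kN)$ for vector bundles $\kM,\kN$ on $U$, whence $\kP^{\circ}:=\kM\otimes_{\kO_U}\kN^\vee\cong\mathcal{H}om_{\kO_U}(\kN,\kM)$ is a Morita progenerator $(\kA$-$\kB)|_U$-bimodule at every point of $U$. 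On the other side, for each $x\in\mathfrak{S}$ the hypothesis provides a central Morita equivalence $\widehat{\kA}_x\sim\widehat{\kB}_x$, i.e.\ a finitely generated projective $(\widehat{\kA}_x$-$\widehat{\kB}_x)$-bimodule $\widehat{P}_x$ with $\widehat{\kA}_x\cong\End_{\widehat{\kB}_x}(\widehat{P}_x)$.

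To glue $\kP^{\circ}$ to the $\widehat{P}_x$, one restricts both to the punctured formal disc at $x$: there $\widehat{\kA}_x$ and $\widehat{\kB}_x$ become products of matrix algebras over Laurent-series fields (again by Theorem \ref{T:BrauerGroupZero}), and any two finitely generated progenerator bimodules between such semisimple rings are isomorphic; hence the restrictions of $\widehat{P}_x$ and of $\kP^{\circ}$ to the punctured disc are abstractly isomorphic, and one fixes such isomorphisms. Viewing a coherent $(\kA$-$\kB)$-bimodule as a coherent sheaf over the coherent $\kO_X$-algebra $\kA\otimes_{\kO_X}\kB^{\circ}$, the Beauville--Laszlo-type formal-gluing description of $\Coh$ on $X$ --- in the non-commutative form developed in \cite[Section 2.4]{BurbanDrozdMorita} --- identifies such bimodules with triples: a coherent bimodule on $U$, coherent bimodules on the formal discs $\Spec(\widehat{O}_x)$ for $x\in\mathfrak{S}$, and gluing isomorphisms over the punctured discs. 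Feeding in $\kP^{\circ}$, the $\widehat{P}_x$ and the chosen isomorphisms produces a coherent bimodule $\kP$ with $\kP|_U\cong\kP^{\circ}$ and $\widehat{\kP}_x\cong\widehat{P}_x$; it is then a Morita progenerator at every closed point, and the first paragraph finishes the proof. (Alternatively, if one takes from \cite[Section 2.4]{BurbanDrozdMorita} the local-to-global criterion "$\Coh(\XX)\simeq\Coh(\YY)$ as soon as $\widehat{\kA}_x$ and $\widehat{\kB}_x$ are centrally Morita equivalent for every $x$", it only remains to add the trivial check at $x\notin\mathfrak{S}$, where both completed stalks are matrix algebras over $\widehat{O}_x$ and hence centrally Morita equivalent to $\widehat{O}_x$, thus to each other.)

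The step I expect to be the main obstacle is the gluing in the last paragraph: one must confirm that the locally manufactured Morita bimodules genuinely match over the punctured formal discs --- which rests on the uniqueness up to isomorphism of progenerator bimodules between products of matrix algebras over a field, together with the fact that, over such a disc, the relevant semisimple ring is intrinsic to $\widehat{\kA}_x$ (resp.\ $\widehat{\kB}_x$) --- and that the Beauville--Laszlo formalism transfers verbatim to modules over a sheaf of orders; this is the point at which I would lean on \cite{BurbanDrozdMorita}. A smaller but genuine technical point is the identification, over the regular locus, of $\kA|_U$ and $\kB|_U$ with endomorphism algebras of vector bundles, which combines Tsen's theorem (Theorem \ref{T:BrauerGroupZero}), the vanishing of $\mathrm{Br}(U)$, and the structure theory of maximal orders over Dedekind domains.
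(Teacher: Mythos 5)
Your proposal is correct, and since the paper itself delegates the proof to \cite[Proposition 7.7]{BurbanDrozdMorita} (which is precisely the source you lean on for the formal-gluing step), your reconstruction by Beauville--Laszlo gluing of local Morita progenerator bimodules is essentially the intended argument. The only remark worth making: your uniqueness claim for progenerator bimodules over the punctured formal disc uses the hypothesis of \emph{central} Morita equivalence in an essential way --- it forces the induced identification of the two copies of $\widehat{K}_x \cong \prod_i \kk\llbrace w_i\rrbrace$ to be the identity, so that the matrix factors on both sides are matched coherently with those of $\kP^\circ$; it is good that you implicitly track this, since without centrality the gluing isomorphisms over the punctured discs need not exist.
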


\smallskip
\noindent
For a proof of this result, see \cite[Proposition 7.7]{BurbanDrozdMorita}.

\subsection{Construction of non-commutative nodal curves}
Let $\kk$ be an algebraically closed field  and $(\tX, \kO_{\tX})$ be a smooth quasi-projective curve over $\kk$.

\begin{itemize}
\item Let $\tX \stackrel{l}\lar \NN$ be a function such that $l(\tx) = 1$ for all but finitely many points $\tx \in \tX$ (such function will be called a \emph{length function}).
\item For any $\tx \in \tX$ we put: 
\begin{equation}\label{E:SetPi}
\Pi_{\tx} := \bigl\{(\tx, 1), \dots, (\tx, l(\tx))\bigr\} \quad \mbox{\rm and}\quad
\Pi:= \bigcup\limits_{\tx \in \tX} \Pi_{\tx}.
\end{equation}
\item For any function $\Pi \stackrel{\mathsf{wt}}\lar \NN$ and any point $\tx \in \tX$ we denote:
$$
\vec{p}(\tx):= \bigl(\mathsf{wt}(\tx, 1), \dots, \mathsf{wt}(\tx, l(\tx))\bigr) \quad \mbox{\rm and} \quad
m(\tx):= \big|\vec{p}(\tx)\big| :=  \sum\limits_{i = 1}^{l(\tx)} \mathsf{wt}(\tx, i).
$$
\item We say that $\mathsf{wt}$ is a \emph{weight function} compatible with the given length function $l$ if $m(\tx') = m(\tx'')$ for any pair of points  $\tx', \tx'' \in \tX$  belonging to the same irreducible component of $\tX$.
\item  For $\tx \in \tX$, let  $O_{\tx}$ be the stalk of the structure sheaf $\kO_{\tX}$ at the point $\tx$. Let
$$\widetilde{H}_{\tx}:= H\bigl(O_{\tx},  \vec{p}(\tx)\bigr) \subseteq \Mat_{m(\tx)}(O_{\tx})$$ be the standard hereditary order defined
by \eqref{E:standardorder}.
\end{itemize}

\begin{definition}
Assume (for simplicity of notation) that $\tX$ is connected, $\tX \stackrel{l}\lar \NN$ be a length function and $\mathsf{wt}$ be a weight function. Let $m = m(\tx)$ for some (hence for any) point $\tx \in \tX$.  Then we define the sheaf of hereditary orders $\tH = \tH(l, \mathsf{wt})$ on the curve $\tX$ as the subsheaf of the sheaf of maximal orders $\sMat_{m}(\kO_{\tX})$ such that
$\tH_{\tx} = \widetilde{H}_{\tx}$ for all $\tx \in \tX$. The corresponding ringed space $\widetilde\XX:= (\tX, \tH)$  is a \emph{non-commutative hereditary curve} over the field $\kk$ defined by the datum $(\tX, l, \mathsf{wt})$.
\end{definition}

\begin{theorem} Let $\tX$ be a smooth quasi-projective curve over $\kk$ and $\tX \stackrel{l}\lar \NN$ be  a length function. Let
$\mathsf{wt}, \mathsf{wt}': \Pi \lar \NN$ be two  weight functions compatible with $l$ and $\widetilde\XX$ and $\widetilde\XX'$
be the corresponding non-commutative hereditary
curves. Then the categories $\Qcoh(\widetilde\XX)$ and $\Qcoh(\widetilde\XX')$ (respectively, 
$\Coh(\widetilde\XX)$ and $\Coh(\widetilde\XX')$)
are equivalent. Let $\widetilde{Y}$ be a   smooth quasi-projective curve over $\kk$, $\widetilde{Y} 
\stackrel{t}\lar \NN$ be a length function and $\widetilde\YY$ be the corresponding non-commutative hereditary curve.
Then $\widetilde\XX$ and $\widetilde\YY$ are Morita equivalent if and only if there exists an isomorphism (of commutative curves)
$\widetilde{X} \stackrel{f}\lar \widetilde{Y}$ such that the following diagram
$$
\xymatrix{
\widetilde{X} \ar[rr]^-f  \ar[rd]_-{l} & & \widetilde{Y} \ar[ld]^-{t}\\
& \NN & 
}
$$
is commutative. In other words, the Morita type of a non-commutative hereditary curve does not depend on the choice of a weight function $\mathsf{wt}$ and is  determined by the underlying commutative curve 
$\widetilde{X}$ and length function $l$.
\end{theorem}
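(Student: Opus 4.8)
The plan is to deduce the independence of the weight function from the uniqueness statement of Theorem~\ref{T:MoritaUniqueness} combined with the local Morita classification of hereditary orders, and then to obtain the ``if and only if'' by combining this with the reconstruction of the central curve from its category of coherent sheaves. For the first assertion I would first note that $\tH := \tH(l, \mathsf{wt})$ and $\tH' := \tH(l, \mathsf{wt}')$ are \emph{central} sheaves of orders on $\tX$, since the centre of a standard hereditary order $H(R, \vec{p})$ over a discrete valuation ring $R$ is $R$. Moreover $\mathfrak{S}(\widetilde\XX) = \{\tx \in \tX \mid l(\tx) > 1\} = \mathfrak{S}(\widetilde\XX')$, because the stalk at $\tx$ is a maximal order precisely when $l(\tx) = 1$. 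For each $\tx$ with $l(\tx) > 1$ the completed stalks $\widehat{\tH}_{\tx}$ and $\widehat{\tH'}_{\tx}$ are, by construction, standard hereditary orders over $\kk\llbracket w\rrbracket$ with the \emph{same} number $l(\tx)$ of diagonal blocks; by Theorem~\ref{T:Harada} (equivalently, by the central Morita equivalence in Theorem~\ref{T:DescriptionNodalOrders} applied to a hereditary datum) each of them is centrally Morita equivalent to $H_{l(\tx)}\bigl(\kk\llbracket w\rrbracket\bigr)$, and central Morita equivalences compose, so $\widehat{\tH}_{\tx}$ and $\widehat{\tH'}_{\tx}$ are centrally Morita equivalent. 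Hence Theorem~\ref{T:MoritaUniqueness} gives $\Coh(\widetilde\XX) \simeq \Coh(\widetilde\XX')$. For the quasi-coherent version I would use that $\widetilde\XX$ is noetherian, so $\Qcoh(\widetilde\XX) \simeq \mathrm{Ind}\bigl(\Coh(\widetilde\XX)\bigr)$ and the equivalence passes to ind-completions; alternatively, the equivalence of Theorem~\ref{T:MoritaUniqueness} is given by $\kHom_{\tH}(\kP, \,-\,)$ for a suitable local progenerator $\kP$, a functor equally defined on quasi-coherent sheaves.

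For the ``if'' direction, suppose $\tX \stackrel{f}\lar \widetilde{Y}$ is an isomorphism of commutative curves with $t \circ f = l$. Then $f$ transports the sheaf of hereditary orders defining $\widetilde\YY$ to a sheaf of hereditary orders on $\tX$ with length function $t \circ f = l$, which by construction is $\tH(l, \mathsf{wt}'')$, where $\mathsf{wt}''$ is the weight function on $\tX$ obtained from that of $\widetilde\YY$ along $f$ (and it is compatible with $l$). Since $f$ is an isomorphism, $f_\ast$ induces an equivalence $\Qcoh(\widetilde\YY) \simeq \Qcoh\bigl((\tX, \tH(l, \mathsf{wt}''))\bigr)$, and by the previous paragraph the latter is equivalent to $\Qcoh(\widetilde\XX)$; thus $\widetilde\XX$ and $\widetilde\YY$ are Morita equivalent.

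For the ``only if'' direction, assume $\Coh(\widetilde\XX) \simeq \Coh(\widetilde\YY)$; the idea is to recover $(\tX, l)$ from the abelian category. Let $\mathcal{T} \subseteq \Coh(\widetilde\XX)$ be the Serre subcategory of objects of finite length; it is intrinsic and decomposes as a coproduct $\coprod_{\tx \in \tX} \mathcal{T}_{\tx}$ of indecomposable blocks, where $\mathcal{T}_{\tx}$ is the category of finite-length $\widehat{\tH}_{\tx}$-modules and contains exactly $l(\tx)$ simple objects. The Serre quotient $\Coh(\widetilde\XX)/\mathcal{T}$ is equivalent to the category of finite-dimensional modules over the generic stalk $\Mat_{m}\bigl(K(\tX)\bigr)$ of $\tH$, hence to finite-dimensional $K(\tX)$-vector spaces, which recovers the function field $K(\tX)$; and analysing how each block $\mathcal{T}_{\tx}$ sits in $\Coh(\widetilde\XX)$ relative to this quotient recovers the corresponding place of $K(\tX)$, i.e.\ the local ring $O_{\tX, \tx}$. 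Therefore an equivalence $\Coh(\widetilde\XX) \simeq \Coh(\widetilde\YY)$ produces an isomorphism of function fields matching the families of local rings, i.e.\ an isomorphism of commutative curves $f \colon \tX \xrightarrow{\sim} \widetilde{Y}$, under which $\mathcal{T}_{\tx}$ corresponds to $\mathcal{T}_{f(\tx)}$; hence $l(\tx) = t(f(\tx))$, that is, $t \circ f = l$.

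The step I expect to be the main obstacle is this last reconstruction: checking that an \emph{abstract} equivalence $\Coh(\widetilde\XX) \simeq \Coh(\widetilde\YY)$ is induced by an isomorphism of the central curves. For an affine base this is immediate from the identification of the categorical centre of $\Coh(\widetilde\XX)$ with $\kO_{\tX}(\tX)$, but in the projective case the categorical centre is merely $\kk$, so one must instead carry out a Gabriel-type reconstruction through the finite-length subcategory and its Serre quotient as above — in particular verifying that the scheme structure of $\tX$, and not merely its set of points and its function field, is recovered. I would appeal here to the reconstruction results of \cite{BurbanDrozdMorita}.
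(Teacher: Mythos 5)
The paper itself gives no proof of this theorem, citing only Spie\ss{} and \cite[Section 4.3]{BurbanDrozdMorita}, so there is no ``paper proof'' to compare against line by line; but your reconstruction of the argument from the paper's own toolkit is correct and matches what the citations are pointing to. The first assertion (independence of the weight) is handled exactly as one should: both $\tH(l, \wt)$ and $\tH(l, \wt')$ are central sheaves of orders (the centre of a standard hereditary order $H(R, \vec p)$ over a DVR $R$ is $R$), the non-regular locus in each case is $\{\tx : l(\tx) > 1\}$, and at each such $\tx$ the completed stalks are standard hereditary orders with the same number $l(\tx)$ of diagonal blocks, hence centrally Morita equivalent by Theorem~\ref{T:Harada} (you correctly read the relevant statement as ``$H(R, \vec p)$ is centrally Morita equivalent to the basic order with the same number of blocks''--- the printed ``$H_p(R)$'' with $p = |\vec p|$ is evidently a typo for ``$H_r(R)$''). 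Theorem~\ref{T:MoritaUniqueness} then gives $\Coh(\widetilde\XX) \simeq \Coh(\widetilde\XX')$, and passing to ind-completions (or observing that the equivalence of \cite[Proposition~7.7]{BurbanDrozdMorita} is given by tensoring with a local progenerator bimodule, hence extends to $\Qcoh$) gives the quasi-coherent version. The ``if'' direction is just transport of structure plus the first part. The ``only if'' direction genuinely requires a Gabriel-type reconstruction of $(\tX, l)$ from the abelian category --- recovering the points from the block decomposition of the finite-length subcategory, the local lengths from the number of simples per block, and the scheme structure from the interaction with the Serre quotient --- and you correctly identify this as the real content and defer it to \cite{BurbanDrozdMorita}; this matches the paper, which elsewhere (in the remark following Example~\ref{Ex:StackyCycles}) invokes precisely this reconstruction theorem of that reference. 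So while the details of the reconstruction are not carried out, your proposal is an accurate account of where the theorem comes from and is logically complete modulo the cited reconstruction result, which is the same dependency the paper has.
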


\smallskip
\noindent
\emph{Comment to the proof}. This result is due to Spie\ss{} \cite{Spiess}, see also \cite[Section 4.3]{BurbanDrozdMorita}.

\begin{remark}
Let $\tX = \PP^1$, $\tX \stackrel{l}\lar \NN$ be a length function,
$\Pi \stackrel{\wt}\lar \NN$ be a weight function compatible with $l$
 and $\XX$ be the corresponding hereditary curve.  Then 
$\XX$  can be identified with an appropriate \emph{weighted projective line}
of Geigle and Lenzing \cite{GeigleLenzing} 
in the sense that the categories of (quasi-)coherent sheaves on both objects are equivalent (see, for instance, 
the paper \cite{ReitenvandenBergh}).

\smallskip
\noindent
 Let us choose homogeneous coordinates on $\PP^1$ and put: 
$\tilde{o}^+ := (0:1)$,  $\tilde{o}^- := (1:0)$ and $\tilde{o}:= (1:1)$. In what follows, we shall use the following notation.
\begin{itemize}
\item $\PP^1(n_+, n_-)$ is the weighted projective line corresponding 
to the length function given by the rule:
$$
l(\tilde{x}) = 
\left\{
\begin{array}{cl}
n_\pm & \mbox{\rm if} \;\;  \tilde{x} =  \tilde{o}^\pm \\
1 & \mbox{\rm otherwise}.
\end{array}
\right.
$$
\item $\PP^1(n_+, n_-, n)$ is the weighted projective line corresponding 
to the length function given by the rule:
$$
l(\tilde{x}) = 
\left\{
\begin{array}{cl}
n_\pm & \mbox{\rm if} \;\;  \tilde{x} =  \tilde{o}^\pm \\
n & \mbox{\rm if} \;\;  \tilde{x} =  \tilde{o} \\
1 & \mbox{\rm otherwise},
\end{array}
\right.
$$
where we additionally assume that $n_\pm \ge 2$.
\end{itemize}
\end{remark}

\begin{definition}\label{D:RelationNodalCurves}
Let $\tX$ be a smooth quasi-projective curve over $\kk$ and  $\tX \stackrel{l}\lar \NN$ be a length function. Let $\approx$ be a relation on the set $\Pi$ defined by \eqref{E:SetPi} such that 
\begin{itemize}
\item For any $\omega \in \Pi$ there exists at most one $\omega' \in \Pi$  such that
$\omega \approx \omega'$ (such elements $\omega, \omega'$  will be called \emph{special}).
\item There are only finitely many special elements in $\Pi$.
\end{itemize}
Non-special elements of $\Pi$ will be called \emph{simple}. The set of special elements of $\Pi$ will be denoted by $\Pi_\circ$. An element 
$\omega \in \Pi_\circ$ is called \emph{reflexive} if $\omega \approx \omega$ and \emph{tied} if
 $\omega \approx \omega'$ for some $\omega \ne \omega'$.
 
\smallskip
\noindent
Similarly to Definition \ref{D:EquivRelation} we define the set $\Pi^\ddagger$ by  replacing each reflexive element $\omega \in \Pi$ by two new simple elements
$\omega_+$ and $\omega_-$. The pairs of tied elements of $\Pi^\ddagger$ are the same as for $\Pi$.

\smallskip
\noindent
Let $\Pi^\ddagger \stackrel{\mathsf{wt}^\ddagger}\lar \NN$ be a function such that  $\mathsf{wt}^\ddagger(\omega') = \mathsf{wt}^\ddagger(\omega'')$ for all  $\omega' \approx \omega''$ in $\Pi^\ddagger$. Then we define the map  $\Pi \stackrel{\mathsf{wt}}\lar \NN$ by the following rule:
$$\mathsf{wt}(\omega) := 
\left\{
\begin{array}{cc}
\mathsf{wt}^\ddagger(\omega_+) + \mathsf{wt}^\ddagger(\omega_-) & \mbox{\rm if} \; 
\omega \in \Pi \;  \mbox{\rm is reflexive}\\
\mathsf{wt}^\ddagger(\omega) & \mbox{\rm otherwise}.
\end{array}
\right.$$ 

\smallskip
\noindent
We call such a relation $\approx$ on the set $\Pi$
\emph{admissible} if there exists
a  function $\Pi^\ddagger \stackrel{\mathsf{wt}^\ddagger}\lar \NN$ for which 
the corresponding function  $\Pi \stackrel{\mathsf{wt}}\lar \NN$
is a weight function 
compatible with  the length function  $l$.
Abusing the notation, we shall drop the symbol $\ddagger$ in the notation of $\mathsf{wt}^\ddagger$ and write $\mathsf{wt}$ for all weight functions introduced above.
\end{definition}

\smallskip
\noindent
 We say that two points $\tx'  \ne  \tx'' \in \tX$  are \emph{tied}  if there are $\omega' \in \Omega_{\tx'}$ and $\omega'' \in \Omega_{\tx''}$ such that $\omega' \approx \omega''$. Let 
\begin{equation}\label{E:tiedpoints}
 \tZ:= \left\{\tx \in \tX \,\big| \, \mbox{\rm there exists}\; \tilde{y} \in \tX \setminus \{\tx\} \; \mbox{\rm such that}\; \tx \; \mbox{\rm and}\; \tilde{y}
 \; \mbox{\rm are tied}\;\right\}
\end{equation}
 be the set of tied points of $\tX$.
 Taking the transitive closure, we get an equivalence relation  $\sim$  on  
 $\widetilde{Z}$. We put: 
 $ Z:= \tZ/\sim.
 $
In what follows, we shall also consider $\tZ$ as a reduced subscheme of $\tX$, $Z$ as a reduced scheme over $\kk$ and the projection map $\tZ \stackrel{\tilde\nu}\lar Z$ as a morphism of schemes.

 \smallskip
 \noindent
 Given an admissible  datum $(\tX, l, \approx)$, we define a quasi-projective curve $X$ requiring  the following diagram of algebraic schemes
\begin{equation}\label{E:gluing}
\begin{array}{c}
\xymatrix
{\widetilde{Z} \ar@{^{(}->}[r]^-{\tilde{\eta}} \ar[d]_-{\tilde{\nu}}
& \widetilde{X} \ar[d]^-\nu \\
Z \ar@{^{(}->}[r]^-\eta & X.
}
\end{array}
\end{equation}
to be  cartesian. In other words, the curve $X$ is obtained from $\tX$ by gluing transversally the equivalent points. It is  clear that $X$ is singular provided $\tZ$ is non-empty and that $\tX \stackrel{\nu}\lar X$ is the normalization map. It always exists, as follows from \cite{Serre}.

\smallskip
\noindent
We put: $\kH:= \nu_*\bigl(\tH\bigr)$. For any $x \in X$ (respectively, $\tx \in \tX$)  let $\widehat{H}_x$ (respectively, $\widehat{H}_{\tx}$) be the radical completion of $\kH_x$ (respectively, $\tH_{\tx}$).
Note  that in the notation of \eqref{E:standardorder} we have: $$\widehat{H}_{\tx}:= H\bigl(\widehat{O}_{\tx},  \vec{p}(\tx)\bigr),$$
where $\widehat{O}_{\tx}$ is the completion of the local ring $O_{\tx}$. It is clear that $\widehat{H}_{\tx}$ is also an order over the
local ring $\widehat{O}_x$, which is the completion of the local ring of the structure sheaf of $X$ at the point $x$.

\smallskip
\noindent
Assume now that
$x \in Z$ and  $\nu^{-1}(x) = \left\{\tx_1, \dots, \tx_r\right\}$.
Then we have a canonical isomorphism
$$
\widehat{H}_x \stackrel{\cong}\lar \widehat{H}_{\tx_1} \times \dots \times  \widehat{H}_{\tx_r}.
$$
Next, we put:
$
\Omega_x := \Omega_{\tx_1} \sqcup \dots \sqcup \Omega_{\tx_r}
$
Then we have a permutation $\Omega_x \stackrel{\sigma_x}\lar \Omega_x$ given by the rule $\sigma_x(\tx, i):= \bigl(\tx, i+1 \;\mathsf{mod}\; l(\tx)\bigr)$ for any $\tx \in \left\{\tx_1, \dots, \tx_r\right\}$. In the terms of Definition \ref{D:nodalrings} we put:
\begin{equation}
\widehat{A}_x:= A(\Omega_x, \sigma_x, \approx, \mathsf{wt}) \subset \widehat{H}_x.
\end{equation}
Then  $\widehat{A}_x$ is a nodal order and  $\widehat{H}_x$ is its hereditary cover. Moreover, the center
of $\widehat{A}_x$ contains the local ring $\widehat{O}_x$.
\begin{definition}\label{D:NonCommNodalCurve}
We define the sheaf of orders $\kA$ on the curve $X$ to be the subsheaf
of $\kH$ satisfying the following conditions on the stalks:
\begin{equation}
\widehat{\kA}_x :=
\left\{
\begin{array}{ccl}
\widehat{\kH}_x & \mbox{\rm if} & x \notin Z \\
\widehat{A}_x & \mbox{\rm if} & x \in  Z.
\end{array}
\right.
\end{equation}
We call the ringed space $\XX = (X, \kA)$ the \emph{non-commutative nodal curve} attached to the datum $(\tX, l, \approx, \mathsf{wt})$.
The ringed space $\widetilde{\XX} = (X, \kH)$  will be called the \emph{hereditary cover} of $\XX$.
\end{definition}

\noindent
 Note that for
$\widetilde{\XX}':= (\tX, \tH)$ we have a natural morphism of ringed spaces $\widetilde{\XX}' \stackrel{\nu}\lar \XX$, which induces
an equivalence of categories $\Coh(\widetilde{\XX}') \lar \Coh(\widetilde\XX)$.

\begin{theorem}\label{T:NodalMorita}
Let $(\tX, l, \approx)$ be an admissible datum, $\Pi^\ddagger \stackrel{\mathsf{wt}}\lar \NN$ be any compatible weight function and $\XX$ be the corresponding non-commutative nodal curve. Then the following results hold.
\begin{itemize}
\item Let
 $\Pi^\ddagger \stackrel{\mathsf{wt}'}\lar \NN$ be any other compatible weight function and
$\XX'$ be the corresponding non-commutative nodal curve. Then the categories of quasi-coherent sheaves $\Qcoh(\XX)$ and $\Qcoh(\XX')$ are equivalent.
 \emph{That is why we often do not mention the weight $\wt$ and say that $\XX$ is attached to the admissible datum $(\tX, l, \approx)$.}
 
\item Let $\approx'$ be another equivalence relation on $\Pi$ and
$\Pi \stackrel{\mathsf{wt}'}\lar \NN$ be a weight function compatible with $\approx'$.
Suppose  that for any $\tx \in \tX$ there exists a cyclic permutation $\Pi_{\tx} \stackrel{f_x}\lar \Pi_{\tx}$ such that the diagram
$$
\xymatrix{
\Pi_{\tx} \ar[rr]^-{f_x} \ar[rd]_-{\mathsf{wt}} & &  \Pi_{\tx} \ar[ld]^-{\mathsf{wt}'}\\
& \NN &
}
$$
is commutative. Then the categories  $\Qcoh(\XX)$ and $\Qcoh(\XX')$ are equivalent.
\end{itemize}
\end{theorem}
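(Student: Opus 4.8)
The plan is to deduce both assertions from the local-to-global Morita criterion of Theorem~\ref{T:MoritaUniqueness}, combined with the central Morita equivalences between completed stalks supplied by Theorems~\ref{T:Harada} and \ref{T:DescriptionNodalOrders}. First I would replace $\XX$ and $\XX'$ by Morita equivalent \emph{central} non-commutative curves $\YY$ and $\YY'$; such central models exist by the remark following Theorem~\ref{T:MoritaUniqueness}, and its quasi-coherent counterpart holds as well, since the equivalences involved are realised by tensoring over the structure sheaf of orders with (or taking homomorphisms out of) a sheaf of bimodules that is coherent and locally projective on one side, hence commutes with filtered colimits. By construction, the completed stalk of $\YY$ at a point $x$ is centrally Morita equivalent to $\widehat{\kA}_x$; in particular it has the same center, so $\YY$ is defined over the same commutative curve as $\XX$, and $\mathfrak{S}(\YY) = \{x \mid \widehat{\kA}_x \text{ is not a maximal order}\}$. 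It therefore suffices, in each of the two cases, to verify that $\YY$ and $\YY'$ are defined over the same commutative curve, that $\mathfrak{S}(\YY) = \mathfrak{S}(\YY')$, and that at each point of $\mathfrak{S}$ the completed stalks of the two sheaves of orders are centrally Morita equivalent; then Theorem~\ref{T:MoritaUniqueness} gives $\Qcoh(\XX) \simeq \Qcoh(\YY) \simeq \Qcoh(\YY') \simeq \Qcoh(\XX')$.

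For the first statement, the curves $\XX$ and $\XX'$ share the normalisation datum $(\tX, l, \approx)$, hence the same gluing \eqref{E:gluing} and the same underlying commutative curve, and by Definition~\ref{D:NonCommNodalCurve} each completed stalk of $\kA$ is either a standard hereditary order $H\bigl(\widehat{O}_{\tx}, \vec p(\tx)\bigr)$ or a nodal order $A\bigl(R, (\Omega_x, \sigma_x, \approx, \wt)\bigr)$ whose combinatorial datum $(\Omega_x, \sigma_x, \approx)$ does not involve the weight. Replacing $\wt$ by $\wt'$ replaces $H\bigl(\widehat{O}_{\tx}, \vec p(\tx)\bigr)$ by $H\bigl(\widehat{O}_{\tx}, \vec p'(\tx)\bigr)$, which is centrally Morita equivalent to it because the two vectors have the same length $l(\tx)$ (Theorem~\ref{T:Harada}), and replaces $A\bigl(R,(\Omega_x,\sigma_x,\approx,\wt)\bigr)$ by $A\bigl(R,(\Omega_x,\sigma_x,\approx,\wt')\bigr)$, which is centrally Morita equivalent to it by the fourth bullet of Theorem~\ref{T:DescriptionNodalOrders} (both being centrally Morita equivalent to the order attached to the trivial weight $\wt_\circ$). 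Since being maximal is preserved under central Morita equivalence, $\mathfrak{S}(\YY) = \mathfrak{S}(\YY')$, and Theorem~\ref{T:MoritaUniqueness} applies.

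For the second statement, the cyclic permutations $f_x$ of the fibres $\Pi_{\tx}$ assemble into a bijection $f \colon \Pi \to \Pi$ that preserves every fibre and commutes with the cyclic shifts $\sigma_x$; the hypotheses then mean exactly that $\approx'$ and $\wt'$ are obtained from $\approx$ and $\wt$ by transport along $f$. Consequently, at each point the datum $(\Omega'_x, \sigma'_x, \approx', \wt')$ is carried to $(\Omega_x, \sigma_x, \approx, \wt)$ by the relabelling $f_x$, which satisfies the hypotheses of the last bullet of Theorem~\ref{T:DescriptionNodalOrders}; hence the corresponding completed stalks are isomorphic, and in particular centrally Morita equivalent. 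Since $f$ preserves fibres it leaves the set \eqref{E:tiedpoints} of tied points and the gluing \eqref{E:gluing} unchanged, so $\XX$ and $\XX'$ have the same underlying commutative curve, and $\mathfrak{S}(\YY) = \mathfrak{S}(\YY')$ exactly as before; Theorem~\ref{T:MoritaUniqueness} again concludes.

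The local computations above are immediate from the results quoted, so the step I expect to be the main obstacle is the bookkeeping needed to invoke Theorem~\ref{T:MoritaUniqueness} honestly: one must check that the central models $\YY$ and $\YY'$ are defined over the \emph{same} commutative curve — not merely over isomorphic ones — which comes down to verifying that the centers of the corresponding completed stalks literally coincide, using the explicit formula for the center of a nodal order in Theorem~\ref{T:DescriptionNodalOrders} together with the fact that a central Morita equivalence is the identity on centers. One also has to make sure that $\mathfrak{S}$ is computed correctly at the special and at the glued points, and that the central-reduction and gluing results of \cite{BurbanDrozdMorita}, stated there for coherent sheaves, carry over to quasi-coherent sheaves.
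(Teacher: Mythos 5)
Your proposal takes the same route the paper indicates — the paper gives only a one-line comment asserting the result is a consequence of Theorem~\ref{T:MoritaUniqueness} and Theorem~\ref{T:DescriptionNodalOrders} — and your elaboration of the details (passing to central models, matching the underlying curve and the non-regular loci, invoking the local central Morita equivalences from Theorem~\ref{T:Harada} and the fourth/last bullets of Theorem~\ref{T:DescriptionNodalOrders}) is correct. Two small remarks: in the second bullet the paper's stated hypothesis only makes the weight diagram commute, with no explicit compatibility condition relating $\approx'$ to $\approx$ via $f$; you are right that such a condition (transport of $\approx$ along $f$) is needed for the conclusion and is clearly what is intended, so your reading repairs rather than misreads the statement. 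You also correctly flag the $\Coh$ vs.\ $\Qcoh$ discrepancy — Theorem~\ref{T:MoritaUniqueness} is stated for coherent sheaves — and your observation that the equivalence is realized by a coherent, one-side locally projective bimodule, hence extends along filtered colimits, is the right way to bridge it.
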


\smallskip
\noindent
\emph{Comment to the proof}. This result is a  consequence of  Theorem \ref{T:MoritaUniqueness} (proven in \cite{BurbanDrozdMorita}) and Theorem \ref{T:DescriptionNodalOrders}.

\begin{example}\label{Ex:AdmissibleDatum} Let $(\widetilde{X}, l, \approx)$ be such that for any $\tx \in \tX$ with $l(\tx) \ge 2$, the set $\Pi_{\tx}$ contains a non-tied element. Then the datum $(\tX, l, \approx)$ is admissible.
\end{example}

\begin{example} Let $\tX$ be any curve and $\tx_1 \ne \tx_2 \in \tX$ be two distinct points.
Define a length  function $\tX \stackrel{l}\lar \NN$ by the rule: 
$$
l(\tilde{x}) = 
\left\{
\begin{array}{cl}
2 & \mbox{\rm if} \;\;  \tilde{x} =  \tx_1 \\
1 & \mbox{\rm otherwise}.
\end{array}
\right.
$$
 Let $\approx$ be given by the rule: $(\tx_1, 1) \approx (\tx_2, 1)$. Then the datum $(\tX, l, \approx)$ is not admissible.
\end{example}

\subsection{Non-commutative nodal curves of tame representation type}

\smallskip
\noindent
In this subsection we recall, following the paper \cite{DrozdVoloshyn}, the description 
of those non-commutative projective nodal curves $\XX =(X, \kA)$ for which the category $\VB(\XX)$ of vector bundles (i.e.~of locally projective coherent $\kA$-modules) has tame representation type. 
Let
$\tX = \tX_1 \sqcup \dots \sqcup \tX_r$ be a disjoint union of $r$  projective lines. We  choose homogeneous coordinates
on each component $\tX_k$ and define points $\tilde{o}_k , \tilde{o}_k^\pm,\in \tX_k$ setting: $\tilde{o}_k:= (1:1), \tilde{o}_k^+ := (0:1)$ and $\tilde{o}_k^- := (1:0)$.
Assume that $(\tX, l, \approx)$ is an admissible datum defining a non-commutative nodal curve $\XX$.
For each $1 \le k \le r$, let $\Sigma_k \subset \tX_k$ be the corresponding set of special points.
\begin{itemize}
\item If $\big|\Sigma_k\big| = 2$, we may without loss of generality  assume that 
$\Sigma_k = \left\{\tilde{o}_k^+, \tilde{o}_k^-\right\}$.
\item Similarly, if $\big|\Sigma_k\big| = 3$, we assume that 
$\Sigma_k = \left\{\tilde{o}_k,\tilde{o}_k^+, \tilde{o}_k^-\right\}$.
\end{itemize}

\smallskip
\noindent
The following result is proved in  \cite{DrozdVoloshyn}.

\begin{theorem}\label{T:DrozdVoloshyn}
Let $\XX$ be a   non-commutative projective  curve.  Then $\VB(\XX)$ has tame  representation type if and only if the following conditions are satisfied.
\begin{itemize}
\item $\XX$ is Morita equivalent to a commutative elliptic curve, i.e.~$\tX$ is an elliptic curve, while $l$ and $ \approx$ are trivial.
\item $\XX$ is the rational non-commutative nodal curve attached to an admissible datum   $(\tX, l, \approx)$ such that $\tX = \tX_1 \sqcup \dots \sqcup \tX_r$ is  a disjoint union of $r$  projective lines, whereas  $(l, \approx)$ satisfies the following conditions:
\begin{itemize}
\item For any $1 \le k \le r$ we have: $\big|\Sigma_k\big| \le 3$.
\item If $\big|\Sigma_k\big| =  3$ then we additionally have:
the  set $\Pi_{\tx}$ contains special elements for precisely one point $\tx \in \Sigma_k$ (say, for $\tx = \tilde{o}$), whereas for the remaining two points of $\Pi_{\tx}$ (say, for $\tilde{o}^\pm$) we have:
$l(\tilde{o}^\pm) = 2)$.
\end{itemize}
\end{itemize}
\end{theorem}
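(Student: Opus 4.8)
The plan is to reduce the classification of $\VB(\XX)$ to a combinatorial ``matrix problem'' localized at the finitely many non-regular points of $\XX$, and then to invoke the tame--wild dichotomy for such problems. First I would use Theorem \ref{T:MoritaUniqueness} to replace $\XX$ by a Morita equivalent central curve, so that it suffices to describe vector bundles in terms of the hereditary cover $\nu\colon \widetilde\XX \to \XX$ together with descent data along the conductor. Over $\widetilde\XX$ the curve splits (after the equivalence $\Coh(\widetilde\XX') \to \Coh(\widetilde\XX)$) into hereditary non-commutative curves over the components $\tX_k$, each a weighted projective line, and vector bundles on it are classified in ``interval'' terms --- via Atiyah's theorem in the elliptic case and via the string/tube description in the rational case. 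The gluing at a non-regular point $x$ is governed, by Theorem \ref{T:DescriptionNodalOrders} and diagram \eqref{E:gentle}, by the semisimple inclusion $\bar\kA_x \hookrightarrow \bar\kH_x$, i.e.\ by the restriction of $\approx$ to the points over $x$; assembling these local data, the description of $\VB(\XX)$ should become a \emph{bunch of semichains} (a clan problem) in the sense of Bondarenko and Drozd.

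Granting this reduction, the tame direction would follow: if every component is either an elliptic curve carrying the trivial length function and relation, or a projective line $\tX_k$ with $|\Sigma_k| \le 3$ subject to the stated restriction, then the associated bunch of semichains is one of the shapes known to be representation tame. A component with $|\Sigma_k| \le 2$ contributes a ``linear'' semichain, of gentle or skew-gentle type; a component with $|\Sigma_k| = 3$ of the prescribed form corresponds, on the hereditary cover, to the weighted line $\PP^1(2,2,n)$ --- the tubular situation, whose vector bundle category is tame (the global counterpart of the tameness of the degenerate tubular algebra $T$ from the Introduction). For the converse I would show that each excluded configuration contains an explicit wild subproblem: a component with $|\Sigma_k| \ge 4$, or with $|\Sigma_k| = 3$ but special elements over more than one point of $\Sigma_k$, or over a single point with $l \ge 3$ at one of the remaining two, produces a bimodule problem dominating, for instance, a free pair of $2\times 2$ matrices or an extended-Dynkin overflow beyond type $\widetilde D$, hence wild; likewise a curve of genus $\ge 1$ together with any non-trivial $l$ or $\approx$ should already yield a wild problem, since an elliptic curve alone exhausts the available ``tameness budget''.

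The hard part will be the tame--wild dichotomy itself: identifying the global matrix problem exactly as a bunch of semichains (with the correct bookkeeping for the weighted orbifold points with $l(\tilde{o}^\pm) = 2$ and for the ties coming from $\approx$), and then checking shape by shape that the admissible configurations land inside Bondarenko's tame list while every other configuration admits a wild subproblem. This is the technical heart of \cite{DrozdVoloshyn}, whose reduction I would follow rather than reprove.
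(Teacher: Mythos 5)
The paper offers no proof of this theorem: immediately after the statement it writes ``The following result is proved in \cite{DrozdVoloshyn}'' and moves on. There is therefore no in-paper argument to compare your sketch against. Your outline --- reduce to central curves via Theorem \ref{T:MoritaUniqueness}, describe $\VB(\XX)$ by descent from the hereditary cover using the local diagram \eqref{E:gentle}, encode the resulting gluing data as a bunch of semichains, and then run the tame/wild dichotomy case by case, with the $|\Sigma_k|=3$ configurations governed by $\PP^1(2,2,n)$ and the excluded ones dominating a wild bimodule problem --- is indeed the strategy of \cite{DrozdVoloshyn}, and you correctly flag that the technical core (identifying the matrix problem precisely and checking Bondarenko's list) lives in that paper. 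This is consistent with how the present paper treats the result, namely as an imported classification; if you actually needed to execute the proof, the one ingredient your sketch skirts is the careful bookkeeping required to see that the problem really is of semichain (rather than more general) type at the orbifold points with $l(\tilde{o}^\pm)=2$, which is where the admissibility hypothesis and the weight function enter and which is delicate enough that \cite{DrozdVoloshyn} is the right reference rather than something to improvise.
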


\begin{definition}\label{D:TameNodal} Consider the pair $\bigl(\vec{p}, \vec{q}\bigr)$, where 
$$
\vec{p} = \bigl((p_1^+, p_1^-), \dots, (p_t^+, p_t^-)\bigr) \in \bigl(\NN^2\bigr)^t \quad \mbox{\rm and} \quad
\vec{q} =  (q_1, \dots, q_s) \in \NN^s
$$
for some $t, s\in  \NN_0$ (either of this tuples may be empty). Let 
$$
\tX := \tX_1 \sqcup \dots \sqcup  \tX_t \sqcup \tX_{t+1} \sqcup \dots \sqcup \tX_{t+s}
$$
be a disjoint union of $t+s$ projective lines. We define the weight function
$\tX \stackrel{l}\lar \NN$ by the following rules
\begin{itemize}
\item For each $1 \le k \le t$ we put: $l(\tilde{o}_k^\pm) = p_k^\pm$.
\item For each $1 \le k \le s$ we put: $l(\tilde{o}_{t+k}^\pm) = 2$ and 
$l(\tilde{o}_{t+k}) = q_k$.
\end{itemize}
Let $\approx$ be a relation on the set 
$
\bigl(\Pi_{\tilde{o}_1^+}\sqcup \Pi_{\tilde{o}_1^-}\bigr) \sqcup \dots
\sqcup \bigl(\Pi_{\tilde{o}_t^+}\sqcup \Pi_{\tilde{o}_t^-}\bigr) \sqcup
\bigl(\Pi_{\tilde{o}_{t+1}}\sqcup \dots \sqcup \Pi_{\tilde{o}_{t+s}}\bigr).
$
satisfying the conditions of Definition \ref{D:RelationNodalCurves}. If 
$\bigl(\vec{p}, \vec{q}, \approx\bigr)$ is admissible and $\wt$ is a compatible weight, we denote
by $\XX\bigl(\vec{p}, \vec{q}, \approx,\wt\bigr)$ the corresponding non-commutative
nodal rational projective curve. Since the weight $\wt$ does not imply the derived category, we often omit it and
write $\XX\bigl(\vec{p}, \vec{q}, \approx\bigr)$.
\end{definition}

\smallskip
\noindent
One can rephrase Theorem \ref{T:DrozdVoloshyn} in the following way.

\begin{theorem}\label{T:VBTame}
The category $\VB(\XX)$  of vector bundles on a non-commutative projective curve $\XX$ is representation tame if and only if $\XX$ is either a commutative elliptic curve
or a non-commutative nodal curve $\XX\bigl(\vec{p}, \vec{q}, \approx\bigr)$, where
$(\vec{p}, \vec{q}, \approx)$ is an admissible datum as in Definition \ref{D:TameNodal}.
\end{theorem}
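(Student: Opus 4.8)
The plan is to derive the statement from Theorem \ref{T:DrozdVoloshyn} by matching its two descriptions of the tame class with the two alternatives in the statement. The first thing I would record is that the full subcategory $\VB(\XX) \subseteq \Coh(\XX)$ is an invariant of the Morita equivalence class of $\XX$; this lets me argue throughout up to Morita equivalence, which in particular permits replacing $\XX$ by the pullback of its sheaf of orders $\kA$ along an isomorphism of the underlying commutative curve, and invoking Theorems \ref{T:NodalMorita} and \ref{T:MoritaUniqueness} to normalise the weight function.

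For the direction ($\Leftarrow$): if $\XX$ is a commutative elliptic curve, tameness of $\VB(\XX)$ is the first alternative of Theorem \ref{T:DrozdVoloshyn}. If $\XX = \XX\bigl(\vec{p}, \vec{q}, \approx\bigr)$ as in Definition \ref{D:TameNodal}, I would check directly that its defining datum $(\tX, l, \approx)$ fulfils the hypotheses of the second alternative of Theorem \ref{T:DrozdVoloshyn}: for the first $t$ projective lines $\approx$ only involves $\Pi_{\tilde{o}_k^+} \cup \Pi_{\tilde{o}_k^-}$, so the set $\Sigma_k$ of special points satisfies $\Sigma_k \subseteq \{\tilde{o}_k^+, \tilde{o}_k^-\}$ and $|\Sigma_k| \le 2$; for the remaining $s$ lines one has $l(\tilde{o}_{t+k}^\pm) = 2$ and $\approx$ only involves $\Pi_{\tilde{o}_{t+k}}$, so $\Sigma_{t+k} \subseteq \{\tilde{o}_{t+k}, \tilde{o}_{t+k}^+, \tilde{o}_{t+k}^-\}$ with special elements concentrated at the single point $\tilde{o}_{t+k}$ and length $2$ at the other two, which is precisely the condition imposed in the $|\Sigma_k| = 3$ case. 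Hence $\VB\bigl(\XX(\vec{p}, \vec{q}, \approx)\bigr)$ is tame.

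For the direction ($\Rightarrow$): I would assume $\VB(\XX)$ tame and $\XX$ not Morita equivalent to an elliptic curve, so that by Theorem \ref{T:DrozdVoloshyn} the curve $\XX$ is attached to an admissible datum $(\tX, l, \approx)$ with $\tX = \tX_1 \sqcup \dots \sqcup \tX_r$, $|\Sigma_k| \le 3$ for all $k$, and the stated extra constraint when $|\Sigma_k| = 3$. First I would reorder the components so that $|\Sigma_k| \le 2$ for $1 \le k \le t$ and $|\Sigma_k| = 3$ for $t < k \le r$, and set $s := r - t$. Then, using that $\Aut(\PP^1)$ acts $3$-transitively, I would choose for each $k$ an automorphism of $\tX_k$ sending $\Sigma_k$ into $\{\tilde{o}_k^+, \tilde{o}_k^-\}$ when $|\Sigma_k| \le 2$, and onto $\{\tilde{o}_k, \tilde{o}_k^+, \tilde{o}_k^-\}$ with the special-element-carrying point sent to $\tilde{o}_k$ when $|\Sigma_k| = 3$; pulling $\kA$ back along the resulting isomorphism of commutative curves affects neither $\Coh(\XX)$ up to equivalence nor admissibility. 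Setting $p_k^\pm := l(\tilde{o}_k^\pm)$ for $1 \le k \le t$ and $q_k := l(\tilde{o}_{t+k})$ for $1 \le k \le s$ then exhibits $\XX$, up to Morita equivalence, as $\XX\bigl(\vec{p}, \vec{q}, \approx\bigr)$ for an admissible datum in the sense of Definition \ref{D:TameNodal}, which is what is required.

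I expect the only genuinely non-routine point — and hence the main, though minor, obstacle — to be the verification that the normalisation of the special points by automorphisms of $\PP^1$ preserves admissibility, together with the careful matching of the $|\Sigma_k| = 3$ clause of Theorem \ref{T:DrozdVoloshyn} with the rôle of the $s$ extra lines in Definition \ref{D:TameNodal}, including degenerate situations such as $q_k = 1$ or a component with $|\Sigma_k| \le 1$ (where the corresponding weight-tuple entries are simply $1$). Admissibility being a condition on the pair $(l, \approx)$ alone, it is insensitive to isomorphisms of the underlying commutative curve, so this obstacle dissolves once stated carefully.
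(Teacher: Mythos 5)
The paper does not give an explicit proof of Theorem \ref{T:VBTame}; it simply asserts that the statement is a rephrasing of Theorem \ref{T:DrozdVoloshyn}. Your argument spells out exactly that rephrasing — matching the two alternatives of Theorem \ref{T:DrozdVoloshyn} with the two alternatives of the statement, normalising special points via the $3$-transitive action of $\Aut(\PP^1)$, and invoking Morita-invariance of $\VB(\XX)$ — so it takes essentially the same approach as the paper, and it is correct.
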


\begin{remark} Let $\vec{p} = \bigl((2,2), (2,2)\bigr)$, $\vec{q}$ be void  and 
$\approx$ be given by the following rule: $(\tilde{o}_k^+, 1) \approx (\tilde{o}_k^-, 1)$ for $k = 1, 2$ and $(\tilde{o}_1^\pm, 2) \approx (\tilde{o}_2^\pm, 2)$. 
Then the central curve $X$ of the corresponding non-commutative nodal curve 
$\XX(\vec{p}, \approx)$ is given by the following Cartesian diagram:
$$
\xymatrix{
\Spec(\kk) \sqcup \Spec(\kk) \ar[rr]^-{\imath} \ar[d] & & E \sqcup E \ar[d]\\
\Spec(\kk) \ar[rr] & & X
}
$$
where $E \cong \overline{V(v^2-u^3-u^2)} \subset \PP^2$ is a plane nodal cubic and
the image of $\imath$ of the singular set of $E \sqcup E$. Note that the arithmetic genus of $X$ is \emph{two}.
In fact, the central curve of a tame non-commutative nodal curve can have arbitrary high arithmetic genus.
\end{remark}

\subsection{Remarks on stacky cycles of projective lines} In this subsection, let  $\kk$ be an algebraically closed field of characteristic zero. 

\begin{example}\label{Ex:FirstCyclicAction}
Let $E$ be a plane nodal cubic and
$\PP^1 \stackrel{\nu}\lar E$ be its normalization. Let us choose homogeneous 
coordinates $(z: w)$ on $\PP^1$ in such a way that $\nu^{-1}(s) = \bigl\{\tilde{o}^+,
 \tilde{o}^-\bigr\}$, where  $s$ is the singular point of $E$ and  $\tilde{o}^+ = (0: 1)$ and  $\tilde{o}^- = (1: 0)$. Consider the action of the cyclic group
 $G:= \left\langle \rho \,\big|\, \rho^n = e\right\rangle$ on $\PP^1$ given by the formula $(z:w) \stackrel{\rho}\mapsto (\zeta z: w)$, where $\zeta$ is a primitive 
 $n$-th root of unitity. It is clear that the action of $G$ on $\PP^1$ descends to an action of $G$ on $E$ such that $E':= E/G \cong E$. Let $\kA := \kO_{E'} \ast G$ be the sheaf on $E'$ defined by the following rule:
 $$
 U \mapsto \Gamma\bigl(\pi^{-1}(U), \kO_E\bigr) \ast G \quad \mbox{\rm for any open } \; U \subseteq E' ,
 $$
 where $E \stackrel{\pi}\lar E'$ is the projection map.
 Then $\kA$ is a sheaf of nodal orders on the projective curve   $E'$. For any
 $x \in E' \setminus \{s\}$, the order $\kA_x$ is maximal, whereas $\widehat{\kA}_s$ is the nodal order given by \eqref{E:nodalquiver}. 
\end{example}

\smallskip
\noindent 
The following result is obvious.
 
\begin{lemma}
The category $\Coh^G(E)$ of $G$-equivariant sheaves on $E$ is equivalent to the category $\Coh(\mathbbm{E})$, where $\mathbbm{E} = (E', \kA)$.
\end{lemma}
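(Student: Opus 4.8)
The statement is an instance of the standard comparison between equivariant sheaves and modules over a skew group algebra, and the plan is to spell this out and globalise. First I would record that the quotient morphism $E \stackrel{\pi}\lar E'$ is finite, hence affine; therefore $\pi_\ast$ induces an equivalence between $\Qcoh(E)$ and the category of quasi-coherent $(\pi_\ast\kO_E)$-module sheaves on $E'$, restricting to an equivalence of the respective categories of coherent objects. Since $G$ acts on $E$ by $E'$-automorphisms, this equivalence is $G$-equivariant, so it identifies $\Coh^G(E)$ with the category of $G$-equivariant coherent $(\pi_\ast\kO_E)$-modules on $E'$. Finally, as $(\pi_\ast\kO_E)(U) = \Gamma\bigl(\pi^{-1}(U), \kO_E\bigr)$ for every open $U \subseteq E'$, the very definition of $\kA$ in Example \ref{Ex:FirstCyclicAction} gives $\kA = (\pi_\ast\kO_E)\ast G$ as a sheaf of $\kO_{E'}$-algebras.

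It then remains to identify $G$-equivariant $(\pi_\ast\kO_E)$-modules on $E'$ with $\kA$-modules, which is a question local on $E'$. Over an affine open $U = \Spec(C)\subseteq E'$ one has $\pi^{-1}(U) = \Spec(B)$ with $C = B^G$ and $\kA(U) = B\ast G$; unwinding the multiplication in the skew group algebra, a left $B\ast G$-module is precisely a left $B$-module $M$ equipped with a $\kk$-linear action of $G$ satisfying $\rho\cdot(b m) = (\rho\circ b)(\rho\cdot m)$ for all $b\in B$ and $m\in M$, i.e.\ a $G$-equivariant $B$-module. (This reformulation requires no hypothesis on $\mathsf{char}(\kk)$; the assumption $\mathsf{char}(\kk) = 0$ is only needed elsewhere, e.g.\ to guarantee that $\widehat{\kA}_s$ is a nodal order.) Because restriction of sheaves along an inclusion of affine opens is given by localisation, which commutes with $-\ast G$, these local equivalences are compatible with the restriction functors and therefore glue to an equivalence between the category of $G$-equivariant $(\pi_\ast\kO_E)$-module sheaves on $E'$ and $\Coh(E', \kA)$. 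Coherence is respected since $\pi$ is finite: $\kA$ is a coherent $\kO_{E'}$-algebra, and a $G$-equivariant $(\pi_\ast\kO_E)$-module is $\kO_{E'}$-coherent if and only if the associated $\kO_E$-module is coherent.

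Composing the two equivalences yields $\Coh^G(E)\simeq\Coh(\mathbbm{E})$. There is no genuine obstacle here; the only points deserving a line of care are the affineness and $G$-equivariance of $\pi_\ast$ and the compatibility of the affine-local module identifications with restriction, so that they descend to a global statement. Alternatively, one may simply invoke the general fact that for a finite group acting on a (noetherian) scheme the category of equivariant coherent sheaves is equivalent to the category of coherent modules over the associated skew group algebra sheaf on the quotient --- which is exactly the sheaf $\kA$ built in Example \ref{Ex:FirstCyclicAction} --- in the same spirit as the affine skew group constructions used earlier in this section.
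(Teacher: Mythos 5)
The paper declares this lemma ``obvious'' and records it without proof, so there is no written argument to compare against. Your proposal is the standard one and is correct: pushforward along the finite (hence affine) morphism $E\to E'$ identifies $\Coh^G(E)$ with $G$-equivariant coherent $(\pi_\ast\kO_E)$-modules on $E'$, and the affine-local correspondence between $G$-equivariant $B$-modules and left $B\ast G$-modules (which, as you note, needs no hypothesis on $\mathsf{char}(\kk)$) glues to give $\Coh(\mathbbm{E})$, since $\kA = (\pi_\ast\kO_E)\ast G$ by the construction in Example \ref{Ex:FirstCyclicAction}.
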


\begin{remark}
The non-commutative nodal curve $\mathbbm{E}$ admits the following description. 
Consider the length function $\PP^1 \stackrel{l}\lar \NN$ given by the rule:
$$
l(\tilde{x}) = 
\left\{
\begin{array}{cl}
n & \mbox{\rm if} \;\;  \tilde{x} \in \{\tilde{o}^+, \tilde{o}^-\} \\
1 & \mbox{\rm otherwise}.
\end{array}
\right.
$$
We then define the relation $\approx$ on the set $\Pi$ setting $(\tilde{o}^+, k) \approx
(\tilde{o}^-, n-k)$ for $1 \le k \le n$, where we replace $0$ by $ n$. It is easy to see that the datum $(\PP^1, l, \approx)$ is admissible. Using Theorem \ref{T:MoritaUniqueness} one can conclude that $\EE$ and the non-commutative nodal curve corresponding to the datum $(\PP^1, l, \approx)$ are Morita equivalent.
\end{remark}

\begin{example}\label{Ex:NodalCubicOrbifold}
Again, let  $E = \overline{V(v^2-u^3-u^2)} \subset \PP^2$ be a plane nodal cubic.
Consider the involution $E \stackrel{\tau}\lar E$ given by the rule  $(u, v) \mapsto (u, -v)$. Let
$
E \stackrel{\pi}\lar E/G \cong \PP^1
$
be the projection map, where   $G = \langle \tau \rangle \cong \ZZ_2$. Next, let  $\kA := \kO_{E} \ast G$ and $\EE = (\PP^1, \kA)$
be the corresponding non-commutative nodal curve. Again, the categories
$\Coh^G(E)$ and $\Coh(\EE)$ are equivalent. Let us choose homogeneous coordinates
on $\PP^1$ in such a way that $\pi^{-1}(\tilde{o}^+)$ is the singular point of $E$ and 
$\pi^{-1}(\tilde{o}^-)$ is its infinite point. Consider the length function 
$\PP^1 \stackrel{l}\lar \NN$ given by the rule:
$$
l(\tilde{x}) = 
\left\{
\begin{array}{cl}
2 & \mbox{\rm if} \;\;  \tilde{x} =  \tilde{o}^- \\
1 & \mbox{\rm otherwise}.
\end{array}
\right.
$$
We define the relation $\approx$ on the set $\Pi$ by setting 
$(\tilde{o}^+, 1) \approx (\tilde{o}^+, 1)$. Obviously, the datum $(\PP^1, l, \approx)$ is admissible.   According to Lemma \ref{L:BasicNodalSkew}
and Theorem \ref{T:MoritaUniqueness}, the datum $(\PP^1, l, \approx)$ defines  a non-commutative nodal curve, which is Morita equivalent to $\EE$.
\end{example}

\begin{example}\label{Ex:StackyCycles} In this example, we give a description of stacky cycles of projective lines used in the paper of Lekili and Polishchuk \cite{LekiliPolishchuk} in the language of non-commutative nodal curves. Let $r \in \NN$, $\vec{n} = (n_1, \dots, n_r) \in \NN^r$ and $\vec{c} = (c_1, \dots, c_r) \in \NN^r$ be such that $\mathsf{gcd}(n_k, c_k) = 1$ for any $1 \le k \le r$. Let $E_r$ be 
a cycle of $r$ projective lines and $\tX \stackrel{\pi}\lar E_r$ be its normalization. Then
$\tX = \tX_1 \sqcup \dots \sqcup \tX_r$ is  a disjoint union of $r$  
projective lines. Let $\bigl\{o_1, \dots, o_r\bigr\}$ be the set of singular points 
of $E_r$, where we choose their labeling in such a way that $\pi^{-1}(o_k) = 
\bigl\{\tilde{o}_k^-, \tilde{o}_{k+1}^+\bigr\}$, where $\tilde{o}_k^- = (1: 0) \in \tX_k$ and
$\tilde{o}_{k+1}^+ = (0: 1) \in \tX_{k+1}$. The completion of the local ring of $E_r$ at each  point $o_k$ is isomorphic to the commutative nodal ring $\kk\llbracket u, v\rrbracket/(uv)$. For any $1 \le k \le n$, consider the action of the cyclic group
$G_{n_k} = \bigl\langle \rho \,\big|\, \rho^{n_k} = e \bigr\rangle$ on $D = \kk\llbracket u, v\rrbracket/(uv)$ given by the rule
\begin{equation}\label{E:action}
\left\{
\begin{array}{ccc}
\rho \circ u & = & \zeta_k u \\
\rho \circ v & = & \zeta_k^{c_k} v,
\end{array}
\right.
\end{equation}
where $\zeta_k$ is some primitive $n_k$-th root of $1$. \emph{Heuristically}, the  category
of coherent sheaves $\Coh(\EE)$ on a stacky cycle
of projective lines $\EE:= \EE_r(\vec{n}, \vec{c})$ is an abelian category satisfying the following property: the category $\Tor(\EE)$  of finite length objects of 
$\Coh(\EE)$ 
splits into a direct sum of blocks:
$$
\Tor(\EE) \cong \biguplus\limits_{x \in E_r} \Tor_x(\EE),
$$
where $\Tor_x(\EE)$ is equivalent to the category of finite length $\kk\llbracket w\rrbracket$-modules if $x \in E_r$ smooth and to the category 
of finite length $D \ast G_{n_k}$-modules if $x = o_k$, where the action of 
$G_{n_k}$ is given by the rule \eqref{E:action}.

\smallskip
\noindent
Informally speaking,  a  stacky cycle of projective lines $\EE_r(\vec{n}, \vec{c})$ can be thought   as an appropriate cyclic gluing of weighted projective lines $\PP^1(n_1, n_2), \dots, \PP^1(n_{r-1}, n_r),  \PP^1(n_r, n_1)$.
Now let us proceed with  a formal definition of  $\EE_r(\vec{n}, \vec{c})$ viewed as a non-commutative nodal curve. As above, let $\tX$ be a disjoint union of $r$ projective lines. 
Consider the length function 
$\tX \stackrel{l}\lar \NN$ given by the rule:
$$
l(\tilde{x}) = 
\left\{
\begin{array}{cl}
n_k & \mbox{\rm if} \;\;  \tilde{x} \in  \left\{\tilde{o}_k^-, \tilde{o}_{k+1}^+ \right\} \\
1 & \mbox{\rm otherwise}.
\end{array}
\right.
$$
Let $\Pi_k^\pm = \Pi_{\tilde{o}_k^\pm}$. It is convenient to  use the identification
$$
\Pi_k^- = \left\{\bar{1}, \dots, \bar{n}_k\right\} = \Pi_{k+1}^+,
$$
given by the rule: $(\tilde{o}_k^-, j) = \bar{j} = (\tilde{o}_{k+1}^+, j)$ for 
$1 \le j \le n_k$. We have a bijection
$$
\Pi_k^- \stackrel{\tau_{k}}\lar \Pi_{k+1}^+, \; \bar{j} \mapsto \overline{c_k\cdot j}.
$$
Let $\approx$ be a relation on the set $\Pi$, given by the rule: 
$(\tilde{o}_k^-, j) \approx \bigl(\tilde{o}_{k+1}^+, \tau_k(j)\bigr)$.
Note that the set $\Pi$ does not contain reflexive elements, hence $\Pi^\ddagger = \Pi$ in this case. 

\smallskip
\noindent
Next, we claim that 
 the datum
$(\tX, l, \approx)$ is admissible. Indeed, let $n  := 
\mathsf{lcm}(n_1,  \dots, n_r)$ be the least common multiple of $n_1, \dots, n_r$. Then
we can define a compatible weight function $\Pi \stackrel{\wt}\lar \NN$ by the following rules:
$$
\wt(\tx, j) = 
\begin{cases}
 \dfrac{n}{n_{k-1}} & \text{if }\,  \tilde{x} =  \tilde{o}_k^+\, \text{ and }\,  1 \le j \le n_{k-1}, \\[10pt] 
 \dfrac{n}{n_{k}} & \text{if }\, \tilde{x} =  \tilde{o}_k^-\, \text{ and }\,  1 \le j \le n_{k},\\%[10pt]
 n & \text{otherwise}.
\end{cases}
$$
Then the stacky cycle of projective lines $\EE_r(\vec{n}, \vec{c})$   can be defined as the non-commutative nodal curve corresponding to the datum $(\tX, l, \approx, \mathsf{wt})$. 
The  central curve of $\EE_r(\vec{n}, \vec{c})$ is the usual cycle  of $r$ projective lines $E_r$, i.e. $\EE_r(\vec{n}, \vec{c}) = (E_r, \kA)$, where $\kA$ is an appropriate sheaf of nodal orders. 

\smallskip
\noindent
Let $\widetilde{X} \stackrel{\nu}\lar E_r$ be the normalization morphism. 
It is clear that the locus $\mathfrak{S}\bigl(\EE_r(\vec{n}, \vec{c})\bigr)$ of non-regular points of $\EE_r(\vec{n}, \vec{c})$ is just the set $\bigl\{o_1, \dots, o_r\bigr\}$ of the singular points
of $E_r$, where $\nu^{-1}(o_k) := 
\left\{\tilde{o}_k^{-}, \tilde{o}_{k+1}^{+} \right\}$. Moreover, for any $1 \le k \le r$ the order  
$
\widehat{A}_{o_k} 
$
is Morita equivalent to the basic nodal order $A_{(n_k, c_k)}$; see Remark \ref{R:nodalcyclic} for the corresponding notation. 

\smallskip
\noindent
As a ringed space, $\EE_r(\vec{n}, \vec{c})$ depends on the choice of the weight
function $\wt$. However, 
Theorem \ref{T:NodalMorita} assures that the corresponding category of coherent sheaves $\Coh\bigl(\EE_r(\vec{n}, \vec{c})\bigr)$ does not depend on this choice.

\smallskip
\noindent
One can define \emph{stacky chains} of projective lines (which appeared in \cite{LekiliPolishchuk}) in a similar way.
\end{example}

\begin{remark}
The following  example shows that a heuristic treatment of the notion of a stacky cycle of projective lines has to be performed  with a special care. Let $n \in \NN$ and
$1 \le c, c', d, d < n$ are such that $c, c', d, d'$ are all pairwise distinct, mutually prime with $n$, $c c' \equiv 1 \, \mod \, n$ and 
$d d' \equiv 1 \, \mod \, n$. Consider the corresponding
non-commutative nodal curves $\EE := \EE\bigl((n, n), (c, d)\bigr)$ and $\EE' := \EE\bigl((n, n), (c', d)\bigr)$. Then $\EE = (E, \kA)$ and $\EE' = (E, \kA')$ are tame non-commutative nodal curves, whose underlying central curve $E$ is a cycle of two projective lines. Next, $\mathfrak{S}(\EE) = \mathfrak{S}(\EE') = \left\{o_1, o_2\right\}$, where $o_1, o_2$ are the singular points of $E$. We have:
$$
\widehat{\kA}_{o_1} \sim A_{(n, c)}, \, \widehat{\kA}'_{o_1} \sim A_{(n, c')} \quad
\mbox{\rm and} \quad \widehat{\kA}_{o_2} \sim A_{(n, d)} \sim \widehat{\kA}'_{o_2},
$$
where $\sim$ denotes central Morita equivalence of the corresponding module categories. Note that $A_{(n, c)}$ and $A_{(n, c')}$ are isomorphic as rings.
Summing up, $\EE$ and $\EE'$ are two non-commutative nodal curves with the same central curve $E$ and such that for any $x \in E$, the corresponding categories
$\Tor_x(\EE)$ and $\Tor_x(\EE')$ are equivalent. Nonetheless, we claim that the categories 
$\Coh(\EE)$ and $\Coh(\EE')$ are \emph{not} equivalent. Indeed, any equivalence
$$\Coh(\EE) \stackrel{\Phi}\lar \Coh(\EE')$$ induces an automorphism of the central curve  $E \stackrel{\Phi_c}\lar E$; see \cite[Theorem 4.4]{BurbanDrozdMorita}. It follows from the assumptions on $d, d', c, c'$  that the orders $\widehat{\kA}_{o_1}$
and $\widehat{\kA}'_{o_2}$ are not Morita equivalent. Hence, 
$\Phi_c(o_1) = o_1$ and $\Phi_c(o_2) = o_2$. In particular, we get (as restrictions of $\Phi$) equivalences of categories
$$
\widehat{\kA}_{o_1}-\mathsf{fdmod} \stackrel{\Phi_1}\lar \widehat{\kA}'_{o_1}-\mathsf{fdmod} \quad \mbox{\rm and} \quad \widehat{\kA}_{o_2}-\mathsf{fdmod} \stackrel{\Phi_2}\lar \widehat{\kA}'_{o_2}-\mathsf{fdmod}.
$$
For $k = 1, 2$, let  $\widehat{\kO}_{o_k} \stackrel{\varphi_k}\lar \widehat{\kO}_{o_k}$ be the  induced automorphisms of the corresponding centers. It follows that 
$\varphi_1$ swaps the irreducible components of $\Spec\bigl(\widehat{\kO}_{o_1}\bigr)$, whereas $\varphi_2$ preserves the irreducible components of 
$\Spec\bigl(\widehat{\kO}_{o_2}\bigr)$. Hence, $\varphi_1$ and $\varphi_2$ can not be automorphisms, induced by a common automorphism $\Phi_c$ of the curve $E$. We get a contradiction, showing that the categories $\Coh(\EE)$ and $\Coh(\EE')$ are not equivalent.  \qed
\end{remark}

\subsection{Auslander curve of a non-commutative nodal curve}
\begin{definition}
Let $\XX = (X, \kA)$ be a non-commutative nodal curve and $\kH$ be the hereditary cover of $\kA$. The \emph{conductor ideal sheaf} $\kC$ is defined
as follows: for any open subset $U \subseteq X$ we put:
\begin{equation}\label{E:conductorsheaf}
\Gamma(U, \kC):= \Bigl\{f \in \Gamma(U, \kA) \,\big|\; fg \in \Gamma(U, \kA)\; \mbox{\rm for all}\; g  \in \Gamma(U, \kH)\Bigr\}.
\end{equation}
\end{definition}

\begin{lemma}
The following results are true.
\begin{itemize}
\item The canonical morphism  of $\kO_X$-modules $\kC \to \sHom_{\kA}(\kH, \kA)$ given on the level of local sections by the rule \eqref{E:conductorlocal} is an isomorphism.
    \item $\kC$ is a sheaf of two-sided ideals both in $\kA$ and $\kH$.
    \item The $\kk$-algebras $\bar{A}:= \Gamma(X, \kA/\kC)$ and $\bar{H}:= \Gamma(X, \kH/\kC)$ are finite dimensional and semisimple.
\end{itemize}
\end{lemma}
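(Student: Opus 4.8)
The plan is to reduce all three assertions to the local structure theory of nodal orders, using that $\kA$, $\kH$ and $\kC$ all coincide away from the finite set $\mathfrak{S}(\XX)$ of non-regular points. First I would record that $\kH$ is a coherent $\kO_X$-module which, being an overorder of the noetherian sheaf of orders $\kA$, is finite and hence finitely presented as a sheaf of left $\kA$-modules; consequently the formation of $\sHom_\kA(\kH,\kA)$ commutes with passing to stalks. The morphism $\kC\to\sHom_\kA(\kH,\kA)$ of the first bullet is therefore an isomorphism exactly when it is one on every stalk. At a point $x$, finite generation of $\kH_x$ over $\kA_x$ shows that the stalk of the presheaf $U\mapsto\{f\in\Gamma(U,\kA)\mid f\Gamma(U,\kH)\subseteq\Gamma(U,\kA)\}$ is precisely the conductor ideal of the order $\kA_x$ inside $\kH_x$; by the bijection \eqref{E:conductorlocal} applied to the order $\kA_x$, the canonical map from this conductor to $\Hom_{\kA_x}(\kH_x,\kA_x)$ is a bijection. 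This proves the first bullet, and incidentally shows that the defining presheaf of $\kC$ is already a sheaf.

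For the second bullet, that $\kC$ is a two-sided ideal in $\kA$ is immediate: for $f\in\Gamma(U,\kC)$, $a\in\Gamma(U,\kA)$, $g\in\Gamma(U,\kH)$ one has $(af)g=a(fg)\in\Gamma(U,\kA)$ and $(fa)g=f(ag)\in\Gamma(U,\kA)$ since $ag\in\Gamma(U,\kH)$. That $\kC$ is a right ideal in $\kH$ is equally formal: $(fh)g=f(hg)\in\Gamma(U,\kA)$ for $h,g\in\Gamma(U,\kH)$. The non-formal point is that $\kC$ is also a \emph{left} ideal in $\kH$; this is local, so I would check it on the completion $\widehat{\kA}_x$ of a stalk at a non-regular point $x$, where $\widehat{\kA}_x$ is, up to conjugation, a nodal order $A\bigl(R,(\Omega,\sigma,\approx,\wt)\bigr)$ over a complete discrete valuation ring as in Definition \ref{D:nodalrings}. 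Proposition \ref{P:ConductorNodalOrder} (see also Corollary \ref{C:ConductorNodalOrder}) then gives the two-sidedness of the conductor in $\widehat{\kH}_x$, the inclusion $J=\rad(\widehat{\kA}_x)=\rad(\widehat{\kH}_x)\subseteq\widehat{\kC}_x$, and the symmetric description $\widehat{\kC}_x=\{a\in\widehat{\kA}_x\mid\widehat{\kH}_xa\subseteq\widehat{\kA}_x\}$. Since completion is faithfully flat and the conductor is compatible with it (as $\kH$ is finitely presented over $\kA$), these descend to $\kA_x$ itself; combined with the right-ideal property already shown, the computation $\kH_x(fg)\subseteq\kA_x$ for $f\in\kC_x$, $g\in\kH_x$ yields $hf\in\kC_x$ for all $h\in\kH_x$.

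For the third bullet, the inclusion $\kJ\subseteq\kC$ just established, together with the fact that $\kC_x=\kA_x=\kH_x$ at every regular point, shows that $\kA/\kC$ and $\kH/\kC$ are coherent $\kO_X$-modules supported on the finite set $\mathfrak{S}(\XX)$, hence are finite direct sums of skyscraper sheaves; therefore $\bar{A}\cong\bigoplus_{x\in\mathfrak{S}(\XX)}\kA_x/\kC_x$ and $\bar{H}\cong\bigoplus_{x\in\mathfrak{S}(\XX)}\kH_x/\kC_x$ are finite-dimensional over $\kk$. Each summand is a quotient of the finite-dimensional semisimple $\kk$-algebra $\kA_x/\rad(\kA_x)$, respectively $\kH_x/\rad(\kH_x)$; in fact the diagram \eqref{E:NodalModConductor}, applied at each $x$, identifies $\kA_x/\kC_x$ and $\kH_x/\kC_x$ with explicit products of matrix algebras over $\kk$. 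Hence $\bar{A}$ and $\bar{H}$ are finite-dimensional and semisimple.

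I expect the main obstacle to be the careful passage between the sheaf $\kC$, its stalks $\kC_x$, and the completions $\widehat{\kC}_x$ — i.e.\ making the conductor insensitive to localization and completion so that the purely algebraic local results genuinely apply — and, within that, the descent of the ``left ideal in $\kH$'' property, which rests on the non-trivial symmetry of the conductor of a nodal order (Proposition \ref{P:ConductorNodalOrder}) rather than on a formal manipulation. Everything else is bookkeeping with coherent sheaves supported on finitely many points.
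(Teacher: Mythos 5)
Your proof is correct and takes essentially the same approach as the paper, whose entire argument is the one-line reduction ``all statements follow directly from the corresponding local statements; see Proposition \ref{P:ConductorNodalOrder} and Corollary \ref{C:ConductorNodalOrder}.'' You are merely making explicit the sheaf-theoretic bookkeeping that the paper leaves implicit: the compatibility of the conductor with stalks and completions (via finite presentation of $\kH$ over $\kA$), the use of Corollary \ref{C:ConductorNodalOrder} for the two-sidedness in $\kH$, and the finite-support argument plus diagram \eqref{E:NodalModConductor} for the last bullet.
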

\begin{proof}
All statements follow directly from the corresponding local statements; see Proposition \ref{P:ConductorNodalOrder} and Corollary \ref{C:ConductorNodalOrder}.
\end{proof}

\begin{definition}
For a nodal curve $\XX = (X, \kA)$ as above, we call the sheaf of orders
\begin{equation}\label{E:AuslanderSheaf}
\kB  :=
\left(
\begin{array}{cc}
\kA & \kH \\
\kC & \kH
\end{array}
\right)
\end{equation}
the \emph{Auslander sheaf} of $\kA$. The corresponding  non-commutative nodal curve  $\YY:= (X, \kB)$ is called the  \emph{Auslander curve} of $\XX$.
\end{definition}

\smallskip
\noindent
Consider the following idempotent sections
$e:=
\left(
\begin{array}{cc}
1 & 0 \\
0 & 0
\end{array}
\right) \; \mbox{\rm and} \; f:=
\left(
\begin{array}{cc}
0 & 0 \\
0 & 1
\end{array}
\right)
\in \Gamma(X, \kB)$. Then  we get the following sheaves of locally projective left $\kB$-modules:
\begin{equation}
\kP := \kB e \cong
\left(
\begin{array}{c}
\kA \\
\kC
\end{array}
\right) \; \mbox{\rm and} \; \kQ := \kB f \cong
\left(
\begin{array}{c}
\kH \\
\kH
\end{array}
\right).
\end{equation}
 We denote
$
\kP^\vee := \sHom_\kB(\kP, \kB) \cong e\kB$ and $\kQ^\vee := \sHom_\kB(\kQ, \kB) \cong  f\kB$.

\smallskip
\noindent
Similarly to  the  local case, we introduce  the following functors.
\begin{itemize}
\item An exact functor $\sG := \Hom_\kB(\kP, \,-\,) \simeq \kP^\vee \otimes_\kB \,-\,$ from  $\Qcoh(\YY)$ to $\Qcoh(\XX)$.
\item  Similarly, we have an exact functor  $\widetilde\sG = \Hom_\kB(\kQ, \,-\,) \simeq \kQ^\vee \otimes_\kB \,-\,$ from
$\Qcoh(\YY)$ to $\Qcoh(\widetilde\XX)$.
\item We have functors $\sF := \kP \otimes_\kA \,-\,$ and $\sH:= \sHom_\kA(\kP^\vee, \,-\,)$ from $\Qcoh(\XX)$ to $\Qcoh(\YY)$.
\item Similarly, we have functors  $\widetilde\sF := \kQ \otimes_\kH \,-\,$ and $\widetilde\sH:= \sHom_\kH(\kQ^\vee, \,-\,)$ from
$\Qcoh(\widetilde\XX)$ to $\Qcoh(\YY)$.
\item Let $\sD\sG, \sD\widetilde\sG$, $\sL\sF$, $\sL\widetilde\sF$, $\sR\sH$ and $\sR\widetilde\sH$ be the corresponding derived functors.
\end{itemize}

\begin{remark}
It is clear that all functors $\sG, \widetilde\sG$, $\sF$, $\widetilde\sF$, $\sH$ and $\widetilde\sH$ can be restricted to the corresponding subcategories of coherent sheaves. The functor $\widetilde\sF$ is exact and transforms locally projective $\kH$-modules to locally projective $\kB$-modules.
\end{remark}

\noindent
All statements and  proofs of
Theorem \ref{T:minorsderived} and Corollary \ref{C:minorsderived} can be generalized to the global setting in a straightforward way.
In particular, we have the following results.

\begin{theorem}\label{T:NonCommNodalCurves} Let $\XX = (X, \kA)$ be a non-commutative nodal curve as in Definition \ref{D:NonCommNodalCurve}, $\widetilde\XX = (X, \kH)$ be its hereditary cover and $\YY = (X, \kB)$ be the corresponding Auslander curve. Then the following results are true.
\begin{itemize}
\item We have: $\mathsf{gl.dim}\bigl(\Coh(\YY)\bigr) = 2$.
\item We have a recollement diagram
\begin{equation}\label{E:RecollementAuslanderCurve}
\xymatrix{D^b\bigl(\bar{A}\mathsf{-mod})\bigr) \ar[rr]|{\,\sI\,} && D^b\bigl(\Coh(\YY)\bigr) \ar@/^2ex/[ll]^{\sI^{!}} \ar@/_2ex/[ll]_{\sI^*}
 \ar[rr]|{\,\sD\widetilde\sG\,}
  && D^b\bigl(\Coh(\widetilde\XX)\bigr). \ar@/^2ex/[ll]^{\,\sR\widetilde\sH\,} \ar@/_2ex/[ll]_{\,\sL\widetilde\sF\,}
  }
\end{equation}
Here, the exact functor $\sI$ is determined by the rule $\sI(\bar{A}) = \kS$, where $\kS$ is given by the locally projective resolution
\begin{equation}\label{E:SheafS}
0 \lar
\left(
\begin{array}{c}
\kC \\
\kC
\end{array}
\right)
\lar \left(
\begin{array}{c}
\kA \\
\kC
\end{array}
\right)
\lar \kS \lar 0.
\end{equation}
In particular, we have a semi-orthogonal decomposition
\begin{equation}
D^b\bigl(\Coh(\YY)\bigr) = \left\langle \mathsf{Im}(\sI), \, \mathsf{Im}(\sL\widetilde\sF)\right\rangle = \left\langle
D^b\bigl(\bar{A}\mathsf{-mod}), \,
D^b\bigl(\Coh(\widetilde\XX)\bigr)
\right\rangle.
\end{equation}
\item
Moreover, we have the following commutative diagram of categories and functors:
\begin{equation}
\begin{array}{c}
\xymatrix{
D^b\bigl(\Coh(\widetilde\XX)\bigr)  \ar[rrd]_{\nu_*} \ar@{^{(}->}[rr]^-{\sL\widetilde\sF} & & D^b\bigl(\Coh(\YY)\bigr) \ar@{->>}[d]_-{\sD\sG} & & \ar@{_{(}->}[ll]_-{\sL\sF} \Perf(\XX) \ar@{_{(}->}[lld]^-{\sE} \\
 & & D^b\bigl(\Coh(\XX)\bigr) & &
 }
 \end{array}
\end{equation}
where $\Perf(\XX)$ is the perfect derived category of coherent sheaves on $\XX$, $\sE$ is the canonical inclusion functor,
$\sL\sF$ and $\sL\widetilde\sF$ are fully faithful, $\sD\sG$ is an appropriate localization functor
 and $\nu_*$
is the functor induced by the ``normalization map'' $\widetilde{\XX} \stackrel{\nu}\lar \XX$.
\end{itemize}
\end{theorem}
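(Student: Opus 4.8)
The plan is to deduce Theorem~\ref{T:NonCommNodalCurves} by globalizing the local statements established for the Auslander order of a nodal order, exactly as indicated by the sentence ``All statements and proofs of Theorem \ref{T:minorsderived} and Corollary \ref{C:minorsderived} can be generalized to the global setting in a straightforward way.'' First I would observe that the Auslander sheaf $\kB$ is a sheaf of orders whose stalks are the Auslander orders of the stalks $\widehat{\kA}_x$, and the sheaves $\kP = \kB e$ and $\kQ = \kB f$ are locally projective left $\kB$-modules with $e\kB e \cong \kA$ and $f\kB f \cong \kH$. Thus the global functors $\sG = \Hom_\kB(\kP, -)$, $\widetilde\sG = \Hom_\kB(\kQ, -)$, $\sF = \kP \otimes_\kA -$, $\widetilde\sF = \kQ \otimes_\kH -$, $\sH$ and $\widetilde\sH$ are the sheaf-theoretic incarnations of the functors from Subsection~\ref{SS:Minors}, and one checks that their formation commutes with restriction to affine opens and with taking stalks; hence every relation among them that holds locally (adjunctions, full faithfulness, the identification of essential images with orthogonal subcategories, the recollements) propagates to the global level.

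The key steps, in order, would be the following. (1) Prove $\mathsf{gl.dim}\bigl(\Coh(\YY)\bigr) = 2$: the inequality $\le 2$ follows from the local bound $\mathsf{gl.dim}\widehat{\kB}_x \le 2$ (the last item of Theorem~\ref{T:minorsderived}, applied to the flat-conductor situation guaranteed by Corollary~\ref{C:ConductorNodalOrder}) together with the fact that $\kA$ itself is not hereditary at the non-regular points, so $\kB$ cannot be hereditary, which by the curve analogue of Theorem~\ref{T:HeredOrders} forces the dimension to be exactly $2$. (2) Establish the adjoint triples $(\sL\sF, \sD\sG, \sR\sH)$ and $(\sL\widetilde\sF, \sD\widetilde\sG, \sR\widetilde\sH)$ and the full faithfulness of $\sL\sF, \sL\widetilde\sF, \sR\sH, \sR\widetilde\sH$ using Lemma~\ref{L:TechnicalitiesMinors} sheafified (the canonical maps $\kP^\vee \otimes_\kB \kP \to \sHom_\kB(\kP, \kP)$, etc., are isomorphisms because they are stalk-locally so). (3) Build the recollement~\eqref{E:RecollementAuslanderCurve}: here the crucial input is that the two-sided ideal sheaf $I_\kQ = \im(\kQ \otimes_\kA \kQ^\vee \to \kB) \cong \smtr{\kC&\kH\\\kC&\kH}$ is locally projective as a right $\kB$-module and $\kB/I_\kQ \cong \kA/\kC = \bar{A}$ is a finite-dimensional semisimple $\kk$-algebra; then the sheaf version of Theorem~\ref{T:minorsderived} yields the recollement with $\sI(\bar A) = \kS$, $\kS$ fitting into~\eqref{E:SheafS}, and the semiorthogonal decomposition is the standard consequence of a recollement by $D^b(\bar A\mathsf{-mod})$ and $D^b(\Coh(\widetilde\XX))$. (4) For the last commutative diagram, mimic the proof of Corollary~\ref{C:minorsderived}: the isomorphism $\sE \simeq \sD\sG \circ \sL\sF$ comes from the adjunction unit $\id \to \sD\sG\circ\sL\sF$ being invertible, and $\sD\sG \circ \sL\widetilde\sF \simeq \sD(\sG\circ\widetilde\sF)$ is the derived functor of the forgetful (``normalization push-forward'') functor $\Coh(\widetilde\XX) \to \Coh(\XX)$, since for a locally projective $\kH$-module $\widetilde\kN$ one computes $\sG(\widetilde\sF(\widetilde\kN)) = \Hom_\kB\bigl(\kB e, \smtr{\widetilde\kN\\\widetilde\kN}\bigr) \cong e\cdot\smtr{\widetilde\kN\\\widetilde\kN} \cong \widetilde\kN$ as $\kA$-modules.

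The main obstacle I expect is the globalization itself being carried out carefully rather than hand-waved: one must verify that the derived functors $\sD\sG$, $\sL\sF$ etc.\ are well-defined on the \emph{bounded} derived categories of coherent (not merely quasi-coherent) sheaves, which requires knowing that $\Coh(\XX)$, $\Coh(\widetilde\XX)$ and $\Coh(\YY)$ have finite global dimension or at least enough acyclic resolutions inside $\Coh$; this is where the $\mathsf{gl.dim} = 2$ computation and the finiteness of global dimension of $\widetilde\XX$ (hereditary, $\mathsf{gl.dim} = 1$) and of $\bar A\mathsf{-mod}$ ($\mathsf{gl.dim} = 0$) are genuinely used, and where one must check compatibility of the local recollements along the gluing to get a global one. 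A secondary subtlety is that $I_\kP = \im(\kP\otimes_\kH \kP^\vee \to \kB) \cong \smtr{\kA&\kH\\\kC&\kC}$ is \emph{not} locally projective as a right $\kB$-module, so the recollement on the $\sD\sG$ side only exists in the weaker form~\eqref{E:recollement2} with $D_T(B\mathsf{-mod})$; this is why the theorem is stated asymmetrically, with a genuine recollement only in~\eqref{E:RecollementAuslanderCurve} and merely a commutative diagram with $\sD\sG$ a localization functor otherwise. Everything else is a routine transcription of the local arguments, so I would keep the write-up short, citing Theorems~\ref{T:minorsabelian}, \ref{T:minorsderived} and Corollary~\ref{C:minorsderived} as the local models and only spelling out the four points above.
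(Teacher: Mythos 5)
Your proposal is correct and takes essentially the same approach as the paper, which simply asserts that the local statements (Theorem \ref{T:minorsderived} and Corollary \ref{C:minorsderived}) globalize in a straightforward way; your write-up fleshes out exactly the steps the paper leaves implicit, including the two genuine subtleties (boundedness of the derived functors via the finite global dimensions of $\Coh(\widetilde\XX)$, $\Coh(\YY)$ and $\bar A$, and the asymmetry coming from $I_\kP$ not being locally projective). One minor citational slip: the local projectivity of the right $\kB$-module $I_\kQ$ is the relevant hypothesis for the last item of Theorem \ref{T:minorsderived}, and this is checked directly in the proof of the local Auslander-order recollement rather than following from Corollary \ref{C:ConductorNodalOrder}, which only asserts that $\kC$ is a two-sided ideal in $\kH$.
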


\section{Tilting on rational  non-commutative nodal projective curves}

\subsection{Tilting on projective hereditary curves}
We begin with a brief description of the standard tilting bundle on  a weighted projective line due to Geigle and Lenzing \cite{GeigleLenzing} reexpressed in the language of non-commutative hereditary curves. Let $\tX = \PP^1$ and  $\tX \stackrel{l}\lar \NN$ be any length function. Let us fix any weight function $\Pi \stackrel{\mathsf{wt}}\lar \NN$  compatible with $l$. As usual, we put $$\vec{p}(\tx):=
\bigl(\wt(\tx, 1),\dots, \wt(\tx, l(\tx))\bigr) \quad \mbox{\rm for any}\quad \tx \in \tX.$$
Let $m:= \big|\vec{p}(\tx)\big|$ for some (hence for any) point $\tx \in \tX$ and
$\widetilde{H}_{\tx} := H\bigl(O_{\tx}, \vec{p}(\tx)\bigr) \subseteq \Mat_{m}(O_{\tx})$
be the standard hereditary order, defined by
the vector $\vec{p}(\tx)$.  Next, let $\widetilde{Q}_{(\tx, 1)}, \dots, \widetilde{Q}_{(\tx, l(\tx))}$ be the standard indecomposable projective left
$\widetilde{H}_{\tx}$-modules, i.e.~ we have a direct sum decomposition
$$
\widetilde{H}_{\tx} \cong \widetilde{Q}_{(\tx, 1)}^{\oplus \wt(\tx, 1)} \oplus \dots \oplus \widetilde{Q}_{(\tx, l(\tx))}^{\oplus \wt(\tx, l(\tx))}.
$$
Here, we have:
$$
\widetilde{Q}_{(\tx, 1)} :=
\left(
\begin{array}{c}
O_{\tx}\\
\vdots \\
O_{\tx}
\end{array}
\right) \cong
\left(
\begin{array}{c}
\idm_{\tx}\\
\vdots \\
\idm_{\tx}
\end{array}
\right) =: \widetilde{Q}'_{(\tx, 1)}.
$$
Note that there is a chain of   embeddings of $H_{\tx}$-modules
$
\widetilde{Q}'_{(\tx, 1)} \subset \widetilde{Q}_{(\tx, l(\tx))} \subset \dots \subset \widetilde{Q}_{(\tx, 1)}.
$

\smallskip
\noindent
Let $\tH := \tH(l, \mathsf{wt})$ be the  sheaf of hereditary orders on $\tX$ defined by $(l, \wt)$ and $\widetilde\XX = (\tX, \tH)$ the corresponding
non-commutative hereditary curve. Recall that $\tH$ is a subsheaf of the sheaf of maximal orders $\sMat_{m}(\kO_{\tX})$  such that
$\tH_{\tx} = \widetilde{H}_{\tx}$ for any $\tx \in \tX$.
First of all, note that we have an exact fully faithful functor
$$
\Coh(\tX) \stackrel{\bar\FF}\lar \Coh(\widetilde\XX), \quad
\kE \mapsto
\left(
\begin{array}{c}
\kE\\
\vdots \\
\kE
\end{array}
\right),
$$
which transforms locally free sheaves on $\tX$ into locally projective $\tH$-modules. We put:
$$
\widetilde\kL := \bar{\FF}\bigl(\kO_{\tX}\bigr) =
\left(
\begin{array}{c}
\kO_{\tX}\\
\vdots \\
\kO_{\tX}
\end{array}
\right).
$$
Next, for any $\tx \in \tX$ such that $l(\tx) \ge 2$ and  $2 \le i \le l(\tx)$, we have  a locally  projective $\kH$-module
$\widetilde\kL_{(\tx, i)}$ uniquely determined  by the following properties:
\begin{itemize}
\item The sheaf $\widetilde\kL_{(\tx, i)}$ is a subsheaf of $\widetilde\kL$.
\item For any $\tilde{y} \in \tX$ we have:
$$
\left(\widetilde\kL_{(\tx, i)}\right)_{\tilde{y}} = \left\{
\begin{array}{ccl}
\widetilde{\kL}_{\tilde{y}} & \mbox{\rm if} & \tilde{y} \ne \tx \\
\widetilde{Q}_{(\tx, i)} & \mbox{\rm if} & \tilde{y} = \tx.
\end{array}
\right.
$$
\end{itemize}
For a convenience of notation, we also define for any point $\tx \in \tX$ the locally projective subsheaf $\widetilde\kL_{(\tx, 1)}$ of $\widetilde\kL$ defined by the following condition on the stalks:
$$
\left(\widetilde\kL_{(\tx, 1)}\right)_{\tilde{y}} = \left\{
\begin{array}{ccl}
\widetilde{\kL}_{\tilde{y}} & \mbox{\rm if} & \tilde{y} \ne \tx \\
\widetilde{Q}'_{(\tx, 1)} & \mbox{\rm if} & \tilde{y} = \tx.
\end{array}
\right.
$$
It is clear that
$$
\widetilde\kL_{(\tx, 1)} := \bar{\FF}\bigl(\kO_{\tX}(-\tx)\bigr) =
\left(
\begin{array}{c}
\kO_{\tX}(-\tx)\\
\vdots \\
\kO_{\tX}(-\tx)
\end{array}
\right).
$$
Note that $\widetilde\kL_{(\tx', 1)} \cong \widetilde\kL_{(\tx'', 1)}$ for any $\tx', \tx'' \in \tX$. Abusing the notation, we shall denote this sheaf by $\widetilde\kL(-1)$.  Next, let
\begin{equation}\label{E:SetPhi}
\Phi := \bigl\{\tx \in \tX \,\big|\, l(\tx) \ge 2 \bigr\} =:\bigl\{\tx_1, \dots, \tx_r\bigr\}
\end{equation}
be the set of weighted points of $\tX$.
Let us choose some homogeneous coordinates $(z: w)$ on $\tX$. Then we have
a pair of distinguished sections $z, w \in \Hom_{\tX}\bigl(\kO_{\tX}(-1), \kO_{\tX}\bigr)$ vanishing, respectively at the points
$\tilde{o}^+:= (0:1)$ and $\tilde{o}^- = (1:0)$.
Let  $\tx_i = (\lambda_i: \mu_i)$ and $l_i = l(\tx_i)$ for $1 \le i \le r$.

\smallskip
\noindent
The following theorem is a restatement  of the classical result of  Geigle and Lenzing; see \cite[Proposition 4.1]{GeigleLenzing}.

\begin{theorem} The locally projective $\kH$-module
\begin{equation}\label{E:standrdtilting}
\widetilde\kT:= \bigl(\widetilde\kL \oplus \widetilde\kL(-1)\bigr) \oplus
 \bigoplus\limits_{\tx \in \Phi} \bigoplus\limits_{i = 2}^{l(\tx)} \widetilde\kL_{(\tx, i)}
\end{equation}
is a tilting object in the derived category $D^b\bigl(\Coh(\widetilde\XX)\bigr)$ (called, in what follows, the \emph{standard tilting bundle} on
$\widetilde\XX$) and the corresponding algebra $\Gamma:= \bigl(\End_{\widetilde\XX}(\widetilde\kT)\bigr)^\circ$ is isomorphic to the 
Ringel canonical algebra 
$\Gamma=\Gamma\bigl((\tx_1, l_1), \dots, (\tx_r, l_r)\bigr)$ which is the path algebra of the following quiver%
\footnote{\, In this picture the leftmost vertex corresponds to the sheaf $\kL$, the rightmost one corresponds to the sheaf $\kL(-1)$, while the
internal vertices of the $k$-th branch correspond to the sheaves $\widetilde\kL_{(\tx_k, i)}$ for a special point $\tx_k$ and $l_k=l(\tx_k)$.}
\def\bvd{\mbox{\huge$\boldsymbol{\vdots}$}}
\begin{equation}\label{E:Canonical}
\begin{array}{c}
 \xymatrix@R=1em{ && \circ \ar[r]^{u_{12}} & \circ\ar[r] &\dots \ar[r] & \circ \ar[rrdd]^{u_{1l_1}} \\
 				  &&  \circ \ar[r]^{u_{22}} & \circ\ar[r] &\dots \ar[r] & \circ \ar[rrd]^(.3){u_{2l_2}} \\
 				  \circ \ar[uurr]^{u_{11}} \ar[urr]^(.7){u_{21}} \ar[ddrr]_{u_{r1}} \ar@/^/[rrrrrrr]^z \ar@/_/[rrrrrrr]_w &&&&&&& \circ \\
 					&& \bvd & \bvd & & \bvd \\
 				  && \circ \ar[r]^{u_{r2}} & \circ\ar[r] &\dots \ar[r] & \circ \ar[rruu]_{u_{rl_r}}
 				  }
 				  \end{array}
\end{equation}
modulo the relations 
\begin{equation}\label{E:CanonicalRel}
u_{i l_i} \dots u_{i 1}=\lambda_{i} w - \mu_{i} z \quad  \mbox{\rm for}\quad 
 1 \le i \le r.
\end{equation} 
In other words, the derived functor
$$
\RHom_{\widetilde\XX}(\widetilde\kT,\,-\,): D^b\bigl(\Coh(\widetilde\XX)\bigr) \lar D^b(\Gamma\mathsf{-mod})
$$
is an equivalence of triangulated categories.

\end{theorem}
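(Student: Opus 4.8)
The plan is to verify the three claims of the theorem in the standard order for a tilting object: (i) $\widetilde\kT$ is a tilting object, i.e.\ it generates $D^b\bigl(\Coh(\widetilde\XX)\bigr)$ and has no higher self-extensions; (ii) the opposite endomorphism algebra is the canonical algebra $\Gamma$ presented by the quiver \eqref{E:Canonical} and relations \eqref{E:CanonicalRel}; (iii) conclude the derived equivalence by the general tilting formalism. Since the paper explicitly states that this is a restatement of \cite[Proposition 4.1]{GeigleLenzing}, the role of the proof is really to translate Geigle--Lenzing's weighted-projective-line computation into the present language of sheaves of hereditary orders on $\PP^1$; so after setting up the dictionary I would mostly cite their computation for the hard analytic input.

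First I would record the dictionary: the non-commutative hereditary curve $\widetilde\XX=(\tX,\tH)$ with $\tX=\PP^1$ is Morita equivalent to the Geigle--Lenzing weighted projective line $\PP^1(\vec{p})$ determined by the length function $l$ (this is the remark following the definition of non-commutative hereditary curves in the excerpt, attributed to Reiten--Van den Bergh and Spie\ss{}). Under this equivalence the sheaf $\widetilde\kL=\bar\FF(\kO_{\tX})$ corresponds to the structure sheaf, $\widetilde\kL(-1)=\bar\FF(\kO_{\tX}(-1))$ to $\kO(-\vec{c})$ in their notation, and the intermediate subsheaves $\widetilde\kL_{(\tx_k,i)}$, defined by prescribing the stalk at $\tx_k$ to be the truncated projective $\widetilde Q_{(\tx_k,i)}$ and the full $\widetilde\kL$-stalk elsewhere, correspond exactly to the line bundles $\kO(\vec{x}_k+\dots)$ that appear as the internal vertices of the $k$-th arm of the Geigle--Lenzing tilting object. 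With this identification, the sheaf $\widetilde\kT$ of \eqref{E:standrdtilting} is the image of their canonical tilting bundle. I would then invoke \cite[Proposition 4.1]{GeigleLenzing} directly for the vanishing $\Ext^{>0}_{\widetilde\XX}(\widetilde\kT,\widetilde\kT)=0$ and for the fact that $\widetilde\kT$ generates; concretely, generation follows because the one-step filtrations $\widetilde Q'_{(\tx,1)}\subset\widetilde Q_{(\tx,l(\tx))}\subset\dots\subset\widetilde Q_{(\tx,1)}$ (noted in the excerpt) together with the sections $z,w$ exhibit every $\kH$-line bundle twist and every simple torsion $\kH$-module in the triangulated hull of the summands of $\widetilde\kT$.

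Next I would compute $\End_{\widetilde\XX}(\widetilde\kT)$ by hand, since this is the step that has genuine content even after the translation, and present the morphisms as the claimed generators. Between $\widetilde\kL(-1)$ and $\widetilde\kL$ there are two independent maps, given fibrewise by the homogeneous coordinates $z$ and $w\in\Hom_{\tX}(\kO_{\tX}(-1),\kO_{\tX})$ (applied diagonally through $\bar\FF$); these are the two arrows $z,w$ at the bottom of \eqref{E:Canonical}. Along the $k$-th arm, the chain of inclusions $\widetilde\kL_{(\tx_k,1)}\subset\widetilde\kL_{(\tx_k,l_k)}\subset\dots\subset\widetilde\kL_{(\tx_k,2)}\subset\widetilde\kL$ furnishes the arrows $u_{k1},\dots,u_{kl_k}$, where $u_{k1}:\widetilde\kL\to\widetilde\kL_{(\tx_k,2)}$ is the ``shear by $\tx_k$'' map at the stalk $\tx_k$ (an isomorphism away from $\tx_k$). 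A direct stalkwise check — at a non-special point everything is $\Mat_m$ over a DVR, at $\tx_k$ one compares truncated projectives over the standard hereditary order $H(O_{\tx_k},\vec{p}(\tx_k))$ — shows that these maps span all Hom-spaces, that there are no further relations among them, and that the composite $u_{kl_k}\cdots u_{k1}$ equals the section of $\kO_{\tX}\bigl(l_k\tx_k\bigr)/\dots$ cutting out $\tx_k$ with multiplicity $l_k$; normalising coordinates so that $\tx_k=(\lambda_k:\mu_k)$ gives $u_{kl_k}\cdots u_{k1}=\lambda_k w-\mu_k z$, i.e.\ precisely \eqref{E:CanonicalRel}. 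Hence $\bigl(\End_{\widetilde\XX}(\widetilde\kT)\bigr)^\circ\cong\Gamma\bigl((\tx_1,l_1),\dots,(\tx_r,l_r)\bigr)$. Finally, since $\widetilde\kT$ is a tilting object with endomorphism algebra of finite global dimension and $\Coh(\widetilde\XX)$ has a projective generator locally (so the relevant Ext-finiteness and compact-generation hypotheses hold), the standard tilting theorem (Bondal, Rickard) yields that $\RHom_{\widetilde\XX}(\widetilde\kT,-)\colon D^b(\Coh(\widetilde\XX))\to D^b(\Gamma\mathsf{-mod})$ is an exact equivalence.

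The main obstacle is step (ii), the explicit identification of the endomorphism algebra: one must be careful that the three a priori different descriptions of the arm morphisms (as subsheaf inclusions, as maps of truncated projectives over $H(O_{\tx_k},\vec{p}(\tx_k))$, and as maps of line bundles on the coarse $\PP^1$ after normalisation) agree up to the right scalars, and that the composition relation \eqref{E:CanonicalRel} comes out with the stated coefficients $\lambda_k,\mu_k$ rather than some Morita-twisted version of them. The cleanest way around this, which I would adopt, is to perform the computation after passing through the Morita equivalence to the genuine weighted projective line, where the relations \eqref{E:CanonicalRel} are exactly Geigle--Lenzing's \cite[(1.5)]{GeigleLenzing}, and then transport back; Theorem \ref{T:NodalMorita} (resp.\ the hereditary-curve Morita statement) guarantees that this transport does not change the derived category, so no information is lost.
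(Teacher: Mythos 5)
Your proposal is correct and follows the paper's own route: the paper gives no proof here, simply citing Geigle--Lenzing, Proposition~4.1, and you supply a sensible translation between the sheaf-of-hereditary-orders language and the weighted-projective-line language together with a sketch of the endomorphism-algebra computation. One minor slip worth flagging: the composite $u_{kl_k}\cdots u_{k1}$ corresponds to the inclusion $\widetilde\kL(-1)\hookrightarrow\widetilde\kL$, which cuts out $\tx_k$ with divisorial multiplicity one (its cokernel has $\tH_{\tx_k}$-length $l_k$, not multiplicity $l_k$ as a divisor), so the phrase ``cutting out $\tx_k$ with multiplicity $l_k$'' is misleading, but the conclusion $u_{kl_k}\cdots u_{k1}=\lambda_k w-\mu_k z$ that you draw from it is still the right one.
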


\begin{remark} In what follows, we shall call the
arrows $u_{ij}$ in the quiver \eqref{E:Canonical} for $1 \le i \le r, \,1 \le j \le l_i$ \emph{essential}, 
whereas the arrows $z$ and $w$ will be called \emph{redundant}. 
Note that the relations \eqref{E:Canonical} defining the canonical algebra $\Gamma\bigl((\tx_1, l_1), \dots, (\tx_r, l_r)\bigr)$ generate an admissible ideal if and only if the set $\Phi$ is empty (in this case, the canonical algebra is the path algebra of the Kronecker quiver). If $r \ge 2$ then both  redundant arrow can be excluded and if $r \ge 1$ then one of the redundant arrows can be excluded.

\smallskip
\noindent
Note also that we can formally add to the set $\Phi$ any point $\tx \in \tX$ of \emph{length one}, which correspond to a formal addition of another redundant arrow and does not change  the corresponding canonical algebra. We will use this procedure in the study of nodal curves.
\end{remark}

\begin{example}
Let $\lambda \in \kk \setminus \{0, 1\}$, $\Phi := \bigl\{(0:1), (1:0), (1:1), (\lambda:1)\bigr\}$ and $\l(\tx) = 2$ for any $\tx \in \Phi$. Then the corresponding 
canonical algebra \eqref{E:Canonical} is the tubular algebra \eqref{E:tubular} from  the introduction. 
\end{example}

\subsection{Tilting on non-commutative rational projective nodal curves}
We begin with an admissible datum  $(\tX, l, \approx)$, where  $\tX = \tX_1 \sqcup \dots \sqcup \tX_r$ is a smooth rational projective curve (each
$\tX_i\simeq\PP^1$). Let $\mathsf{wt}$ be any compatible weight function, $\XX = (X, \kA)$ be the corresponding non-commutative nodal curve,
$\widetilde\XX = (X, \kH) =  \widetilde\XX_1 \sqcup \dots \sqcup \widetilde\XX_r$ be its hereditary cover and $\YY = (X, \kB)$ be the corresponding Auslander curve.

\begin{theorem}\label{T:tilting1} Let $\widetilde\kT_i$ be the standard tilting bundle on $\widetilde\XX_i$ defined by \eqref{E:standrdtilting},
$\widetilde\kT:= \widetilde\kT_1 \oplus \dots \oplus \widetilde\kT_r$ and 
$
\kT := \widetilde\sF(\widetilde\kT) =
\left(
\begin{array}{c}
\widetilde\kT \\
\widetilde\kT \\
\end{array}
\right).
$
\smallskip
\noindent
Consider the complex  $\kX^{\bul} :=  \kT \oplus\kS[-1]$ in the derived category  $D^b\bigl(\Coh(\YY)\bigr)$, where $\kS$ is the torsion sheaf defined by the short exact sequence \eqref{E:SheafS}.  Then  $\kX^{\bul}$ is a tilting complex
in $D^b\bigl(\Coh(\YY)\bigr)$. In particular, if $\Lambda:= \bigl(\End_{D^b(\YY)}(\kX^{\bul})\bigr)^\circ$ then
the derived categories $D^b\bigl(\Coh(\YY)\bigr)$ and $D^b(\Lambda\mathsf{-mod})$ are equivalent.
\end{theorem}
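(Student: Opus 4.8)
The strategy is the standard one for proving that an object is tilting: verify (i) that $\kX^\bu$ generates $D^b\bigl(\Coh(\YY)\bigr)$ as a triangulated category, and (ii) that $\kX^\bu$ has no higher self-extensions, i.e. $\Hom_{D^b(\YY)}(\kX^\bu, \kX^\bu[i]) = 0$ for $i \ne 0$; the equivalence with $D^b(\Lambda\mathsf{-mod})$ then follows from the Keller--Rickard tilting theorem since $\YY$ has finite global dimension (Theorem \ref{T:NonCommNodalCurves}). The two summands $\kT = \sL\widetilde\sF(\widetilde\kT)$ and $\kS[-1]$ play complementary roles: by Corollary \ref{C:minorsderived}/Theorem \ref{T:NonCommNodalCurves} we have a semiorthogonal decomposition $D^b\bigl(\Coh(\YY)\bigr) = \bigl\langle \mathrm{Im}(\sI), \mathrm{Im}(\sL\widetilde\sF)\bigr\rangle = \bigl\langle D^b\bigl(\bar{A}\mathsf{-mod}\bigr), D^b\bigl(\Coh(\widetilde\XX)\bigr)\bigr\rangle$, with $\sI(\bar{A}) = \kS$, and $\widetilde\kT$ is a tilting bundle on $\widetilde\XX$ by the Geigle--Lenzing theorem. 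So $\kT$ generates $\mathrm{Im}(\sL\widetilde\sF)$ (using that $\sL\widetilde\sF$ is fully faithful and exact, hence preserves tilting generation), while $\kS$ generates the other piece $D^b\bigl(\bar{A}\mathsf{-mod}\bigr)$ because $\bar{A}$ is semisimple, so its module category is generated by $\bar{A}$ itself, and $\mathrm{Im}(\sI)$ is generated by the simple summands $\kS_\gamma$ of $\kS$. Hence $\kX^\bu$ generates $D^b\bigl(\Coh(\YY)\bigr)$.

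\textbf{Vanishing of self-extensions.} This breaks into four blocks. First, $\Hom(\kT, \kT[i]) = 0$ for $i \ne 0$: since $\sL\widetilde\sF$ is fully faithful, this reduces to $\Hom_{D^b(\widetilde\XX)}(\widetilde\kT, \widetilde\kT[i]) = 0$, which holds because $\widetilde\kT$ is tilting. Second, $\Hom(\kS[-1], \kS[-1][i]) = \Hom_{D^b(\bar{A})}(\bar{A}, \bar{A}[i]) = 0$ for $i \ne 0$ since $\bar{A}$ is semisimple. Third, $\Hom(\kT, \kS[-1][i]) = \Hom(\kT, \kS[i-1])$: since $\kT \in \mathrm{Im}(\sL\widetilde\sF)$ and $\kS \in \mathrm{Im}(\sI)$, and $\bigl\langle \mathrm{Im}(\sI), \mathrm{Im}(\sL\widetilde\sF)\bigr\rangle$ is a semiorthogonal decomposition, we get $\Hom(\mathrm{Im}(\sL\widetilde\sF), \mathrm{Im}(\sI)[j]) = 0$ for \emph{all} $j$, so this block vanishes including in degree $0$ — consistent with $\kX^\bu$ being a direct sum. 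Fourth, and this is the crux: $\Hom(\kS[-1], \kT[i]) = \Hom(\kS, \kT[i+1])$ must vanish for $i \ne 0$, i.e. $\Hom_{D^b(\YY)}(\kS, \kT[j]) = 0$ for all $j \ne 1$. By the semiorthogonality $\Hom(\mathrm{Im}(\sI), \mathrm{Im}(\sL\widetilde\sF)[j])$ need \emph{not} vanish, so this requires genuine computation: one computes $\RHom_{\YY}(\kS, \kT)$ using the locally projective resolution \eqref{E:SheafS} of $\kS$ and the explicit description of $\kT = \bigl(\begin{smallmatrix}\widetilde\kT\\ \widetilde\kT\end{smallmatrix}\bigr)$, reducing to the local $\Ext$-computations of Lemma \ref{L:ExtLOcalComp}. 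Concretely, $\kS_\gamma$ has a length-one projective resolution $0 \to \kQ_{\sigma(\omega)} \to \kP_\gamma \to \kS_\gamma \to 0$ (or the tied analogue), and the summands $\widetilde\kL_{(\tx,i)}$ of $\widetilde\kT$ correspond under $\sL\widetilde\sF$ to direct sums of the $\kQ_\omega$; one checks that $\Hom_{\YY}(\kP_\gamma, \kQ_\omega) \to \Hom_{\YY}(\kQ_{\sigma(\omega')}, \kQ_\omega)$ is injective with cokernel concentrated so that $\mathrm{Ext}^j(\kS_\gamma, \kQ_\omega) = 0$ for $j \ne 1$, which is exactly the content of \eqref{E:ExtCase1}--\eqref{E:ExtCase2}. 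Globally this follows by the usual local-to-global spectral sequence argument since $\kS$ is supported at finitely many points and $\widetilde\XX \to X$ is affine over those points.

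\textbf{Main obstacle.} The genuinely non-formal step is the fourth $\Hom$-block: one must verify that $\RHom_{\YY}(\kS, \kT)$ is concentrated in cohomological degree $1$. This is where the specific choice of the tilting summands $\widetilde\kL_{(\tx,i)}$ of $\widetilde\kT$ — in particular the chain of inclusions $\widetilde\kL'_{(\tx,1)} \subset \widetilde\kL_{(\tx,l(\tx))} \subset \dots \subset \widetilde\kL_{(\tx,1)}$ — interacts with the structure of $\kS$ at the nodal points. The point is that $\kS$ is built from the simple modules $\kS_\gamma$ attached to special (reflexive or tied) points, and Lemma \ref{L:ExtLOcalComp} pins down $\mathrm{Ext}^\bu_\kB(\kS_\gamma, \kQ_\delta)$; one then has to match the indices $\delta = \sigma(\omega)$ appearing there against which $\kQ_\delta$ actually occur as summands of $\kT$, and confirm that the non-vanishing $\mathrm{Ext}^1$'s are precisely accounted for by the differential in the would-be endomorphism dg-algebra, so that no stray $\mathrm{Ext}^{\ne 1}$ survives. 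Once this local bookkeeping is done — it is a finite check organized by the combinatorial datum $(\Omega, \sigma, \approx, \wt)$ — the globalization is routine, and one reads off $\Lambda = \bigl(\End_{D^b(\YY)}(\kX^\bu)\bigr)^\circ$ as a path algebra whose quiver glues the canonical-algebra quivers of the $\widetilde\XX_i$ along arrows dual to the $\mathrm{Ext}^1$'s of Lemma \ref{L:ExtLOcalComp}, recovering the algebra $\Lambda(\vec p, \vec q, \approx)$ of the introduction.
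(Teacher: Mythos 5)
Your proof is correct and follows essentially the same strategy as the paper: use the semiorthogonal decomposition of $D^b(\Coh(\YY))$ to split the self-extension computation into four blocks, dispatch three of them by full faithfulness of $\sI$ and $\sL\widetilde\sF$ plus semiorthogonality, and handle generation and the Keller equivalence formally. The one place where you diverge — and overcomplicate — is the fourth block $\Hom_\YY(\kS,\kT[j])$. You treat this as a ``genuinely non-formal'' computation requiring the fine local $\Ext$-data of Lemma~\ref{L:ExtLOcalComp} (which summands $\kQ_{\sigma(\omega)}$ occur, injectivity of the induced map on $\Hom$, etc.), but that lemma is only needed later, to identify $\Lambda$ with $\Lambda(\vec p,\vec q,\approx)$ explicitly. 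For the tilting condition itself the paper's argument is coarser and cleaner: since $\kS$ is a torsion sheaf supported at finitely many points, $\Ext^i_\YY(\kS,\kT)\cong\Gamma(X,\sExt^i_\YY(\kS,\kT))$; then $\sHom_\YY(\kS,\kT)=0$ simply because $\kS$ is torsion and $\kT$ is locally projective (hence torsion-free), and $\sExt^{\ge 2}_\YY(\kS,\kT)=0$ because the sequence~\eqref{E:SheafS} is a locally projective resolution of $\kS$ of length one. No matching of indices against the summands $\widetilde\kL_{(\tx,i)}$ of $\widetilde\kT$ is required at this stage. Your argument is not wrong, but you are re-deriving by hand a vanishing that follows from two soft structural facts.
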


\begin{proof}
The fact that $\kX^{\bul}$ generates the derived category $D^b\bigl(\Coh(\YY)\bigr)$ follows from the recollement diagram \eqref{E:RecollementAuslanderCurve} and the facts that $\widetilde\kT$ generates $D^b\bigl(\Coh(\widetilde\XX)\bigr)$ and
$\bar{A}$ generates $D^b(\bar{A}\mathsf{-mod})$. Since the functors $\sI$ and $\sL\widetilde\sF$ are fully faithful, we have:
$$
\Ext^i_{\YY}(\kS, \kS) = 0 = \Ext^i_{\YY}(\kT, \kT)
$$
for $i \ge 2$. Since the functor $\sL\widetilde\sF$ is left adjoint to $\sD\sG$ and $\sD\sG(\kS) =  0$, we have:
$$
\Ext^i_{\YY}(\kT, \kS) \cong \Hom_{D^b(\widetilde{\XX})}\bigl(\widetilde\kT, \sD\sG(\kS)[i]\bigr) = 0 \quad \mbox{\rm for all} \; i \in \ZZ.
$$
Finally, for any $i \in \ZZ$ we have:
$
\Ext^i_{\YY}(\kS, \kT) \cong \Gamma\bigl(X, \sExt^i_{\YY}(\kS, \kT)\bigr).
$
Since $\kS$ is torsion and $\kT$ is locally projective, we have: $\sHom_{\YY}(\kS, \kT) = 0$.  It follows from the exact sequence \eqref{E:SheafS} that $\sExt^{i}_{\YY}(\kS, \kT) = 0$ for $i \ge 2$. Therefore, $\Hom_{D^b(\YY)}\bigl(\kX^{\bul}, \kX^{\bul}[i]\bigr) = 0$
for $i \ne 0$. We have shown that $\kX^{\bul}$ is a tilting object in 
$D^b\bigl(\Coh(\YY)\bigr)$.  Let $\Lambda:=
 \bigl(\End_{D^b(\YY)}(\kX^{\bul})\bigr)^\circ$ then  the triangulated categories $D^b\bigl(\Coh(\YY)\bigr)$ and $D^b(\Lambda\mathsf{-mod})$ are equivalent; see \cite{Keller}.
 \end{proof}

\begin{remark}
Note that  we have: 
$
\Lambda 
  \cong
 \left(
 \begin{array}{cc}
  \Gamma & 0 \\
 W & \bar{A}
 \end{array}
 \right),
$
where $\Gamma = \bigl(\End_{\widetilde\XX}(\widetilde{\kT})\bigr)^\circ$, $\bar{A} = \bigl(\End_{\YY}(\kS)\bigr)^\circ$ and
$W := \Gamma\bigl(X, \sExt^1_{\YY}(\kS, \kT)\bigr)$ (viewed as an $\bar{A}$-$\Gamma$-bimodule). 
\end{remark}

\begin{corollary}\label{C:corollaryMain}
Let $(\tX, l, \approx)$ be an admissible datum, where $\tX$ is a disjoint union of projective lines. Then we have the following commutative diagram of categories and functors:
\begin{equation}\label{E:FunctorsII}
\begin{array}{c}
\xymatrix{
                                    & D^b\bigl(\Coh(\widetilde{\XX})\bigr) \ar[ld]_{\nu_*}  \ar@{^{(}->}[d]^-{\sL\widetilde\sF} &                          \\
D^b\bigl(\Coh(\XX)\bigr) & D^b\bigl(\Coh(\YY)\bigr) \ar@{->>}[l]_-{\sD\sG}  \ar[r]^-{\sT}                  & D^b(\Lambda\mathsf{-mod}) \\
                         & \Perf(\XX) \ar@{_{(}->}[u]_-{\sL\sF}  \ar@{_{(}->}[lu]^-{\sE} &
}
\end{array}
\end{equation}
in which $\sT$ is an  exact equivalence of triangulated categories, $\sL\sF$ and $\sL\widetilde\sF$ are fully faithful exact functors, $\sE$ is the canonical inclusion,
$\sD\sG$ is an appropriate   Verdier localization functor
and $\nu_\ast$ is induced by the forgetful functor $\Coh(\widetilde\XX) \lar \Coh(\XX)$ (normalization).
\end{corollary}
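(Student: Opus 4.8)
The plan is to assemble \eqref{E:FunctorsII} from the two principal results of the previous section, Theorem \ref{T:NonCommNodalCurves} and Theorem \ref{T:tilting1}, so that almost everything in the statement is already in place and only a small amount of bookkeeping is needed. First I would fix a weight function $\wt$ compatible with $(\tX, l, \approx)$ and form the non-commutative nodal curve $\XX = (X, \kA)$ of Definition \ref{D:NonCommNodalCurve}, its hereditary cover $\widetilde\XX = (X, \kH)$ and its Auslander curve $\YY = (X, \kB)$. Because $\tX$ is a disjoint union of projective lines, $\widetilde\XX = \widetilde\XX_1 \sqcup \dots \sqcup \widetilde\XX_r$ is a disjoint union of non-commutative hereditary curves over $\PP^1$ --- equivalently, of weighted projective lines in the sense of Geigle--Lenzing --- so each $\widetilde\XX_i$ carries the standard tilting bundle $\widetilde\kT_i$ of \eqref{E:standrdtilting}. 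This is exactly the input required by Theorem \ref{T:tilting1}.

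Next I would invoke Theorem \ref{T:NonCommNodalCurves} as a black box: it supplies the entire lower part of the desired diagram, i.e.\ the fully faithful functors $\sL\widetilde\sF$ and $\sL\sF$, the canonical inclusion $\sE\colon \Perf(\XX) \hookrightarrow D^b\bigl(\Coh(\XX)\bigr)$, the functor $\nu_*$ induced by the forgetful/normalization functor $\Coh(\widetilde\XX) \to \Coh(\XX)$, the localization $\sD\sG$ coming out of the recollement \eqref{E:RecollementAuslanderCurve} (which in particular exhibits $\sD\sG$ as a Verdier localization, since the Verdier quotient of $D^b\bigl(\Coh(\YY)\bigr)$ by the image of $\sI$ is $D^b\bigl(\Coh(\XX)\bigr)$), together with the two commutativity relations $\nu_* \simeq \sD\sG \circ \sL\widetilde\sF$ and $\sE \simeq \sD\sG \circ \sL\sF$.

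Then I would apply Theorem \ref{T:tilting1} to $\YY$: the complex $\kX^{\bul} = \kT \oplus \kS[-1]$, with $\kT = \widetilde\sF(\widetilde\kT)$, is a tilting object in $D^b\bigl(\Coh(\YY)\bigr)$, hence $\sT := \RHom_{\YY}(\kX^{\bul}, \,-\,)$ is an exact equivalence onto $D^b(\Lambda\mathsf{-mod})$ for $\Lambda = \bigl(\End_{D^b(\YY)}(\kX^{\bul})\bigr)^\circ$. Adjoining this single arrow on the right of the diagram produced in the previous step yields \eqref{E:FunctorsII}, and no further commutativity needs to be verified: $D^b(\Lambda\mathsf{-mod})$ is a sink, so the only closed circuits in \eqref{E:FunctorsII} are the two triangles already dealt with by Theorem \ref{T:NonCommNodalCurves}.

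With this, the corollary as literally stated is immediate. The step that carries the real content of the theorem announced in the introduction --- and hence the main obstacle --- is the concrete identification of $\Lambda$ with the combinatorially defined algebra $\Lambda(\vec p, \vec q, \approx)$ of Definition \ref{D:DerivedTameAlgebras}. For that I would start from the block form $\Lambda \cong \smtr{\Gamma & 0 \\ W & \bar A}$ of the Remark following Theorem \ref{T:tilting1}: here $\Gamma$ is the product of the canonical algebras $\Gamma(p_k^+, p_k^-)$ and $\Gamma(2, 2, q_k)$ read off from the components $\widetilde\XX_i$, the algebra $\bar A \cong \prod_{\gamma \in \widetilde\Omega_\circ^\ddagger} \bar A_\gamma$ is semisimple, and $W = \Gamma\bigl(X, \sExt^1_{\YY}(\kS, \kT)\bigr)$ is to be computed component by component from the local $\Ext^1$-groups of Lemma \ref{L:ExtLOcalComp}. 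Those computations determine precisely which additional arrows attach $\bar A$ to $\Gamma$ (the arrows $u_i, v_i, z_{1,j}, t_{1,j}$, etc.\ appearing in the examples of the introduction) and which composites of them with the essential arrows of $\Gamma$ vanish; matching the outcome with the ``gluing/blowing-up'' recipe for $\Lambda(\vec p, \vec q, \approx)$ is lengthy but routine, and once it is carried through the theorem in its stated form is complete.
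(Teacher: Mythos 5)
Correct, and this is exactly the paper's implicit argument: the corollary carries no separate proof because it is precisely the conjunction of Theorem~\ref{T:NonCommNodalCurves} (supplying the commuting triangles with $\nu_*$, $\sL\widetilde\sF$, $\sD\sG$, $\sL\sF$, $\sE$, and the Verdier-localization nature of $\sD\sG$ via the recollement \eqref{E:RecollementAuslanderCurve}) and Theorem~\ref{T:tilting1} (supplying the tilting equivalence $\sT$), glued as you describe. Your closing remark correctly separates the corollary as stated from the harder identification $\Lambda \cong \Lambda(\vec p, \vec q, \approx)$, which the paper indeed handles afterwards in the theorem of Subsection~5.3.
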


\subsection{Tame non-commutative  nodal curves and tilting}
We are especially interested in studying those finite dimensional $\kk$-algebras $\Lambda$ arising in the  diagram \eqref{E:FunctorsII} for which the derived category 
$D^b(\Lambda\mathsf{-mod})$ has tame representation type. Since $D^b(\Lambda\mathsf{-mod})$ contains the category of vector bundles $\VB(\XX)$ as a full subcategory, the non-commutative nodal curve $\XX$ has to be vector bundle tame, i.e.~of the form
$\XX(\vec{p}, \vec{q}, \approx)$, where $(\vec{p}, \vec{q}, \approx)$ is a datum from
Definition \ref{D:TameNodal}; see  Theorem \ref{T:VBTame}. 

\smallskip
\noindent
In this subsection
we are going  to elaborate one step further  an explicit description of the corresponding algebras $\Lambda(\vec{p}, \vec{q}, \approx)$.

\begin{definition}\label{D:DerivedTameAlgebras}
Let
us start with a pair of tuples
$$
\vec{p} = \bigl((p_1^+, p_1^-), \dots, (p_r^+, p_r^-)\bigr) \in \bigl(\NN^2\bigr)^r \quad \mbox{\rm and} \quad
\vec{q} =  (q_1, \dots, q_s) \in \NN^s,
$$
where  $r, s\in  \NN_0$ (either of this tuples is allowed to be empty).

\smallskip
\noindent
For any $1 \le i \le r$, let 
$\Xi_i^\pm := \bigl\{x_{i,1}^\pm, \dots, x_{i, p_i^\pm}^\pm\bigr\}$
and
\begin{equation}\label{E:AffineDynkin}
\Gamma\bigl(p_i^+, p_i^-\bigr) =
\begin{array}{c}
\xymatrix{
                 & \circ \ar[r]^-{x^+_{i,2}} &  \circ \; \; \dots & \circ \ar[r] & \circ \ar[rd]^-{x^+_{i, p_i^+}} & \\
\circ \ar[ru]^-{x^+_{i,1}}  \ar[d]_-{x^-_{i,1}}  &                            &    &    &               & \circ \\
    \circ   \ar[r]_-{x^-_{i,2}}          & \circ \ar[r] &  \circ \; \; \dots & \circ \ar[r] & \circ \ar[r] & \circ \ar[u]_-{x^-_{i, p_i^-}}
}
\end{array}
\end{equation}

\smallskip
\noindent
Next, for any $1 \le j \le s$, let $\Xi_j^\circ:= \bigl\{w_{j,1}, \dots, w_{j,q_j}\bigr\}$ and 
\begin{equation}
\Gamma\bigl(2, 2, q_j) =
\begin{array}{c}
\xymatrix{& &  &\circ  \ar@/^13pt/[rrrd]^-{v_j^+} & & & \\
\circ \ar@/^13pt/[rrru]^-{u_j^+} \ar@/_13pt/[rrrd]_-{u_j^-} \ar[r]^-{w_{j,1}} & \circ \ar[r]^-{w_{j,2}} & & \dots & \circ \ar[r] & \circ 
\ar[r]^-{w_{j,q_j}} & \circ \\
& &  &\circ  \ar@/_13pt/[rrru]_-{v_j^-} & & & \\
}
\end{array}
\end{equation}
modulo the relation $v_j^+ u_j^+ + v_j^- u_j^- +  w_{j, q_j}  \dots  
w_{j,1} = 0$.

\smallskip
\noindent
Let $\approx$ be a symmetric  relation  on the set
$$
\Xi:= \bigl((\Xi_1^+ \cup \Xi_1^-) \cup \dots \cup (\Xi_r^+ \cup \Xi_r^-)\bigr) \cup \bigl(\Xi_1^\circ \cup \dots \cup  \Xi_s^\circ \bigr)
$$
 such that for any $\xi \in \Xi$ there exists at most one $\xi' \in \Xi$ such that $\xi \approx \xi'$.
Then the  datum $(\vec{p}, \vec{q}, \approx)$ defines a finite dimensional
$\kk$-algebra $\Lambda = \Lambda(\vec{p}, \vec{q}, \approx)$ which is obtained from the disjoint union of quivers with relatioins
$\Gamma\bigl(p_i^+, p_i^-\bigr)$ and $\Gamma\bigl(2, 2, q_j)$ by the following combinatorial procedure.
\begin{itemize}
\item For any pair of tied elements $\varrho' \approx \varrho''$ of $\Xi$, we add a new vertex and two arrows 
ending in it:
\begin{equation}\label{E:gluingarrows}
    \begin{array}{cc}
    \xymatrix{
    \circ \ar[rr]^-{\varrho'} & & \circ \ar[rd]^-{\vartheta'} &  & \\
    &  &           & \bullet &  \\
    \circ \ar[rr]^-{\varrho''} & &  \circ \ar[ru]_-{\vartheta''} &  & \\
    }
    \end{array}
\end{equation}
The new arrows satisfy the following zero relations: 
$
 \vartheta' \varrho'  = 0 = \vartheta'' \varrho''.
$
\item For each reflexive element $\varrho \in \Xi$, we add
two new vertices and two arrows ending in each new vertex:
\begin{equation}\label{E:blowuparrow}
\begin{array}{cc}
    \xymatrix{
   & & \bullet \\
\circ 
\ar[r]^-{\varrho} & \circ \ar[ru]^-{\vartheta_+} \ar[rd]_-{\vartheta_-} &   \\
& & \bullet \\ 
    }
\end{array}
\end{equation}
The new arrows satisfy the following zero relations: 
$
\vartheta_\pm  \varrho = 0.
$
\end{itemize} 
\end{definition}

\begin{remark}
In the case when $s = 0$ (i.e.~when the tuple $\vec{q}$ is void) the algebra $\Lambda$ is skew-gentle  \cite{GeissDelaPena}. If additionally $\xi \not\approx \xi$ for all $\xi \in \Xi$, then the algebra $\Lambda$ is gentle \cite{AssemSkowr}. We also refer to  \cite{BurbanDrozdGentle} for a survey of results on the derived categories of gentle and skew-gentle algebras.
\end{remark}

\begin{theorem} Let $\XX = \XX(\vec{p}, \vec{q}, \approx)$ be the non-commutative nodal curve attached to an admissible datum  $(\vec{p}, \vec{q}, \approx)$ from  Definition 
\ref{D:TameNodal},  $\YY$ be the Auslander curve of $\XX$ and 
$\Lambda = \Lambda(\vec{p}, \vec{q}, \approx)$ be the finite dimensional algebra from Definition
\ref{D:DerivedTameAlgebras}. Then the following results hold:
\begin{itemize}
\item The derived categories $D^b\bigl(\Coh(\YY)\bigr)$ and $D^b(\Lambda\mathsf{-mod})$ are equivalent.
\item Moreover, $D^b\bigl(\Coh(\YY)\bigr)$ and $D^b\bigl(\Coh(\XX)\bigr)$ have tame
representation type.
\end{itemize}
\end{theorem}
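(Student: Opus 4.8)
The plan is to deduce both assertions from Theorem~\ref{T:tilting1}, by computing explicitly the endomorphism algebra of the tilting complex constructed there and then feeding the resulting finite dimensional algebra into the classification of tame matrix problems. Theorem~\ref{T:tilting1} already provides an equivalence $D^b\bigl(\Coh(\YY)\bigr) \simeq D^b(\Lambda\mathsf{-mod})$ with $\Lambda = \bigl(\End_{D^b(\YY)}(\kX^{\bul})\bigr)^\circ$ and $\kX^{\bul} = \kT \oplus \kS[-1]$, and the remark after it presents $\Lambda$ as the triangular matrix algebra $\smtr{\Gamma & 0 \\ W & \bar{A}}$ with $\Gamma = \bigl(\End_{\widetilde\XX}(\widetilde\kT)\bigr)^\circ$, $\bar{A} = \bigl(\End_{\YY}(\kS)\bigr)^\circ$ and $W = \Gamma\bigl(X, \sExt^1_{\YY}(\kS, \kT)\bigr)$. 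So the first bullet reduces to checking that this algebra is the combinatorial algebra $\Lambda(\vec p, \vec q, \approx)$ of Definition~\ref{D:DerivedTameAlgebras}. For $\Gamma$ I would use that $\widetilde\XX = \widetilde\XX_1 \sqcup \dots \sqcup \widetilde\XX_{r+s}$, where $\widetilde\XX_1, \dots, \widetilde\XX_r$ carry the weighted points prescribed by $\vec p$ and $\widetilde\XX_{r+1}, \dots, \widetilde\XX_{r+s}$ those prescribed by $\vec q$ (Definition~\ref{D:TameNodal}), so that $\Gamma$ is the product of the Ringel canonical algebras of the components; for $1 \le i \le r$ there are two weighted points of lengths $p_i^\pm$ (formally adjoining a length-one redundant arrow if some $p_i^\pm = 1$), and eliminating \emph{both} redundant arrows $z, w$ from \eqref{E:Canonical}--\eqref{E:CanonicalRel} turns the two canonical relations into the relation-free quiver $\Gamma(p_i^+, p_i^-)$ of \eqref{E:AffineDynkin}; for $1 \le k \le s$ there are three weighted points of lengths $2, 2, q_k$, and eliminating $z, w$ by means of two of the three canonical relations converts the third into the single relation $v_k^+ u_k^+ + v_k^- u_k^- + w_{k, q_k}\dots w_{k,1} = 0$, reproducing $\Gamma(2, 2, q_k)$. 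Thus $\Gamma$ is exactly the disjoint union of quivers with relations in Definition~\ref{D:DerivedTameAlgebras}.

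The next step is to read off $\bar{A}$ and $W$. By the local description underlying Theorem~\ref{T:NonCommNodalCurves} together with diagram \eqref{E:NodalModConductor}, $\bar{A} = \bigl(\End_{\YY}(\kS)\bigr)^\circ \cong \prod_{\gamma \in \widetilde\Omega_\circ^\ddagger} \Mat_{\wt(\gamma)}(\kk)$; since the derived category $D^b\bigl(\Coh(\YY)\bigr)$ does not depend on the compatible weight (Theorem~\ref{T:NodalMorita}) I may take $\wt$ trivial, so $\bar{A} \cong \kk^N$ with $N$ the number of tied pairs of $\approx$ plus twice the number of reflexive elements; these simple summands are precisely the new vertices $\bullet$ of \eqref{E:gluingarrows} and \eqref{E:blowuparrow}. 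The bimodule $W = \Gamma\bigl(X, \sExt^1_{\YY}(\kS, \kT)\bigr)$ is concentrated at the special points and is governed there by Lemma~\ref{L:ExtLOcalComp}: for a reflexive point $\omega$ with $\gamma = \omega_\pm$ one has $\Ext^1(S_\gamma, Q_\delta) \cong \kk$ exactly when $\delta = \sigma(\omega)$, where $Q_{\sigma(\omega)}$ is the restriction of the locally projective summand of $\kT$ sitting at the target of the essential arrow $\varrho \in \Gamma$ associated with $\omega$; this produces one new arrow $\vartheta_\pm$ from that vertex to the simple corresponding to $\omega_\pm$, matching \eqref{E:blowuparrow}. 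Likewise a tied pair $\{\omega', \omega''\}$ with $\gamma = \overline{\{\omega', \omega''\}}$ gives $\Ext^1(S_\gamma, Q_\delta) \cong \kk$ for $\delta \in \{\sigma(\omega'), \sigma(\omega'')\}$ and hence the two arrows $\vartheta', \vartheta''$ of \eqref{E:gluingarrows}. The relations $\vartheta_\pm \varrho = 0$ and $\vartheta'\varrho' = 0 = \vartheta''\varrho''$ I would extract from the minimal projective resolutions \eqref{E:ResolutionCase1}--\eqref{E:ResolutionCase2}: the arrow $\varrho$ maps into $\rad$ of the relevant indecomposable projective, whereas the connecting class realising $\vartheta$ is detected on $\rad(P_\gamma) = Q_{\sigma(\omega)}$, so the composite lands in $\rad^2$ and vanishes; and dimension reasons ($\Ext^1$ at most one-dimensional, $\Ext^{\ge 2} = 0$, and $\sExt^1_{\YY}(\kS, \kT) = 0$ away from the special points) rule out any further arrows or relations. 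This yields $\Lambda \cong \Lambda(\vec p, \vec q, \approx)$, proving the first bullet.

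For the second bullet, each hereditary component $\widetilde\XX_i$ is Morita equivalent to a weighted projective line of Geigle--Lenzing type $(p_i^+, p_i^-)$ or $(2, 2, q_k)$, both of which have strictly positive Euler characteristic ($\tfrac1{p^+} + \tfrac1{p^-} > 0$, resp.\ $\tfrac1q > 0$), so each $D^b\bigl(\Coh(\widetilde\XX_i)\bigr)$ is derived-tame; with $\bar{A}$ semisimple, the semi-orthogonal decomposition $D^b\bigl(\Coh(\YY)\bigr) = \bigl\langle D^b(\bar{A}\mathsf{-mod}),\, D^b\bigl(\Coh(\widetilde\XX)\bigr)\bigr\rangle$ of Theorem~\ref{T:NonCommNodalCurves} glues two derived-tame categories along a bimodule whose components are at most one-dimensional, and the resulting algebra $\Lambda(\vec p, \vec q, \approx)$ is skew-gentle when $\vec q$ is void and, in general, a controlled degeneration of a canonical algebra, for which the indecomposable objects of the derived category are classified by a representation-tame matrix problem of bunch-of-chains (clannish) type exactly as in the authors' treatment of commutative nodal curves and of nodal orders \cite{bd, Nodal}; this gives tameness of $D^b\bigl(\Coh(\YY)\bigr) \simeq D^b(\Lambda\mathsf{-mod})$. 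Tameness of $D^b\bigl(\Coh(\XX)\bigr)$ then follows because $\sD\sG$ exhibits it as a Verdier localization of $D^b\bigl(\Coh(\YY)\bigr)$ by the thick subcategory generated by the finite-length sheaf $\kS$ (with fully faithful section $\sL\sF$), or, alternatively, from the classification of the indecomposable objects of $\Coh(\XX)$ obtained in \cite{Duke}. The step I expect to be the main obstacle is the bookkeeping of the second paragraph: aligning the cyclic datum $(\Omega, \sigma, \approx)$ that controls the local nodal orders with the vertices and essential arrows of the canonical algebras, and verifying that $W$ contributes precisely the arrows $\vartheta$ of \eqref{E:gluingarrows}--\eqref{E:blowuparrow} subject to precisely the relations $\vartheta\varrho = 0$, with no hidden extra morphisms.
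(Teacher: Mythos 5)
Your proposal is correct and follows essentially the same route as the paper's proof: it invokes Theorem~\ref{T:tilting1} and the triangular-matrix presentation $\Lambda \cong \smtr{\Gamma & 0 \\ W & \bar{A}}$, matches $\Gamma$ with the disjoint union of canonical algebras after eliminating the redundant arrows, reads off $\bar{A}$ and $W$ via Lemma~\ref{L:ExtLOcalComp} and the local structure of the conductor, and then reduces tameness of $D^b(\Lambda\mathsf{-mod})$ to the known bunch-of-chains/clannish matrix problem before descending to $D^b\bigl(\Coh(\XX)\bigr)$ via Verdier localization. The only cosmetic difference is that the paper cites \cite{Toronto} for the tameness step rather than \cite{bd, Nodal}, and the remark on Euler characteristics of the hereditary components, while harmless, is not actually what drives the tameness (it is the clannish reduction that does the work, as you also note).
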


\begin{proof} According to Theorem \ref{T:tilting1}, there exists a tilting complex
$\kX^{\bul} := \kT \oplus \kS[-1]$ in the derived category $D^b\bigl(\Coh(\YY)\bigr)$ such that 
\begin{equation*}
\widetilde{\Lambda} :=
 \bigl(\End_{D^b(\YY)}(\kX^{\bul})\bigr)^\circ \cong
 \left(
 \begin{array}{cc}
 \Gamma & 0 \\
 W & \bar{A}
 \end{array}
 \right),
\end{equation*}
Note, that 
$
\Gamma \cong 
\bigl(\Gamma(p_1^+, p_1^-) \times \dots \times \Gamma(p_r^+, p_r^-)\bigr) \times
\bigl(\Gamma(2,2,q_1) \times \dots \times \Gamma(2,2,q_s)\bigr).
$
Next, $\bar{A}$ is a product of several copies of the semisimple algebras $\kk$ and $\kk\times \kk$.
Namely, each pair $\omega', \omega'' \in \Pi$ of tied elements gives a factor $\kk$,  whereas each  reflexive element $\omega \in \Pi$ gives a factor $\kk \times \kk$. Taking into account  the description of the space
$W = \Gamma\bigl(X, \sExt^1_{\YY}(\kS, \kT)\bigr)$ viewed as right $\Gamma$-module given by  Lemma \ref{L:ExtLOcalComp}, we can conclude that actually 
$\widetilde{\Lambda} = \Lambda$, giving the first statement. 

Since the derived category  $D^b(\Lambda\mathsf{-mod})$ is  representation tame (it can be deduced as in \cite{Toronto}), the derived category $D^b\bigl(\Coh(\YY)\bigr)$ is representation tame too. Since $D^b\bigl(\Coh(\XX)\bigr)$ can be obtained as a Verdier localization of $D^b\bigl(\Coh(\YY)\bigr)$ (see Theorem \ref{T:NonCommNodalCurves}), one can conclude that 
$D^b\bigl(\Coh(\XX)\bigr)$ is representation tame as well.\!%
\footnote{\,Another approach to establish  the representation tameness of $D^b\bigl(\Coh(\XX)\bigr)$ is given in \cite{DrozdVoloshynDerived}.}
\end{proof}

\section{Tilting exercises with some tame non-commutative nodal curves}

\noindent
In this section we are going to study in more details several  special cases of the setting of  Corollary \ref{C:corollaryMain}.

\subsection{Elementary modifications} We are going to introduce two ``elementary modifications'', which allow to replace the algebra $\Lambda = \Lambda(\vec{p}, \vec{q}, \approx)$ by a derived-equivalent algebra.

\begin{lemma}\label{L:ModificationOne} Any fragment  of $\Lambda$ of the form \eqref{E:gluingarrows} can be replaces by the
    fragment
$$
\xymatrix{
\circ \ar[rd]_-{\varrho'_1}  &  & \circ &  & \\
               & \bullet \ar[rd]^-{\varrho''_2} \ar[ru]_-{\varrho'_2} &  &   
\varrho''_2  \varrho'_1 = 0, \varrho''_1  \varrho''_2 = 0   \\           
\circ \ar[ru]^-{\varrho''_1} &  & \circ & & \\
}
$$
\end{lemma}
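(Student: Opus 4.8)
The plan is to exhibit a two-term tilting complex on one side whose endomorphism algebra is the other, or more efficiently, to use the fact that both algebras are the endomorphism algebras of tilting objects on the same triangulated category $D^b\bigl(\Coh(\YY)\bigr)$ built from the Auslander curve. Concretely, recall from the proof of Theorem~\ref{T:tilting1} that the gluing fragment \eqref{E:gluingarrows} attached to a pair of tied elements $\varrho' \approx \varrho''$ arises from the piece of the tilting complex $\kX^{\bul} = \kT \oplus \kS[-1]$ corresponding to the finite-length summand $\kS_\gamma$ with $\gamma = \overline{\{\omega',\omega''\}}$ together with the locally projective summands $Q_{\sigma(\omega')}$, $Q_{\sigma(\omega'')}$ of $\kT$ linked to it via the $\Ext^1$ computed in Lemma~\ref{L:ExtLOcalComp}, formula \eqref{E:ExtCase2}. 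The first step is therefore to identify precisely the four vertices appearing in \eqref{E:gluingarrows} with summands of $\kX^{\bul}$: the two sources are (the endpoints of) $Q_{\sigma(\omega')}$ and $Q_{\sigma(\omega'')}$, the middle vertex is $\kS_\gamma[-1]$, and the target is a further projective summand into which the two $\varrho$'s land with the zero relations $\vartheta'\varrho' = 0 = \vartheta''\varrho''$.

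Next I would replace the middle summand $\kS_\gamma[-1]$ by a different object that still, together with the rest, generates $D^b\bigl(\Coh(\YY)\bigr)$ and has the same $\Ext$-vanishing. The natural candidate is the cone (shifted appropriately) of one of the two maps $Q_{\sigma(\omega')} \to \kS_\gamma$, i.e. a "mutation" of the tilting object at the summand $\kS_\gamma[-1]$. One checks, using the two short exact sequences/triangles coming from \eqref{E:ResolutionCase2} and \eqref{E:SheafS}, that this mutated object $\kX'^{\bul}$ is again a tilting complex (the self-$\Ext$'s in positive degrees still vanish, by the same adjunction and local-projectivity arguments as in the proof of Theorem~\ref{T:tilting1}), and that its endomorphism algebra differs from $\Lambda$ exactly by replacing the fragment \eqref{E:gluingarrows} with the claimed fragment: now one arrow $\varrho'_1$ goes from a source into the new middle vertex $\bullet$, two arrows $\varrho'_2, \varrho''_2$ leave $\bullet$ toward the two former targets, and a fourth arrow $\varrho''_1$ comes in from the other source, subject to $\varrho''_2\varrho'_1 = 0$ and $\varrho''_1\varrho''_2 = 0$. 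Since $\kX^{\bul}$ and $\kX'^{\bul}$ are both tilting complexes in the same derived category, the algebras $\Lambda$ and the modified algebra are derived equivalent, which is the assertion.

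Alternatively, and perhaps more cleanly for a self-contained proof, one can work entirely inside $D^b(\Lambda\mathsf{-mod})$: take the indecomposable projective $\Lambda$-module $P_\bullet$ at the target vertex $\bullet$ of \eqref{E:gluingarrows}, observe that because of the relations $\vartheta'\varrho' = 0 = \vartheta''\varrho''$ its radical is a direct sum of two "interval" modules supported on the two incoming arrows, and replace $P_\bullet$ in the canonical tilting complex $\bigoplus_i P_i$ by the two-term complex $\bigl[P_\bullet \to (\text{something})\bigr]$; computing $\End$ of the resulting tilting complex gives the modified algebra. The main obstacle I expect is the bookkeeping: one must check that the mutated object really is tilting — i.e. verify $\Hom(\kX'^{\bul}, \kX'^{\bul}[i]) = 0$ for $i \neq 0$ in every combination involving the new middle summand and the two projectives on either side, and confirm generation — and then carefully read off all the arrows and relations of the new endomorphism algebra, making sure no extra arrows or relations sneak in from the interaction with the rest of $\kX^{\bul}$. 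This is a finite but slightly delicate local computation; away from the fragment \eqref{E:gluingarrows} nothing changes, so it suffices to do it for a single tied pair in isolation.
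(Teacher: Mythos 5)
Your second, ``self-contained'' alternative is in fact the paper's proof, but you have left the crucial piece unspecified and you have one detail backwards.  In the paper one writes $j = \bullet$ for the common target of $\vartheta'$ and $\vartheta''$, and $i'$, $i''$ for their sources, and replaces the summand $P_j$ of $\bigoplus_i P_i$ by the two-term complex
$$
T_j := \bigl( \dots \lar 0 \lar P_j \xrightarrow{(\vartheta', \vartheta'')} \underline{P_{i'} \oplus P_{i''}} \lar 0 \lar \dots \bigr),
$$
so the ``(something)'' you were unwilling to name is precisely $P_{i'} \oplus P_{i''}$, the two projectives at the \emph{sources} of the incoming arrows.  One then checks that $T := T_j \oplus \bigl(\bigoplus_{i \ne j} P_i\bigr)$ is a tilting object of $D^b(\Lambda\mathsf{-mod})$ and reads off the quiver and relations of $\bigl(\End_{D^b(\Lambda)}(T)\bigr)^\circ$.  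Your heuristic about the radical of $P_\bullet$ is wrong: in the algebras $\Lambda(\vec{p},\vec{q},\approx)$ the vertex $\bullet$ added for a tied pair is a \emph{sink}, so the left projective $P_\bullet = \Lambda e_\bullet$ is just the simple module $S_\bullet$ and its radical is zero; you may be thinking of the right projective $e_\bullet\Lambda$ or of the injective hull of $S_\bullet$.  This does not affect the construction, since $T_j$ is defined directly from the arrows $\vartheta', \vartheta''$ and not from a radical filtration, but it is worth keeping straight because it is exactly why the complex has $P_\bullet$ in degree $-1$ and the two $P_{i'}\oplus P_{i''}$ in degree $0$, not the other way around.

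Your first alternative, mutating $\kX^\bul = \kT \oplus \kS[-1]$ inside $D^b\bigl(\Coh(\YY)\bigr)$, is more roundabout than needed and, as stated, would not reproduce the lemma: you propose taking the cone of \emph{one} of the two maps $Q_{\sigma(\omega')} \to \kS_\gamma$, but the transformation in the statement treats the two incoming arrows symmetrically, so any correct mutation must involve both $Q_{\sigma(\omega')}$ and $Q_{\sigma(\omega'')}$ simultaneously (as $T_j$ does).  Moreover the lemma is a purely representation-theoretic statement about a finite-dimensional algebra, and the proof should not need to re-enter the geometric picture of the Auslander curve; the paper simply does the mutation locally in $D^b(\Lambda\mathsf{-mod})$ and the rest of the quiver is untouched.
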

\begin{proof} Let $j$ be the common  target of the arrows $\vartheta'$ and $\vartheta''$, $i'$ be the source of $\vartheta'$ and $i''$ be the source of $\vartheta''$. Consider the complex 
$$
T_j := ( \dots \lar 0 \lar P_j \xrightarrow{(\vartheta', \vartheta'')} \underline{P_{i'} \oplus P_{i''}} \lar 0 \lar \dots),
$$
where the underlined term of  $T_\ast$ is located in the zero degree. Let $\Omega$ be the set of vertices of the quiver of the algebra $\Lambda$. Then 
$
T := T_j \oplus \bigl(\oplus_{i \in \Omega \setminus \{j\}} P_i)
$
is a tilting object of $D^b(\Lambda\mathsf{-mod})$. Let $\Gamma := 
\bigl(\End_{D^b(\Lambda)}(T)\bigr)^\circ$.
Then on the level of quivers and  relations we get precisely the transformation described in the statement of Lemma.
\end{proof}

\begin{example}
Let  $\Lambda$ be  the path algebra of the following quiver
\begin{equation}
\begin{array}{c}
\xymatrix{
              & \circ \ar[rd]^-{u_1} \ar[rr]^-{x_2} &  & \circ \ar[rd]^-{x_3} & & \\
\circ \ar[ru]^-{x_1} \ar[rd]_{y_1} &  & \bullet        &          & \circ    \ar@/^/[rr]^-{u_3}  \ar@/_/[rr]_-{v_3}  & &  \bullet   &   \\
 & \circ  \ar[rr]_-{y_2} &  & \circ \ar[lu]_-{v_2}  \ar[ru]_-{y_3} & &
}
\end{array}
\end{equation}
modulo  the relations: $u_i x_i = 0$ for $i \in \{1, 3\}$ and
 $v_j y_j = 0$ for $j \in \{2, 3\}$.

\smallskip
\noindent
Making an elementary transformation at both bullets, we get a derived equivalent 
algebra $\Gamma$, which  is the path algebra of the following quiver
\begin{equation}
\begin{array}{c}
\xymatrix{
              & \circ  \ar[rr]^-{x_2} &  & \circ \ar[rd]^-{x_3^{(1)}} & & \\
\circ \ar[rr]^-{x_1^{(1)}} \ar[rd]_{y_1} &  & \bullet   \ar[lu]_-{x_1^{(2)}}  \ar[rd]_-{y_2^{(2)}}   &          & \circ    \ar@/^/[rr]^-{x_3^{(2)}}  \ar@/_/[rr]_-{y_3^{(2)}}  & &  \bullet \\
 & \circ  \ar[ru]_-{y_2^{(1)}} &  & \circ   \ar[ru]_-{y_3^{(1)}} & &
}
\end{array}
\end{equation}
modulo  the relations: $x_1^{(2)} y_2^{(1)} = y_{2}^{(2)} x_1^{(1)} = x_3^{(2)} y_3^{(1)} = y_{3}^{(2)} x_3^{(1)} = 0$. 
\end{example}

\begin{lemma}\label{L:ModificationTwo}
Any fragment  of $\Lambda$ of the form \eqref{E:blowuparrow} can be replaced by the
    fragment
$$
\xymatrix{
                      & \bullet \ar[rd]^-{\varrho^{(2)}_+} & & \\
\circ \ar[rd]_{\varrho^{(1)}_-} \ar[ru]^{\varrho^{(1)}_+} & & \circ & \varrho^{(2)}_+ \varrho^{(1)}_+ = \varrho^{(2)}_- \varrho^{(1)}_-\\
& \bullet \ar[ru]_{\varrho^{(2)}_-} & & \\
}
$$
\end{lemma}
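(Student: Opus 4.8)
The plan is to mimic the proof of Lemma \ref{L:ModificationOne}, replacing the single ``gluing'' cone by the pair of cones dictated by the branching shape of \eqref{E:blowuparrow}. Concretely, let $i$ be the vertex that is the common source of the two arrows $\vartheta_+$ and $\vartheta_-$ in \eqref{E:blowuparrow} (i.e.~the target of $\varrho$), and let $j_+$ and $j_-$ be the two new vertices which are the targets of $\vartheta_+$ and $\vartheta_-$ respectively. As in the previous lemma, I would form the two two-term complexes
\begin{equation*}
T_{j_\pm} := \bigl(\dots \lar 0 \lar P_{j_\pm} \xrightarrow{\vartheta_\pm} \underline{P_i} \lar 0 \lar \dots\bigr),
\end{equation*}
with the underlined term in degree $0$, and set $T := T_{j_+} \oplus T_{j_-} \oplus \bigl(\bigoplus_{k \in \Omega \setminus \{j_+, j_-\}} P_k\bigr)$, where $\Omega$ is the vertex set of the quiver of $\Lambda$. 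The first thing to check is that $T$ is a tilting object of $D^b(\Lambda\mathsf{-mod})$: it generates (the original projectives $P_{j_\pm}$ are recovered from $T_{j_\pm}$, $P_i$ and the mapping cones), and $\Hom_{D^b(\Lambda)}(T, T[n]) = 0$ for $n \ne 0$ by the usual computation — the only potentially nonzero higher Ext would come from a composite path through the two arrows $\vartheta_+$, $\vartheta_-$, and the relations $\vartheta_\pm \varrho = 0$ force these to vanish. The fact that $T_{j_+}$ and $T_{j_-}$ share the summand $P_i$ in degree $0$ is exactly what will produce, in the endomorphism algebra, the commutativity relation $\varrho_+^{(2)}\varrho_+^{(1)} = \varrho_-^{(2)}\varrho_-^{(1)}$ rather than a pair of zero relations.

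Next I would compute $\Gamma := \bigl(\End_{D^b(\Lambda)}(T)\bigr)^\circ$ on the level of quivers with relations. The summand $P_i$ of $\Lambda$ gives rise, under the tilt, to a single vertex of $\Gamma$ (call it the ``middle'' vertex, the target of $\varrho_\pm^{(2)}$ in the claimed picture), because $T_{j_\pm}$ each contribute $P_i$ to degree $0$; a morphism $T_{j_\pm} \to T_{\text{(whatever carried }P_i\text{)}}$ lifting the degree-$0$ identity on $P_i$ becomes the arrow $\varrho_\pm^{(2)}$. The two new vertices $j_+, j_-$ of $\Lambda$ become the two ``bullet'' vertices of $\Gamma$, and the morphism $\varrho$ of $\Lambda$ (from its source vertex, say $\ell$, to $i$) now factors through the cones: the two liftings produce the arrows $\varrho_\pm^{(1)}$ from the image of $P_\ell$ to the respective bullets. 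The only relation among these that survives is that the two ways of going from $P_\ell$ to $P_i$ agree up to the original map $\varrho$, i.e.~$\varrho_+^{(2)}\varrho_+^{(1)} = \varrho_-^{(2)}\varrho_-^{(1)}$; there are no other new relations because the original local relations $\vartheta_\pm\varrho = 0$ are precisely what makes the chosen two-term complexes exact on the relevant stalks, and all other arrows and relations of $\Lambda$ outside this fragment are unaffected since their projectives appear untouched in $T$.

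The main obstacle I anticipate is bookkeeping the endomorphism algebra carefully enough to be sure that (a) no spurious arrows or relations are introduced at the boundary — i.e.~where the fragment \eqref{E:blowuparrow} attaches to the rest of the quiver along the vertex $\ell$ and the arrow $\varrho$ — and (b) the two bullet vertices really do end up with no outgoing arrows and with $\varrho_\pm^{(1)}$ as their unique incoming arrows, matching the displayed picture exactly. This amounts to verifying that $\Hom_{D^b(\Lambda)}(T_{j_\pm}, P_k[n])$ and $\Hom_{D^b(\Lambda)}(P_k, T_{j_\pm}[n])$ vanish for $n \ne 0$ and are the expected one-dimensional (or zero) spaces for $n = 0$, for all $k$, which is a finite and routine but slightly delicate diagram chase using the projective presentations of simples over $\Lambda$; I would organize it exactly as in the proof of Lemma \ref{L:ModificationOne}, treating the two cones symmetrically. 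Once the quiver-with-relations of $\Gamma$ is identified, the derived equivalence $D^b(\Lambda\mathsf{-mod}) \simeq D^b(\Gamma\mathsf{-mod})$ is automatic by the tilting theorem of \cite{Keller}, and the replacement of the fragment is precisely as stated.
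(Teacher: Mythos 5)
Your proposal matches the paper's proof: it uses exactly the same tilting complex $T = T_{j_+}\oplus T_{j_-}\oplus\bigl(\bigoplus_{k\ne j_\pm}P_k\bigr)$ with $T_{j_\pm}=\bigl(P_{j_\pm}\xrightarrow{\vartheta_\pm}P_i\bigr)$, and identifies the new quiver-with-relations by reading off $\bigl(\End_{D^b(\Lambda)}(T)\bigr)^\circ$. One small wording slip: the nonzero space is $\Hom_{D^b(\Lambda)}(P_i,T_{j_\pm})\cong\kk$ rather than a map $T_{j_\pm}\to P_i$ ``lifting the identity'' (that $\Hom$ vanishes because of the chain-map condition $f_0\vartheta_\pm=0$); but the resulting arrows $\varrho^{(2)}_\pm$ and the commutativity relation $\varrho^{(2)}_+\varrho^{(1)}_+=\varrho^{(2)}_-\varrho^{(1)}_-$, coming from the fact that both composites $P_i\to T_{j_\pm}\to P_\ell$ are the single map $\varrho$, come out exactly as you describe.
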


\begin{proof} 
Let $j_\pm$ be the target of $\vartheta_\pm$ and $i$ be their common source.
Consider the complexes 
$$
T_{j_\pm} := ( \dots \lar 0 \lar P_{j_\pm} \xrightarrow{\vartheta_\pm} P_i \lar 0 \lar \dots).
$$
Again, let  $\Omega$ be the set of vertices of the quiver of the algebra $\Lambda$. Then 
$$
T := \bigl(T_{j_+} \oplus T_{j_-}\bigr) \oplus \bigl(\oplus_{i \in \Omega \setminus \{j_+, j_-\}} P_i)
$$
is a tilting object in  $D^b(\Lambda\mathsf{-mod})$. If $\Gamma := 
\bigl(\End_{D^b(\Lambda)}(T)\bigr)^\circ$,
then on the level of quivers and  relations the passage from $\Lambda$ to $\Gamma$ gives the desired elementary transformation.
\end{proof}

\begin{example}
Let
$\Lambda$  be the path algebra of the following quiver
\begin{equation}
\begin{array}{c}
\xymatrix{
              &  & \bullet & & \\
              & \circ  \ar[rr]^-{x_2} \ar[ru]^-{u_1}&  & \circ \ar[rd]^-{x_3} \ar[lu]_-{u_2}& & \\
\circ \ar[ru]^-{x_1} \ar[rrd]_{y_1} &  &        &          & \circ    \ar@/^/[rr]^-{u_3}  \ar@/_/[rr]_-{v_2}  & &  \bullet   &   \\
 & & \circ      \ar[rru]_-{y_2} \ar[rd]^-{v_1^+} \ar[ld]_-{v_1^-} & & \\
  & \bullet & &  \bullet &
}
\end{array}
\end{equation}
subject to the relations: $u_i x_i = 0$ for all $1 \le i \le 3$, $v_2 y_2 = 0$ and $v_1^\pm y_1 = 0$.

\smallskip
\noindent
Performing the elementary transformations at all bullets, we get a derived equivalent
algebra 
$\Gamma$, given as    the path algebra of the following quiver
\begin{equation}
\begin{array}{c}
\xymatrix{
                                                                            &  & \bullet \ar@/_/[d]_-{x_1^{(2)}}  \ar@/^/[rd]^-{x_2^{(2)}} & & \\
                                       &  &   \circ \ar@/_/[u]_-{x_2^{(1)}} &   \circ \ar[rd]^-{x_3^{(1)}} & & \\
\circ \ar@/^/[rruu]^{x_1^{(1)}} \ar[d]_{y_{11}^+}  \ar[rr]^-{y_{11}^-}& & \bullet \ar[d]^-{y_{12}^-}  &        &           \circ    \ar@/^/[rr]^-{x_3^{(2)}}  \ar@/_/[rr]_-{y_2^{(2)}}  & &  \bullet   &   \\
\bullet \ar[rr]_-{y_{12}^+} & &\circ      \ar[rru]_-{y_2^{(1)}}   & &  & \\
}
\end{array}
\end{equation}
subject to the relations: $$x_2^{(2)} x_1^{(1)} = x_1^{(2)} x_2^{(1)} = x_3^{(2)} y_2^{(1)} = y_2^{(2)} x_3^{(1)} = 0 \quad \mbox{\rm and}\quad  y_{12}^+ y_{11}^+ = y_{12}^- y_{11}^-.$$
\end{example}

\subsection{Degenerate tubular algebra}\label{SS:DegenTub} Let $E = V\bigl(zy^2 - x^2(x-z)\bigr)  \subset \PP^2$ be a plane nodal cubic and  $G = \langle\tau \rangle \cong \ZZ_2$, where 
$E \stackrel{\tau}\lar E$ is the involution given by the rule  $(x: y: z) \mapsto 
(x: -y: z)$.  Then the  category
$\Coh^G(E)$ of $G$-equivariant coherent sheaves on $E$ is equivalent
to the category of coherent sheaves on the non-commutative nodal curve $\EE = 
\XX(\vec{p}, \vec{q} \approx)$ described
in Example \ref{Ex:NodalCubicOrbifold}.
Recall that the vector $\vec{p}$ is void,  $\vec{q} = (1)$ and $(\tilde{o}, 1) \approx
(\tilde{o}, 1)$.
 Then the corresponding algebra $\Lambda = \Lambda(\vec{p}, \vec{q}, \approx)$ is the path algebra of the following quiver
$$
\xymatrix{
     & \circ \ar[rd]^-{x_2} & & \bullet  \\
\circ \ar[ru]^-{x_1} \ar[rr]^-{w} \ar[rd]_-{y_1} & & \circ \ar[ru]^-{u_+} \ar[rd]_{u_-} & \\
& \circ \ar[ru]_-{y_2} & & \bullet   \\
}
$$
modulo the relations $x_2 x_1 + y_2 y_1 + w = 0$ and $u_\pm w = 0$. Note that the corresponding ideal in the path algebra is not admissible and the arrow $w$ is redundant. Applying the elementary transformation from Lemma \ref{L:ModificationTwo}
to the arrow $w$, we end up with the the path algebra $T$ of the following quiver
 \begin{equation}\label{E:degtubular}
\begin{array}{c}
\xymatrix
{
        &           & \circ \ar[lld]_{a_1}
\ar[ld]^{a_2}  \ar[rd]_{a_3}  \ar[rrd]^{a_4}
      &         &       \\
\circ \ar[rrd]_{b_1}  & \circ \ar[rd]^{b_2}
&        & \circ \ar[ld]_{b_3}
& \circ \ar[lld]^{b_4}\\
        &           &\circ &         &       \\
}
\end{array}
\end{equation}
modulo the  relations $b_1 a_1 + b_2 a_2 +  b_3 a_3 = 0$ and  $b_1 a_1  = b_4 a_4$,
i.e.~the degenerate tubular algebra from Introduction. Since the derived categories 
$D^b(\Lambda\mathsf{-mod})$ and $D^b(T\mathsf{-mod})$ are equivalent, 
the commutative diagram of categories and functors \eqref{E:TiltingDegTub}
is a special case of the setting from Corollary \ref{C:corollaryMain}.

\smallskip
\noindent
Let $S$ be the path algebra of the following quiver
\begin{equation}\label{E:skewgentle}
\begin{array}{c}
\xymatrix{
\circ   \ar[rr]^-{a_+} \ar@/_70pt/[rrdd]_-{c_+}  & & \circ \ar[rr]^-{b_+} 
\ar@/^70pt/[rrdd]^-{d_+} & & \circ \\
& & & & \\
\circ   \ar[rr]_{a_-} \ar@/^70pt/[rruu]^-{c_-}  & & \circ \ar[rr]_{b_-} \ar@/_70pt/[rruu]_-{d_-} & & \circ
}
\end{array}
\end{equation}
modulo the following set of relations: 
$$
d_\pm a_\pm = b_\mp c_\pm \quad \mbox{\rm and}\quad b_\pm a_\pm = d_\mp a_\mp,
$$
i.e.~any two paths with the same source and target are equal.

\begin{proposition}\label{P:Geiss}
The derived categories $D^b(T\mathsf{-mod})$ and $D^b(S\mathsf{-mod})$ are equivalent.
\end{proposition}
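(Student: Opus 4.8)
The plan is to exhibit an explicit tilting complex in $D^b(T\mathsf{-mod})$ whose endomorphism algebra is $S$ (or, more symmetrically, to produce tilting complexes on both sides that lead to a common algebra). The algebra $T$ is the degenerate tubular algebra \eqref{E:degtubular}, which by the previous subsection is derived equivalent to $D^b\bigl(\Coh(\EE)\bigr) \simeq D^b\bigl(\Coh^G(E)\bigr)$, with $E$ the nodal cubic and $G = \ZZ_2$ acting by $(x:y:z) \mapsto (x:-y:z)$. The algebra $S$ in \eqref{E:skewgentle} is a skew-gentle algebra; in fact its underlying shape strongly suggests it is the endomorphism algebra of a tilting object on the Auslander curve $\YY$ of $\EE$ arising from a \emph{different} choice of tilting bundle on the hereditary cover $\widetilde\EE$ — namely, instead of Geigle--Lenzing's standard tilting bundle \eqref{E:standrdtilting}, one uses a ``twisted'' or ``rotated'' line bundle configuration on the two copies of $\PP^1$ (here $\widetilde\EE$ is a single $\PP^1$ with one length-two point and one length-one special point, so $\widetilde\EE$ has just two components after passing to the Auslander curve, matching the two horizontal rows in \eqref{E:skewgentle}). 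So the first step is to identify $S$ as $\bigl(\End_{D^b(\YY)}(\kX'^{\bul})\bigr)^\circ$ for a suitable tilting complex $\kX'^{\bul} = \kT' \oplus \kS[-1]$, with $\kT'$ coming from a tilting bundle $\widetilde\kT'$ on $\widetilde\EE$ that differs from $\widetilde\kT$ by a line-bundle twist; since $D^b\bigl(\Coh(\widetilde\EE)\bigr)$ is the derived category of a weighted projective line, any two tilting bundles are derived equivalent, hence $S$ and $\Lambda$ (and therefore $S$ and $T$) are derived equivalent.

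More concretely, I would proceed as follows. First, write down the tilting complex $\kX^{\bul}$ of Theorem~\ref{T:tilting1} for $\XX = \EE$ explicitly, recovering $\Lambda$ and then $T$ as in Subsection~\ref{SS:DegenTub}. Second, apply the elementary modifications of Lemma~\ref{L:ModificationOne} and Lemma~\ref{L:ModificationTwo} to $T$: the shape of \eqref{E:skewgentle} — two parallel ``rows'' $a_\pm, b_\pm$ connected by long arrows $c_\pm, d_\pm$ with the ``all parallel paths equal'' relations — is exactly what one gets by performing blow-up-type modifications at the branch vertices of \eqref{E:degtubular}, combined with a reflection functor (BGP-type mutation) at the central source vertex of the $T$-quiver to turn the four outgoing arrows $a_1,\dots,a_4$ into a more linear configuration. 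The technical heart is bookkeeping: each application of Lemma~\ref{L:ModificationOne} or~\ref{L:ModificationTwo} replaces a local fragment by a prescribed one, and I would track the quiver and relations through a short sequence of such moves until the quiver of \eqref{E:skewgentle} with its relations appears. Third, since each modification is realized by an explicit tilting complex with endomorphism algebra the modified algebra (Lemma~\ref{L:ModificationOne}, Lemma~\ref{L:ModificationTwo}) and BGP reflection functors give derived equivalences, the composite is a derived equivalence $D^b(T\mathsf{-mod}) \simeq D^b(S\mathsf{-mod})$.

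The main obstacle will be the central vertex of the $T$-quiver \eqref{E:degtubular}: it is a source with \emph{four} arrows, whereas \eqref{E:skewgentle} has no vertex of valency four, so a genuine reflection/mutation (not just one of the two ``elementary modifications'' of this subsection) is needed there, and one must check carefully that the relations $b_1a_1 + b_2a_2 + b_3a_3 = 0$, $b_1a_1 = b_4a_4$ transform correctly — the commutativity/zero relations of $S$ must emerge, which requires computing $\Hom$ and $\Ext^1$ between the summands of the mutated tilting object. A cleaner alternative, which I would actually prefer to write up, is to bypass $T$ entirely: prove $D^b(S\mathsf{-mod}) \simeq D^b\bigl(\Coh(\YY)\bigr)$ directly by checking that the obvious projective modules over $S$, reorganized into a two-term complex at the appropriate vertices (dualizing the pattern of \eqref{E:SheafS} and \eqref{E:standrdtilting}), form a tilting complex whose endomorphism algebra is $\Lambda$; the hint attributed to Gei\ss{} in the acknowledgements presumably refers to precisely this direct identification of $S$ with a twisted-tilting-bundle algebra on the Auslander curve. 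Either way, once one side is matched to $D^b\bigl(\Coh(\YY)\bigr)$, Corollary~\ref{C:corollaryMain} (with $\sT$ an equivalence) closes the argument.
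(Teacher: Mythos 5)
Your proposal is a strategy sketch rather than a proof, and it does not take the paper's route. The paper's key move is structural, not geometric: one observes that $T$ is a one-point extension of the \emph{hereditary} path algebra $A$ of the $\widetilde D_4$-quiver \eqref{E:QuiverD4tilde} by the $A$-module $M$ of dimension vector $(1,1,1,1,2)$ in \eqref{E:ModuleForOnePointExt}, i.e.\ $T \cong \left(\begin{smallmatrix}\kk & 0\\ M & A\end{smallmatrix}\right)$. One then tilts on $A$ (not on $T$): the explicit tilting module $V = V_1 \oplus \dots \oplus V_5$ has $\bigl(\End_A(V)\bigr)^\circ = B$, the algebra in \eqref{E:skewgentle2}, and the functor $\sT = \RHom_A(V,\,-\,)$ sends $M$ to a $B$-module $N$ sitting in degree zero; since $S \cong \left(\begin{smallmatrix}\kk & 0\\ N & B\end{smallmatrix}\right)$, the theorem of Barot and Lenzing on derived equivalences of one-point extensions closes the argument. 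This sidesteps every difficulty you encountered.

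Your own plan, as written, does not go through. The elementary modifications of Lemmas~\ref{L:ModificationOne} and~\ref{L:ModificationTwo} require local fragments of the exact shapes \eqref{E:gluingarrows} (two arrows into a common new sink) or \eqref{E:blowuparrow} (one incoming arrow plus two new outgoing arrows), and the quiver \eqref{E:degtubular} of $T$ contains neither: its sink has four incoming arrows, its middle vertices have valency $(1,1)$, and its source has four outgoing arrows. A BGP/APR reflection at that 4-valent source is also not available off the shelf since $T$ is not hereditary and carries relations based at $\star$, so "the relations transform correctly" is precisely the step you acknowledge you cannot do; similarly, your "cleaner alternative" of exhibiting a tilting complex over $S$ with endomorphism algebra $\Lambda$ is asserted rather than carried out and is not obviously true. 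The missing idea is to pass to the hereditary algebra $A$ via the one-point extension decomposition and invoke Barot--Lenzing; your intuition that both algebras should be tilted from a common geometric object is reasonable, but it is not the mechanism the paper uses and it does not produce a proof.
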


\begin{proof}
Let $A$ be the path algebra of the affine Dynkin quiver
\begin{equation}\label{E:QuiverD4tilde}
\begin{array}{c}
\xymatrix
{
\circ \ar[rrd]_{b_1}  & \circ \ar[rd]^{b_2}
&        & \circ \ar[ld]_{b_3}
& \circ \ar[lld]^{b_4}\\
        &           &\circ &         &       \\
}
\end{array}
\end{equation}
and $M$ be the left $A$-module corresponding to the representation
\begin{equation}\label{E:ModuleForOnePointExt}
\begin{array}{c}
\xymatrix
{
\kk \ar[rrd]_-{\left(\begin{smallmatrix} 1 \\0 \end{smallmatrix}\right)}  & \kk \ar[rd]^-{\left(\begin{smallmatrix} 1 \\1 \end{smallmatrix}\right)}
&        & \kk\ar[ld]_-{\left(\begin{smallmatrix} 1 \\1 \end{smallmatrix}\right)}
& \kk \ar[lld]^-{\left(\begin{smallmatrix} 0 \\1 \end{smallmatrix}\right)}\\
        &           &\kk^2 &         &       \\
}
\end{array}
\end{equation}
Then we have: $T \cong
\left(
\begin{array}{cc}
\kk & 0 \\
M & A
\end{array}
\right),
$
i.e.~$T$ is a so-called one-point extension of the algebra $A$ by the left $A$-module
$M$.
Next, consider the following left $A$-modules:
\begin{equation*}
V_1 = 
\begin{array}{c}
\xymatrix
{
0 \ar[rrd]  & 0 \ar[rd] &        & \kk \ar[ld]_-{1} & 0 \ar[lld]\\
        &           &\kk &         &       \\
}
\end{array}
\quad 
V_2 = 
\begin{array}{c}
\xymatrix
{
0 \ar[rrd]  & \kk \ar[rd]^-{1} &        & 0 \ar[ld] & 0 \ar[lld]\\
        &           &\kk &         &       \\
}
\end{array}
\end{equation*}
\begin{equation*}
V_3 = 
\begin{array}{c}
\xymatrix
{
\kk \ar[rrd]_-{\left(\begin{smallmatrix} 1 \\1 \end{smallmatrix}\right)}  & \kk \ar[rd]^-{\left(\begin{smallmatrix} 1 \\0 \end{smallmatrix}\right)}
&        & \kk\ar[ld]_-{\left(\begin{smallmatrix} 0 \\1 \end{smallmatrix}\right)}
& 0 \ar[lld]\\
        &           &\kk^2 &         &       \\
}
\end{array}
\quad
V_4 = 
\begin{array}{c}
\xymatrix
{
0 \ar[rrd]  & \kk \ar[rd]^-{\left(\begin{smallmatrix} 1 \\0 \end{smallmatrix}\right)}
&        & \kk\ar[ld]_-{\left(\begin{smallmatrix} 0 \\1 \end{smallmatrix}\right)}
& \kk \ar[lld]^-{\left(\begin{smallmatrix} 1 \\1 \end{smallmatrix}\right)}\\
        &           &\kk^2 &         &       \\
}
\end{array}
\end{equation*}
and
$$
V_5 = 
\begin{array}{c}
\xymatrix
{
0 \ar[rrd]  & \kk \ar[rd]^-{1} &        & \kk \ar[ld]_-{1} & 0 \ar[lld]\\
        &           &\kk &         &       \\
}
\end{array}
$$
Then $V := V_1 \oplus V_2 \oplus V_3 \oplus V_4 \oplus V_5$ is a tilting module
in $D^b(A\mathsf{-mod})$ and 
$$
\sT:= \mathsf{RHom}_A(T, \,-\,): D^b(A\mathsf{-mod}) \lar D^b(B\mathsf{-mod})
$$
is an equivalence of triangulated categories, where 
$B:= \bigl(\End_A(V)\bigr)^\circ$. Note that $B$ is isomophic to the path algebra
of the following quiver
\begin{equation}\label{E:skewgentle2}
\begin{array}{c}
\xymatrix{
   & \circ \ar[rr]_-{b_+} \ar@/^70pt/[rrdd]^-{d_+} & & \circ \\
\circ \ar[ru]^-{u_+} \ar[rd]_{u_-}  & & & \\
   & \circ \ar[rr]^-{b_-} \ar@/_70pt/[rruu]_-{d_-} & & \circ
}
\end{array}
\end{equation}
modulo the relations: $b_\pm u_\pm = d_\mp u_\mp$. It is not difficult to see that 
 $\sT(M) \cong N$, where
$N$ is the following representation of the quiver \eqref{E:skewgentle2}:
\begin{equation*}
\begin{array}{c}
\xymatrix{
   & \kk \ar[rr]_-{1} \ar@/^70pt/[rrdd]^-{1} & & \kk \\
0 \ar[ru] \ar[rd]  & & & \\
   & \kk \ar[rr]^-{1} \ar@/_70pt/[rruu]_-{1} & & \kk
}
\end{array}
\end{equation*}
Observe  that $S \cong
\left(
\begin{array}{cc}
\kk & 0 \\
N & B
\end{array}
\right).
$
The statement follows now  a result of Barot and Lenzing 
\cite[Theorem 1]{BarotLenzing} on derived equivalences of one-point extensions.
\end{proof}

\begin{remark}\label{R:DegTub} According to \cite[Theorem 8.1.10 and Exercise 8.1]{KashiwaraSchapira},
the derived category $D^b(S\mathsf{-mod})$ is equivalent to the derived category of constructible sheaves $D^b_\Sigma\bigl(\mathsf{ConSh}(\SS^2)\bigr)$ on the two-dimensional real sphere $\SS^2$ with respect to the stratification
$\Sigma$ described in the following picture:
\newcommand{\sfrm}[3]{
\node[draw,solid, thick, fit=(#1-1-1)(#1-#2-#3), inner sep=0pt]{};}
\begin{center}
\begin{tikzpicture}[scale=0.40,
    thick,
    dot/.style={fill=blue!10,circle,draw, inner sep=1pt, minimum size=4pt}]
\draw (0,0) node[dot](p1){} (6,0) node[dot](p2){};
\draw (3,0) circle (3cm);
\draw[dashed] (3,0) circle [ x radius=3cm, y radius=1cm];

\draw (0,0) arc (180:360:3 and 1);
\end{tikzpicture}
\end{center}
Putting together all results obtained in this subsection, we get the following commutative diagram of triangulated categories and exact functors:
\begin{equation}\label{E:TiltingDegTub2}
\begin{array}{c}
\xymatrix{D^b(T\mathsf{-mod})
  \ar[r]^-\sT  & D^b_\Sigma\bigl(\mathsf{ConSh}(\SS^2)\bigr)  \ar@{->>}[r]^-{\sP} &  D^b\bigl(\Coh^G(E)\bigr) \\
 & \Perf^G(E) \ar@{_{(}->}[lu]^-{\sE} \ar@{^{(}->}[ru]_-{\sI} &  
}
\end{array}
\end{equation}
where $\sI$ is the canonical inclusion functor, $\sE$ is a fully faithful functor, $\sT$ is an equivalence of categories  and $\sP$ is an appropriate localization functor. It would be quite interesting to give
an interpretation of this result in terms of the homological mirror symmetry in the spirit of the approach of \cite{NadlerZaslow}. 
\end{remark}

\subsection{A purely commutative application of non-commutative nodal curves}
Again, let  $E = V\bigl(zy^2 - x^2(x-z)\bigr)  \subset \PP^2$ be a plane nodal cubic.
Consider the action of the cyclic group $G \cong \ZZ_2$ from Example \ref{Ex:FirstCyclicAction}. As an application of the technique of non-commutative nodal curves, we give a direct proof of  the following known result; see 
\cite[Example 1.4]{SST}.

\begin{proposition}\label{P:Sibilla}
Let $\widetilde{E}$ be the cycle of two projective lines. Then the derived categories
$D^b\bigl(\Coh(\widetilde{E})\bigr)$ and $D^b\bigl(\Coh^G(E)\bigr)$ are equivalent.
\end{proposition}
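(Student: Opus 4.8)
The plan is to move, on both sides, into the language of non-commutative nodal curves and then to apply the commutative diagram of Corollary~\ref{C:corollaryMain}.

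\emph{Step 1: rewriting both categories.} By Example~\ref{Ex:FirstCyclicAction} taken with $n=2$ and the Remark following it, $\Coh^G(E)\simeq\Coh(\EE)$, where $\EE$ is the non-commutative nodal curve attached to the admissible datum $(\PP^1,l,\approx)$ with $l(\tilde{o}^\pm)=2$, $l\equiv 1$ elsewhere, and $(\tilde{o}^+,i)\approx(\tilde{o}^-,i)$ for $i=1,2$; in the notation of Definition~\ref{D:TameNodal} this is $\XX(\vec{p},\vec{q},\approx)$ with $\vec{p}=((2,2))$ and $\vec{q}$ empty. On the other hand the cycle $\widetilde{E}$ of two projective lines \emph{is} the commutative non-commutative nodal curve $\XX'=\XX(\vec{p}',\vec{q}',\approx')$ with $\vec{p}'=((1,1),(1,1))$ and $\vec{q}'$ empty, i.e.~the curve attached to $(\PP^1\sqcup\PP^1,\,l'\equiv 1,\,\approx')$ with $(\tilde{o}_1^-,1)\approx'(\tilde{o}_2^+,1)$ and $(\tilde{o}_2^-,1)\approx'(\tilde{o}_1^+,1)$, so that $\Coh(\widetilde{E})=\Coh(\XX')$ tautologically. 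By Theorem~\ref{T:VBTame} both $\EE$ and $\XX'$ are tame non-commutative nodal curves.

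\emph{Step 2: the Auslander curves.} Applying Corollary~\ref{C:corollaryMain} to $\EE$ and to $\XX'$, denote by $\YY,\YY'$ their Auslander curves and by $\Lambda,\Lambda'$ the associated finite dimensional algebras, so that $D^b(\Coh(\YY))\simeq D^b(\Lambda\mathsf{-mod})$ and $D^b(\Coh(\YY'))\simeq D^b(\Lambda'\mathsf{-mod})$, together with Verdier localizations $\sD\sG\colon D^b(\Coh(\YY))\lar D^b(\Coh(\EE))$ and $\sD\sG'\colon D^b(\Coh(\YY'))\lar D^b(\Coh(\XX'))$. By Theorem~\ref{T:NonCommNodalCurves} (via Theorem~\ref{T:minorsderived}), $\ker(\sD\sG)$ is the thick triangulated subcategory of $D^b(\Coh(\YY))$ generated by the simple $\kB$-modules $\kS_\omega$ attached to the special points $\omega\in\Omega_\circ$; for $\EE$ these are the four tied points over $\tilde{o}^\pm$, and for $\XX'$ the four branches over the two nodes, and in both cases $\kH/\kC\cong\kk^4$. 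The same holds for $\ker(\sD\sG')$.

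\emph{Step 3: the two algebras are derived equivalent.} Compute $\Lambda$ and $\Lambda'$ directly from Definition~\ref{D:DerivedTameAlgebras}: from the quiver $\Gamma(2,2)$, respectively from the disjoint union $\Gamma(1,1)\sqcup\Gamma(1,1)$ of two Kronecker quivers, one inserts for each of the two tied pairs the gluing fragment \eqref{E:gluingarrows}; in both cases this produces an explicit gentle algebra on six vertices. These two algebras are not isomorphic, but a short sequence of the elementary modifications of Lemma~\ref{L:ModificationOne}, carried out at the two glued vertices, transforms the bound quiver of $\Lambda$ into that of $\Lambda'$; hence there is an explicit chain of tilting complexes producing an equivalence $\Psi\colon D^b(\Coh(\YY))\xrightarrow{\sim}D^b(\Coh(\YY'))$.

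\emph{Step 4: descent, and the main obstacle.} It remains to check that $\Psi$ carries $\ker(\sD\sG)$ to $\ker(\sD\sG')$. Granting this, $\Psi$ induces an equivalence of Verdier quotients
\[
D^b(\Coh(\EE))=D^b(\Coh(\YY))/\ker(\sD\sG)\xrightarrow{\sim}D^b(\Coh(\YY'))/\ker(\sD\sG')=D^b(\Coh(\widetilde{E})),
\]
which, combined with $\Coh^G(E)\simeq\Coh(\EE)$ from Step~1, proves the proposition. To verify the compatibility one determines the images in $D^b(\Lambda\mathsf{-mod})$ of the generators $\kS_\omega$ ($\omega\in\Omega_\circ$) under the tilting equivalence $D^b(\Coh(\YY))\simeq D^b(\Lambda\mathsf{-mod})$ — using the short exact sequence \eqref{E:SheafS} and the projective resolutions of Lemma~\ref{L:ExtLOcalComp} — obtaining an explicit finite, $\Ext$-orthogonal system of finite-length $\Lambda$-modules; since each elementary modification of Lemma~\ref{L:ModificationOne} alters only the indecomposable projectives at the vertices of the relevant copy of \eqref{E:gluingarrows}, and in a way that can be read off from the tilting complex implementing it, one tracks this system along the chain of Step~3 and checks that it lands on the corresponding generating system of $\ker(\sD\sG')$. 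This bookkeeping with finitely many finite-length objects is the delicate point of the argument; it is also precisely where one must make sure that $\Psi$ is \emph{chosen} compatibly with the two localization functors, and not merely that $D^b(\Coh(\YY))$ and $D^b(\Coh(\YY'))$ happen to be triangle-equivalent.
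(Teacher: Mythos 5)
Your proposal follows the same overall architecture as the paper's proof (translate into non-commutative nodal curves, pass to Auslander curves, identify the tilted gentle algebras, construct a compatible derived equivalence, and descend to Verdier quotients). Steps 1, 2 and the final descent argument in Step 4 are correct and match the paper. However, Step 3 contains a genuine gap.

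You assert that ``a short sequence of the elementary modifications of Lemma~\ref{L:ModificationOne}, carried out at the two glued vertices, transforms the bound quiver of $\Lambda$ into that of $\Lambda'$,'' and leave this unverified. This is not correct. Writing out the two algebras explicitly — $\Lambda$ has an $\widetilde{A}_3$ diamond with two gluing vertices attached, while $\Lambda'=\widetilde{\Lambda}$ has a linear $A_6$-shaped quiver with the two sink vertices in the middle — one finds that applying Lemma~\ref{L:ModificationOne} at a glued vertex of $\Lambda$ produces a \emph{chain-shaped} gentle algebra $\Gamma$ (with arrows $1\rightrightarrows 2\to 3_\pm\to 4\rightrightarrows 5$), and applying it again at the other glued vertex only produces something isomorphic to $\Gamma$. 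The Lemma never applies at a vertex with outgoing arrows, so there is no way to reverse the orientation of the arrows $3_\pm\to 4\to 5$ to reach the $\widetilde{\Lambda}$-shape by iterating Lemma~\ref{L:ModificationOne}. The paper's actual route is $\Lambda\to\Gamma\leftarrow\widetilde\Lambda$: it uses one elementary modification $\sT_1$ at a glued vertex of $\Lambda$, but then constructs $\sT_2\colon D^b(\widetilde\Lambda\mathsf{-mod})\to D^b(\Gamma\mathsf{-mod})$ from the tilting complex $S_1[-2]\oplus S_2[-1]\oplus P_{3_+}\oplus P_{3_-}\oplus P_4\oplus P_5$, which (because of the three-term projective resolution of $S_1$) is not an elementary modification in the sense of Lemma~\ref{L:ModificationOne} and must be set up separately. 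The subsequent bookkeeping in Step 4 — computing the images of the generators $X_\pm$, $Y_\pm$ of $\ker(\sD\sG)$ under $\sT_1$, and comparing them to the images of the generators of $\ker(\sD\sG')$ under $\sT_2$ inside $D^b(\Gamma\mathsf{-mod})$ — is exactly what the paper carries out, and it depends essentially on the concrete form of $\sT_2$; once Step 3 is replaced by the correct chain $\Lambda\to\Gamma\leftarrow\widetilde\Lambda$, the verification you only sketch in Step 4 goes through as in the paper.
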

\begin{proof} Let $\EE$ be the non-commutative nodal curve from Example 
\ref{Ex:FirstCyclicAction} (i.e.~the categories 
$\Coh(\EE)$ and $\Coh^G(E)$ are equivalent). Let $\YY$ (respectively, 
$\widetilde{\YY}$)
be the Auslander curve of 
$\EE$  (respectively, of  $\widetilde{E}$). 
Let $\mathsf{K} := \mathsf{Ker}\Bigl(D^b\bigl(\Coh(\XX)\bigr) \stackrel{\sP}\lar  D^b\bigl(\Coh(\EE)\bigr)\Bigr)$ and
$\widetilde{\mathsf{K}} := \mathsf{Ker}\Bigl(D^b\bigl(\Coh(\widetilde{\XX})\bigr) \stackrel{\tilde\sP}\lar  D^b\bigl(\Coh(\widetilde{E})\bigr)\Bigr)$
be the kernels of the corresponding localization functors from Corollary \ref{C:corollaryMain}.
We are going to construct an equivalence of triangulated categories 
$D^b\bigl(\Coh(\XX)\bigr) \stackrel{\sE}\lar D^b\bigl(\Coh(\widetilde{\XX})\bigr)$,
which induces  a commutative diagram of categories and functors
$$
\xymatrix{
\mathsf{K} \ar[rr]^-{\sE} \ar@{_{(}->}[d] & & \widetilde{\mathsf{K}} \ar@{^{(}->}[d]\\
D^b\bigl(\Coh(\YY)\bigr) \ar[rr]^-{\sE} \ar@{->>}[d]_-{\sP} & &  D^b\bigl(\Coh(\widetilde{\YY})\bigr) \ar@{->>}[d]^-{\tilde\sP}\\
D^b\bigl(\Coh(\EE)\bigr) \ar[rr]^-{\bar{\sE}} & &  D^b\bigl(\Coh(\widetilde{E})\bigr),
}
$$
where all horizontal arrows are equivalences of triangulated categories.

\smallskip
\noindent
Let $D^b\bigl(\Coh(\YY)\bigr) \stackrel{\sT}\lar D^b(\Lambda\mathsf{-mod})$ and
$D^b\bigl(\Coh(\widetilde{\YY})\bigr) \stackrel{\widetilde{\sT}}\lar D^b(\widetilde{\Lambda}\mathsf{-mod})$) be the equivalences of triangulated categories,
where the algebras $\Lambda$ and $\widetilde{\Lambda}$ are the algebras corresponding, respectively, to $\YY$
and to $\widetilde{\YY}$ as in Corollary \ref{C:corollaryMain}. Recall that
\begin{equation}\label{E:GentleA1}
\Lambda = 
\begin{array}{c}
\xymatrix{
 & 2_- \ar[d]^-{b_-} \ar@/^/[rd]^-{c_+} & & &\\
1 \ar@/^/[ru]^-{a_+} \ar@/_/[rd]_-{a_-} & 3& 4  \ar@/^/[r]^{d_+} \ar@/_/[r]_{d_-}  
& 5 & b_\pm a_\mp = 0 \; \; \mbox{\rm and}\;\;  d_\pm c_\mp = 0\\
& 2_- \ar[u]_{b_+} \ar@/_/[ru]_-{c_-} & & & 
}
\end{array}
\end{equation}
whereas 
\begin{equation}\label{E:GentleA2}
\widetilde{\Lambda} = 
\begin{array}{c}
\xymatrix
{
 & & 3_+ & &  & \\
1 \ar@/^/[r]^-{u_+} \ar@/_/[r]_-{u_-}
 & 2  \ar[ru]^-{v_+} \ar[rd]_-{v_-} & &
 4 \ar[lu]_-{w_{+}} \ar[ld]^-{w_{-}}
& 5 \ar@/_/[l]_{z_+} \ar@/^/[l]^{z_-} & v_\pm u_\mp = 0 \;\;  \mbox{\rm and}\;\;  w_\pm z_\mp = 0.\\
& & 3_- & & & 
}
\end{array}
\end{equation}
Consider the third gentle algebra
$$
\Gamma  = 
\begin{array}{c}
\xymatrix
{
 & & 3_+ \ar[rd]^-{c_+} & &  & \\
1 \ar@/^/[r]^-{u_+} \ar@/_/[r]_-{u_-}
 & 2  \ar[ru]^-{v_+} \ar[rd]_-{v_-} & &
 4 \ar@/^/[r]^{d_+} \ar@/_/[r]_{d_-}
& 5  & v_\pm u_\mp = 0 \;\;  \mbox{\rm and}\;\;  d_\pm c_\mp = 0.\\
& & 3_- \ar[ru]_-{c_-} & & & 
}
\end{array}
$$
We construct now a pair of equivalences of triangulated categories:
$$
D^b(\Lambda\mathsf{-mod}) \stackrel{\sT_1}\lar D^b(\Gamma\mathsf{-mod})
\stackrel{\sT_2}\longleftarrow D^b(\widetilde{\Lambda}\mathsf{-mod}).
$$
\begin{itemize}
\item The first equivalence $\sT_1$ is just the elementary modification from Lemma
\ref{L:ModificationOne}, applied to the third vertex.
\item The second equivalence $\sT_2$ is given by the tilting complex
$$X^\bu:= S_1[-2] \oplus S_2[-1] \oplus P_{3_+} \oplus  P_{3_-} \oplus P_4 \oplus P_5.
$$
\end{itemize}
The image of the localizing subcategory $\mathsf{K} \subset D^b\bigl(\Coh(\YY)\bigr)$
in $D^b(\Lambda\mathsf{-mod})$ under the tilting equivalence $\sT$  is the triangulated envelope 
$\bigl\langle X_+, X_-, Y_+, Y_-\bigr\rangle$, where
$$
X_\pm = \bigl(\dots \lar  0 \lar P_5 \stackrel{d_\pm}\lar P_4 \stackrel{c_\mp}\lar \underline{P_{2_\pm}} \lar 0 \lar \dots\bigr)
$$
and
$
Y_\pm = \bigl(\dots \lar  0 \lar P_3 \stackrel{b_\mp}\lar P_{2_\mp} \stackrel{a_\pm}\lar \underline{P_{1_\pm}} \lar 0 \lar \dots\bigr).
$
One can check that $\sT_1(X_\pm) \cong S_{3_\pm}$, whereas $\sT_1(Y_\pm) \cong Z_\pm$, where
$$
Z_\pm = \bigl(\dots \lar  0 \lar P_3 \stackrel{v_\mp}\lar P_{2_\mp} \stackrel{u_\pm}\lar \underline{P_{1_\pm}} \lar 0 \lar \dots\bigr).
$$ 
In an analogous way one can check that the image of the localizing subcategory
$\widetilde{\sK}$ under the chain of equivalences of derived categories
$$
D^b\bigl(\Coh(\widetilde{\YY})\bigr) \stackrel{\widetilde\sT}\lar D^b(\widetilde{\Lambda}\mathsf{-mod}) 
\stackrel{\sT_2}\lar D^b(\Gamma\mathsf{-mod})
$$
is again the triangulated category $\bigl\langle S_{3_+}, S_{3_-}, Z_+, Z_-\bigr\rangle$. It proves the proposition.
\end{proof}

\subsection{Tilting on Zhelobenko's non-commutative cycles of projective lines}\label{SS:ZhelobenkoCurves} For $n \in \NN$, let $E = E_{2n}$ be a cycle of $2n$ projective lines. It is convenient to label
the irreducible components of $E$  
by the natural numbers $\bigl\{1, 2, \dots, 2n\bigr\}$.  Let $\widetilde{E} \stackrel{\nu}\lar E$ be the normalization of $E$, $\widetilde{\kO}:= 
\nu_\ast\bigl(\kO_{\widetilde{E}}\bigr)$ and  $\kC := \mathit{Ann}_{E}\bigl(\widetilde{\kO}/\kO\bigr) \cong \mathit{Hom}_E\bigl(\widetilde{\kO}, \kO\bigr)$ 
be the corresponding conductor ideal sheaf. In this particular case, $\kC$ is just the ideal sheaf
of the set $\bigl\{o_1, o_2, \dots, o_{2n}\bigr\}$ of the singular points of $E$. Let
$\EE = (E, \kB)$ be the Auslander curve of $E$, where 
$
\kB = 
\left(
\begin{array}{cc}
\kO & \widetilde{\kO}\\
\kC & \widetilde{\kO}
\end{array}
\right).
$
Recall that for any $1 \le k \le 2n$ we have: $\widehat{\kB}_{o_k} \cong B$, where
$B$ is the Auslander order \eqref{E:AuslanderOrderNodal}. Let $\kS_k^\pm$ be the simple torsion sheaf on $\EE$ supported at the singular point $o_k$ which corresponds
to the vertex $\pm$ of the quiver \eqref{E:AuslanderQuiver}. Note that
\begin{equation}\label{E:ExtVanishing}
\Ext_{\EE}^p\bigl(\kS_k^\pm, \kS_k^\pm\bigr) \cong
\left\{
\begin{array}{cl}
\kk & \mbox{\rm if} \; p = 0\\
0 & \mbox{\rm otherwise}. 
\end{array}
\right.
\end{equation}
Let $e := \left(
\begin{array}{cc}
1 & 0\\
0 & 1'
\end{array}
\right) \in \Gamma(E, \kB),
$ where $1 = 1' + 1'' \in \Gamma(E, \widetilde{\kO})$ is the decomposition of the identity section corresponding  to the decomposition 
$\widetilde{\kO} = \widetilde{\kO}' \oplus \widetilde{\kO}''$ of $\widetilde{\kO}$ in the direct sum of ``even'' and ``odd'' components. Let
$$
\kA :=  \mathit{Hom}_{\kB}\bigl(\kB  e\bigr) \cong
\left(
\begin{array}{cc}
\kO & \widetilde{\kO}'\\
\kC & \widetilde{\kO}'
\end{array}
\right)
$$
 and $\mathbb{A}:= (E, \kA)$ be the corresponding non-commutative nodal curve. Note that for
 any $1 \le k \le 2n$ we have: $\widehat{\kA}_{o_k} \cong A$, where
$A$ is the Zhelobenko  order \eqref{E:ZhelobenkoOrder}, so we call
$\mathbb{A}$ \emph{Zhelobenko's non-commutative cycle of projective lines}. 
 Next, we have a splitting 
$\kC = \kC' \oplus \kC''$, where $\kC' \subset \widetilde{\kO}'$ (respectively, 
$\kC'' \subset \widetilde{\kO}''$). Under these notations, the  following sequences of sheaves are exact:
$
0 \lar \kC' \lar \kO \lar \widetilde{\kO}'' \lar 0$
and $0 \lar \kC'' \lar \kO \lar \widetilde{\kO}' \lar 0.
$
Next, let 
$$
\kI := \mathsf{Im}\Bigl(\kB e \otimes_{e\kB e} e\kB \xrightarrow{\mathsf{mult}} \kB\Bigr) = 
\left(
\begin{array}{cc}
\kO & \widetilde{\kO}' \oplus \widetilde{\kO}''\\
\kC & \widetilde{\kO}' \oplus \widetilde{\kC}''
\end{array}
\right)
$$
and $\bar{\kB}:= \kB/\kI$. Then we have:
$
\bar{O}'' := \Gamma\bigl(E, \widetilde{\kO}''/\kC''\bigr) \cong 
\Gamma(E, \bar{\kB}) \cong \underbrace{\kk \times \dots \times \kk}_{\mbox{$2n$  times}}.
$

\begin{proposition} We have a recollement diagram
\begin{equation}\label{E:RecollementAuslanderZhelobenko}
\xymatrix{D^b\bigl(\bar{O}''\mathsf{-mod})\bigr) \ar[rr] && D^b\bigl(\Coh(\EE)\bigr) \ar@/^2ex/[ll] \ar@/_2ex/[ll]
 \ar[rr]
  && D^b\bigl(\Coh(\mathbb{A})\bigr). \ar@/^2ex/[ll] \ar@/_2ex/[ll]
  }
\end{equation}
In particular, there exists  an equivalence of triangulated categories:
\begin{equation}
D^b\bigl(\Coh(\mathbb{A})\bigr) \lar \bigl\langle \kS_1^+, \kS_2^-, \dots,
\kS_{2n-1}^+, \kS_{2n}^-\bigr\rangle^\perp \subset \bigl(\Coh(\EE)\bigr).
\end{equation}
\end{proposition}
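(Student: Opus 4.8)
The statement is a direct application of the theory of minors developed in Subsection \ref{SS:Minors} (Theorems \ref{T:minorsabelian} and \ref{T:minorsderived}), globalized as in Theorem \ref{T:NonCommNodalCurves}. Indeed, $\kA \cong \sHom_{\kB}(\kB e)$ (more precisely $\kA = e\kB e$, viewed via the left projective $\kB$-module $\kP := \kB e$), so $\Coh(\mathbb{A})$ is obtained from $\Coh(\EE)$ by the ``minor'' construction attached to the locally projective $\kB$-module $\kP$. The crucial point is to identify the sheaf of ideals $\kI = \mathrm{Im}\bigl(\kB e\otimes_{e\kB e}e\kB \xrightarrow{\mathrm{ev}}\kB\bigr)$ and to check that it is locally projective as a \emph{right} $\kB$-module, so that the last part of Theorem \ref{T:minorsderived} applies: one then gets a recollement of $D^b\bigl(\Coh(\EE)\bigr)$ with kernel $D^b\bigl(\bar{O}''\mathsf{-mod}\bigr)$ (where $\bar{O}'' = \Gamma(E,\bar{\kB})$, $\bar{\kB} = \kB/\kI$) and quotient $D^b\bigl(\Coh(\mathbb{A})\bigr)$.

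First I would record the explicit local picture. At a singular point $o_k$ the order $\widehat{\kB}_{o_k}\cong B$ is the Auslander order of $D = \kk\llbracket u,v\rrbracket/(uv)$, and the idempotent $e$ picks out, on the hereditary-cover block, the ``even'' branch $\widetilde{\kO}'$. A direct matrix computation with \eqref{E:AuslanderOrderNodal} shows that the stalk of $\kP = \kB e$ is the indecomposable projective $\widehat{\kB}_{o_k}$-module corresponding to one of the two vertices $\pm$ of \eqref{E:AuslanderQuiver}, and that $\widehat{\kI}_{o_k}$ is the two-sided ideal with $\widehat{\kB}_{o_k}/\widehat{\kI}_{o_k}\cong \kk$, corresponding to the remaining simple vertex; away from the singular points $e$ is a full idempotent (or $\kB = \kA$ already) so $\kI = \kB$ there. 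This gives $\bar{\kB} = \kB/\kI$ supported at $\{o_1,\dots,o_{2n}\}$ with $\Gamma(E,\bar{\kB})\cong\kk^{2n}$, identified as $\bar{O}''$ via $\bar{O}'' = \Gamma\bigl(E,\widetilde{\kO}''/\kC''\bigr)$ exactly as stated in the text just before the Proposition. I would also note that $\bar{O}''$ is semisimple, hence $\mathsf{gl.dim}\,\bar{O}'' = 0 < \infty$, and that $\mathsf{gl.dim}\bigl(\Coh(\mathbb{A})\bigr) < \infty$ (it is a sheaf of nodal orders, and more precisely the minor of a finite-global-dimension Auslander sheaf — here one invokes Theorem \ref{T:NonCommNodalCurves} or its local counterpart, $\mathsf{gl.dim}B = 2$), so that the bounded form of the recollement in Theorem \ref{T:minorsderived} is available.

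The main technical step — and the one I expect to be the real obstacle — is verifying that $\kI$ is \emph{right} $\kB$-locally projective. Locally at $o_k$ this amounts to checking that $I_P := \mathrm{Im}(P\otimes_{eBe}eB \to B)$ is projective as a right $B$-module for the relevant idempotent $e$ in the Auslander order $B$; this is the analogue of the computation $I_Q = \left(\begin{smallmatrix} C & H\\ C & H\end{smallmatrix}\right)$ being right-projective that appears in the proof of the recollement \eqref{E:recollement1}, and it should follow from the same kind of argument (write $I_P$ explicitly as a matrix of ideals in $B$ and exhibit a projective right-module decomposition using $\widetilde{\sF}$-type exactness). Once this is done, Theorem \ref{T:minorsderived} (last bullet) yields the recollement \eqref{E:RecollementAuslanderZhelobenko} with the middle term $D^b\bigl(\Coh(\EE)\bigr)$, the right term $D^b\bigl(\Coh(\mathbb{A})\bigr) = D^b\bigl((e\kB e)\mathsf{-mod}\bigr)$ via $\sD\sG$, and the left term the full subcategory of complexes with cohomology in $\bar{\kB}\mathsf{-mod} = \bar{O}''\mathsf{-mod}$.

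Finally, for the ``in particular'' clause I would unwind the recollement into a semiorthogonal decomposition $D^b\bigl(\Coh(\EE)\bigr) = \bigl\langle \mathrm{Im}(\sI),\ \mathrm{Im}(\sL\sF)\bigr\rangle$, where $\mathrm{Im}(\sL\sF)$ is the essential image of the fully faithful functor $D^b\bigl(\Coh(\mathbb{A})\bigr)\to D^b\bigl(\Coh(\EE)\bigr)$ and, by Theorem \ref{T:minorsderived}, equals the right orthogonal $(\bar{\kB})^\perp$ of $\bar{\kB}$ viewed as a left $\kB$-module. It remains to identify $\bar{\kB}$, as a complex of $\kB$-modules, with $\bigoplus_{k} \kS_k^{\epsilon_k}$ for the alternating choice of signs $\epsilon_k$ specified by the chosen idempotent $e$ (the ``odd'' simple at each $o_k$), i.e. with $\kS_1^+\oplus\kS_2^-\oplus\dots\oplus\kS_{2n-1}^+\oplus\kS_{2n}^-$ up to the labelling convention; here the $\Ext$-orthogonality is automatic since these are distinct simple sheaves at distinct points with $\Ext^p(\kS_k^{\epsilon},\kS_k^{\epsilon}) = \kk$ only for $p = 0$, cf.\ \eqref{E:ExtVanishing}. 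Substituting into $(\bar{\kB})^\perp$ gives the asserted equivalence $D^b\bigl(\Coh(\mathbb{A})\bigr)\xrightarrow{\ \sim\ }\bigl\langle \kS_1^+,\kS_2^-,\dots,\kS_{2n-1}^+,\kS_{2n}^-\bigr\rangle^\perp\subset D^b\bigl(\Coh(\EE)\bigr)$.
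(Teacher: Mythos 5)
Your overall framing is right — this is a minor/recollement situation attached to the idempotent $e$ — but you diverge from the paper's proof at exactly the step you flag as ``the real obstacle,'' and you never close it. You propose to apply the last bullet of Theorem \ref{T:minorsderived}, which requires $\kI$ to be right $\kB$-locally projective. You offer an analogy with $I_Q = \left(\begin{smallmatrix} C & H\\ C & H\end{smallmatrix}\right)$ being right-projective, but the paper explicitly records (in the proof following Example \ref{Ex:AuslanderOrderII}) that the ``other'' ideal $I_P = \left(\begin{smallmatrix} A & H\\ C & C\end{smallmatrix}\right)$ is \emph{not} right $B$-projective; so projectivity of the Zhelobenko $\kI$ (a different mixture of the two) is not automatic, and you do not actually verify it. Your justification that $\mathsf{gl.dim}(\Coh(\mathbb{A}))<\infty$ — ``it is a minor of a finite-global-dimension Auslander sheaf'' — is also not a valid inference: $\kA$ itself is a minor of $\kB$, yet $\Coh(\XX)$ has infinite global dimension in general for nodal curves. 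So as written, your argument has a genuine gap.

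The paper sidesteps both issues. It does not invoke the flatness hypothesis at all: it takes from Theorem \ref{T:minorsderived} only the ``easy'' recollement with $D^b_{\bar{\kB}}\bigl(\Coh(\EE)\bigr)$ in the left corner, and then identifies this left corner with $D^b(\bar{O}''\mathsf{-mod})$ directly from the Ext-vanishing \eqref{E:ExtVanishing}. Since $\bar{O}''\cong\kk^{2n}$ is semisimple with simples corresponding to the sheaves $\kS_1^+,\kS_2^-,\dots,\kS_{2n-1}^+,\kS_{2n}^-$, and each such $\kS$ has $\Ext^p_{\EE}(\kS,\kS)=0$ for $p>0$, the canonical functor $D^b(\bar{O}''\mathsf{-mod})\to D^b_{\bar{\kB}}\bigl(\Coh(\EE)\bigr)$ is fully faithful and essentially surjective, hence an equivalence. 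You mention the Ext-vanishing only at the end, as a tool to compute the orthogonal; the paper uses it as the main engine. The two routes are logically distinct: yours, if $\kI$ were shown right-projective and the finite-global-dimension hypotheses were separately verified, would also work — but the paper's route avoids both verifications and is the one actually carried out. If you wanted to keep your route you would need to compute $\kI$ locally at each $o_k$ and exhibit an explicit right $B$-projective decomposition, rather than asserting the analogy.
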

\begin{proof}
It is a consequence of the corresponding local statement (see Theorem \ref{T:minorsderived}) combined with the fact (following from \eqref{E:ExtVanishing}) that the functor 
$$D^b\bigl(\bar{O}''\mathsf{-mod})\bigr) \lar
D^b_{\bar{\kB}}\bigl(\Coh(\EE)\bigr) = 
\bigl\langle \kS_1^+, \kS_2^-, \dots,
\kS_{2n-1}^+, \kS_{2n}^-\bigr\rangle
$$ is an equivalence of triangulated categories.
\end{proof}

\begin{theorem}\label{T:ZhelobenkoTilting}
Let $\Upsilon = \Upsilon_n$ be the gentle algebra given by \eqref{E:GentleGlobDimThree}. Then the derived categories
$D^b\bigl(\Coh(\mathbb{A})\bigr)$ and $D^b(\Upsilon\mathsf{-mod})$ are equivalent.
\end{theorem}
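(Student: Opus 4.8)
The plan is to deduce Theorem~\ref{T:ZhelobenkoTilting} from the proposition just proved, which already identifies $D^b\bigl(\Coh(\mathbb{A})\bigr)$ with the right orthogonal
$$
\mathcal{S}^\perp := \bigl\langle \kS_1^+, \kS_2^-, \dots, \kS_{2n-1}^+, \kS_{2n}^-\bigr\rangle^\perp
\subset D^b\bigl(\Coh(\EE)\bigr),
$$
where $\EE$ is the Auslander curve of the \emph{commutative} cycle $E = E_{2n}$. It therefore suffices to show that $\mathcal{S}^\perp$ is equivalent to $D^b(\Upsilon_n\mathsf{-mod})$. First I would make the ambient category concrete. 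Viewing $E_{2n}$ as the tame nodal curve $\XX(\vec p, \vec q, \approx)$ with $\vec p = \bigl((1,1),\dots,(1,1)\bigr)$ ($2n$ entries), $\vec q$ void, and $\approx$ the cyclic gluing $x^-_{k,1}\approx x^+_{k+1,1}$, Corollary~\ref{C:corollaryMain} (equivalently Theorem~\ref{T:tilting1} applied to the tilting complex $\kX^\bullet = \kT\oplus\kS[-1]$ on $\EE$) yields an exact equivalence $\sT\colon D^b\bigl(\Coh(\EE)\bigr)\xrightarrow{\ \sim\ }D^b(\Lambda\mathsf{-mod})$, where by Definition~\ref{D:DerivedTameAlgebras} the algebra $\Lambda$ is the gentle algebra on the $6n$ vertices $\{L_k, R_k, \bullet_k\}_{k\in\ZZ/2n}$: namely $2n$ Kronecker quivers $L_k\rightrightarrows R_k$ (arrows $x^\pm_{k,1}$), together with sink vertices $\bullet_k$ and arrows $\vartheta'_k\colon R_k\to\bullet_k$, $\vartheta''_k\colon R_{k+1}\to\bullet_k$ subject to the zero-relations $\vartheta'_k x^-_{k,1}=0=\vartheta''_k x^+_{k+1,1}$ of \eqref{E:gluingarrows}.

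The next step is to identify the images $\sT(\kS_k^{\pm})\in D^b(\Lambda\mathsf{-mod})$. Recall that $\kS_k^{\pm}$ is the simple torsion sheaf supported at the node $o_k$ attached to one of the two \emph{branch} vertices $\pm$ of the local Auslander quiver \eqref{E:AuslanderQuiver} (the one lying in the ``even'' piece $\widetilde\kO'$), the alternation being recorded by the superscripts $+,-,+,\dots,-$. Using the local analysis of the Auslander order $B\cong\End_D(D\oplus\widetilde D_u\oplus\widetilde D_v)$ of the node (Example~\ref{Ex:AuslanderOrder}), the projective resolutions of Lemma~\ref{L:ExtLOcalComp}, and the explicit shape of $\kT=\widetilde\sF(\widetilde\kT)$ and of $\kS$, one computes that $\mathsf{RHom}_{\EE}(\kX^\bullet,\kS_k^\pm)$ is concentrated in one degree and equals an explicit indecomposable rigid $\Lambda$-module $M_k$, supported near the $k$-th Kronecker component (a two-dimensional ``regular'' Kronecker-type module, suitably glued at $\bullet_{k-1}$ or $\bullet_k$). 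One then checks, using \eqref{E:ExtVanishing}, that $\{M_1,\dots,M_{2n}\}$ is a full exceptional collection spanning an \emph{admissible} subcategory $\langle M_1,\dots,M_{2n}\rangle\simeq D^b(\kk^{2n}\mathsf{-mod})$ of $D^b(\Lambda\mathsf{-mod})$; this uses that the relevant ideal of $\Lambda$ is flat on the appropriate side, exactly the condition appearing in Theorem~\ref{T:minorsderived}. Consequently $\mathcal{S}^\perp\simeq D^b(\Lambda\mathsf{-mod})/\langle M_k\rangle$ is again the bounded derived category of a finite-dimensional algebra.

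To finish, I would produce a tilting complex in $\mathcal{S}^\perp$ built from those indecomposable projectives $P_v$ of $\Lambda$ that already lie in $\langle M_k\rangle^\perp$, together with the two-term mutations $\bigl(P_w\to P_v\bigr)$ of the remaining $P_v$ across the objects $M_k$ (the same kind of mutation used in the proofs of Lemmas~\ref{L:ModificationOne} and~\ref{L:ModificationTwo}). Computing its endomorphism algebra gives $4n$ vertices and arrows falling into four ``rows'': the Kronecker pairs on the $n$ even components furnish $a_i^{\pm}$, those on the $n$ odd components furnish $c_i^{\pm}$, and the two bridge directions at the pair of nodes bounding an even component furnish $b_i^{+}$ (internal to a pair) and $b_i^{-}$ (to the next pair). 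The relations $\vartheta'x^- = 0 = \vartheta''x^+$ transform into $b_i^{\pm}a_i^{\mp}=0$, $c_i^{-}b_i^{+}=0$ and $c_{i+1}^{+}b_i^{-}=0$ — the cyclic shift in the last relation reflecting the passage once around $E_{2n}$ while identifying even and odd branches. After a last application of the elementary modifications of Section~5.1 to put the quiver in normal form one obtains precisely the quiver with relations \eqref{E:GentleGlobDimThree}, i.e. $\Upsilon_n$, whence $D^b\bigl(\Coh(\mathbb{A})\bigr)\simeq\mathcal{S}^\perp\simeq D^b(\Upsilon_n\mathsf{-mod})$.

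\textbf{Main obstacle.} The conceptual skeleton is routine, but the heart of the argument is the bookkeeping in the middle step: pinning down the modules $M_k=\sT(\kS_k^\pm)$ on the nose (their dimension vectors and the gluing at the $\bullet$-vertices), verifying that they span an admissible subcategory (so that $\mathcal{S}^\perp$ really is $D^b$ of an algebra, not just a quotient triangulated category), and then matching the endomorphism algebra of the mutated tilting complex with $\Upsilon_n$ \emph{including} all three families of zero-relations and the $i\mapsto i+1$ shift. An alternative, which I would pursue in parallel, is to bypass $\Lambda$ and build the required tilting complex directly on $\mathbb{A}=(E_{2n},\kA)$ — using the minor structure $\kA=\bigl(\begin{smallmatrix}\kO&\widetilde\kO'\\ \kC&\widetilde\kO'\end{smallmatrix}\bigr)$ and imitating the construction of Theorem~\ref{T:tilting1} — and to read off $\Upsilon_n$ from it; the two routes should meet at the same combinatorial identification.
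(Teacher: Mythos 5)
Your proposal follows essentially the same route as the paper: reduce via the recollement to the right orthogonal $\mathcal{S}^\perp$ inside $D^b\bigl(\Coh(\EE)\bigr)$, transport through the tilting equivalence $\sT$ of Corollary~\ref{C:corollaryMain} to $D^b(\Lambda\mathsf{-mod})$, identify the images $\sT(\kS_k^\pm)$ explicitly, and then exhibit a tilting object inside the orthogonal category whose endomorphism algebra is $\Upsilon_n$. The one adjustment worth noting is that the paper's tilting object $H$ keeps the projectives $P_{\gamma_k}, P_{\delta_k}$ untouched but replaces the rest by complexes $B_k, C_k$ of the form $P_{\phi_{k-1}^+}\oplus P_{\phi_k^-} \to P_{\beta_k}$ (two projectives in degree $-1$, one in degree $0$), rather than single-to-single mutations $(P_w \to P_v)$ as your shorthand suggests — so when you carry out the bookkeeping you flagged as the main obstacle, expect to mutate across both neighbouring objects $\sT(\kS_{2k-2}^-)$ and $\sT(\kS_{2k-1}^+)$ simultaneously, in the spirit of Lemma~\ref{L:ModificationOne} rather than Lemma~\ref{L:ModificationTwo}.
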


\begin{proof}
Let $D^b\bigl(\Coh(\EE)\bigr) \stackrel{\sT}\lar D^b(\Lambda\mathsf{-mod})$ be
the tilting equivalence from Corollary \ref{C:corollaryMain}. Recall from \cite[Section 5.2]{bd} that $\Lambda = \Lambda_{2n}$ is the path algebra of the following quiver
$$
\xymatrix{
\alpha_1 \ar@/^/[r]^-{a_1^+} \ar@/_/[r]_-{a_1^-} & \beta_1 \ar@/_10pt/[rddddddd]_-{b_1^-} \ar[r]^{b_1^+}  &  \phi_1^+        &    & \\       
                                              &                  & \phi_1^- & \ar[l]_{c_1^+} \ar[lu]_-{c_1^-} \gamma_1  
                                              & \ar@/^/[l]^-{d_1^-} \ar@/_/[l]_-{d_1^+} \delta_1 \\
\alpha_2 \ar@/^/[r]^-{a_2^+} \ar@/_/[r]_-{a_2^-} & \beta_2 \ar[ru]^-{b_2^-} \ar[r]^-{b_2^+}  &  \phi_2^+        &    & \\       
                                              &                  & \phi_2^- & \ar[l]_{c_2^+} \ar[lu]_-{c_2^-} \gamma_2  
                                              & \ar@/^/[l]^-{d_2^-} \ar@/_/[l]_-{d_2^+} \delta_2 \\
                                              & & \vdots & & \\  
                                              & & \phi_{n-1}^-& & \\                                              
\alpha_n \ar@/^/[r]^-{a_n^+} \ar@/_/[r]_-{a_n^-} & \beta_n \ar[ru]^-{b_n^-} \ar[r]^-{b_n^+}  &  \phi_n^+        &    & \\       
                                              &                  & \phi_n^- & \ar[l]_{c_n^+} \ar[lu]_-{c_n^-} \gamma_n  
                                              & \ar@/^/[l]^-{d_n^-} \ar@/_/[l]_-{d_n^+} \delta_n \\
}
$$
modulo the relations $b_k^\pm a_k^\mp = 0 = c_k^\pm d_k^\mp$ for all $1 \le k \le n$. For any $1 \le k \le n$, consider the complexes
$$
A_k^\pm:= \bigl(\dots \lar 0 \lar P_{\phi_k^\pm}
\xrightarrow{c_k^\mp} P_{\gamma_k} \xrightarrow{d_k^\pm} 
\underline{P_{\delta_k}} \lar 0 \lar \dots \bigr).
$$
Then we have:
$$
\left\{
\begin{array}{ccc}
\sT\bigl(\kS_{2k-1}^+\bigr) & \cong & A_k^+ \\
\sT\bigl(\kS_{2k}^-\bigr) & \cong & A_k^-
\end{array}
\right.
\; \; 1\le k \le n.
$$
For any $1 \le k \le n$, consider the following objects of $D^b(\Lambda\mathsf{-mod})$:
$$
B_k = \bigl(\dots \lar 0 \lar P_{\phi_{k-1}^+} \oplus P_{\phi_{k}^-}
\xrightarrow{\left(b_k^- \; b_k^+\right)} P_{\beta_k}  \lar 0 \lar \dots \bigr),
$$

$$
C_k = \bigl(\dots \lar 0 \lar P_{\phi_{k-1}^+} \oplus P_{\phi_{k}^-}
\xrightarrow{\left(b_k^- a_k^- \; b_k^+ a_k^+ \right)} P_{\alpha_k}  \lar 0 \lar \dots \bigr)
$$
and
$$
H:= \bigoplus\limits_{k = 1}^n \bigl(P_{\gamma_k}\oplus P_{\delta_k} \oplus B_k \oplus C_k\bigr).
$$ 
It is not difficult to check that $\Hom_{D^b(\Lambda)}\bigl(H, H[p]\bigr) = 0$ and $H$ generates the triangulated category
$\bigl\langle A_1^+, A_1^-, \dots, A_n^+, A_n^-\bigr\rangle^\perp$. Hence $H$ is a tilting object of the latter category.
Moreover, one can show that $\bigl(\End_{D^b(\Lambda)}(H)\bigr)^\circ \cong \Upsilon$.
 Summing up, we
have a chain of equivalences of triangulated categories
$$
D^b\bigl(\Coh(\mathbb{A})\bigr) \lar \bigl\langle A_1^+, A_1^-, \dots, A_n^+, A_n^-\bigr\rangle^\perp \lar D^b(\Upsilon\mathsf{-mod}),
$$
which yields the desired statement.
\end{proof}

\smallskip
\noindent
For any $n \in \NN$, consider the \emph{graded} gentle algebra
$\Theta = \Theta_n$, given as the path algebra of the following quiver
\begin{equation}
\begin{array}{c}
\xymatrix{
\circ  \ar@/_15pt/[dd]_-{a_1}  \ar@/_3pt/[rrdd]_-{b_1} & & \circ  \ar@/_15pt/[dd]_-{a_2} \ar@/_3pt/[rrdd]_-{b_2} & & \dots  & \circ  \ar@/_15pt/[dd]_-{a_n}  \ar@/^40pt/[ddlllll]^-{b_n}\\
& & & & & \\
\circ \ar@{.>}[uu]_-{w_1} & & \circ \ar@{.>}[uu]_-{w_2} & &  \circ & \circ \ar@{.>}[uu]_-{w_n}
}
\end{array}
\end{equation}
modulo the relations $b_k w_k = 0 = w_k a_k$ for all $1 \le k \le n$, where the grading is given by the rule
$\mathsf{deg}(a_k) = \mathsf{deg}(b_k) = 0$, whereas $\mathsf{deg}(w_k) = 1$.

\begin{proposition}
The triangulated categories $D^b(\Upsilon\mathsf{-mod})$ and $D^b(\Theta)$ are equivalent, where $D^b(\Theta)$ denotes the derived category of $\Theta$ viewed as a differential graded category with trivial differential. As a consequence, the triangulated categories $D^b\bigl(\Coh(\mathbb{A})\bigr)$ and $D^b(\Theta)$ are equivalent, too.
\end{proposition}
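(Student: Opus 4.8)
The plan is to recognize $\Theta = \Theta_n$ as a \emph{graded} gentle algebra and to realize both $D^b(\Upsilon_n\mathsf{-mod})$ and $D^b(\Theta)$ as partially wrapped Fukaya categories of one and the same graded surface with stops. First, I would record that $\Theta$ is gentle in the graded sense: every vertex has at most two incoming and at most two outgoing arrows, the relations $b_k w_k = 0 = w_k a_k$ are of the required form, and the graded path algebra is concentrated in degrees $0$ and $1$ — the only paths of positive degree are $w_k$, $a_k w_k$, $w_k b_{k-1}$ and $a_k w_k b_{k-1}$, since any longer composition runs into one of the two relations, so that $\Theta_{\ge 2} = 0$. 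Viewed as a differential graded algebra with zero differential, $\Theta$ is homologically smooth and proper, and $D^b(\Theta)$ in the statement is by definition its perfect derived category $\Perf(\Theta)$.

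Next I would invoke the geometric model for graded gentle algebras, in the form developed by Haiden--Katzarkov--Kontsevich, Lekili--Polishchuk \cite{LekiliPolishchuk, LekiliPolishchukII} and Opper--Plamondon--Schroll; see also \cite{BurbanDrozdGentle}. To the graded gentle algebra $\Theta$ it attaches a compact oriented surface $\Sigma_\Theta$ with non-empty boundary, a finite set of boundary stops, and a line field $\mathfrak{s}_\Theta$, together with a triangle equivalence between $\Perf(\Theta)$ and the partially wrapped Fukaya category of this graded surface-with-stops; all of this data is read off combinatorially from the quiver, the relations and the degrees of the arrows. Applying the same recipe to the ordinary gentle algebra $\Upsilon_n$ — whose global dimension is finite, so that its bounded derived category again models such a Fukaya category, now with the line field $\mathfrak{s}_\Upsilon$ encoding the grading structure coming from $\mathsf{gl.dim}\,\Upsilon_n < \infty$ — produces a graded surface $(\Sigma_\Upsilon, \mathfrak{s}_\Upsilon)$ with stops. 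Because $\Upsilon_n$ is, by Theorem \ref{T:ZhelobenkoTilting}, derived equivalent to the coherent sheaves on Zhelobenko's non-commutative cycle of $2n$ projective lines, the surface $\Sigma_\Upsilon$ has genus zero, and the number and placement of its boundary components and stops are dictated by the combinatorics of the quiver \eqref{E:GentleGlobDimThree}, including the wrap-around arrow $b_n^-$. The assertion then follows once one exhibits a diffeomorphism $(\Sigma_\Upsilon, \mathfrak{s}_\Upsilon) \to (\Sigma_\Theta, \mathfrak{s}_\Theta)$ carrying stops to stops and $\mathfrak{s}_\Upsilon$ to $\mathfrak{s}_\Theta$.

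Producing this identification is the heart of the matter, and the only genuinely delicate step is the comparison of the two line fields: one must check that the winding numbers computed from the $2n$ zero-relations of $\Upsilon_n$ — which come in $n$ ``diagonal'' pairs $\{c_k^- b_k^+,\, c_{k+1}^+ b_k^-\}$ spliced together along $b_n^-$ — match the winding numbers produced by the $n$ degree-one arrows $w_k$ of $\Theta_n$. This is a finite combinatorial bookkeeping, but it is precisely where the jump in global dimension of $\Upsilon_n$ gets traded for the internal grading of $\Theta_n$, and I expect it to absorb essentially all of the work. An equivalent, surface-free route, which I would present as the main alternative, is to build a tilting object in $D^b(\Theta_n) = \Perf(\Theta_n)$ directly: take the indecomposable projectives $P_{p_k}$, $P_{q_k}$ together with the twisted complexes $C_k = \bigl(P_{q_k} \xrightarrow{\,w_k\,} P_{p_k}\bigr)$, the cones of the degree-one morphisms $w_k$ — the operation that ``uses up'' the grading — and the cones of suitable degree-zero arrows, so as to obtain $4n$ indecomposable summands $T$; one then computes $\RHom^\bullet_\Theta(T,T)$, verifies that it is concentrated in degree zero, and identifies it with $\Upsilon_n$, after which $\Perf(\Theta_n) \simeq D^b(\Upsilon_n\mathsf{-mod})$ follows from the dg-tilting theorem \cite{Keller}. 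In this formulation the obstacle is the endomorphism-algebra computation, in particular the degree-concentration check, i.e.~that the residual degree-one paths $a_k w_k$, $w_k b_{k-1}$, $a_k w_k b_{k-1}$ are all annihilated by the cone construction. In either approach the final clause of the proposition — the equivalence $D^b\bigl(\Coh(\mathbb{A})\bigr) \simeq D^b(\Theta)$ — is then immediate upon composing with the equivalence of Theorem \ref{T:ZhelobenkoTilting}.
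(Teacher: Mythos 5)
Your proposal takes a genuinely different route from the paper's, and it is worth explaining why the paper's argument has the shape it does and what your alternatives would have to deliver.

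The paper does not work on the Fukaya side at all, nor does it tilt directly in $\Perf(\Theta)$. Instead it stays inside $D^b\bigl(\Coh(\EE)\bigr)$, observes (quoting \cite{BurbanKalck}) that the pairs $\kS_k^+,\kS_k^-$ are $2$-spherical collections, and uses the Seidel--Thomas twist $\widetilde\sT = \sT_1\circ\sT_3\circ\cdots\circ\sT_{2n-1}$ to move the orthogonal $\bigl\langle \kS_1^+,\kS_2^-,\ldots\bigr\rangle^\perp$ (which is $D^b(\Upsilon\mathsf{-mod})$) to $\bigl\langle \kS_1^-,\kS_2^-,\ldots\bigr\rangle^\perp$. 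It then writes down an explicit generator $G$ of the latter and computes $\Theta\cong\bigl(\Ext^*_{D^b(\Lambda)}(G)\bigr)^\circ$ as graded algebras. The crucial point --- which your proposal does not engage with and which would silently break a naive version of the paper's argument --- is that computing the graded $\Ext$-algebra of a generator only determines an $A_\infty$-structure on $\Theta$, not its dg quasi-isomorphism class. This is exactly why the paper then invokes Bardzell's bimodule resolution to get $\mathsf{gl.dim}\,\Theta = 3$ and the vanishing $\Ext^3_{\mathsf{gr}(\Theta^e)}(\Theta,\Theta(-1))=0$, which by Kadeishvili's criterion forces $\Theta$ to be intrinsically formal; only then does Keller's theorem give $D^b(\Upsilon\mathsf{-mod})\simeq D^b(\Theta)$. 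An argument that stops at ``$\Ext^*(G,G)\cong\Theta$ as graded algebras'' is incomplete.

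Your two sketched alternatives are both plausible in outline, and interestingly neither of them would face the $A_\infty$-formality obstruction head on. The Fukaya-categorical route would identify a single graded surface-with-stops realizing both sides; this is reasonable in light of \cite{HaidenKatzarkovKontsevich, LekiliPolishchukII}, and is probably the most conceptual way to see the statement, but it shifts all the work into computing genus, boundary, stop configuration, and --- most delicately --- the winding numbers of the two line fields and exhibiting a diffeomorphism carrying one to the other. You correctly flag this as the hard part; without actually carrying it out the argument is not a proof. Your dg-tilting alternative (build $T\in\Perf(\Theta)$ with $\RHom^\bullet_\Theta(T,T)$ concentrated in degree zero and identify $H^0$ with $\Upsilon_n$) is, if the computation goes through, in some ways cleaner than the paper's: it dodges Kadeishvili entirely, because once the cohomology of the dg endomorphism algebra is concentrated in degree $0$ the standard truncation zig-zag makes it quasi-isomorphic to $\Upsilon_n$ as a dg algebra. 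But this requires a genuine $4n$-term tilting computation in $\Perf(\Theta)$, including the degree-concentration check you mention, and you have not carried it out. In short: your routes are viable and genuinely different, and the second one would even be slightly leaner than the paper's, but both are left as programs rather than proofs, and your sketch as written does not contain the intrinsic-formality step that the paper's own route makes essential.
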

\begin{proof} Let $\sS$ be a  Serre functor of the derived category $D^b\bigl(\Coh(\EE)\bigr)$. In \cite[Lemma 5.2]{BurbanKalck} it was observed that
$\sS(\kS_k^\pm) \cong \kS_k^\mp[2]$ for any $1 \le k \le 2n$. Moreover,
for $p \in \NN_0$ and $\varepsilon, \delta \in \left\{+, -\right\}$ we have: 
$$
\Ext_{\EE}^p\bigl(\kS_k^{\varepsilon}, \kS_k^{\delta}\bigr) = 
\left\{
\begin{array}{ccc}
\kk& \mbox{\rm if} & p = 0 \; \mbox{\rm and} \; \varepsilon = \delta \\
0 & \mbox{\rm if} & p = 2 \; \mbox{\rm and} \; \varepsilon \ne  \delta.
\end{array}
\right.
$$
In other words, for any $1 \le k \le 2n$, the pair of objects 
$S_k^+, S_k^-$ forms a generalized $2$-spherical collection. Let 
$\sT_k: D^b\bigl(\Coh(\EE)\bigr) \lar D^b\bigl(\Coh(\EE)\bigr)$ be the corresponding Seidel-Thomas twist functor. According to \cite[Proposition 2.10]{SeidelThomas} (see also 
\cite[Theorem 2]{BurbanBurban}, \cite[Remark 2.5]{Rouquier} and \cite{AnnoLogvinenko}), the functor $\sT_k$ is an auto-equivalence of $D^b\bigl(\Coh(\EE)\bigr)$.
For any $1 \le k, l \le 2n$ we have:
$$
\sT_{k}\bigl(\kS_l^\pm\bigr) \cong 
\left\{
\begin{array}{ccc}
\kS_l^\pm & \mbox{\rm if} & l \ne k \\
\kS_l^\mp[2] & \mbox{\rm if} & l =  k. \\
\end{array}
\right.
$$
It follows that the composition 
$\widetilde\sT := \sT_{1} \circ \sT_{3} \circ \dots \circ \sT_{2n-1}$ induces an equivalence of triangulated categories
$$
D^b(\Upsilon\mathsf{-mod}) \lar \bigl\langle \kS_1^+, \kS_2^-, \dots,
\kS_{2n-1}^+, \kS_{2n}^-\bigr\rangle^\perp 
\xrightarrow{\widetilde\sT}  \bigl\langle \kS_1^-, \kS_2^-, \dots,
\kS_{2n-1}^-, \kS_{2n}^-\bigr\rangle^\perp. 
$$
Let
$
\widetilde{A}_k^- := \bigl(\dots \lar 0 \lar P_{\phi_k^+}
\xrightarrow{b_k^+} P_{\beta_k} \xrightarrow{a_k^-} 
\underline{P_{\alpha_k}} \lar 0 \lar \dots \bigr).
$
Then we have: $\sT\bigl(\kS_{2k-1}^-\bigr) \cong \widetilde{A}_k^-$ for 
all $1 \le k \le n$. As a consequence, the categories $D^b(\Upsilon\mathsf{-mod})$
and $\bigl\langle A_1^-, \widetilde{A}_1^-, \dots, A_n^-, \widetilde{A}_n^- \bigr\rangle^\perp$ are equivalent.

\smallskip
\noindent
For any $1 \le k \le n$, consider the following object in $D^b(\Lambda\mathsf{-mod})$:
$$
\left\{
\begin{array}{ccl}
X_k & = &  \bigl(\dots \lar 0 \lar P_{\beta_k}
\xrightarrow{a_k^-} P_{\alpha_k}  \lar 0 \lar \dots \bigr) \\
Y_k & = & \bigl(\dots \lar 0 \lar P_{\gamma_k}
\xrightarrow{d_k^-} P_{\delta_k}  \lar 0 \lar \dots \bigr) \\
U_k & = &  \bigl(\dots \lar 0 \lar P_{\phi_k^-}
\xrightarrow{b_k^-} P_{\beta_{k+1}}  \lar 0 \lar \dots \bigr) \\
V_k & = &  \bigl(\dots \lar 0 \lar P_{\phi_k^+}
\xrightarrow{c_k^-} P_{\gamma_{k}}  \lar 0 \lar \dots \bigr). \\
\end{array}
\right.
$$
One can show that 
$$
G:= \bigoplus\limits_{k= 1}^n \bigl(X_k \oplus Y_k \oplus U_k \oplus V_k\bigr)
$$
generates the orthogonal category $\bigl\langle A_1^-, \widetilde{A}_1^-, \dots, A_n^-, \widetilde{A}_n^- \bigr\rangle^\perp$. Moreover, we have an isomorphism
of graded algebras $\Theta \cong \bigl(\Ext^*_{D^b(\Lambda)}(G)\bigr)^\circ$.

\smallskip
\noindent
A result of Bardzell (see \cite[Theorem 4.1]{Bardzell}) allows to write down a minimal resolution of $\Theta$ viewed as a module over its enveloping algebra
$\Theta^e := \Theta \otimes_\kk \Theta^\circ$. From the explicit form of this resolution one can conclude that $\mathsf{gl.dim}\,\Theta = 3$. Moreover, one can show that
in the category of \emph{graded} left $\Theta^e$-modules the following vanishing is true:
$$
\Ext^3_{\mathsf{gr}(\Theta^e)}\bigl(\Theta, \Theta(-1)\bigr) = 0.
$$
A result of Kadeishvili \cite{Kadeishvili} implies that the algebra $\Theta$ is \emph{intrinsically formal},  i.e.~that any minimal $A_\infty$-structure on $\Theta$ is equivalent to the trivial one.  According to Keller's work \cite{Keller}, the categories  $\bigl\langle A_1^-, \widetilde{A}_1^-, \dots, A_n^-, \widetilde{A}_n^- \bigr\rangle^\perp$ and $D^b(\Theta)$  are equivalent, which implies the statement.
\end{proof}

\begin{remark}
It follows from Bardzell's resolution \cite[Theorem 4.1]{Bardzell} that the third Hochschild cohomology $\mathsf{HH}^3(\Upsilon)$ is non-vanishing. Assume that 
$\Gamma$ is a finite dimensional algebra, which is derived equivalent to $\Upsilon$.
According to a result of Schr\"oer and Zimmermann \cite{SchZimm}, $\Gamma$ is a gentle algebra.
Rickard's derived Morita theorem  \cite{Rickard} implies that 
$\mathsf{HH}^3(\Gamma) \cong \mathsf{HH}^3(\Upsilon) \ne 0$ (see for instance
\cite[Section 2.4]{KellerOnHochschild} for a detailed argument). It follows from
\cite[Section 1.5]{HappelOnHochschild} that $\mathsf{gl.dim}(\Gamma) \ge 3$.
In particular, the gentle algebra $\Upsilon$  can  not be 
derived equivalent to a gentle algebra $\Lambda(\vec{p}, \approx)$ from Definition
\ref{D:DerivedTameAlgebras}. As a consequence, a Zhelobenko's non-commutative cycle of projective lines $\mathbb{A}$ is not derived equivalent to the Auslander curve of a non-commutative projective nodal curve.
\end{remark}

\section{AG-invariant of gentle algebras derived equivalent to  Auslander curves of tame nodal curves and homological mirror symmetry}

Let $\Lambda = \kk\vec{Q}/I$ be a gentle algebra \cite{AssemSkowr} (see  also 
\cite{BurbanDrozdGentle} for the definition and main properties of this  class of algebras). Let $Q_0$ be the set of vertices of $\vec{Q}$ and $Q_1$ be its set of arrows and  $s, t: Q_1 \lar Q_0$ be the maps attaching to each arrow its source and target, respectively. A path in $\vec{Q}$ is a sequence $\pi = a_m \dots a_1$ of elements of $Q_1$ such that $t(a_i) = s(a_{i+1})$ for all $1 \le i \le m-1$; $m = l(\pi)$ is the length of $\pi$. For any $\ast \in Q_0$ we have the trivial path $e_\ast$ of length zero with
$s(e_\ast) = t(e_\ast) = \ast$.
Following 
\cite{AAGeiss} we say that
\begin{itemize}
\item $\pi$ is a permitted path in $\Lambda$ if $a_{i+1} a_i \notin I$ for all $1 \le i \le l(\pi)-1$;
\item $\pi$ is a forbidden  path in $\Lambda$ if $a_{i+1} a_i \in  I$ for all $1 \le i \le l(\pi)-1$.
\end{itemize}
In this section  we assume that $\mathsf{gl.dim}(\Lambda) < \infty$. Then for any forbidden path $\pi$ in $\Lambda$ we have: $l(\pi) \le \mathsf{gl.dim}(\Lambda)$; see  \cite{Bardzell}. Now we recall the definition of the combinatorial invariant of $\Lambda$ of Avella-Alaminos and Gei\ss{} \cite{AAGeiss}.

\begin{definition}
The set $\Pi$ of \emph{permitted threads} in $\Lambda$ is the set  of all 
\begin{itemize}
\item maximal permitted paths in $\Lambda$;
\item trivial paths $e_\ast$, where $\ast \in Q_0$ is a vertex such that there exists at most one $a \in Q_1$ such that $t(a) = \ast$ and at most one
$b \in Q_1$ such that $s(b) = \ast$, and $b a \notin I$ provided both $a$ and $b$ do exist.
\end{itemize}
Similarly, the set $\Phi$ of \emph{forbidden threads} in $\Lambda$ is the set of all 
\begin{itemize}
\item maximal forbidden  paths in $\Lambda$;
\item trivial paths $e_\ast$, where $\ast \in Q_0$ is a vertex such that there exists at most one $a \in Q_1$ such that $t(a) = \ast$ and at most one
$b \in Q_1$ such that $s(b) = \ast$, and $b a \in I$ provided both $a$ and $b$ exist.
\end{itemize}
\end{definition}

\noindent
The assumption that the algebra $\Lambda$ is gentle implies that  there exists a bijection $ \Pi \stackrel{\vartheta_+}\lar \Phi$  
defined as follows.
\begin{itemize}
\item Let $\pi \in \Pi$ be a maximal permitted path. Then there exists a uniquely determined $\varphi \in \Phi$ such that $t(\pi) = t(\varphi)$ and either $l(\varphi) = 0$ or the terminating arrows of $\pi$ and $\varphi$ alter. We put $\varphi:= \vartheta_+(\pi)$. 
\item Let $\ast \in Q_0$ be such $e_\ast \in \Pi$. If there exists $b \in Q_1$ such that $t(b) = \ast$ then there exists a unique maximal forbidden path $\varphi$, whose terminating arrow is $b$. In this case we put $\varphi:= \vartheta_+(e_\ast)$. Otherwise, we set $\theta_+(e_\ast) = e_\ast$. 
\end{itemize}
Dually, we a have a bijection $\Phi \stackrel{\vartheta_-}\lar \Pi$ defined as follows. 
\begin{itemize}
\item Let $\varphi \in \Phi$ be a maximal forbidden  path. Then there exists a uniquely determined $\pi \in \Pi$ such that $s(\pi) = s(\varphi)$ and either $l(\pi) = 0$ or the starting  arrows of $\pi$ and $\varphi$ alter. We put $\pi:= \vartheta_-(\pi)$. 
\item Let  $\ast \in Q_0$ be  such that $e_\ast \in \Phi$.  If there exists $a \in Q_1$ such that $s(a) = \ast$ then there exists a unique maximal permitted  path $\pi$, whose starting  arrow is $a$. In this case we put $\pi:= \vartheta_-(e_\ast)$. Otherwise, we set $\theta_-(e_\ast) = e_\ast$. 
\end{itemize}
Consider the permutation  
\begin{equation}\label{E:keypermutation}
\vartheta:= \vartheta_+ \vartheta_-: \Phi\lar \Phi
\end{equation}
 Let $\vartheta = \vartheta_1 \circ \dots \circ \vartheta_r$ be a  cyclic decomposition of $\vartheta$. For each cycle 
$
\vartheta_i = \bigl(\varphi_1^{(i)}, \dots, \varphi_{m_i}^{(i)}\bigr)$,
$1 \le i \le r$, we put: 
$\kappa(\vartheta_i) := \bigl(m_i, l(\varphi_1^{(i)}) + \dots + l(\varphi_{m_i}^{(i)})\bigr).
$
\begin{definition} The Avella-Alaminos--Gei\ss{} invariant of $\Lambda$ (called \emph{AG-invariant} in what follows) is the function 
$\NN_0 \times \NN_0 \stackrel{\phi_\Lambda}\lar \NN_0$ defined by
the following rule:
$$
\phi_\Lambda(m,n) = \Big|\bigl\{ i \in \{1, \dots, r\}: \kappa(\vartheta_i) = (m, n)\bigr\}\Big|.
$$
\end{definition}

\noindent
It was shown in \cite[Theorem 16]{AAGeiss} that $\phi_\Lambda$ is a derived invariant of  $\Lambda$: if $\Lambda'$ is a finite dimensional algebra such that
$D^b(\Lambda\mathsf{-mod})$ and $D^b(\Lambda'\mathsf{-mod})$ are equivalent then we have: $\phi_\Lambda = \phi_{\Lambda'}$ (note that the algebra $\Lambda'$ is automatically gentle; see \cite{SchZimm}).

\begin{example}\label{Ex:AGInvariant1} Let $\Upsilon = \Upsilon_1$ be a gentle algebra from Theorem \ref{T:ZhelobenkoTilting}, i.e. the path algebra of the following quiver with relations:
\begin{equation}\label{E:GentleQ} 
\xymatrix
{
\circ  \ar@/^/[rr]^-{a_{+}} \ar@/_/[rr]_-{a_{-}} & & \circ \ar@/^/[rr]^-{b_{+}} \ar@/_/[rr]_-{b_{-}} &&
 \ar@/^/[rr]^-{c_{+}} \ar@/_/[rr]_-{c_{-}} 
 & & \circ & 
    \quad b_{\pm} a_{\mp} = 0 = c_{\pm} b_{\mp}}.
\end{equation}
Let $\pi_\pm = c_\pm b_\pm a_\pm$ and $\varphi_\pm = c_\mp b_\pm a_\mp$. Then we have: $\Pi = \left\{\pi_+, \pi_-\right\}$, $\Phi = \left\{\varphi_+, \varphi_-\right\}$ and the action of the bijections $\vartheta_\pm$  is given by the rules:
$
\vartheta_+(\pi_{\pm}) = \varphi_\pm$ and $
\vartheta_-(\varphi_\pm) = \pi_\pm.
$
From this we conclude that the AG-invariant of $\Upsilon$ is given by the formula
$$
\phi_\Upsilon(m, n) = 
\left\{
\begin{array}{cl}
2 & (m,n) = (1,3) \\
0 & \mbox{\rm otherwise}.
\end{array}
\right.
$$
\end{example}
\begin{example}\label{Ex:AGInvariant2} In the notation of Definition \ref{D:DerivedTameAlgebras},
let
$\Lambda = \Lambda\bigl((3,1), \approx\bigr)$, where $x_{1,1}^+ \approx
x_{1,2}^+$ and $x_{1,3}^+ \approx
x_{1,1}^-$. Then $\Lambda$ is the path algebra of the following quiver
\begin{equation}
\begin{array}{c}
\xymatrix{
              &  & \circ & & \\
              & \circ  \ar[rr]^-{x_{1,2}^+} \ar[ru]^-{u}&  & \circ \ar[rd]^-{x_{1,3}^+} \ar[lu]_-{v}& & \\
\circ \ar[ru]^-{x_{1,1}^+} \ar@/_/[rrrr]_{x_{1,1}^-} &  &        &          & \circ    \ar@/^/[rr]^-{w}  \ar@/_/[rr]_-{z}  & &  \circ   &   \\
}
\end{array}
\end{equation}
subject to the relations $u x_{1,1}^+ = v x_{1,2}^+ = z x_{1,3}^+ = w x_{1,1}^- = 0$. We have:
$$
\Pi = \left\{z x_{1,3}^+, x_{1,2}^+ x_{1,1}^+, wx_{1,1}^-, v, u \right\} \quad
\mbox{\rm and} \quad
\Phi = \left\{u x_{1,1}^+, v x_{1,2}^+,  z x_{1,3}^+,  w x_{1,1}^- \right\}.
$$
Moreover, the  bijection $\Phi \stackrel{\vartheta}\lar \Phi$ is given by  the rule:
$
w x_{1,1}^- \stackrel{\vartheta}\mapsto z x_{1,3}^+ 
\stackrel{\vartheta}\mapsto u x_{1,1}^+ \stackrel{\vartheta}\mapsto
 w x_{1,1}^- $ and $v x_{1,2}^+ \stackrel{\vartheta}\mapsto  v x_{1,2}^+.
$
From this we conclude that the AG-invariant of $\Lambda$ is given by the formula
$$
\phi_{\Lambda}(m, n) 
= 
\left\{
\begin{array}{cl}
1 & (m,n) = (1,2) \; \mbox{\rm or}\, (3, 6)\\
0 & \mbox{\rm otherwise}.
\end{array}
\right.
$$
\end{example}

\noindent
Let $\vec{p} = \bigl((p_1^+, p_1^-), \dots, (p_r^+, p_r^-)\bigr) \in \bigl(\NN^2\bigr)^r$ and  $\approx$ be a relation on the set $\Xi = \Xi(\vec{p})$
without reflexive elements  as in Definition \ref{D:DerivedTameAlgebras}. For any $x \in \Xi$ we set:
$$
w(x) := 
\left\{
\begin{array}{cl}
2 \ & \mbox{\rm if} \; x \; \mbox{\rm is tied}\\
1 & \mbox{\rm otherwise}.
\end{array}
\right. 
$$
For any cyclic permutation $\varpi = (x_1, \dots, x_t)$ of elements of $\Xi$ we put $$\kappa(\varpi) := \bigl(t, w(x_1) + \dots + w(x_t)\bigr).$$
Consider two permutations $\sigma, \tau: \Xi \lar \Xi$, given by the formulae
\begin{equation}\label{E:sigma}
\sigma(x) := 
\left\{
\begin{array}{cl}
x'\ & \mbox{\rm if there  exists}\,  x'  \,\mbox{\rm such that}\, x' \approx x\\
x & \mbox{\rm otherwise}
\end{array}
\right. 
\end{equation}
and
$ 
\tau\bigl(x_{i, j}^\pm\bigr) := x_{i, j-1}^\pm$
for all  $1 \le i \le r, 1 \le j \le p_i^\pm$, where $x_{i, 0}^\pm := x_{i, p_i^\pm}^\pm.
$ Finally, let $\varrho= \sigma \circ \tau$ and 
$
\varrho= \varrho_1 \circ \dots \circ \varrho_c
$
be its cyclic decomposition.

\begin{lemma}\label{L:AGinvariant} Let   $ \Lambda = \Lambda(\vec{p}, \approx)$ be a gentle algebra from Definition \ref{D:DerivedTameAlgebras}.  Then the  AG-invariant of $\Lambda$ is given by the following formula:
$$
\phi_\Lambda(m,n) = \Big|\bigl\{ i \in \{1, \dots, c\}: \kappa(\varrho_i) = (m, n)\bigr\}\Big|.
$$
\end{lemma}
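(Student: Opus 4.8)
The plan is to compute the Avella-Alaminos--Gei\ss{} invariant of $\Lambda = \Lambda(\vec{p}, \approx)$ directly from its quiver presentation in Definition \ref{D:DerivedTameAlgebras}, identifying the permutation $\vartheta$ of \eqref{E:keypermutation} with the permutation $\varrho = \sigma\circ\tau$ built from the combinatorial datum $(\vec{p},\approx)$. First I would analyze the structure of the quiver $\vec{Q}$ of $\Lambda$: since $\vec{q}$ is void and $\approx$ has no reflexive elements, $\Lambda$ is obtained from the disjoint union of the quivers $\Gamma(p_i^+,p_i^-)$ of \eqref{E:AffineDynkin} by attaching, for each tied pair $\varrho'\approx\varrho''$, a configuration \eqref{E:gluingarrows} consisting of a new sink vertex and two arrows $\vartheta',\vartheta''$ with relations $\vartheta'\varrho' = 0 = \vartheta''\varrho''$. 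I would catalogue the permitted and forbidden threads: a maximal permitted path in $\Lambda$ runs along a directed path inside one of the $\Gamma(p_i^+,p_i^-)$ components (a $\pm$-branch, i.e.\ a sequence $x_{i,j}^\pm$) and may terminate with one of the gluing arrows $\vartheta$; the trivial permitted threads occur exactly at the genuinely free source/sink vertices; the forbidden threads are the length-$2$ paths $\vartheta\cdot\varrho$ for tied $\varrho$ together with the trivial forbidden threads at the remaining sinks. The key observation is that the arrow $x_{i,j}^\pm$ is the ``$j$-th step of the $\pm$-branch of the $i$-th block'', so the cyclic structure $\tau(x_{i,j}^\pm) = x_{i,j-1}^\pm$ encodes how, inside a single affine-Dynkin block, the forbidden thread ending at one arrow continues (via the bijection $\vartheta_-$) to the permitted thread starting at the previous arrow of the same branch.

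Next I would compute the two bijections $\vartheta_+\colon\Pi\to\Phi$ and $\vartheta_-\colon\Phi\to\Pi$ explicitly in these terms. The crucial point is that $\vartheta_-$ follows the branch structure: given a forbidden thread $\varphi$ whose starting arrow is $x_{i,j}^\pm$ (this is a forbidden thread of the form $\vartheta\cdot x_{i,j}^\pm$ when $x_{i,j}^\pm$ is tied, or a trivial one otherwise), $\vartheta_-(\varphi)$ is the permitted thread starting at the next branch arrow, which is $x_{i,j-1}^\pm = \tau(x_{i,j}^\pm)$ in the indexing of \eqref{E:AffineDynkin}, because around a vertex of $\Gamma(p_i^+,p_i^-)$ exactly one incoming and one outgoing arrow of each branch meet, and the alternation condition picks the other branch. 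Dually, $\vartheta_+$ of a permitted thread starting at $x_{i,j-1}^\pm$ records the gluing: if $x_{i,j-1}^\pm$ is tied to some $x' = \sigma(x_{i,j-1}^\pm)$, the permitted thread actually proceeds to (and the matching forbidden thread uses) the gluing arrow over that tied pair, so the composite $\vartheta = \vartheta_+\circ\vartheta_-$ sends the forbidden thread indexed by $x_{i,j}^\pm$ to the one indexed by $\sigma(\tau(x_{i,j}^\pm)) = \varrho(x_{i,j}^\pm)$. This gives a bijection between the set $\Phi$ of forbidden threads and the set $\Xi$ (or rather its obvious completion with the untied branch arrows), under which $\vartheta$ corresponds to $\varrho$; the trivial-thread bookkeeping at the free source/sink vertices has to be checked to match the fixed points / untied orbits of $\varrho$, which is where the definition of $\sigma$ as the identity on untied elements is used.

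Finally, under this identification I would transport the length data. For a forbidden thread $\varphi$ sitting over $x\in\Xi$, its length is $2$ if $x$ is tied (it is $\vartheta\cdot x$, two arrows) and $1$ if $x$ is untied (it is just the single branch arrow $x$, or a trivial thread adjusted accordingly) — precisely the value $w(x)$ defined just before the lemma. Hence for a cycle $\varrho_i = (x_1,\dots,x_t)$ of $\varrho$ the corresponding cycle $\vartheta_i$ of $\vartheta$ has $m_i = t$ elements and total length $\sum_j w(x_j)$, so $\kappa(\vartheta_i) = \bigl(t, w(x_1)+\dots+w(x_t)\bigr) = \kappa(\varrho_i)$, and the formula for $\phi_\Lambda(m,n)$ follows by the very definition of the AG-invariant together with \cite[Theorem 16]{AAGeiss}. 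The main obstacle I anticipate is the careful bookkeeping of the \emph{trivial} threads $e_\ast$: one must verify that every free source of a branch (where $p_i^\pm$ meets the central vertex and no gluing arrow is attached) and every sink not carrying a $\vartheta$ contributes exactly the right trivial permitted/forbidden thread so that the fixed points and short orbits of $\varrho$ are reproduced, and that the endpoint-alternation rules defining $\vartheta_\pm$ never produce an ``accidental'' longer thread crossing from one block to another except through the designated gluing arrows. Getting every boundary case of this dictionary right — rather than the core branch-following argument, which is essentially forced — is the delicate part.
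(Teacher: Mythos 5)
Your strategy coincides with the paper's: establish the bijection $\Xi \to \Phi$ (untied $x$ maps to the length-one forbidden thread $x$; tied $x$ maps to the length-two forbidden thread through $x$), identify $\vartheta = \vartheta_+\vartheta_-$ with $\varrho = \sigma\circ\tau$ under this bijection, and match forbidden-thread lengths with the weight $w(x)$ --- the paper merely compresses the middle step into ``a straightforward verification''.

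That said, your description of how $\vartheta_-$ acts is wrong as written, and this is precisely the delicate bookkeeping you flag at the end. For $j\ge 2$ the thread $\vartheta_-(\varphi_{x_{i,j}^\pm})$ starts at the vertex $s(x_{i,j}^\pm) = t(x_{i,j-1}^\pm)$ and is either the trivial permitted thread $e_{t(x_{i,j-1}^\pm)}$ (when $x_{i,j-1}^\pm$ is untied, so $\vartheta_+$ then returns $\varphi_{x_{i,j-1}^\pm}$ --- same branch, no switch) or the single gluing arrow based at that vertex (when $x_{i,j-1}^\pm$ is tied, so $\vartheta_+$ then returns $\varphi_{\sigma(x_{i,j-1}^\pm)}$); it is not ``the permitted thread starting at the next branch arrow,'' and at such internal vertices the alternation does not ``pick the other branch'' --- there is no other branch present, only the gluing arrow or the trivial thread. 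The branch-switching picture is correct only for $j=1$: there $s(\varphi_{x_{i,1}^\pm})$ is the common source of $\Gamma(p_i^+,p_i^-)$, $\vartheta_-$ returns the entire $\mp$-branch permitted thread (possibly extended by a gluing arrow at the sink), and $\vartheta_+$ of that is $\varphi_{\sigma(x_{i,p_i^\pm}^\pm)} = \varphi_{\sigma(\tau(x_{i,1}^\pm))}$. Also, the forbidden thread over an untied $x$ is the single arrow $x$ itself, not a trivial thread. So $\vartheta_-$ alone does not ``follow the branch''; it is the composite $\vartheta_+\vartheta_-$ whose net effect is $\varphi_x \mapsto \varphi_{\sigma(\tau(x))}$, and the case analysis above is what actually verifies it. Once the intermediate description is repaired, the identification $\vartheta\leftrightarrow\varrho$ together with $\kappa(\vartheta_i)=\kappa(\varrho_i)$ is exactly right, and the formula for $\phi_\Lambda$ follows.
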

\begin{proof}
Since $\mathsf{gl.dim}(\Lambda) = 2$, all forbidden threads in $\Lambda$ have length at most two. Moreover, we have a bijection $\Xi \lar \Phi$,
sending an element $x\in \Xi$ to itself if $x$ is untied and to the unique forbidden path of length two containing $x$ if $x$ is tied. It is a straightforward verification that the permutation $\varrho$ defined above gets identified 
with the permutation  $\vartheta$ from the definition of the AG-invariant.
\end{proof}

\smallskip
\noindent
Let $\Sigma$ be a compact  oriented surface with non-empty boundary. According to the classification of surfaces, $\Sigma$ is determined (up to a homeomorphism)  by its genus $g \in \NN_0$ and the number $b \in \NN$ of the boundary components. In what follows, we shall denote such surface by $\Sigma_{g, b}$.

\smallskip
\noindent
Next, let  $\mathfrak{S} \subset \partial \Sigma$ be a finite subset such that
$\mathfrak{S}_i := \mathfrak{S} \cap \partial_i \Sigma \ne \emptyset$ for all $1 \le i \le b$, where $\partial_i \Sigma$ is the $i$-th boundary component. Let $P_\Sigma$ be the projectivized tangent bundle of $\Sigma$ and $\eta \in \Gamma(\Sigma, P_\Sigma)$ be a line field (it follows from the assumption $b \ge 1$ that $\Gamma(\Sigma, P_\Sigma)\ne \emptyset$). 

\smallskip
\noindent
In \cite[Chapter 3]{HaidenKatzarkovKontsevich}, Haiden, Katzarkov and Kontsevich attached to 
the datum $(\Sigma, \mathfrak{S}, \eta)$ a certain triangulated category 
$\mathsf{WFuk}(\Sigma, \mathfrak{S}, \eta)$ called \emph{partially wrapped Fukaya category}. Here, we follow the terminology of the papers of Lekili
and Polishchuk \cite{LekiliPolishchuk, LekiliPolishchukII} (in the original version of  \cite{HaidenKatzarkovKontsevich} this category  was called topological Fukaya category). According to  \cite[Lemma 3.3]{HaidenKatzarkovKontsevich}, for any datum $(\Sigma, \mathfrak{S}, \eta)$ there exists a
homologically smooth   \emph{graded} gentle algebra $\Lambda$ and an equivalence of triangulated categories
$
\mathsf{WFuk}(\Sigma, \mathfrak{S}, \eta) \simeq D^b_{dg}(\Lambda),
$
where $D^b_{dg}(\Lambda)$ is the derived category of $\Lambda$ viewed as a graded dg-algebra with trivial differential. Note that if the degrees of all arrows of $\Lambda$ are zero then $D^b_{dg}(\Lambda)$ is  equivalent to the usual derived category $D^b(\Lambda\mathsf{-mod})$ of the abelian category  of finite dimensional representations of $\Lambda$; see \cite{Keller}.

In  \cite{LekiliPolishchukII}, Lekili and Polishchuk proved that conversely,  for any homologically smooth  graded gentle algebra $\Lambda$ there exists a datum $(\Sigma, \mathfrak{S}, \eta)$ as above such that
$\mathsf{WFuk}(\Sigma, \mathfrak{S}, \eta)$ is equivalent to $D^b_{dg}(\Lambda)$. We are mostly interested in the case when the grading of a gentle algebra $\Lambda = \kk \vec{Q}/I$ is trivial, i.e. we have:
$$D^b(\Lambda\mathsf{-mod}) \simeq 
\mathsf{WFuk}(\Sigma, \mathfrak{S}, \eta).$$
Let $\chi(\Sigma)$ be the Euler characteristic of $\Sigma$ and $b$ be the number of its boundary components. Recall at this place that $
\chi(\Sigma_{g,b}) = 2(1-g) - b.$
According to 
\cite[Theorem 3.2.2]{LekiliPolishchukII}, we have 
 \begin{equation}\label{E:NumberBoundaryComp}
\chi(\Sigma) = |Q_0|-|Q_1| \quad \mbox{\rm and} \quad
b = \sum\limits_{(m, n) \in \NN_0^2} \phi_\Lambda(m, n).
\end{equation}

\smallskip
\noindent
Formula (\ref{E:NumberBoundaryComp}) tells that
the permutation $\vartheta$ given by (\ref{E:keypermutation}) splits into a product of precisely $b$ cycles: $\vartheta = \vartheta_1 \circ \dots \circ \vartheta_b$. Lekili and Polishchuk also prove that the order of the boundary components
$\partial_1\Sigma, \dots, \partial_b\Sigma$  can be chosen  in such a way that 
$
\big|\mathfrak{S}_i\big| = m_i$ and $
w_\eta^{(i)} = m_i-n_i
$
for all $1 \le i \le b$,
where $(m_i, n_i) = \kappa(\vartheta_i)$  and $w_\eta^{(i)}$ is the winding number of the line field $\eta$ along 
$\partial_i\Sigma$; see  \cite[Theorem 3.2.2]{LekiliPolishchukII}.
Since $\phi_\Lambda$  is a derived invariant, it implies   that the \emph{marked surface} $(\Sigma, 
\mathfrak{S})$ is a derived invariant of $\Lambda$, too. See also a work of Opper, Plamondon and Schroll \cite{OPS} for an alternative approach to attach a marked surface  to a gentle algebra of possibly infinite global dimension.

\begin{remark}
Let $\Lambda = \Lambda(\vec{p}, \approx) = \kk\vec{Q}/I$. It is easy to see that $|Q_1|-|Q_0|$ is equal to the number of two-cycles of the permutation $\sigma$, defined by (\ref{E:sigma}). This  gives  a closed formula for the Euler characteristic of the surface of $\Lambda$. 
\end{remark}

\begin{example}
Let $(\Sigma, \mathfrak{S})$ be the marked surface of  the gentle algebra
$\Upsilon$  from Example \ref{Ex:AGInvariant1}. Then $\Sigma = \Sigma_{1, 2}$ is a torus with two boundary components. Moreover, $|\mathfrak{S}| = 2$ and 
on each boundary component of $\Sigma$ lies one marked point. If $\eta$ is a line field on $\Sigma$ such that 
$D^b(\Upsilon\mathsf{-mod}) \simeq 
\mathsf{WFuk}(\Sigma, \mathfrak{S}, \eta)$ then 
$w_\eta^{(i)} = -2$ for $i = 1,2$. We have in this case:
$$
\mathsf{WFuk}(\Sigma, \mathfrak{S}, \eta) \simeq D^b\bigl(\Coh(\mathbbm{A})\bigr),
$$
where $\mathbbm{A}$ is the Zhelobenko nodal curve, whose central curve is a cycle of two projective lines; see Subsection \ref{SS:ZhelobenkoCurves}.
\end{example}

\begin{example} Let $(\Sigma, \mathfrak{S})$ be the marked surface of  the gentle algebra $\Lambda$ from Example \ref{Ex:AGInvariant2}. Again, we have:  $\Sigma = \Sigma_{1, 2}$. However, this time, 
there is only one marked point lying 
 on $\partial_{1}\Sigma$   and $w_{\eta}^{(1)} = -1$, whereas  there are  three marked points on $\partial_{2}\Sigma$ and $w_{\eta}^{(2)} = -3$. According to \cite[Corollary 3.25]{LekiliPolishchukII}, in this case the AG-invariant $\phi_\Lambda$ is a full derived invariant of $\Lambda$. Equivalently, for any line field $\tilde\eta \in \Gamma(\Sigma, P_\Sigma)$ such that $w_{\tilde\eta}^{(1)} = -1$ and $w_{\tilde\eta}^{(2)} = -3$ we have: $\mathsf{WFuk}(\Sigma, \mathfrak{S}, \eta) \simeq \mathsf{WFuk}(\Sigma, \mathfrak{S}, \tilde\eta)$.
\end{example}

\begin{remark}
Let $\XX =\XX(\vec{p}, \approx)$ be a tame non-commutative nodal projective curve of gentle type, $\YY$ be the corresponding Auslander curve and $\Lambda = 
\Lambda(\vec{p}, \approx)$ be the corresponding gentle algebra. According to
\cite[Theorem 3.2.2]{LekiliPolishchukII}, there exists a uniquely determined oriented compact marked surface
$(\Sigma, \mathfrak{S})$ with non-empty boundary and a line field $\eta$ on $\Sigma$ such 
that the triangulated categories $D^b(\Lambda\mathsf{-mod})$ and $\mathsf{WFuk}(\Sigma, \mathfrak{S}, \eta)$ are equivalent. In combination 
with Theorem \ref{T:tilting1}, we get an equivalence of triangulated categories
\begin{equation}
D^b\bigl(\Coh(\YY)\bigr) \simeq \mathsf{WFuk}(\Sigma, \mathfrak{S}, \eta),
\end{equation}
what gives  a version of the homological mirror symmetry for  oriented compact surfaces with non-empty boundary, generalizing   \cite[Theorem A]{LekiliPolishchuk}. If $\XX =\XX(\vec{p}, \approx)$ is a stacky chain or cycle of projective lines (see Example \ref{Ex:StackyCycles}) then all winding numbers of the line field $\eta$ along of  all boundary components of $\Sigma$ are even; see \cite[Section 4]{LekiliPolishchukII}. However, in the case of the datum from Example  \ref{Ex:AGInvariant2}, the  winding numbers of the line field  $\eta$ along both boundary components are odd. This implies that the class of stacky chains and stacky cycles of projective lines  does not exhaust all  derived equivalence classes of tame nodal curves of gentle type. 
 \end{remark}

\end{document}